\definecolor{winered}{rgb}{0.6,0,0}
\definecolor{lessblue}{rgb}{0,0,0.7}
\newcommand{\myitem}[2]{\item[\rm(#2)]\def\@currentlabel{#2}\label{#1}}
\def\@tocline#1#2#3#4#5#6#7{
\begingroup
  \par
    \parindent\z@ \leftskip#3 \relax \advance\leftskip\@tempdima\relax
                  \rightskip\@pnumwidth plus 4em \parfillskip-\@pnumwidth
    \ifcase #1 
       \vskip 0.6em \hskip 0em 
       \or
       \or \hskip 0em 
       \or \hskip 1em 
    \fi%
    %
    #6
    %
    \nobreak\relax{\leavevmode\leaders\hbox{\,.}\hfill}
    \hbox to\@pnumwidth {\@tocpagenum{#7}}
  \par
\endgroup
}
 \def\l@section{\@tocline{0}{0pt}{0pc}{}{}}
\renewcommand{\tocsection}[3]{%
  \indentlabel{\@ifnotempty{#2}{ 
    \ignorespaces\bfseries{#2. #3}}}
  \indentlabel{\@ifempty{#2}{\ignorespaces\bfseries{#3}}{}} 
    \vspace{1.5pt}}
\renewcommand{\tocsubsection}[3]{%
  \indentlabel{\@ifnotempty{#2}{
    \ignorespaces#2. #3}}
  \indentlabel{\@ifempty{#2}{\ignorespaces #3}{}}
    \vspace{1.5pt}}
\renewcommand{\tocsubsubsection}[3]{%
  \indentlabel{\@ifnotempty{#2}{
    \ignorespaces#2. #3}}
  \indentlabel{\@ifempty{#2}{\ignorespaces #3}{}}
    \vspace{1.5pt}}
\def\@nomenstarted{0}
\newlength{\@nomenoldtabcolsep}
\newcommand{\nomenstart}
  {%
    \def\@nomenstarted{1}%
    \setlength{\@nomenoldtabcolsep}{\tabcolsep}%
    \setlength{\tabcolsep}{3.5pt}%
    \begin{longtable}{p{0.11\textwidth} p{0.86\textwidth}}
  }
\newcommand{\nomenitem}[2]{%
    \ifcase\@nomenstarted%
      \or 
      \or \\ 
    \fi%
    #1\,{\leavevmode\leaders\hbox{\,.}\hfill} & #2%
    \def\@nomenstarted{2}%
  }%
\newcommand{\nomenend}
  {\\%
      \end{longtable}%
      \setlength{\tabcolsep}{\@nomenoldtabcolsep}%
      \def\@nomenstarted{0}%
  }
\newcommand{\BIG}{\bBigg@{3.5}}
\newcommand{\vast}{\bBigg@{4}}
\newcommand{\Vast}{\bBigg@{5}}
\newcommand{\VAST}[1]{\bBigg@{#1}}
\numberwithin{equation}{section}
\numberwithin{figure}{section}
\newtheorem{thm}{Theorem}[section]
\newtheorem{prop}[thm]{Proposition}
\newtheorem{lemma}[thm]{Lemma}
\newtheorem{cor}[thm]{Corollary}
\newtheorem*{thm*}{Theorem}
\newtheorem*{prop*}{Proposition}
\newtheorem*{cor*}{Corollary}
\newtheorem*{conj*}{Conjecture}
\theoremstyle{definition}
\newtheorem{definition}[thm]{Definition}
\theoremstyle{remark}
\newtheorem{rmk}[thm]{Remark}
\newtheorem{example}[thm]{Example}
\newcommand{\fakephantomsection}{%
  \Hy@MakeCurrentHref{\@currenvir.\the\Hy@linkcounter}
  \Hy@raisedlink{\hyper@anchorstart{\@currentHref}\hyper@anchorend}%
  \Hy@GlobalStepCount\Hy@linkcounter%
}
\newcommand{\mc}{\mathcal}
\newcommand{\cA}{\mc A}
\newcommand{\cB}{\mc B}
\newcommand{\cC}{\mc C}
\newcommand{\cD}{\mc D}
\newcommand{\cF}{\mc F}
\newcommand{\cL}{\mc L}
\newcommand{\cO}{\mc O}
\newcommand{\cT}{\mc T}
\newcommand{\cU}{\mc U}
\newcommand{\cV}{\mc V}
\newcommand{\cW}{\mc W}
\newcommand{\ms}{\mathscr}
\newcommand{\sA}{\ms A}
\newcommand{\sB}{\ms B}
\newcommand{\sC}{\ms C}
\newcommand{\sD}{\ms D}
\newcommand{\sH}{\ms H}
\newcommand{\sS}{\ms S}
\newcommand{\sW}{\ms W}
\newcommand{\C}{\mathbb{C}}
\newcommand{\N}{\mathbb{N}}
\newcommand{\R}{\mathbb{R}}
\newcommand{\Z}{\mathbb{Z}}
\newcommand{\Sph}{\mathbb{S}}
\newcommand{\sfl}{\mathsf{l}}
\newcommand{\sfm}{\mathsf{m}}
\newcommand{\sfs}{\mathsf{s}}
\newcommand{\fB}{\mathfrak{B}}
\newcommand{\fF}{\mathfrak{F}}
\newcommand{\fM}{\mathfrak{M}}
\newcommand{\fu}{\mathfrak{u}}
\newcommand{\fw}{\mathfrak{w}}
\newcommand{\Hom}{\operatorname{Hom}}
\newcommand{\Id}{\operatorname{Id}}
\newcommand{\supp}{\operatorname{supp}}
\newcommand{\diag}{\operatorname{diag}}
\DeclareMathOperator{\ad}{ad}
\newcommand{\eps}{\epsilon}
\newcommand{\hra}{\hookrightarrow}
\newcommand{\la}{\langle}
\newcommand{\ol}{\overline}
\newcommand{\pa}{\partial}
\newcommand{\dd}{{\mathrm d}}
\newcommand{\ra}{\rangle}
\newcommand{\wh}{\widehat}
\newcommand{\wt}{\widetilde}
\newcommand{\xra}{\xrightarrow}
\newcommand{\ubar}[1]{\underaccent{\bar}#1}
\newcommand{\bop}{{\mathrm{b}}}
\newcommand{\scop}{{\mathrm{sc}}}
\newcommand{\cl}{{\mathrm{cl}}}
\newcommand{\eop}{{\mathrm{e}}}
\newcommand{\scbtop}{{\mathrm{sc}\text{-}\mathrm{b}}}
\newcommand{\semi}{\hbar}
\newcommand{\cp}{{\mathrm{c}}}
\newcommand{\Diff}{\mathrm{Diff}}
\DeclareMathOperator{\Op}{Op}
\newcommand{\Vb}{\cV_\bop}
\newcommand{\Ve}{\cV_\eop}
\newcommand{\Diffb}{\Diff_\bop}
\newcommand{\Psiscbt}{\Psi_\scbtop}
\newcommand{\Vsc}{\cV_\scop}
\newcommand{\WF}{\mathrm{WF}}
\newcommand{\Ell}{\mathrm{Ell}}
\newcommand{\Char}{\mathrm{Char}}
\newcommand{\singsupp}{\mathrm{\rm sing}\supp}
\newcommand{\esssupp}{\mathrm{ess}\supp}
\newcommand{\Tb}{{}^{\bop}T}
\newcommand{\Tsc}{{}^{\scop}T}
\newcommand{\CI}{\cC^\infty}
\newcommand{\CIc}{\cC^\infty_\cp}
\newcommand{\Hb}{H_{\bop}}
\newcommand{\openbigpmatrix}[1]
  {%
    \def\@bigpmatrixsize{#1}%
    \addtolength{\arraycolsep}{-#1}%
    \begin{pmatrix}%
  }
\newcommand{\closebigpmatrix}
  {%
    \end{pmatrix}%
    \addtolength{\arraycolsep}{\@bigpmatrixsize}%
  }
\newlength{\enummargin}\setlength{\enummargin}{1.5em}
\newcommand{\usref}[1]{{\upshape\ref{#1}}}
\begin{document}

\title[Scaled bounded geometry]{Pseudodifferential operators on manifolds with scaled bounded geometry}

\date{\today}

\begin{abstract}
  We prove a general black box result which produces algebras of pseudodifferential operators (ps.d.o.s) on noncompact manifolds, together with a precise principal symbol calculus. Our construction (which also applies in parameter-dependent settings, with phase space weights and variable differential and decay orders) recovers most of the ps.d.o.\ algebras which have been introduced in recent years as tools for the microlocal analysis of non-elliptic partial differential equations. This includes those used for proving resolvent bounds (b- and scattering algebras and resolved or semiclassical versions thereof), studying waves on asymptotically flat spacetimes (\mbox{3b-}, edge-b-, and desc-algebras), inverting geodesic X-ray transforms (semiclassical foliation and 1-cusp algebras), and many others.

  Our main result rests on the novel notion of manifolds with scaled bounded geometry. A scaling encodes, in each distinguished chart of a manifold with bounded geometry, the amounts in $(0,1]$ by which the components of a uniformly bounded vector field are scaled. This decouples the regularity of the coefficients of elements of the resulting Lie algebra $\cV$ of vector fields from the pointwise size of their coefficients. When the scaling tends to $0$ at infinity, the approximate constancy of coefficients of elements of $\cV$ on increasingly large cubes, as measured using $\cV$, gives rise to a principal symbol which captures $\cV$-operators modulo operators of lower differential order and more decay.
\end{abstract}

\subjclass[2010]{Primary: 58J40. Secondary: 35S05, 35A17}

\author{Peter Hintz}
\address{Department of Mathematics, ETH Z\"urich, R\"amistrasse 101, 8092 Z\"urich, Switzerland}
\email{peter.hintz@math.ethz.ch}

\maketitle

\section{Introduction}
\label{SI}

We prove a black box result which produces algebras of pseudodifferential operators (ps.d.o.s) on noncompact manifolds, together with a precise (principal) symbol calculus, in a large variety of settings including the semiclassical calculus, b- and scattering calculi, and many others. This result obviates the need to construct, by hand, ps.d.o.\ algebras associated with (structured) singularities or ends, as long as it is only the \emph{local} and \emph{symbolic} behavior of ps.d.o.s that one cares about. Global effects, as often arise in the inversion of elliptic partial differential operators, are not addressed here; neither are normal operators.

The noncompact manifolds of interest are equipped with two pieces of data:
\begin{enumerate}
\item a bounded geometry (b.g.) structure---roughly speaking, a cover by unit cube coordinate charts with uniformly bounded transition functions. (Metrics are not involved, cf.\ Remark~\ref{RmkIBddMet}.)
\item Scalings---roughly speaking, a subdivision of each unit cube into cuboids aligned with the local coordinate axes, in a manner compatible across unit cubes.
\end{enumerate}
The differential operators of interest are then built from vector fields which have bounded size in each cuboid, but whose coefficients vary only on the scale of the unit cubes. In particular, when the cuboids become very small, such operators have roughly constant coefficients on increasingly large collections of cuboids, which gives rise to a further principal symbol which, in a certain sense, captures the limiting constant coefficient models. The reader may jump directly to Definitions~\ref{DefIBdd} and \ref{DefISB}, Figure~\ref{FigISB}, and Theorem~\ref{ThmISBRough} for details.

\bigskip

We first present some broader context for the present paper. In the analysis of partial differential equations (PDE), pseudodifferential operators typically serve one of two purposes.
\begin{enumerate}
\item\label{ItIPx} Precise approximate inverses (parametrices), or true inverses, of \emph{elliptic} (pseu\-do)dif\-fer\-en\-tial operators can often be constructed within a calculus of pseudodifferential operators tailored to the partial differential operator at hand.
\item\label{ItITool} One can use ps.d.o.s as \emph{tools} to microlocalize the study of the PDE in phase space. This is particularly convenient for applications to \emph{non-elliptic} PDE (whose parametrices are more complicated objects, such as paired Lagrangian distributions \cite{MelroseUhlmannLagrangian}).
\end{enumerate}
For PDE on compact manifolds without boundary, the standard ps.d.o.\ calculus \cite{HormanderAnalysisPDE3} serves both purposes. For PDE on noncompact (and also on singular) spaces $M$ however, ps.d.o.\ calculi appropriate for the first purpose are in general significantly more delicate than those for the second: Schwartz kernels of inverses, or of parametrices precise enough to imply Fredholm properties, often have nontrivial behavior far from the diagonal $\diag_M\subset M\times M$, cf.\ the Schwartz kernel $\R^3\times\R^3\ni(x,x')\mapsto(4\pi|x-x'|)^{-1}$ of $\Delta_{\R^3}^{-1}$. A well-established method to encode the (off-diagonal) behavior of Schwartz kernels, originating in Melrose's work \cite{MelroseTransformation,MelroseMendozaB,MelroseAPS}, is to describe them as distributions on a appropriate resolutions (iterated blow-ups) of the double space $\bar M\times\bar M$ where $\bar M$ is a compactification of $M$ to a manifold with corners.

By contrast, when using ps.d.o.s as tools, only the behavior of their Schwartz kernels near the diagonal $\diag_M\subset M\times M$, including in a suitable uniform sense near infinity in $\diag_M$, matters, and one can attempt to use a \emph{bounded geometry} approach to encode uniformity near infinity without introducing compactifications and boundaries or corners.

\begin{definition}[Bounded geometry structure]
\label{DefIBdd}
  (Cf.\ \cite{ShubinBounded}.) Let $M$ be a smooth $n$-dimensional manifold (without boundary). Then a \emph{bounded geometry structure} (b.g.\ structure) on $M$ is a set $\{(U_\alpha,\phi_\alpha)\colon\alpha\in\sA\}$ where $\sA$ is a countable index set and for each $\alpha\in\sA$, $U_\alpha$ is an open subset of $M$ and $\phi_\alpha\colon U_\alpha\to (-2,2)^n$ is a diffeomorphism; and
  \begin{enumerate}
  \item\label{ItIBddFinite} there exists $A\in\N$ so that for all pairwise distinct $\alpha_1,\ldots,\alpha_{A+1}\in\sA$, we have $\bigcap_{a=1}^{A+1} U_{\alpha_a}=\emptyset$;
  \item $M=\bigcup_{\alpha\in\sA} \phi_\alpha^{-1}((-1,1)^n)$;
  \item\label{ItIBddTrans} the transition functions $\tau_{\beta\alpha}:=\phi_\beta\circ\phi_\alpha^{-1}\colon\phi_\alpha(U_\alpha\cap U_\beta)\to\phi_\beta(U_\alpha\cap U_\beta)$ are uniformly bounded in $\CI$, i.e.\ for all $k\in\N_0$ there exists $C_k<\infty$ so that $\|\tau_{\beta\alpha}^j\|_{\cC^k(\phi_\alpha(U_\alpha\cap U_\beta))}\leq C_k$ for all $\alpha,\beta$, where $\tau_{\beta\alpha}^j$ denotes the $j$-th component ($j=1,\ldots,n$) of $\tau_{\beta\alpha}$, and $\|\cdot\|_{\cC^k}$ is the maximum of the supremum norms of a function and its up to $k$-fold coordinate derivatives.
  \end{enumerate}
\end{definition}

We shall frequently refer to the sets $U_\alpha$ as \emph{unit cells} or \emph{distinguished charts}.

\begin{rmk}[Metrics]
\label{RmkIBddMet}
  The more common definition of a \emph{manifold with bounded geometry} is that of a Riemannian manifold $(M,g)$ with positive injectivity radius for which the Riemann curvature tensor and all of its covariant derivatives are uniformly bounded \cite{CheegerGromovTaylorComplete}, \cite[Appendix~1]{ShubinBounded}. That this gives rise to a b.g.\ structure is shown in \cite[pp.\ 63--66]{ShubinBounded} (using balls instead of cubes in Definition~\ref{DefIBdd}, which is easily seen to lead to an equivalent notion); see also \cite{ElderingUniform}. Conversely, given a b.g.\ structure, a metric of bounded geometry can be constructed as $g:=\sum_\alpha \phi_\alpha^*(\chi\,\dd x^2)$ where $\chi\in\CIc((-2,2)^n)$ equals $1$ on $(-1,1)^n$, and $\dd x^2$ denotes the Euclidean metric. For our purposes however, Riemannian structures are irrelevant.
\end{rmk}

We define the spaces
\[
  \CI_{{\rm uni},\fB}(M)\quad\text{resp.}\quad \CI_{{\rm uni},\fB}(M;T M)
\]
of functions and vector fields which, resp.\ whose coefficients, have uniformly bounded $\CI$ seminorms in the charts $U_\alpha$. Uniform (or bounded geometry) differential operators are then finite sums of up to $m$-fold compositions of elements of $\CI_{{\rm uni},\fB}(M;T M)$, with a $0$-fold composition defined to be multiplication with an element of $\CI_{{\rm uni},\fB}(M)$. Pseudodifferential operators (ps.d.o.s) on $M$, which act boundedly on $\CI_{{\rm uni},\fB}(M)$, can be defined by patching together standard quantizations on $\R^n$ (see~\eqref{EqIScOp} below) in the charts $U_\alpha$.

\emph{However}, the bounded geometry perspective is too imprecise and inflexible to capture crucial properties of many standard algebras of (pseudo)differential operators. For example, it cannot capture the typical regularity of coefficients of differential operators on $\R^n$ (cf.\ \eqref{EqIScReg}), nor can (a parameterized version of) it capture the principal symbol in the semiclassical calculus (see~\S\ref{SssIh}). We explain these shortcomings in some detail in~\S\ref{SsIBad}.

In this paper, we rectify these shortcomings via the introduction of \emph{(parameterized) scaled bounded geometry structures}; see~\S\ref{SsISB}. We recover all those pseudodifferential algebra\footnote{At least all those known to the author!} on manifolds with corners (possibly allowing for parameter dependencies) which are characterized via Schwartz kernels on resolved double spaces, including the associated sharp principal symbol maps. More precisely, we recover the \emph{very small} algebras consisting of operators whose Schwartz kernels are supported in a collar neighborhood of the lifted diagonal, and which have conormal coefficients down to the boundary. In particular, by using our general construction, it becomes obsolete to define double and triple spaces \cite{MelroseICM,MelroseDiffOnMwc} for the purpose of constructing novel ps.d.o.\ algebras---provided one only needs this algebra in the sense of~\eqref{ItITool} above. The examples from~\S\ref{SsIBad} are revisited from our new perspective in~\S\S\ref{SssISc2}--\ref{SssIh2}. Further examples are discussed in~\S\S\ref{SssIAniso}--\ref{SssIF}.

\subsection{Shortcomings of the bounded geometry perspective}
\label{SsIBad}

We give two examples to motivate our main result.

\subsubsection{Operators on Euclidean space}
\label{SssISc}

A basic example is $M=\R^n$, with bounded geometry structure
\begin{equation}
\label{EqIScBdd}
  \sA=\Z^n,\quad
  U_\alpha=\alpha+(-4,4)^n,\quad
  \phi_\alpha\colon U_\alpha\ni x\mapsto (x-\alpha)/2\in(-2,2)^n.
\end{equation}
One can associate a scale of Sobolev spaces and algebras of (pseudo)differential operators with a b.g.\ structure. In the case of $\R^n$, these are the standard Sobolev spaces $H^s(\R^n)$ and (uniform) ps.d.o.s $\Psi^m(\R^n)$, defined as quantizations
\begin{equation}
\label{EqIScOp}
  \Op(a)(x,x') := (2\pi)^{-n}\int e^{i(x-x')\xi} a(x,\xi)\,\dd\xi
\end{equation}
of symbols $a\in S^m(\R^n;\R^n)$ satisfying the uniform bounds
\[
  |\pa_x^\beta\pa_\xi^\gamma a(x,\xi)|\leq C_{\beta\gamma}\la\xi\ra^{m-|\gamma|}\qquad\forall\,\beta,\gamma\in\N_0^n,\ x\in\R^n,\ \xi\in\R^n,\quad \la\cdot\ra:=(1+|\cdot|^2)^{1/2}.
\]
(One may in addition enforce that $|x-x'|$ is uniformly bounded on $\supp\Op(a)$ by inserting a cutoff $\chi(x-x')$, $\chi\in\CIc(\R^n)$, in~\eqref{EqIScOp}.)

Consider now the shifted Laplacian $L:=\Delta_{\R^n}+1=\sum_{j=1}^n D_{x^j}^2+1$ (where $D=\frac{1}{i}\pa$), which is a uniformly elliptic operator; or more generally a variable coefficient version $L=\sum_{i,j=1}^n g^{i j}(x)D_{x^i}D_{x^j}+1$ where $(g^{i j})$ is everywhere positive definite and, for some $\eta>0$,
\begin{equation}
\label{EqIScReg}
  |\pa_x^\beta(g^{i j}(x)-\delta^{i j})|\leq C_\eta\la x\ra^{-\eta-|\beta|}\qquad\forall\,\beta\in\N_0^n.
\end{equation}
Using the spaces arising from the b.g.\ structure on $\R^n$, one can show elliptic estimates
\[
  \|u\|_{H^s} \leq C_{s,N}\bigl( \|L u\|_{H^{s-2}} + \|u\|_{H^{-N}}\bigr).
\]
However, ellipticity considerations \emph{within the class of uniform ps.d.o.s} are not precise enough to establish the Fredholm property of $L$. What is missing is a record of the fact that the coefficients of $L$, near any sequence of points $p_i\to\infty$, are essentially constant on \emph{increasingly large cubes} of side length $\frac12|p_i|$ (which are thus unions of an increasingly large number of unit cells);\footnote{This allows for an approximate inversion of $L$ near infinity via the inversion of its constant coefficient models---here the single model $\Delta_{\R^n}+1$---by means of the (inverse) Fourier transform.} put differently, the regularity of the coefficients of $L$ is \emph{stronger} than mere uniform boundedness, with all derivatives, in the charts $(U_\alpha,\phi_\alpha)$. This cannot be encoded only using the b.g.\ structure~\eqref{EqIScBdd}; we shall, however, be able to capture this using our notion of \emph{scaled bounded geometry structures}.\footnote{The constant coefficient models then appear in the guise of a more precise principal symbol which, in the present setting, recovers the boundary principal symbol of the scattering calculus.} Only this more precise principal symbol is precise enough to distinguish $L=\Delta+1$ from the operators $\Delta$ or $\Delta-1$ whose mapping properties are dramatically different. (The operator $L$ considered here can of course be easily analyzed using integration by parts; our aim here being the development of a general-purpose microlocal tool, we pay no attention to this special feature.)

In the concrete setting of $\R^n$, one can encode this higher regularity directly in the symbol class; the resulting symbols are then called \emph{scattering symbols} \cite{VasyMinicourse}, and their quantizations are \emph{scattering pseudodifferential operators}, first introduced under the strongest possible notion of regularity near infinity (smoothness in $|x|^{-1},\frac{x}{|x|}$ for $|x|>1$) by Melrose \cite{MelroseEuclideanSpectralTheory} and generalized in \cite{MazzeoMelroseFibred}. We give detailed comparison with the geometric microlocal perspective in~\S\ref{SsIRel}.

\subsubsection{Semiclassical operators}
\label{SssIh}

We now consider semiclassical ps.d.o.s
\begin{equation}
\label{EqIhOp}
  \Op_h(a)(x,x') := (2\pi h)^{-n}\int e^{i(x-x')\xi/h} a(h,x,\xi)\,\dd\xi
\end{equation}
on $\R^n$, where $h\in(0,1)$ is the semiclassical parameter; these are used to study semiclassical operators, such as $L_h=h^2\Delta_{\R^n}+V-E$ where $V=V(x)$ is a smooth potential and $E\in\R$, in a uniform fashion as $h\to 0$ \cite{ZworskiSemiclassical}.

We first attempt to recover this class of operators using a bounded geometry perspective with parameter $h$: for $h\in(0,1)$, we define the set $U_{h,\alpha}=h(\alpha+(-4,4)^n)$ of size $\sim h$, together with the chart $\phi_{h,\alpha}(x)=(h^{-1}x-\alpha)/2$. The corresponding class of uniform ps.d.o.s then contains operators defined as quantizations
\begin{equation}
\label{EqIhOpBdd}
  (2\pi h)^{-n}\int e^{i\frac{x-x'}{h}\xi} a\Bigl(h,x,\xi\Bigr)\chi\Bigl(\frac{x-x'}{h}\Bigr)\,\dd\xi
\end{equation}
where $\chi\in\CIc((-4,4)^n)$ equals $1$ on $(-2,2)^n$. The cutoff $\chi$ localizes $x,x'$ to essentially the same unit cell $U_{h,\alpha}$. Consider the simplest case of multiplication operators, so $a=a(h,x)$: then the function $a$ on $(0,1]_h\times\R^n_x$ is uniformly smooth with respect to this \emph{parameterized} bounded geometry structure if and only if is uniformly bounded upon applying any number of vector fields $(\phi_{h,\alpha})^*\pa_x\sim h\pa_x$; thus, $a$ is allowed to oscillate on spatial scales $\sim h$.

Typical semiclassical operators (such as $L_h$ above) however have coefficients which are smooth in $x$ uniformly as $h\to 0$. Put differently, the coefficients are essentially constant on \emph{increasingly large cubes}, i.e.\ unions of an increasingly large number $\sim h^{-1}\times\cdots\times h^{-1}=h^{-n}$ of unit cells $U_{h,\alpha}$. This stronger regularity is responsible for the crucial fact that semiclassical ps.d.o.s defined by~\eqref{EqIhOp} have a principal symbol which is well-defined modulo symbols \emph{with extra decay} as $h\to 0$ \cite[\S{9.3.3}]{ZworskiSemiclassical}.

\subsection{Scaled bounded geometry structures; main result}
\label{SsISB}

The discussion of the regularity of coefficients in~\S\S\ref{SssISc}--\ref{SssIh} might suggest the introduction of a structure on top of a b.g.\ structure which describes the increasingly large collections of unit cells on which coefficients of vector fields and operators should be roughly constant. However, in order to avoid having to describe how increasingly large numbers of unit cells fit together, it is more convenient to set this up the other way around. That is, we elevate the b.g.\ structure $\fB$ which gives rise to the desired notion of smoothness of coefficients to the central object. The class of vector fields and operators of interest, which thus shall have these more regular coefficients, then arises from vector fields whose coefficients, in the unit cells of $\fB$, are rescaled by amounts in $(0,1]$ in the local coordinate directions.

Moreover, while in the examples of~\S\ref{SsIBad} the coefficients of naturally arising operators are essentially constant on cubes whose size increases by the same amount in all directions (as $|x|\to\infty$ or $h\to 0$), it is important to allow for more general anisotropic scalings (see~\S\ref{SssIAniso} for an example). We thus arrive at the following notion.

\begin{definition}[Scaled bounded geometry structure]
\label{DefISB}
  Let $M$ be a smooth $n$-dimensional manifold. Then a \emph{scaled bounded geometry structure} (\emph{scaled b.g.\ structure} for short) on $M$ is a set $\fB_\times=\{(U_\alpha,\phi_\alpha,\rho_\alpha)\colon\alpha\in\sA\}$ so that $\fB=\{(U_\alpha,\phi_\alpha)\colon \alpha\in\sA\}$ is a b.g.\ structure on $M$ (called the \emph{underlying b.g.\ structure}), and $\rho_\alpha=(\rho_{\alpha,i})_{i=1,\ldots,n}\in(0,1]^n$ for each $\alpha\in\sA$, with the property that there exist constants $C_\gamma<\infty$ for $\gamma\in\N_0^n$ so that for all $\alpha,\beta$,
  \begin{equation}
  \label{EqISB}
    | \pa^\gamma(\rho_{\alpha,i}\pa_i\tau_{\beta\alpha}^j(x))| \leq C_\gamma\rho_{\beta,j}\qquad\forall\,x\in (-2,2)^n,\quad i,j=1,\ldots,n,\ \ \gamma\in\N_0^n.
  \end{equation}
  (We call $\{\rho_\alpha\}$ a \emph{scaling}.) Given $\fB_\times$, we further define:\footnote{That $\cW,\cV,\cV'$ are indeed Lie algebras, as their name indicates, is shown in~\S\ref{SsSBComp}.}
  \begin{enumerate}
  \item $\bar\rho_\alpha:=\max_{1\leq i\leq n}\rho_{\alpha,i}$;
  \item the \emph{coefficient Lie algebra} $\cW:=\CI_{\rm uni,\fB}(M;T M)$;
  \item the \emph{operator Lie algebra} $\cV\subset\cW$ consisting of all $V\in\cW$ so that, upon writing
    \begin{equation}
    \label{EqISBV}
      (\phi_\alpha)_*V = \sum_{i=1}^n V_\alpha^i\rho_{\alpha,i}\pa_i,\qquad V_\alpha^1,\ldots,V_\alpha^n\in\CI((-2,2)^n),
    \end{equation}
    the functions $V_\alpha^1,\ldots,V_\alpha^n$ are uniformly bounded with all derivatives;
  \item the \emph{large operator Lie algebra} $\cV'$ consisting of all smooth vector fields $V$ on $M$ so that the components $V_\alpha^i$ in each coordinate chart are uniformly (in $\alpha$) bounded in absolute value, and so are the derivatives $(\rho_{\alpha,1}\pa_1)^{\gamma_1}\cdots(\rho_{\alpha,n}\pa_n)^{\gamma_n}((\phi_\alpha)_*V)^j$ for any fixed $\gamma\in\N_0^n$.
  \end{enumerate}
\end{definition}

Bounded geometry structures arise as the special case when all $\rho_{\alpha,i}$ are equal to $1$ (or any other constant $>0$): then $\cV'=\cV=\cW$. The crucial condition~\eqref{EqISB} means that the vector fields $\rho_{\alpha,i}\pa_i$ push forward under $\tau_{\beta\alpha}$ to linear combinations of $\rho_{\beta,j}\pa_j$ with uniformly bounded (with all derivatives) coefficients. Put differently (for $\gamma=0$), the coordinate change maps $\tau_{\beta\alpha}$ map $\rho_\alpha$-cuboids (i.e.\ cuboids with side lengths $\rho_{\alpha,i}$ in the $x^i$-direction) into sets which contain a $C^{-1}\rho_\beta$-cuboid and are contained in a $C\rho_\beta$-cuboid where $C$ is independent of $\alpha,\beta$. See Figure~\ref{FigISB}.

\begin{figure}[!ht]
\centering
\includegraphics{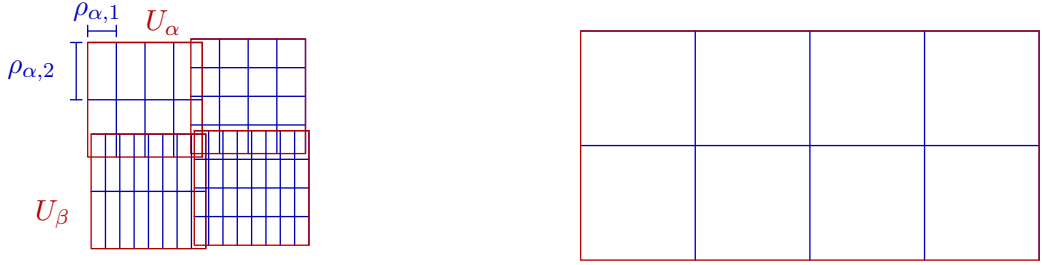}
\caption{\textit{On the left:} Four unit cells for $\cW$ are drawn in red. The smaller cuboids of side lengths $\rho_{\alpha,i}$ are drawn in blue. \textit{On the right:} the top left unit cell from the left, rescaled so that the little cuboids (i.e.\ the unit cells for $\cV'$) appear as unit squares.}
\label{FigISB}
\end{figure}

We denote by
\[
  \Diff_\cV^m(M)
\]
the space of finite sums of up to $m$-fold compositions of elements of $\cV$; $0$-fold compositions are defined to be multiplication operators by elements of $\CI_{{\rm uni},\fB}(M)$. One may regard $\Diff_\cV(\R^n)$ as making rigorous the notion of a space of $\cV'$-differential operators with $\cW$-regular coefficients; see for example the discussion following~\eqref{EqISc2Vp} below. More generally, we define spaces $\rho^l\Diff_\cV^m(M)$ of weighted operators; here $P\in\rho^l\Diff_\cV^m(M)$ if and only if $(\phi_\alpha)_*P=\sum_{|\gamma|\leq m} p_{\alpha\gamma}(x)(\rho_{\alpha,1}\pa_{x^1})^{\gamma_1}\cdots(\rho_{\alpha,n}\pa_{x^n})^{\gamma_n}$ where the weighted coefficients $\bar\rho_\alpha^{-l}p_{\alpha\gamma}$ are uniformly bounded in $\CI$.

To describe principal symbols of $\cV$-operators, we define $\rho^l S^m({}^\cV T^*M)$ for $l,m\in\R$ to consist of all smooth functions $p\in\CI(T^*M)$ with the following property: defining the local coordinate representations
\[
  p_\alpha\colon (-2,2)^n\times\R^n\ni (x,\xi) \mapsto p\biggl(x,\sum_{i=1}^n \xi_i\frac{\dd x^i}{\rho_{\alpha,i}}\biggr),
\]
the symbol seminorms $\bar\rho_\alpha^{-l}\sup \la\xi\ra^{m-|\gamma|}|\pa_x^\beta\pa_\xi^\gamma p_\alpha|$ are, for each $\beta,\gamma\in\N_0^n$, uniformly bounded in $\alpha$. We write $\rho^l P^m({}^\cV T^*M)\subset\rho^l S^m({}^\cV T^*M)$ for the subspace of symbols which on each fiber of $T^*M$ are polynomials of degree $m$.

\begin{thm}[Main result, rough and abridged version]
\label{ThmISBRough}
  There exists a well-defined surjective principal symbol map
  \[
    \upsigma_\cV^m \colon \Diff_\cV^m(M) \to P^m({}^\cV T^*M) / \rho P^{m-1}({}^\cV T^*M),
  \]
  with kernel $\rho\Diff_\cV^{m-1}(M)$, similarly for weighted operators. This map is multiplicative in the sense that
  \[
    \upsigma_\cV^{m_1+m_2}(A_1\circ A_2) = \upsigma_\cV^{m_1}(A_1) \cdot \upsigma_\cV^{m_2}(A_2),\qquad A_j\in\Diff_\cV^{m_j}(M),\ j=1,2.
  \]
  Moreover, there exists a space
  \[
    \Psi_\cV^m(M) \supset \Diff_\cV^m(M)
  \]
  of pseudodifferential operators `microlocalizing' $\Diff_\cV^m(M)$, together with a surjective principal symbol map
  \begin{equation}
  \label{EqISBRoughSymb}
    \upsigma_\cV^m \colon \Psi_\cV^m(M) \to S^m({}^\cV T^*M) / \rho S^{m-1}({}^\cV T^*M)
  \end{equation}
  with kernel $\rho\Psi_\cV^{m-1}(M)$, and a quantization map
  \[
    \Op_\cV\colon S^m({}^\cV T^*M)\to\Psi_\cV^m(M)
  \]
  which maps $1$ to the identity operator and which is surjective modulo an appropriate space $\rho^\infty\Psi_\cV^{-\infty}(M)$ of residual operators; similarly for weighted operators and symbols. The space $\Psi_\cV(M)=\bigcup_{m\in\R}\Psi_\cV^m(M)$ is a graded algebra under composition, and the principal symbol map is multiplicative; this generalizes also to $\bigcup_{m,l\in\R}\rho^l\Psi_\cV^m(M)$. The principal symbol of the commutator of two operators is given by the Poisson bracket of their principal symbols.
\end{thm}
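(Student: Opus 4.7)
The plan is to reduce the problem, in each distinguished chart $(U_\alpha,\phi_\alpha)$, to the standard pseudodifferential calculus on $\R^n$ quantized in the rescaled fiber coordinates $\xi_i$ dual to $\rho_{\alpha,i}\pa_{x^i}$, and then patch using the bounded overlap~\usref{ItIBddFinite} together with the compatibility condition~\eqref{EqISB}. Concretely, for $a\in S^m({}^\cV T^*M)$ with local representation $a_\alpha(x,\xi)$, I would define the local quantization
\begin{equation*}
  \Op_\alpha(a)u(x) := (2\pi)^{-n}\int e^{i(x-y)\cdot\zeta}\,a_\alpha(x,\rho_\alpha\zeta)\,\chi(x-y)\,u(y)\,\dd y\,\dd\zeta,
\end{equation*}
where $\rho_\alpha\zeta=(\rho_{\alpha,1}\zeta_1,\ldots,\rho_{\alpha,n}\zeta_n)$ and $\chi\in\CIc(\R^n)$ is a diagonal cutoff. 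After the substitution $\xi_i=\rho_{\alpha,i}\zeta_i$, this is precisely the quantization of $a_\alpha$ in the scaled coordinates; equivalently, in the unscaled $\zeta$-variables it is a uniform family of standard $\Psi^m(\R^n)$-operators (albeit with non-standard symbol estimates encoded by $\rho_\alpha$).

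For a differential operator $P\in\Diff_\cV^m(M)$, written locally as $P|_{U_\alpha}=\sum_{|\gamma|\le m}p_{\alpha\gamma}(x)(\rho_{\alpha,1}\pa_{x^1})^{\gamma_1}\cdots(\rho_{\alpha,n}\pa_{x^n})^{\gamma_n}$ with $p_{\alpha\gamma}$ uniformly bounded in $\CI$, I would set $\upsigma^m_\cV(P)|_{U_\alpha}(x,\xi):=\sum_{|\gamma|=m}p_{\alpha\gamma}(x)(i\xi)^\gamma$. Chart independence follows because, by~\eqref{EqISB}, under $\tau_{\beta\alpha}$ the local frames $\{\rho_{\alpha,i}\pa_i\}$ transform into $\CI$-bounded linear combinations of $\{\rho_{\beta,j}\pa_j\}$, so dually the fiber coordinates $\xi_i$ transform tensorially. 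The crucial refinement---that $\upsigma_\cV^m(P)$ is well-defined modulo $\rho P^{m-1}({}^\cV T^*M)$, not merely $P^{m-1}$---arises from the Leibniz rule: reorderings in a monomial produce commutators $[\rho_{\alpha,i}\pa_i,p]=\rho_{\alpha,i}(\pa_i p)$, and since $p\in\CI_{{\rm uni},\fB}(M)$ has $\pa_i p$ uniformly bounded (only $\cW$-regular, not $\cV$-regular), this commutator is bounded by $\rho_{\alpha,i}\le\bar\rho_\alpha$. Surjectivity and the identification of the kernel with $\rho\Diff_\cV^{m-1}$ are then immediate, by quantizing a prescribed polynomial symbol chart-by-chart and summing against a $\cW$-smooth partition of unity subordinate to $\{\phi_\alpha^{-1}((-1,1)^n)\}$, with locally finite overlap guaranteed by~\usref{ItIBddFinite}.

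I would define $\Psi_\cV^m(M)$ as locally finite sums $\sum_\alpha\Op_\alpha(a_\alpha)$ with $\{a_\alpha\}$ a uniformly bounded family of symbols in $S^m({}^\cV T^*M)$ cut off to $\phi_\alpha(U_\alpha)$, modulo a residual ideal $\rho^\infty\Psi_\cV^{-\infty}(M)$ consisting of operators with Schwartz kernels that are $\CI_{{\rm uni},\fB}$ on $M\times M$ and exhibit arbitrary $\bar\rho$-decay in the scaled sense; weighted classes $\rho^l\Psi_\cV^m$ arise by rescaling symbol seminorms by $\bar\rho_\alpha^{-l}$. The quantization map is $\Op_\cV(a):=\sum_\alpha\Op_\alpha(\chi_\alpha a)$ for a $\cW$-smooth partition $\{\chi_\alpha\}$, which maps $1$ to $\Id$ modulo a residual operator; surjectivity of~\eqref{EqISBRoughSymb} with kernel $\rho\Psi_\cV^{m-1}(M)$ follows from a standard Borel summation in conjunction with the $\rho$-gaining mechanism of the previous paragraph.

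The main obstacle, and the most delicate step, will be composition: proving $A_1\circ A_2\in\rho^{l_1+l_2}\Psi_\cV^{m_1+m_2}(M)$ with principal symbol $\upsigma_\cV^{m_1}(A_1)\cdot\upsigma_\cV^{m_2}(A_2)$ modulo $\rho S^{m_1+m_2-1}$. For two local pieces $\Op_\alpha(a)$ and $\Op_\beta(b)$ with $U_\alpha\cap U_\beta\ne\emptyset$, I would use~\eqref{EqISB} to transfer both quantizations into a common chart---the uniform bounds on $\tau_{\beta\alpha}$ and its $\rho$-scaled derivatives ensure this introduces only controlled errors. The standard asymptotic composition formula on $\R^n$, expressed back in scaled coordinates, then reads $a\#b\sim\sum_{|\gamma|\ge 0}\tfrac{1}{i^{|\gamma|}\gamma!}\pa_\xi^\gamma a\cdot(\rho_\alpha\pa_x)^\gamma b$, and the key point is that each factor $\rho_{\alpha,i}\pa_{x^i}$ falling on a $\cW$-regular coefficient of $b$ yields a gain of $\rho_{\alpha,i}$---so the $k$-th subleading term lies in $\bar\rho^k S^{m_1+m_2-k}$, as required. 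Summability of cross-chart contributions follows from~\usref{ItIBddFinite}, while off-diagonal kernel pieces excised by the diagonal cutoffs are absorbed into $\rho^\infty\Psi_\cV^{-\infty}(M)$. Extracting the top and next-to-top orders of this expansion yields both multiplicativity of $\upsigma_\cV$ and the identification of the principal symbol of $[A_1,A_2]$ with $\tfrac{1}{i}\{\upsigma_\cV^{m_1}(A_1),\upsigma_\cV^{m_2}(A_2)\}$.
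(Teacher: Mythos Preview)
Your overall strategy matches the paper's, but there is a genuine error in the differential-operator case. You define $\upsigma^m_\cV(P)|_{U_\alpha}:=\sum_{|\gamma|=m}p_{\alpha\gamma}(x)(i\xi)^\gamma$, keeping only the top-order terms. With this definition the kernel is all of $\Diff_\cV^{m-1}(M)$, not $\rho\Diff_\cV^{m-1}(M)$: any operator of order $\le m-1$ has vanishing $|\gamma|=m$ coefficients and hence vanishing symbol, with or without an extra $\rho$-factor. The refined principal symbol is supposed to distinguish, say, $1\in\Diff_\cV^1(M)$ from $0$ (when $\inf\rho=0$), which your map cannot. The paper's definition (Theorem~\ref{ThmSBDSymb}) takes the \emph{full} polynomial $\sum_{|\gamma|\le m}p_{\alpha\gamma}(x)\xi^\gamma$ as a representative and only then passes to the quotient $P^m/\rho P^{m-1}$. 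The nontrivial content is that this full symbol transforms under chart changes with an error in $\rho P^{m-1}$: expanding $(\rho_\alpha\pa_{x_\alpha})^\gamma$ in the $\beta$-chart via the chain rule and~\eqref{EqISB}, the lower-order corrections each carry a factor $\bar\rho_\beta$ because they arise from $\rho_{\beta,j}\pa_{x_\beta^j}$ hitting the (uniformly smooth) scaled Jacobian entries $\rho_{\alpha,i}\pa_i\tau_{\beta\alpha}^j/\rho_{\beta,j}$. Your reordering/commutator explanation is the right mechanism for the $\rho$-gain in \emph{composition}, but it is not what produces the chart-independence of the symbol.

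A second, more minor point: your description of the residual class---Schwartz kernels that are $\CI_{{\rm uni},\fB}$ on $M\times M$ with arbitrary $\bar\rho$-decay---is too loose to make the algebra close. Residual operators must be $\cV$-smoothing and $\rho$-weight-improving while \emph{preserving} integer degrees of $\cW$-regularity (so that the calculus acts on the mixed spaces $H^{(s;k)}_{\cV;\cW}$ of Definition~\ref{DefSBHVW}), and the paper (Definition~\ref{DefSBPsdoRes}) defines them through exactly these mapping properties, and those of their adjoints, rather than via Schwartz kernels; the introduction explicitly flags the Schwartz-kernel route as subtle. The rest of your plan for composition and commutators is essentially what the paper does in Theorem~\ref{ThmSBPsdoComp}.
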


See Theorem~\ref{ThmSBDSymb} for the case of differential operators, and Definition~\ref{DefSBPsdoV} (based on Definitions~\ref{DefSBPsdo} and \ref{DefSBPsdoRes}) for the definition of $\Psi_\cV^m(M)$ (and weighted versions $w\Psi_\cV^m(M)$ thereof) and the principal symbol map. The algebra and symbolic properties of (weighted versions of) $\Psi_\cV(M)$ is the content of Theorem~\ref{ThmSBPsdoComp}. We define ${}^\cV T^*M$ as a vector bundle \emph{with distinguished trivializations} in Definition~\ref{DefSBDCotgt}; it is isomorphic to $T^*M$ as a smooth vector bundle (but, for general scalings, not in the category of vector bundles of bounded geometry). Furthermore,
\begin{enumerate}
\item we define associated scales of \emph{weighted $\cV$-Sobolev spaces} (Definition~\ref{DefSBH}) and obtain the boundedness of $\cV$-ps.d.o.s\ between them (Theorem~\ref{ThmSBPsdoH});
\item we prove the Fredholm property of elliptic operators when $\cV$ is \emph{fully symbolic}, which means that $\bar\rho_\alpha$ tends to $0$ as $\alpha$ leaves every finite subset of $\sA$, i.e.\ as $U_\alpha$ leaves every compact subset of $M$ (see Definition~\ref{DefSBTsSymbolic}). This implies that the principal symbol controls the mapping properties of an elliptic $\cV$-ps.d.o.\ modulo compact errors (see also Theorem~\ref{ThmBHRellich} for the relevant Rellich compactness theorem);
\item we introduce the \emph{$\cV$-wave front set} (Definition~\ref{DefSBWF});
\item operators acting on sections of \emph{uniform vector bundles} are discussed in Remarks~\ref{RmkSBDBundles} and~\ref{RmkSBPsdoVB};
\item we define a locally convex topology on $\Psi_\cV^m(M)$ and its weighted versions so that $\Op_\cV$ is continuous, composition of $\cV$-ps.d.o.s is a continuous bilinear operation, and operator norms on weighted Sobolev spaces are bounded by $\Psi_\cV^m(M)$-seminorms; see Remark~\ref{RmkSBPsdoTop}.
\end{enumerate}

\begin{rmk}[Lie algebra properties of $\cV$]
\label{RmkISBLie}
  We have $[\cV,\cV]\subseteq\rho\cV$ (rather than just $[\cV,\cV]\subseteq\cV$), as follows from a local coordinate computation using~\eqref{EqISBV}. Thus, Theorem~\ref{ThmISBRough} justifies, in considerable generality, the heuristic that improved commutation properties of the Lie algebra $\cV$ imply the existence of an improved principal symbol~\eqref{EqISBRoughSymb} for the associated pseudodifferential calculus.
\end{rmk}

Roughly speaking, elements of $\Psi_\cV^m(M)$ are defined as sums over $\alpha\in\sA$ of quantizations in local charts which are of the form $\Op_\alpha(a_\alpha)=a_\alpha(x^1,\ldots,x^n,\rho_{\alpha,1}D_1,\ldots,\rho_{\alpha,n}D_n)$ for symbols $a_\alpha=a_\alpha(x,\xi)$ which are uniformly bounded in standard symbol classes on $\R^n$; that is,
\[
  (\Op_\alpha(a_\alpha)u)(x) = (2\pi)^{-n}\int_{\R^n\times\R^n} \exp\biggl(i\sum_{j=1}^n (x^j-x^{\prime j})\frac{\xi_j}{\rho_{\alpha,j}}\biggr) a_\alpha(x,\xi)u(x')\,\dd\xi\,\frac{\dd x^{\prime 1}}{\rho_{\alpha,1}}\cdots\frac{\dd x^{\prime n}}{\rho_{\alpha,n}}
\]
(omitting cutoffs in $x-x'$). Suitable \emph{residual operators} must be added to capture the off-diagonal behavior of $\cV$-ps.d.o.s; we consider only operators whose Schwartz kernels are supported a finite distance (as measured by the number of unit cells $U_\alpha$ one must traverse to get from $p$ to $q$ when $(p,q)\in M\times M$ lies in the support of the Schwartz kernel) away from the diagonal in $M\times M$. The definition of a workable class of residual operators turns out to be somewhat subtle: we characterize them through their mapping properties ($\cV$-smoothing, weight-improving, but $\cW$-regularity-preserving) rather than their Schwartz kernels.

\begin{rmk}[Bounded geometry ps.d.o.s]
\label{RmkISBBdd}
  The simplest example of a scaled b.g.\ structure is given by a b.g.\ structure together with the scalings $\rho_{\alpha,i}=1$ for all $\alpha,i$. In this case, Theorem~\ref{ThmISBRough} recovers the usual properties of bounded geometry (pseudo)differential operators \cite{ShubinBounded}.
\end{rmk}

\begin{rmk}[Mixed $\cV$-$\cW$-Sobolev spaces]
\label{RmkIMixed}
  It may happen that solutions of a PDE $P u=f$, where $P$ is a $\cV$-differential operator, have stronger regularity than what is immediately accessible using $\cV$-tools, namely regularity with respect to applications of elements of $\cW\supsetneq\cV$. Examples include the regularity at infinity of $(\Delta-\lambda^2)^{-1}f$ on asymptotically Euclidean spaces \cite[\S{12}]{MelroseEuclideanSpectralTheory} or on asymptotically hyperbolic spaces \cite{GrahamZworskiScattering,ZworskiRevisitVasy} (for compactly supported $f$, say), and the regularity of waves on asymptotically de~Sitter \cite{VasyWaveOndS}, anti-de~Sitter \cite{VasyWaveOnAdS}, and asymptotically flat and asymptotically stationary spacetimes \cite{HintzNonstat}. In such instances, it can be of interest to be able to work $\cV$-microlocally but on function spaces which encode additional integer order $\cW$-regularity. Such mixed function spaces $H_{\cV;\cW}^{(s;k)}$ are discussed in the present general setting in Definition~\ref{DefSBHVW}. Theorem~\ref{ThmSBPsdoH} shows that $\cV$-ps.d.o.s define bounded maps between such mixed function spaces (with the $\cW$-regularity order $k$ being the same for the input and output spaces). In the literature, proofs of this statement in special cases required the introduction of an algebra of $\cW$-differential $\cV$-pseudodifferential operators. This is not necessary in our approach. Instead, we check this directly for quantizations in local coordinates; see the proof of Proposition~\ref{PropSBPsdoH}.
\end{rmk}

\begin{rmk}[Microlocalization locus]
\label{RmkISBMicro}
  Defining a suitable notion of elliptic sets and (operator) wave front sets, and more generally the locus $\fM$ of microlocalization which captures where the $\cV$-algebra is commutative to leading order, is rather delicate; for example, if in~\eqref{EqISBRoughSymb} the weight $\rho$ has infimum equal to $0$, then the $\cV$-algebra is commutative to leading order at `$\rho=0$'. While one can easily make sense of this if a compactification of $M$ (or more precisely ${}^\cV T^*M$) to a reasonable space, e.g.\ a manifold with corners, is specified (in which case $\{\rho=0\}$ may be a boundary hypersurface), there typically exist many such compactifications. See~\S\ref{SsSBCpt}. In order to avoid requiring arbitrary choices, we take an abstract approach and work with a universal compactification of the uniform space $M$ called the \emph{uniform} or \emph{Samuel compactification} \cite{SamuelCompactification}, defined as the spectrum of the *-algebra of uniformly continuous and bounded functions on $M$; this maps onto every other compactification by Lemma~\ref{LemmabgUniv}. Then the \emph{compact} set $\fM$ consists of points in the boundary of the uniform compactification of ${}^\cV T^*M$ which lie at fiber infinity or where $\rho=0$; see Definition~\ref{DefSBTsMicro}. The compactness of $\fM$ is conceptually crucial, as local control of a distribution near each point in $\fM$ automatically gives uniform control modulo distributions with higher regularity and more decay; see e.g.\ Proposition~\ref{PropSBWF}\eqref{ItSBWFEmpty}. --- The price to pay for working with a universal compactification is that $\fM$ does not have an explicit description and has very large cardinality. (See also the discussion preceding Proposition~\ref{PropbgAlt}.)
\end{rmk}

In order to capture parameter-dependent settings, we introduce:

\begin{definition}[Parameterized scaled bounded geometry structure]
\label{DefIPSB}
  Let $M$ be a smooth $n$-dimensional manifold, and let $P$ be a set. Then a \emph{parameterized scaled bounded geometry structure} on $(M,P)$ is a collection $\{\fB_{p,\times}\}$ of scaled bounded geometry structures $\fB_{p,\times}=\{(U_{\alpha_p},\phi_{\alpha_p},\rho_{\alpha_p})\colon\alpha_p\in\sA_p\}$ on $M$ for each $p\in P$ so that the quantities $A$ and the $\CI$ bounds on transition functions in Definition~\usref{DefIBdd} as well as the constants $C_\gamma$ in~\eqref{EqISB} are uniform also in $p\in P$. We define \emph{parameterized bounded geometry structures} analogously.
\end{definition}

This includes scaled b.g.\ structures as the special case when $P$ is a singleton set. Furthermore, it includes semiclassical bounded geometry settings when $P=(0,1]_h$ if all $\fB_{h,\times}$ feature the same b.g.\ structure $\{(U_\alpha,\phi_\alpha)\}$, and the scalings are $\rho_{\alpha_h,i}=h$ for $h\in(0,1]$.

The generalization of Theorem~\ref{ThmISBRough} for parameterized scaled b.g.\ structures, discussed in~\S\ref{SsSBPar}, asserts the existence of an algebra $\Psi_\cV(M)$ of $\cV$-pseudodifferential operators with the usual properties (principal symbol calculus, boundedness on weighted Sobolev spaces). The key point is that such operators are \emph{families}
\[
  (A_p)_{p\in P} \in \Psi_\cV(M)
\]
of operators $A_p\in\Psi_{\cV_p}(M)$ which are quantizations of symbols on ${}^{\cV_p}T^*M$ obeying uniform (in the parameter $p\in P$) bounds; here $\cV_p$ is the operator Lie algebra associated with $\fB_{p,\times}$. Similarly, the relevant Sobolev spaces are $p$-dependent function spaces (corresponding to the scaled b.g.\ structure $\fB_{p,\times}$), and the boundedness of ps.d.o.s is understood as \emph{uniform} (in $p$) boundedness. In the semiclassical bounded geometry setting, this recovers to a large extent the definition of Bahuaud--Guenther--Isenberg--Mazzeo \cite[\S{4}]{BahuaudGuentherIsenbergMazzeoBddGeoWP} of semiclassical operators on manifolds with bounded geometry; the main difference is that our operators do not feature any regularity in the parameter $h$.

\medskip

The following further topics are addressed in~\S\ref{SF}:
\begin{enumerate}
\item \S\ref{SsFWg}: operators and spaces corresponding to certain phase space weights, which for example allows us to recover Vasy's second microlocal b-scattering algebra \cite{VasyLAPLag};
\item \S\ref{SsFVar}: variable regularity (and decay, when $\inf_\alpha\bar\rho_\alpha=0$) orders, with important applications in scattering theory on asymptotically conic spaces \cite{MelroseEuclideanSpectralTheory,VasyMinicourse};
\item \S\ref{SsFFT}: the interaction of certain translation-invariant scaled b.g.\ structures and the Fourier transform, leading for instance to generalizations of the Mellin transform on b-Sobolev spaces \cite{MelroseAPS}, \cite[\S{3.1}]{VasyMicroKerrdS} or the Fourier transform on 3b-Sobolev spaces \cite[\S{4.4}]{Hintz3b};
\item \S\ref{SsFComm}: when there exists a subspace of $\cW$ which spans $\cW$ over $\CI_{{\rm uni},\fB}(M)$ and which contains $\cV$ as an ideal, one can sharpen the class of $\cV$-ps.d.o.s slightly. This improvement is important in some applications (see e.g.\ \cite[\S{2.5.4}]{HintzGlueLocII}).
\end{enumerate}

\subsubsection{Operators on Euclidean space revisited}
\label{SssISc2}

Returning to the setting of~\S\ref{SssISc}, we cover $\R^n$ by the set $U_0:=(-4,4)^n$ with $\phi_0(x)=\frac12 x$ and, for $j=1,\ldots,n$ and $k\in\N_0$,
\begin{subequations}
\begin{equation}
\label{EqISc2Sets}
\begin{split}
  U_{j,k,\pm 1} &:= \Bigl\{ x\in\R^n \colon {\pm}x^j\in(2^k,2^{k+2}),\ \Bigl|\frac{x^l}{x^j}\Bigr| \in (-2,2)\ \forall\,l\neq j \Bigr\}, \\
  \phi_{j,k,\pm 1}(x) &:= \Bigl( \pm 2^{-k}x^j,\frac{x^1}{x^j}, \ldots, \wh{\frac{x^j}{x^j}}, \ldots, \frac{x^n}{x^j} \Bigr) \in (1,4)_{X^1} \times (-2,2)^{n-1}_{(X^2,\ldots,X^n)};
\end{split}
\end{equation}
here the hat denotes the omission of a term. If we further map $(1,4)\mapsto(-2,2)$ via $z\mapsto\frac{4 z-10}{3}$ (which we shall not do for notational simplicity), we obtain a b.g.\ structure on $\R^n$, and with the weights
\begin{equation}
\label{EqISc2Scale}
  \rho_{0,i}=1,\quad
  \rho_{(j,k,\pm 1),i}=2^{-k}\qquad (\forall\,i)
\end{equation}
\end{subequations}
a scaled b.g.\ structure. (Note that when the scales $\rho_{\alpha,i}$ are independent of $i$, then condition~\eqref{EqISB} follows directly from Definition~\ref{DefIBdd}\eqref{ItIBddTrans}.) See Figure~\ref{FigISc2}. Note that $\la x\ra\sim 2^k$ on $U_{j,k,\pm 1}$, and thus
\begin{equation}
\label{EqISc2Charts}
  \phi_{j,k,\pm 1}^*(\pa_{X^1}) = \pm 2^k\Bigl( \pa_{x^j} + \sum_{l=2}^n X^l\pa_{x^l}\Bigr),\qquad
  \phi_{j,k,\pm 1}^*(\pa_{X^l}) = x^j\pa_{x^l}\ \ (l\geq 2)
\end{equation}
are equivalent to $\la x\ra\pa_{x^i}$, $1\leq i\leq n$, in that either set of vector fields can be expressed as a linear combination of the other with uniformly bounded smooth coefficients. Thus the coefficient Lie algebra is
\begin{equation}
\label{EqISc2W}
  \cW = \left\{ \sum_{i=1}^n a^i(x)\la x\ra\pa_{x^i} \colon \forall\,\beta,i\ \exists\,C_{\beta,i}\ \text{s.t.}\ |\pa_x^\beta a^i|\leq C_{\beta,i} \la x\ra^{-|\beta|}\ \forall x\in\R^n \right\}.
\end{equation}
Rescaling the vector fields in~\eqref{EqISc2Charts} by $2^{-k}$ gives the coordinate vector fields on $\R^n$ (up to equivalence as above), and thus the operator Lie algebra is
\[
  \cV = \left\{ \sum_{i=1}^n a^i(x)\pa_{x^i} \colon \forall\,\beta,i\ \exists\,C_{\beta,i}\ \text{s.t.}\ |\pa_x^\beta a^i|\leq C_{\beta,i}\la x\ra^{-|\beta|}\ \forall\,x\in\R^n \right\}.
\]
According to Theorem~\ref{ThmISBRough}, the principal symbol of an element of $\Psi_\cV^m(\R^n)$ is an element of $S^m({}^\cV T^*\R^n)/\la x\ra^{-1}S^{m-1}({}^\cV T^*\R^n)$. For the shifted Laplacian $L=\Delta+1$, it is given by $|\xi|^2+1$ (where we write covectors as $\sum_{j=1}^n \xi_j\,\dd x^j$), which is elliptic in this quotient space (unlike that of $\Delta$ or $\Delta-1$).

\begin{figure}[!ht]
\centering
\includegraphics{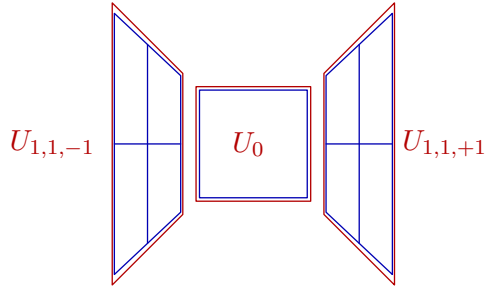}
\caption{Illustration of some unit cells for the b.g.\ structure~\eqref{EqISc2Sets} are drawn in red. The cuboids of side lengths $\rho_{\alpha,i}$ contained in them are drawn in blue. All blue cuboids are comparable in size to a standard unit square, corresponding to the operator Lie algebra being spanned by standard coordinate vector fields.}
\label{FigISc2}
\end{figure}

Finally, the large operator Lie algebra is
\begin{equation}
\label{EqISc2Vp}
  \cV' = \left\{ \sum_{i=1}^n a^i(x)\pa_{x^i} \colon \forall\,\beta,i\ \exists\,C_{\beta,i}\ \text{s.t.}\ |\pa_x^\beta a^i|\leq C_{\beta,i}\ \forall\,x\in\R^n \right\}.
\end{equation}
This is thus the Lie algebra of uniformly bounded vector fields on $\R^n$ corresponding to the b.g.\ structure~\eqref{EqIScBdd}. This explains the sense in which $\Diff_\cV(\R^n)$ comprises all $\cV'$-differential operators with $\cW$-regular coefficients.

\begin{rmk}
\label{RmkISc2Cpt}
  In the language of \cite{MelroseAPS,MelroseEuclideanSpectralTheory}, $\cW$, resp.\ $\cV$ in this example is the space of b-, resp.\ scattering vector fields on the radial compactification $\ol{\R^n}$ with conormal coefficients, i.e.\ $\Vb(\ol{\R^n})\otimes_{\CI(\ol{\R^n})}\cA(\ol{\R^n})$, resp.\ $\Vsc(\ol{\R^n})\otimes_{\CI(\ol{\R^n})}\cA(\ol{\R^n})$; and $\Psi_\cV^m(\R^n)$ is the space of scattering ps.d.o.s of order $m$ with conormal coefficients. (This coincides with the definition of $\Psi^{m,0}(\R^n)$ in \cite{VasyMinicourse}.)
\end{rmk}

\subsubsection{Semiclassical operators revisited}
\label{SssIh2}

Consider again the setting in~\S\ref{SssIh}. For simplicity, we consider $M=\R^n$. We set $P=(0,1]$. For $h\in P$, we define the scaled bounded geometry structure $\fB_{h,\times}$ using the $h$-independent charts $U_j=(-2,2)^n+j$, $j\in\Z^n$, $\phi_j(x)=x-j$, and $j$-independent scalings $\rho_{h,1}=\cdots=\rho_{h,n}=h$. See Figure~\ref{FigIExScl}. Then
\begin{align*}
  \cW &= \left\{ \sum_{i=1}^n a^i(h,x) \pa_{x^i} \colon \forall\,\beta,i\ \exists\,C_{\beta,i}\ \text{s.t.}\ |\pa_x^\beta a^i(h,x)|\leq C_{\beta,i}\ \forall\,h\in(0,1],\ x\in\R^n \right\}, \\
  \cV &= \left\{ \sum_{i=1}^n a^i(h,x) h\pa_{x^i} \colon \forall\,\beta,i\ \exists\,C_{\beta,i}\ \text{s.t.}\ |\pa_x^\beta a^i(h,x)|\leq C_{\beta,i}\ \forall\,h\in(0,1],\ x\in\R^n \right\}.
\end{align*}

\begin{figure}[!ht]
\centering
\includegraphics{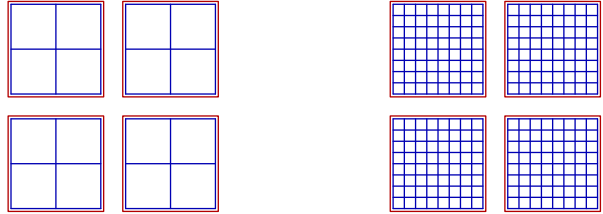}
\caption{Some unit cells for the b.g.\ structure for semiclassical operators with smooth coefficients in red, and cubes of side lengths $h$ in blue. \textit{On the left:} $h=\frac12$. \textit{On the right:} $h=\frac18$.}
\label{FigIExScl}
\end{figure}

\subsubsection{An anisotropic example}
\label{SssIAniso}

We consider asymptotically hyperbolic spaces and wish to encode the conormality of the output of the resolvent on `nice' inputs (cf.\ Remark~\ref{RmkIMixed}). For simplicity, let us work on the upper half space $M=(0,\infty)_x\times\R_y$ (with focus on the region $x<1$) with the b.g.\ structure
\[
  U_{j,k} := (2^{-j-2},2^{-j+2})_x \times (k-2,k+2)_y,\qquad U'_{j,k}:=(j,j+4)_x\times(k-2,k+2)_y
\]
for $j\in\N_0$, $k\in\Z$; we define the scaling
\[
  \rho_{(j,k),1}=1,\ \ \rho_{(j,k),2}=2^{-j},\qquad \rho'_{(j,k),i}=1\ \ (i=1,2).
\]
The coefficient Lie algebra $\cW$ (restricted to $x<1$) thus consists of b-vector fields (linear combinations of $x\pa_x$, $\pa_y$) with conormal coefficients (regularity with respect to b-vector fields), while $\cV$ consists of 0-vector fields \cite{MazzeoMelroseHyp} (linear combinations of $x\pa_x$, $x\pa_y$) with conormal coefficients.

\subsubsection{Further examples}
\label{SssIF}

We demonstrate the versatility of (parameterized) scaled b.g.\ structures by giving a list of examples complementing those discussed in~\S\ref{SsISB}. In these examples, we shall be somewhat imprecise in that we shall only write down the distinguished charts and scalings in the regions of interest in the interiors $M=\bar M^\circ$ of various manifolds with boundaries or corners $\bar M$, rather than providing full b.g.\ structures on the entirety of $M$. For notational clarity, we shall occasionally write $\rho_{\alpha,x}=\rho_{\alpha,i}$ when $x$ is the symbol for the $i$-th local coordinate function.

\begin{enumerate}
\item\label{ItIb} \emph{b-algebra on manifolds with boundary}, introduced in \cite{MelroseTransformation,MelroseMendozaB,MelroseAPS}. Here $\bar M$ is a manifold with boundary; we only consider the local model near a boundary point, so $\bar M=[0,1)_x\times\R^{n-1}_y$. The \emph{b-vector fields} are $x\pa_x$, $\pa_{y^\ell}$, with coefficients which we require to be conormal. Thus, we set
  \begin{equation}
  \label{EqIFb}
    U_{j,k} := (2^{-j-2},2^{-j+2})_x \times (k+(-4,4)^{n-1})_y,\qquad \N\ni j\geq 2,\ k\in\Z^n,
  \end{equation}
  and the scalings are all $1$. (This bounded geometry perspective on the b-algebra is of course not new; see e.g.\ \cite{MazzeoEdge,AmmannLauterNistorLie}.) This admits a straightforward generalization to manifolds with corners, so locally $\bar M=[0,1)^q\times\R^{n-q}$ with unit cells
  \begin{equation}
  \label{EqIFbCorners}
    U_{j_1,\ldots,j_q,k}=(2^{-j_1-2},2^{-j_1+2})_{x^1}\times\cdots\times(2^{-j_q-2},2^{-j_q+2})_{x^q}\times(k+(-4,4)^{n-q})_y,
  \end{equation}
  where $j_1,\ldots,j_q\in\Z$ and $k\in\Z^{n-q}$.
\item\label{ItIsc} \emph{Scattering algebra on manifolds with boundary}, introduced in \cite{MelroseEuclideanSpectralTheory} for geometric scattering theory on generalizations of asymptotically conic manifolds. Here $\bar M$ is a manifold with boundary; we only consider the local model near a boundary point, so $\bar M=[0,1)_x\times\R^{n-1}_y$. The vector fields are $x^2\pa_x$, $x\pa_{y^\ell}$, with coefficients which we require to be conormal. Thus, we use the unit cells~\eqref{EqIFb} with scalings $\rho_{(j,k),i}=2^{-j}$.
\item\label{ItIsch} \emph{Semiclassical scattering algebra} on $\R^n$, used for high energy resolvent estimates in asymptotically Euclidean scattering \cite{VasyZworskiScl}. On $\R^n$ with the b.g.\ structure~\eqref{EqISc2Sets}, we only modify the scaling~\eqref{EqISc2Scale} to depend on the parameter $h\in(0,1]$ via $\rho_{h,0,i}=h$ and $\rho_{h,(j,k,\pm 1),i}=h 2^{-k}$. Principal symbols of semiclassical scattering ps.d.o.s (with coefficients which are uniformly bounded together with all derivatives along $\la x\ra\pa_x$) in $\Psi_\cV^m(\R^n)$ are thus equivalence classes in $S^m/h \la x\ra^{-1}S^{m-1}({}^\cV T^*\R^n)$, similarly for weighted operators. The semiclassical version of the algebra in~\eqref{ItIsc} can be defined similarly.
\item\label{ItIe} \emph{Edge algebra} \cite{MazzeoEdge}, already briefly discussed from the bounded geometry perspective in the proof of \cite[Corollary~(3.23)]{MazzeoEdge}. The local model is $\bar M=[0,1)_x\times\R^{n_Y}_y\times\R^{n_Z}_z$ with the basic \emph{edge vector fields} being $x\pa_x$, $x\pa_{y^\ell}$, $\pa_{z^m}$. If one wishes to work with edge-regular coefficients, one takes as unit cells
  \[
    (2^{-j-2},2^{-j+2})_x\times(2^{-j}(k+(-4,4)^{n_Y}))_y\times(k'+(-4,4)^{n_Z})_z
  \]
  where $k\in\Z^{n_Y}$, $k'\in\Z^{n_Z}$. If one requires edge operators with \emph{b-regular} coefficients, one instead takes $(2^{-j-2},2^{-j+2})_x\times(k''+(-4,4)^{n_Y+n_Z})$, $k''\in\Z^{n_Y+n_Z}$, and the scalings $\rho_{(j,k''),x}=\rho_{(j,k''),z^m}=1$, $\rho_{(j,k''),y^\ell}=2^{-j}$.
\item\label{ItI1cu} \emph{1-cusp algebra}, introduced for inverse problems for geodesic X-ray transforms on asymptotically conic spaces \cite{VasyZachosXray}. On $\bar M=[0,1)_x\times\R^{n-1}_y$, 1-cusp vector fields are spanned by $x^3\pa_x$, $x\pa_{y^\ell}$, with conormal coefficients. This corresponds to using the sets $U_{j,k}$ from~\eqref{EqIFb} and the scalings $\rho_{(j,k),x}=(2^{-j})^2$ and $\rho_{(j,k),y^\ell}=2^{-j}$. Principal symbols of weighted 1-cusp ps.d.o.s in $x^{-\ell}\Psi_\cV^m$ are thus elements $[a]\in x^{-l}S^m/x^{-l+1}S^{m-1}$ (cf.\ \cite[Proposition~1.5]{VasyZachosXray}), where $a\in S^m$ means that $a=a(x,y,\xi_{1\rm c},\eta_{1\rm c})$, with covectors written as $\xi_{1\rm c}\frac{\dd x}{x^3}+\eta_{1\rm c}\frac{\dd y}{x}$, is a symbol of order $m$ in $(\xi_{1\rm c},\eta_{1\rm c})$ with conormal regularity in $(x,y)$. --- The \emph{semiclassical foliation 1-cusp algebra} is a parameterized version of this, with the scaling for the value $h\in(0,1]$ given by $\rho_{h,(j,k),x}=h(2^{-j})^2$, $\rho_{h,(j,k),y^\ell}=h^{\frac12}2^{-j}$; the principal symbol now captures leading order behavior also modulo $\cO(h^{\frac12})$ as $h\searrow 0$.
\item\label{ItIscbt} \emph{Scattering-b-transition algebra}, which first appeared under the name $\Psi_k$ in \cite{GuillarmouHassellResI} with a full calculus, and which under the name $\Psiscbt$ was used as a tool in \cite{HintzKdSMS}, in both cases for the purpose of uniform analysis of resolvents on asymptotically Euclidean or conic spaces. Working near spatial infinity, the local model is $[0,1)_\rho\times\R^{n-1}_\omega$ (where $\rho$ is an inverse radius), and the (spectral) parameter is $\sigma\in[0,1]$; the b.g.\ structure is, independently of $\sigma$, the b-structure $U_{j,k}=(2^{-j-2},2^{-j+2})_\rho\times(k+(-4,4)^{n-1})_\omega$ where $\N\ni j\geq 2$, $k\in\Z^{n-1}$, and the scalings which give rise to the desired scattering-b-transition vector fields $\frac{\rho}{\sigma+\rho}\rho\pa_\rho$, $\frac{\rho}{\sigma+\rho}\pa_{\omega^m}$ are $\rho_{\sigma,(j,k),i}=\frac{2^{-j}}{\sigma+2^{-j}}\sim\frac{\rho}{\sigma+\rho}$ (or equivalently $2^{-j}\sim\rho$ when $2^{-j}\sim\rho\leq\sigma$, and $1$ otherwise). The principal symbol thus captures operators also to leading order as $\frac{\rho}{\sigma+\rho}\searrow 0$, cf.\ \cite[\S{2.4}]{Hintz3b} and \cite[Proposition~2.13]{GuillarmouHassellResI}.
\item\label{ItIhF} \emph{Semiclassical foliation algebra}, utilized for the inversion of ray transforms \cite{VasyXraySemiclassical}. The local model is $M=\R^n=\R_x\times\R^{n-1}_y$ with parameter space $P=(0,1]_h$, unit cells (corresponding to uniform smoothness) $U_{h,j}:=j+(-4,4)^n$, $j\in\Z^n$, and scalings $\rho_{h,j,x}=h$, $\rho_{h,j,y^\ell}=h^{\frac12}$. The operator Lie algebra is thus spanned over the space of uniformly smooth functions by $h\pa_x$, $h^{\frac12}\pa_{y^\ell}$. The principal symbol now also captures the leading order behavior modulo $\cO(h^{\frac12})$ as $h\searrow 0$, recovering the results in \cite[\S2]{VasyXraySemiclassical}. --- The \emph{semiclassical scattering foliation algebra} \cite{VasyXraySemiclassical} is an amalgamation of this with the scattering algebra, so the unit cells are~\eqref{EqIFb} for each $h$, but the scalings are $\rho_{h,(j,k),x}=h 2^{-j}$ and $\rho_{h,(j,k),y^\ell}=h^{\frac12}2^{-j}$.
\item\label{ItI00} \emph{00-algebra}, for sharply localized gluing \cite{HintzGlueID,Mazzeo00} following \cite{ChruscielDelayMapping,DelayCompact}, with 0-regular coefficients. On $\bar M=[0,1)_x\times\R^{n-1}_y$, we take $U_{j,k}=(2^{-j-2},2^{-j+2})_x\times \{|y-2^{-j}k|<2^{-j+2}\}$ for $\N\ni j\geq 2$ and $k\in\Z^{n-1}$ (which is the b.g.\ structure for 0-vector fields $x\pa_x$, $x\pa_{y^\ell}$) and the scalings $\rho_{(j,k),i}=2^{-j}$ for all $i$. The operator Lie algebra is thus spanned by $x^2\pa_x$, $x^2\pa_{y^\ell}$ over the space of functions $a(x,y)$ which are uniformly bounded together with all derivatives along $x\pa_x$, $x\pa_{y^\ell}$. (The reference only considers 00-operators with smooth coefficients on $\bar M$ explicitly; however, the proof of \cite[Proposition~2.7]{HintzGlueID} does use localizers to 0-unit cells.) In addition to the standard principal symbol, there is now also a commutative \emph{boundary principal symbol} capturing leading order behavior as $x\searrow 0$. Both are captured simultaneously by~\eqref{EqISBRoughSymb}.
\item\label{ItIdescb} \emph{Double-edge scattering (desc) algebra}, introduced for the detailed microlocal and asymptotic analysis of massive waves on asymptotically Minkowski spacetimes \cite{SussmanKG}, with \cite{LauterMoroianuDoubleEdge} as a partial precursor. The local model near the intersection of spacelike and null infinity is $\bar M=[0,1)_{\rho_0}\times[0,1)_{x_I}\times\R^{n-2}_\omega$ where $n$ is the spacetime dimension, and desc-vector fields are spanned by $\rho_0^2 x_I\pa_{\rho_0}$, $\rho_0 x_I^2\pa_{x_I}$, $\rho_0 x_I^2\pa_{\omega^\ell}$. Choosing to work with b-regular coefficients, the unit cells are
  \[
    U_{j,k,l} = (2^{-j-2},2^{-j+2})_{\rho_0} \times (2^{-k-2},2^{-k+2})_{x_I} \times (l+(-4,4)^{n-2})
  \]
  where $\N\ni j,k\geq 2$ and $l\in\Z^{n-2}$, with scalings $\rho_{(j,k,l),\rho_0}=\rho_{(j,k,l),x_I}=2^{-j}2^{-k}$ and $\rho_{(j,k,l),\omega^m}=2^{-j}2^{-2 k}$. This gives a fully symbolic algebra, with the principal symbol capturing leading order behavior also at $\rho_0=0$ and $x_I=0$. (Edge-b-regular coefficients would suffice for this purpose.) --- The \emph{edge-b-algebra}, utilized for the analysis of \emph{massless} waves in \cite{HintzVasyScrieb}, can be recovered using the same unit cells but with scalings $\rho_{(j,k,l),\rho_0}=\rho_{(j,k,l),x_I}=1$, $\rho_{(j,k,l),\omega^m}=2^{-k}$. In that paper, regularity with respect to b-vector fields (which in this setup comprise the coefficient Lie algebra $\cW$) is captured relative to microlocal edge-b-regularity (cf.\ the discussion of mixed function spaces in~\S\ref{SssIAniso}).
\item\label{ItI3b} \textit{3b-algebra}, defined in \cite{Hintz3b} and used in \cite{HintzNonstat} in the study of waves on asymptotically stationary and asymptotically flat spacetimes. We only focus on a neighborhood $1\leq r\lesssim t$ of the corner of $\cT\cap\cD$ (in the notation of \cite[Definition~3.1]{Hintz3b}); the vector fields are $r\pa_t$, $r\pa_r$, $\pa_{\omega^m}$ (vector fields on $\Sph^{n-2}$), and we require b-regularity (on the 3b-single space) for the coefficients, which means regularity under $t\pa_t$, $r\pa_r$, $\pa_{\omega^m}$. As unit cells, we thus take
  \begin{equation}
  \label{EqIF3b}
    U_{j,k,l} = (2^{j-2},2^{j+2})_t \times (2^{k-2},2^{k+2})_r \times (l+(-4,4)^{n-2})_\omega,
  \end{equation}
  where $j,k\in\N$ with $1\leq k\leq j$, and $l\in\Z^{n-2}$; and the scalings are $\rho_{(j,k,l),t}=2^{-j+k}$, $\rho_{(j,k,l),r}=\rho_{(j,k,l),\omega^m}=1$. Mixed spaces capturing b-regularity relative to microlocal 3b-regularity are used in \cite[\S\S{2.2.2} and 5]{HintzNonstat}.
\item\label{ItIch} \textit{Semiclassical cone algebra}, defined in \cite{HintzConicPowers} (including a large calculus) and used as a tool in \cite{HintzConicProp} to study the semiclassical propagation of singularities through conic singularities of geometric or analytic (e.g.\ inverse square singularities) character. The local model is $[0,1)_r\times\R^{n-1}_\omega$, with parameter $h\in(0,1]$, and semiclassical cone vector fields $\frac{h}{h+r}r\pa_r$, $\frac{h}{h+r}\pa_{\omega^m}$. We take the coefficients to be uniformly conormal (i.e.\ regularity with respect to $r\pa_r$, $\pa_{\omega^m}$), and correspondingly the unit cells are $U_{j,k}=(2^{-j-2},2^{-j+2})_r\times(k+(-4,4)^{n-1})_\omega$ for $\N\ni j\geq 2$, $k\in\Z^{n-1}$ as in~\eqref{EqIFb}, independently of $h$; the scalings are $\rho_{h,(j,k),i}=\frac{h}{h+2^{-j}}\sim\frac{h}{h+r}$ (or equivalently $1$ when $2^{-j}\sim r\leq h$, and $\frac{h}{2^{-j}}\sim\frac{h}{r}$ otherwise). The principal symbol in this algebra captures operators to leading order also as $\frac{h}{h+r}\searrow 0$; this recovers the corresponding result in \cite[\S{3.2}]{HintzConicProp} (when generalized to variable order settings, cf.\ \S\ref{SsFVar}).
\end{enumerate}

This list is not exhaustive: other algebras which Theorem~\ref{ThmISBRough} covers are, for instance, the \emph{q-} and \emph{Q-algebras} introduced in \cite{HintzXieSdS,HintzGlueID,HintzKdSMS} for certain singular gluing problems, and the closely related \emph{surgery algebra} \cite{McDonaldThesis,MazzeoMelroseSurgery}; the 3-body scattering (and, more generally, $N$-body scattering) algebras introduced by Vasy \cite{VasyThreeBody,VasyManyBody}; and the leC-algebra defined by Sussman \cite{SussmanCoulomb} to study scattering by attractive Coulomb-type potentials at small nonnegative energies.

\begin{rmk}[Relationships of norms]
\label{RmkIFNorms}
  In settings which, in the above descriptions, arise from Lie algebras on manifolds with corners, it often happens that the associated Sobolev norms localize to neighborhoods of suitable boundary hypersurfaces to be equivalent to `simpler' Sobolev norms. For example, the scattering-b-transition Sobolev norm (see~\eqref{ItIscbt}) of a function with support in $\rho\gtrsim\sigma$ is uniformly (as $\sigma\searrow 0$) equivalent to its b-Sobolev norm. This is the content of \cite[Proposition~2.21(2)]{Hintz3b}; and it also follows directly from the bounded geometry perspective since in such regions, the scattering-b-transition unit cells are the same as the b-unit cells. In the same fashion, we recover \cite[Proposition~2.21(1)]{Hintz3b}, \cite[Corollary~3.7]{HintzConicProp} (in the setting~\eqref{ItIch}), and further similar results.
\end{rmk}

There are several important ps.d.o.\ algebras which \emph{cannot} be defined using scaled b.g.\ structures. These include certain second microlocal algebras whose symbols do not lie in the $S_{\rho,\delta}$ symbol classes of H\"ormander \cite[Definition~1.1.1]{HormanderFIO1} with $\rho=1-\delta$ and $\delta\in[0,\frac12)$, for example the semiclassical second microlocal calculus defined in \cite{SjostrandZworskiConvexObstacle} for the study of resonances for scattering by convex obstacles, and the refinement of the cusp calculus defined by Jia \cite{JiaTrapping} to reduce regularity losses at normally hyperbolic trapping. We also do not recover the parabolic calculus used in \cite{GellRedmanGomesHassellSchroedinger} for the Fredholm analysis of the time-dependent Schr\"odinger equation (see also \cite{LascarPropagation}), or calculi on certain Lie groups \cite{BahouriFermanianKammererGallagherHeisenberg}. Except in this final example, one may however entertain the idea of developing modified notions of scaling which guarantee that coordinate transformations across unit cells preserve the relevant local structures (e.g.\ the parabolic scaling in the fibers of the cotangent bundle).

\subsection{Relation with Lie algebras on manifolds with corners}
\label{SsIRel}

Almost all examples given in~\S\ref{SssIF} were introduced from a geometric singular analysis point of view: the starting point is a Lie algebra $\cV$ of vector fields on a manifold with corners $\bar M$ which one wishes to microlocalize.\footnote{Thus, while a central aim of the present paper is to provide general-purpose tools for analysis on singular spaces in the context of Melrose's program \cite{MelroseICM} on geometric microlocal analysis, our implementation is decidedly non-geometric, as it sidesteps manifolds with corners and geometric compactifications altogether.} Following Melrose, spaces of $\cV$-pseudodifferential operators were defined via a description of their Schwartz kernels as distributions on a suitable resolution (iterated blow-up \cite{MelroseDiffOnMwc}) $\bar M^2_\cV$ of $\bar M^2=\bar M\times\bar M$ (the \emph{$\cV$-double space}) which are conormal to the lift $\diag_\cV$ of the diagonal $\diag_{\bar M}\subset\bar M^2$. To show that the space of such operators defines an algebra, one constructs a \emph{$\cV$-triple space} $\bar M^3_\cV$ with the property that the three different projection maps down to $\bar M^2_\cV$ are \emph{b-fibrations}; the composition formula then follows from pullback and pushforward theorems for conormal distributions \cite{MelrosePushfwd}.

Thus far, there is no general purpose method which, given a Lie algebra $\cV$, produces the correct double (let alone triple) spaces; see however \cite{KottkeRochonProducts,MelroseGeneralizedProducts}. In those cases where a double space has been written down, the ps.d.o.\ algebra $\Psi_\cV(M)$ given by Theorem~\ref{ThmISBRough}, \emph{in the case that all scalings are equal to $1$} (or uniformly bounded from below by a positive constant), can be characterized to consist of those operators whose Schwartz kernels have support in a collar neighborhood of $\diag_\cV$; this is sometimes called the \emph{very small $\cV$-calculus/algebra}.

When $\cV$ is a \emph{Lie structure at infinity} \cite{AmmannLauterNistorLieGeometry} on a compact manifold with corners $\bar M$, i.e.\ a Lie subalgebra $\cV\subset\Vb(\bar M)$ whose elements span every tangent space over the interior $M=\bar M^\circ$, then by a result of Bui \cite{BuiLieStructure}, the injectivity radius of associated $\cV$-metrics is positive, and thus $\cV$ induces a bounded geometry structure on $M$. In turn, this structure gives rise to associated algebras of ps.d.o.s \cite{ShubinBounded}. Furthermore, Ammann--Lauter--Nistor \cite{AmmannLauterNistorLie} construct a very small $\cV$-calculus in the absence of a $\cV$-double space by quantizing symbols which are \emph{smooth} down to $\pa\bar M$. (In some sense, the groupoid constructions in \cite{DebordHolonomyGroupoid,CrainicFernandesLieBrackets} used in \cite{BuiLieStructure} do construct a neighborhood of the $\cV$-diagonal of a putative $\cV$-double space.)

Returning again to settings where $\cV$-double spaces are available, consider now the case that the scaling is \emph{nontrivial}, i.e.\ $\inf_\alpha\min_{i=1,\ldots,n}\rho_{\alpha,i}=0$. Then the operators in our algebra $\Psi_\cV(M)$ no longer have support close to $\diag_\cV$, and this is the crucial flexibility which allows for the existence of the refined principal symbol \eqref{EqISBRoughSymb}. For example, in the scattering setting~\eqref{EqISc2Sets}--\eqref{EqISc2Scale}, Schwartz kernels of elements of $\Psi_\cV^m(\R^n)$ are of the form
\begin{equation}
\label{EqIScQuant}
  K_A(x,x') = (2\pi)^{-n}\int e^{i(x-x')\cdot\xi} \chi\Bigl(\frac{|x-x'|}{|x|+|x'|}\Bigr)a(x,\xi)\,\dd\xi\,|\dd x'|
\end{equation}
where $\chi\in\CIc(\R)$ equals $1$ near $0$; thus, they are localized to an asymptotically conic neighborhood of $\{x=x'\}$ rather than to a neighborhood $|x-x'|\leq C$. From the perspective of the scattering double space \cite[\S{21}]{MelroseEuclideanSpectralTheory} of $\ol{\R^n}$, the cutoff $\chi$ is equal to $1$ not only near the scattering diagonal but also in a full neighborhood of the scattering front face. The Fourier transform of Schwartz kernels along the (vector space) fibers of the scattering front face gives rise, geometrically, to the boundary principal symbol of the scattering calculus; thus, it is necessary to localize only weakly, as in~\eqref{EqIScQuant}, in order to capture this boundary principal symbol. --- The semiclassical setting on $\R^n$ is similar: semiclassical ps.d.o.s with uniformly smooth coefficients have Schwartz kernels
\[
  (2\pi)^{-n}\int e^{i(x-x')\cdot\xi/h} \chi(|x-x'|)a(h,x,\xi)\,\dd\xi\,\frac{|\dd x'|}{h^n}
\]
and the Fourier transform of their restriction to the semiclassical front face---which is a bundle of vector spaces, with coordinate $\frac{x-x'}{h}$, over the diagonal $\diag_{\R^n}$ (see \cite[\S{4}]{BahuaudGuentherIsenbergMazzeoBddGeoWP})---gives the semiclassical principal symbol (i.e.\ $a(0,x,\xi)$). (By contrast, bounded geometry ps.d.o.s would feature a localization by $\chi(\frac{|x-x'|}{h})$ which destroys this principal symbol.)

\medskip

We end this section by pointing out the main disadvantages of the (scaled) bounded geometry perspective espoused in the present paper:
\begin{enumerate}
\item generalized inverses of elliptic $\cV$-operators typically do not exist in $\Psi_\cV$ (except in fully symbolic settings);
\item we do not have access to normal operators (i.e.\ restriction maps from $\Psi_\cV$ to noncommutative algebras of operators on boundary hypersurfaces) when microlocalizing Lie algebras $\cV$ on manifolds with corners $\bar M$, since we are at best able to keep track of conormal (but not smooth) regularity of coefficients/symbols down to the boundary hypersurfaces of $\bar M$.
\end{enumerate}
It appears that the full power of the geometric microlocal approach is required when one needs either of these, as the definition of a \emph{large} $\cV$-calculus (whose Schwartz kernels may have nontrivial behavior also on those boundary hypersurfaces of the $\cV$-double space which do not intersect the $\cV$-diagonal) is unavoidable. Recent examples include \cite{GuillarmouHassellResI,GuillarmouHassellResII,GuillarmouHassellSikoraResIII,GuillarmouSherConicLow,AlbinGellRedmanDirac,GrieserTalebiVertmanPhiLowEnergy,FritzschGrieserSchroheCalderon,NuetziDeRham}. On the other hand, in the applications of the calculi in~\S\ref{SssIF} to non-elliptic problems, not only the first but also the second disadvantage is irrelevant: differential operators of interest (which one studies using microlocal \emph{tools}) which have sufficiently regular coefficients on suitable manifolds with corners \emph{do} have well-defined normal operators---which in turn can be studied using scaled b.g.\ ps.d.o.s on the respective boundary hypersurfaces.

\subsection*{Acknowledgments}

I would like to thank Jeff Galkowski for helpful comments which in particular motivated parts of~\S\ref{SsbgCpt}. (For reasons of space, I unfortunately could not follow his suggestion to treat H\"ormander--Weyl calculi from a similar perspective in this paper.) Thanks are also due to Pierre Albin, Daniel Grieser, Ethan Sussman, and Andr\'as Vasy for useful conversations, comments, and suggestions.

\section{Aspects of bounded geometry structures}
\label{Sbg}

We consider a bounded geometry structure $\fB=\{(U_\alpha,\phi_\alpha)\colon\alpha\in\sA\}$ on the manifold $M$.
\begin{enumerate}
\item In~\S\ref{SsbgComp}, we explain in what sense spaces of uniformly smooth functions or vector fields determine $\fB$ (see Proposition~\ref{PropbgCompUniq});
\item in~\S\ref{SsbgDist}, we discuss metrics on $(M,\fB)$;
\item in~\S\ref{SsbgP}, we prove results regarding partitions and refinements of $(M,\fB)$;
\item in~\S\ref{SsbgCpt} finally, we discuss analytic notions related to the \emph{uniform compactification} $\fu M$ of $M$, as motivated in Remark~\ref{RmkISBMicro}.
\end{enumerate}

\subsection{Compatibility and uniqueness}
\label{SsbgComp}

The space $\CI_{{\rm uni},\fB}(M;T M)$ of uniformly bounded vector fields is a module over $\CI_{{\rm uni},\fB}(M)$. It is also a Lie algebra since for two vector fields on $(-2,2)^n$, the $\cC^k$-norms of the coefficients of their commutator are bounded by the product of their $\cC^{k+1}$-norms. We shall explain in which sense $\CI_{{\rm uni},\fB}(M)$ and $\CI_{{\rm uni},\fB}(M;T M)$ (individually) determine $\fB$.

\begin{definition}[Compatibility of b.g.\ structures]
\label{DefbgComp}
  Consider two b.g.\ structures $\fB=\{(U_\alpha,\phi_\alpha)\colon\alpha\in\sA\}$ and $\tilde\fB=\{(\tilde U_{\tilde\alpha},\tilde\phi_{\tilde\alpha})\colon\tilde\alpha\in\tilde\sA\}$. Set $U'_\alpha:=\phi_\alpha^{-1}((-\frac32,\frac32)^n)$ and $\tilde U'_{\tilde\alpha}:=\tilde\phi_{\tilde\alpha}^{-1}((-\frac32,\frac32)^n)$.
  \begin{enumerate}
  \item\label{ItbgCompCoarser} $\fB$ is \emph{coarser} than $\tilde\fB$ (denoted $\fB\geq\tilde\fB$) if the maps
    \begin{equation}
    \label{EqbgComp1}
      \phi_\alpha\circ\tilde\phi_{\tilde\alpha}^{-1} \colon \tilde\phi_{\tilde\alpha}(U'_\alpha\cap\tilde U'_{\tilde\alpha}) \to \phi_\alpha(U'_\alpha\cap\tilde U'_{\tilde\alpha})
    \end{equation}
    are uniformly (in $\alpha\in\sA$, $\tilde\alpha\in\tilde\sA$) bounded with all derivatives.
  \item $\fB$ and $\tilde\fB$ are \emph{compatible} if $\fB\geq\tilde\fB$ and $\tilde\fB\geq\fB$.
  \item $\fB$ is \emph{strongly coarser} than $\tilde\fB$ if the maps~\eqref{EqbgComp1} with $U_\alpha,\tilde U_{\tilde\alpha}$ in place of $U'_\alpha$, $\tilde U'_{\tilde\alpha}$ are uniformly bounded with all derivatives. Similarly, $\fB$ is \emph{strongly compatible} with $\tilde\fB$ if $\fB$ is strongly coarser than $\tilde\fB$ and vice versa.
  \end{enumerate}
\end{definition}

Thus, $\fB\geq\tilde\fB$ implies $\CI_{{\rm uni},\fB}(M)\subset\CI_{{\rm uni},\tilde\fB}(M)$. Roughly speaking, $\fB\geq\tilde\fB$ means that the unit cells of $\fB$ are larger than those of $\tilde\fB$.

The shrinkage to $(-\tfrac32,\tfrac32)^n$ is inconsequential in the following sense. Fix a diffeomorphism $\Psi\colon(-\tfrac32,\tfrac32)^n\to(-2,2)^n$ which is the identity on $[-1,1]^n$. Then:
\begin{enumerate}
\item Define $\phi'_\alpha:=\Psi\circ\phi_\alpha|_{U'_\alpha}$; then $\fB':=\{(U'_\alpha,\phi'_\alpha)\}$ and $\fB$ are strongly compatible (and thus give rise to the same notions of uniformly bounded smooth functions etc.).
\item Suppose $\fB,\tilde\fB$ are compatible. Define $\tilde\phi'_{\tilde\alpha}$ similarly to $\phi'_\alpha$ and set $\tilde\fB':=\{(\tilde U'_{\tilde\alpha},\tilde\phi'_{\tilde\alpha})\}$. Then the b.g.\ structures $\fB'$ and $\tilde\fB'$ are strongly compatible.
\end{enumerate}

\begin{prop}[Uniqueness of b.g.\ structures]
\label{PropbgCompUniq}
  Let $\fB$ and $\tilde\fB$ be two b.g.\ structures on $M$. Then $\fB$ and $\tilde\fB$ are compatible if and only if either of the following conditions holds:
  \begin{enumerate}
  \item\label{ItbgCompUniqFn} $\CI_{{\rm uni},\fB}(M)=\CI_{{\rm uni},\tilde\fB}(M)$;
  \item\label{ItbgCompUniqVF} $\CI_{{\rm uni},\fB}(M;T M)=\CI_{{\rm uni},\tilde\fB}(M;T M)$.
  \end{enumerate}
\end{prop}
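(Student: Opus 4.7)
I would prove the forward direction by a chain rule computation after passing to the shrunken structures $\fB', \tilde\fB'$ discussed right after Definition~\ref{DefbgComp}, and both converse directions by testing against cut-off coordinate functions (respectively coordinate vector fields) and invoking the closed graph theorem for Fréchet spaces.

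\textbf{Only if.} Compatibility of $\fB, \tilde\fB$ yields strong compatibility of $\fB', \tilde\fB'$, and both $\fB'$/$\fB$ and $\tilde\fB'$/$\tilde\fB$ are strongly compatible, so it suffices to show that strong compatibility implies equality of uniform smooth function spaces (and uniform vector fields). Given $f \in \CI_{{\rm uni},\fB}(M)$ and $y \in (-2,2)^n$, pick $\alpha$ with $\tilde\phi_{\tilde\beta}^{-1}(y) \in \phi_\alpha^{-1}((-1,1)^n)$ by the covering axiom, then apply the chain rule to $f\circ\tilde\phi_{\tilde\beta}^{-1} = (f\circ\phi_\alpha^{-1}) \circ (\phi_\alpha\circ\tilde\phi_{\tilde\beta}^{-1})$ locally; the uniform $\CI$-bound on the transition maps (from strong compatibility) and on $f\circ\phi_\alpha^{-1}$ (from $f \in \CI_{{\rm uni},\fB}(M)$) give the desired uniform $\CI$-bound on $f\circ\tilde\phi_{\tilde\beta}^{-1}$, with constants independent of $\tilde\beta$. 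The argument for vector fields is analogous, with the pushforward computed via the derivative of the transition map.

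\textbf{If, function version.} Fix $\chi \in \CIc((-2,2)^n)$ equal to $1$ on $[-\tfrac32,\tfrac32]^n$ and define $f_\alpha^j := \phi_\alpha^*(\chi \cdot x^j)$, extended by zero. By the uniform $\CI$-bounds on the transition maps of $\fB$, the family $\{f_\alpha^j : \alpha\in\sA,\ 1\leq j\leq n\}$ is bounded in every seminorm of the Fréchet space $\CI_{{\rm uni},\fB}(M)$. Both $\CI_{{\rm uni},\fB}(M)$ and $\CI_{{\rm uni},\tilde\fB}(M)$ are Fréchet spaces whose topologies are finer than pointwise convergence; hence, if they coincide as sets, the closed graph theorem forces the identity between them to be continuous in both directions. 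Thus $\{f_\alpha^j\}$ is also bounded in every seminorm of $\CI_{{\rm uni},\tilde\fB}(M)$. Since $f_\alpha^j|_{U'_\alpha} = (\phi_\alpha)^j|_{U'_\alpha}$, this uniform bound on $f_\alpha^j \circ \tilde\phi_{\tilde\beta}^{-1}$ on $(-2,2)^n$ in particular bounds the $j$-th component of $\phi_\alpha\circ\tilde\phi_{\tilde\beta}^{-1}$ in $\CI$ on $\tilde\phi_{\tilde\beta}(U'_\alpha\cap\tilde U'_{\tilde\beta})$, uniformly in $\alpha,\tilde\beta,j$; by symmetry we get compatibility.

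\textbf{If, vector field version.} Set $V_\alpha^j := \phi_\alpha^*(\chi\,\pa_j)$, extended by zero; this family is uniformly bounded in $\CI_{{\rm uni},\fB}(M;TM)$. The same closed graph argument (applied to Fréchet spaces of vector fields, using that uniform convergence with all derivatives in any chart refines pointwise convergence of the components) yields uniform boundedness in $\CI_{{\rm uni},\tilde\fB}(M;TM)$. Computing the pushforward $(\tilde\phi_{\tilde\beta})_* V_\alpha^j$ at points in $\tilde\phi_{\tilde\beta}(U'_\alpha\cap\tilde U'_{\tilde\beta})$ yields the first coordinate derivatives $\pa_j (\tilde\phi_{\tilde\beta}\circ\phi_\alpha^{-1})^i$; combining with the automatic $\cC^0$-bound on $\tilde\phi_{\tilde\beta}\circ\phi_\alpha^{-1}$ (since its range lies in $(-2,2)^n$) gives uniform $\CI$-bounds on the transition maps in the direction $\tilde\fB \geq \fB$. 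The reverse direction is symmetric.

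\textbf{Main obstacle.} The main subtlety is the interplay between the $\CI_{{\rm uni}}$ spaces, which are defined via seminorms on the full charts $(-2,2)^n$, and the compatibility condition, which is formulated on the shrunken sets $U'_\alpha$. The cutoff $\chi$ is designed precisely to make the test functions behave as the coordinates on $U'_\alpha$ while remaining globally defined elements of $\CI_{{\rm uni},\fB}(M)$. A secondary (routine) subtlety is verifying the hypotheses of the closed graph theorem, which amounts to checking that both Fréchet topologies are continuous with respect to pointwise convergence, for which the covering axiom~\eqref{ItIBddFinite} is convenient but not strictly necessary.
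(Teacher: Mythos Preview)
Your proof is correct and follows essentially the same route as the paper: the closed-graph-plus-test-objects argument for the converse directions is exactly what the paper does. For condition~(2), the paper mentions your direct approach via $\phi_\alpha^*(\chi\pa_i)$ but then favors an alternative reduction: it shows (2)~$\Rightarrow$~(1) by proving the identity $\CI_{{\rm uni},\fB}(M)=\{u\in\CI(M):uV\in\CI_{{\rm uni},\fB}(M;TM)\ \forall\,V\in\CI_{{\rm uni},\fB}(M;TM)\}$, thereby reducing to the function case. Your direct argument is fine, but note that the final step hides a short bootstrap: the pushforward components in $\tilde y$-coordinates are $g_i^j(\tilde y)=(\pa_j\sigma^i)\circ\sigma^{-1}(\tilde y)$ with $\sigma=\tilde\phi_{\tilde\beta}\circ\phi_\alpha^{-1}$, so the $\cC^0$ bound on $g$ gives only a $\cC^1$ bound on $\sigma$; higher $\cC^k$ bounds on $\sigma$ then follow inductively by differentiating $\pa_j\sigma^i(x)=g_i^j(\sigma(x))$ and invoking the already-established lower-order bounds.
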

\begin{proof}
  Fix $\chi\in\CI((-\tfrac32,\tfrac32)^n)$ to be equal to $1$ on $[-1,1]^n$. Since a smooth function $f$, resp.\ vector field $W$ on $M$ lies in $\CI_{{\rm uni},\fB}(M)$, resp.\ $\CI_{{\rm uni},\fB}(M;T M)$ if and only if $\chi\cdot(\phi_\alpha)_*f$, resp.\ $\chi\cdot(\phi_\alpha)_*W$ is uniformly bounded, resp.\ has uniformly bounded coefficients in $\CI((-2,2)^n)$, the compatibility of $\fB$ and $\tilde\fB$ implies both~\eqref{ItbgCompUniqFn} and \eqref{ItbgCompUniqVF}.

  We first prove that condition~\eqref{ItbgCompUniqFn} implies the compatibility of $\fB$ and $\tilde\fB$. We need to prove uniform $\CI$-bounds on $(-\tfrac32,\tfrac32)^n\ni p\mapsto(\phi_\alpha\circ\tilde\phi_{\tilde\alpha}^{-1})^i(p)$ for all $i,\alpha,\tilde\alpha$. Now, if $\chi\in\CI((-2,2)^n)$ equals $1$ on $[-\tfrac32,\tfrac32]^n$, then the family of functions $f_{\alpha,i}:=\phi_\alpha^*(\chi x^i)$, where $\alpha\in\sA$ and $i=1,\ldots,n$, is a bounded subset of $\CI_{{\rm uni},\fB}(M)$. Since $\CI_{{\rm uni},\fB}(M)$ and $\CI_{{\rm uni},\tilde\fB}(M)$ are Fr\'echet spaces, the identity map $\CI_{{\rm uni},\fB}(M)\to\CI_{{\rm uni},\tilde\fB}(M)$ is continuous by the closed graph theorem; therefore, $\{f_{\alpha,i}\}$ is uniformly bounded in $\CI_{{\rm uni},\tilde\fB}(M)$. This now implies the uniform boundedness in $\CI$ of $(\tilde\phi_{\tilde\alpha}^{-1})^*f_{\alpha,i}=(\phi_\alpha\circ\tilde\phi_{\tilde\alpha}^{-1})^*(\chi x^i)$ and thus of the function $(\phi_\alpha\circ\tilde\phi_{\tilde\alpha}^{-1})^i$ restricted to $\tilde\phi_{\tilde\alpha}(\phi_\alpha^{-1}((-\frac32,\frac32)^n)\cap\tilde U_{\tilde\alpha})$ since $\chi=1$ on the image of this set under $\phi_\alpha\circ\tilde\phi_{\tilde\alpha}^{-1}$. This implies the uniform boundedness of the maps~\eqref{EqbgComp1}, so $\fB\geq\tilde\fB$. The other direction $\tilde\fB\geq\fB$ is proved by reversing the roles of $\fB$ and $\tilde\fB$.

  That condition~\eqref{ItbgCompUniqVF} implies the compatibility of $\fB$ and $\tilde\fB$ can be proved in a similar manner, now considering the uniformly bounded family of vector fields $\phi_\alpha^*(\chi\pa_i)$. Alternatively, one can first show that~\eqref{ItbgCompUniqVF} implies \eqref{ItbgCompUniqFn} and thus reduce to a case already treated. To this end, we claim that
  \begin{equation}
  \label{EqbgCompUniqCI}
    \{ u\in\CI(M) \colon u V\in\CI_{{\rm uni},\fB}(M;T M)\ \forall\,V\in\CI_{{\rm uni},\fB}(M;T M) \} = \CI_{{\rm uni},\fB}(M).
  \end{equation}
  (This allows one to recover $\CI_{{\rm uni},\fB}(M)$ from $\CI_{{\rm uni},\fB}(M;T M)$ and the smooth manifold structure on $M$.) The inclusion `$\supseteq$' in~\eqref{EqbgCompUniqCI} is clear. For the converse, suppose we are given a smooth function $u\notin\CI_{{\rm uni},\fB}(M)$. Then there exists a sequence $\alpha_i$ in $\sA$ so that $(\phi_{\alpha_i}^{-1})^*u$ is unbounded in $\CI((-\frac32,\frac32)^n)$ and thus in $\cC^k$ for some $k$. If one element of $\sA$ appears infinitely often, pass to the corresponding constant subsequence of $\{\alpha_i\}$; otherwise, by Definition~\ref{DefIBdd}\eqref{ItIBddFinite}, we may pass to a subsequence so that the sets $U_{\alpha_i}$ are pairwise disjoint. Upon passing to a further subsequence, there exist a multiindex $\beta\in\N_0^n$, $|\beta|\leq k$, and a sequence of points $x_i\in(-\frac32,\frac32)^n$ converging to some limit $\bar x\in[-\frac32,\frac32]^n$ so that $|(\pa^\beta(\phi_{\alpha_i}^{-1})^*u)(x_i)|\to\infty$. Fix now $\chi\in\CIc((-2,2)^n)$ to be equal to $1$ near $\bar x$. If $\alpha_i=\alpha$ is constant, set $V=\phi_\alpha^*(\chi\pa_1)$, and otherwise set $V=\sum_i\phi_{\alpha_i}^*(\chi\pa_1)$; in both cases, $V\in\CI_{{\rm uni},\fB}(M;T M)$. Then in the charts $\phi_{\alpha_i}$, the first component of the vector field $u V$ has $\pa^\beta$-derivative at $x_i$ tending to infinity; so $u V\notin\CI_{{\rm uni},\fB}(M;T M)$, finishing the proof.
\end{proof}

\subsection{Compatible metrics}
\label{SsbgDist}

While one can define distance functions on $M$ compatible with the bounded geometry structure $\fB$ by taking the Riemannian distance function for a Riemannian metric as in Remark~\ref{RmkIBddMet}, we follow here a more pedestrian route (in line with our insistence that Riemannian structures should not be regarded as central objects for our present purely analytic purposes) to prove the following result:

\begin{prop}[Compatible metrics]
\label{PropbgDist}
  There exists a metric $d\colon M\times M\to[0,\infty)$ which is \emph{compatible} with $\fB$ in the following sense: there exist constants $\delta_0\in(0,1)$ and $C\geq 1$ so that
  \begin{enumerate}
  \item\label{ItbgDistNear} for all $\alpha\in\sA$ and $p,q\in U'_\alpha:=\phi_\alpha^{-1}((-\frac32,\frac32)^n)$, we have
    \begin{equation}
    \label{EqbgDistNear}
      C^{-1} d(p,q) \leq |\phi_\alpha(p)-\phi_\alpha(q)| \leq C d(p,q);
    \end{equation}
  \item\label{ItbgDistFar} for $p,q\in M$, let $D(p,q)$ denote the smallest number $D\in\N_0$ for which there exist $\alpha_0,\alpha_1,\ldots,\alpha_D\in\sA$ with $p\in U_{\alpha_0}$, further $U_{\alpha_i}\cap U_{\alpha_{i+1}}\neq\emptyset$ for $i=0,\ldots,D-1$, and finally $q\in U_{\alpha_D}$; then
    \begin{equation}
    \label{EqbgDistFar}
      \delta_0 D(p,q) \leq d(p,q) \leq \delta_0^{-1}(D(p,q)+1).
    \end{equation}
    The same remains true, for a different constant $\delta_0>0$, if we use the smaller sets $U'_\alpha$ in the definition of $D(p,q)$.
  \end{enumerate}
  The same metric $d$ is also compatible with any other b.g.\ structure $\fB'$ that is strongly compatible with $\fB$.
\end{prop}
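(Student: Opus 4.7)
I construct $d$ as the Riemannian distance for a partition-of-unity metric $g$ compatible with $\fB$, as outlined in Remark~\ref{RmkIBddMet}. Fix $\chi\in\CIc((-2,2)^n)$ with $\chi\geq 0$ and $\chi\equiv 1$ on $[-\tfrac32,\tfrac32]^n$, and set $g:=\sum_{\alpha\in\sA}\phi_\alpha^*(\chi\,\dd x^2)$. This is a smooth Riemannian metric on $M$: the sum is locally finite by the multiplicity bound in Definition~\ref{DefIBdd}\eqref{ItIBddFinite}, and it is positive definite by the covering condition~(2). I then take $d(p,q)$ to be the infimum of $g$-lengths of piecewise smooth curves from $p$ to $q$, declared to be $+\infty$ between distinct path components (consistent with~\eqref{EqbgDistFar}, in which $D(p,q)$ is $\infty$ on such pairs). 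The workhorse for everything else is the uniform pointwise comparison
\[
  c\,\phi_\alpha^*(\dd x^2)\leq g\leq C\,\phi_\alpha^*(\dd x^2)\qquad\text{on}\ U_\alpha,
\]
for constants $c,C>0$ independent of $\alpha\in\sA$. The upper bound is immediate from Definition~\ref{DefIBdd}\eqref{ItIBddTrans} (each pulled-back term has uniformly bounded Euclidean coefficients in the chart $\phi_\alpha$) combined with the multiplicity bound (only uniformly many terms are nonzero in a given chart). For the lower bound, any $q\in U_\alpha$ lies in some $\phi_\beta^{-1}((-1,1)^n)$ by~(2), so near $q$ the $\beta$-summand is exactly $\phi_\beta^*(\dd x^2)$; this dominates a uniform multiple of $\phi_\alpha^*(\dd x^2)$ since the Jacobians of $\tau_{\beta\alpha}$ and $\tau_{\alpha\beta}$ are both uniformly bounded in $C^0$.

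For property~\eqref{ItbgDistNear}, the upper bound comes from the Euclidean segment from $\phi_\alpha(p)$ to $\phi_\alpha(q)$, which remains in the convex set $(-\tfrac32,\tfrac32)^n$ and thus pulls back to a curve in $U'_\alpha$ whose $g$-length is at most $\sqrt C\,|\phi_\alpha(p)-\phi_\alpha(q)|$. For the lower bound, any curve from $p$ to $q$ either stays in $U_\alpha$, in which case its $g$-length is at least $\sqrt c\,|\phi_\alpha(p)-\phi_\alpha(q)|$ by the pointwise inequality, or it exits $U_\alpha$ and must traverse the annular corridor between $(-\tfrac32,\tfrac32)^n$ and $(-2,2)^n$, giving $g$-length at least $\sqrt c/2$. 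Since $|\phi_\alpha(p)-\phi_\alpha(q)|\leq 3\sqrt n$ is uniformly bounded on $U'_\alpha$, both cases yield the required lower bound on $d(p,q)$.

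The main obstacle is the lower bound in~\eqref{EqbgDistFar}; for this I prove a \emph{short-curve lemma}: there exists $\epsilon_0>0$ such that $d(p,p')<\epsilon_0$ implies that $p$ and $p'$ share a common unit cell. Indeed, pick $\beta$ with $p\in\phi_\beta^{-1}((-1,1)^n)$; in the $\phi_\beta$-chart, a curve from $p$ that reaches $\pa U_\beta$ must traverse Euclidean distance at least $1$, hence has $g$-length at least $\sqrt c$ by the pointwise lower bound, so $\epsilon_0:=\sqrt c$ works. Given the lemma, arclength-parameterize a nearly-minimizing curve from $p$ to $q$ of length $L$ and cut it into $N:=\lceil L/\epsilon_0\rceil$ pieces of length $<\epsilon_0$ each, with endpoints $p=q_0,q_1,\ldots,q_N=q$. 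Each pair $\{q_{i-1},q_i\}$ lies in some $U_{\beta_i}$, and the chain $\beta_1,\ldots,\beta_N$ is valid since $q_i\in U_{\beta_i}\cap U_{\beta_{i+1}}$, exhibiting $D(p,q)\leq N-1\leq L/\epsilon_0$; letting $L\searrow d(p,q)$ gives $\epsilon_0 D(p,q)\leq d(p,q)$. The upper bound in~\eqref{EqbgDistFar} is the converse: in a realizing chain $\alpha_0,\ldots,\alpha_D$ for $D=D(p,q)$, pick overlap points $r_i\in U_{\alpha_{i-1}}\cap U_{\alpha_i}$ ($1\leq i\leq D$), set $r_0:=p$, $r_{D+1}:=q$, and sum the in-cell upper bounds from~\eqref{ItbgDistNear} on each pair $r_i,r_{i+1}\in U_{\alpha_i}$; each term is bounded by $\sqrt C\cdot 4\sqrt n$, yielding $d(p,q)\leq C'(D+1)$. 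For the variant of~\eqref{EqbgDistFar} using $U'_\alpha$, the same argument with the stricter threshold $\epsilon_0':=\sqrt c/2$ (which forces the short curves to stay inside $\phi_\beta^{-1}((-\tfrac32,\tfrac32)^n)=U'_\beta$) yields the claim.

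The final assertion about a strongly compatible $\fB'$ is then immediate: strong compatibility gives uniform $C^\infty$-comparability of $\tilde\phi_{\tilde\alpha}\circ\phi_\alpha^{-1}$ on all of $\phi_\alpha(U_\alpha\cap\tilde U_{\tilde\alpha})$, so the Euclidean chart distances and the chain-counts $D_\fB,D_{\fB'}$ are related by universal multiplicative constants; hence both properties~\eqref{ItbgDistNear} and~\eqref{ItbgDistFar} for $d$ transfer from $\fB$ to $\fB'$.
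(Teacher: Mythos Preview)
Your proof is correct and follows precisely the Riemannian route that the paper acknowledges (in the paragraph introducing \S\ref{SsbgDist}) but deliberately avoids. The paper instead defines $d$ directly as the infimum, over piecewise paths $\gamma$ together with a partition and chart assignment, of the sum of Euclidean lengths of the pieces in their respective charts; there is no smooth metric $g$. The subsequent arguments are structurally parallel to yours: part~\eqref{ItbgDistNear} is proved by the same straight-segment upper bound and annular-exit lower bound, and the lower bound in~\eqref{EqbgDistFar} is also obtained by subdividing a nearly-minimizing path into short pieces that each fit in a single cell and then counting (the paper's bookkeeping is a bit more elaborate, grouping consecutive short segments). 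For the final claim about strongly compatible $\fB'$, the paper uses a cleaner trick than your direct comparison: it forms the union $\tilde\fB=\fB\cup\fB'$, which is again a b.g.\ structure, and observes that both $d$ and the analogously constructed $d'$ are uniformly equivalent to the metric $\tilde d$ coming from $\tilde\fB$; your argument that $D_\fB$ and $D_{\fB'}$ are directly comparable is correct in outcome but would benefit from this intermediary, since two points in a single $\tilde U'_{\tilde\alpha}$ need not lie in a common $U'_\alpha$.

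Your approach buys conceptual economy (standard Riemannian distance theory, a single pointwise comparison $c\,\phi_\alpha^*(\dd x^2)\le g\le C\,\phi_\alpha^*(\dd x^2)$ does all the work); the paper's buys alignment with its stated philosophy that Riemannian structures are incidental, and makes the chart-counting interpretation of $d$ slightly more transparent.
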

\begin{proof}
  For $p,q\in M$, define the set $\sC_{p,q}$ of all triples $(\gamma,T,I)$ of piecewise smooth curves $\gamma\colon[0,1]\to M$ with $\gamma(0)=p$, $\gamma(1)=q$, and tuples $T=(t_0,\ldots,t_k)$, $I=(\alpha_0,\ldots,\alpha_{k-1})$ where $0=t_0<t_1<\ldots<t_k=1$, and $\gamma|_{[t_i,t_{i+1}]}$ is smooth for $i=0,\ldots,k-1$ and contained in $U_{\alpha_i}$. We then set
  \begin{equation}
  \label{EqbgDistLength}
    L(\gamma,T,I) := \sum_{i=0}^{k-1} L(\phi_{\alpha_i}\circ\gamma|_{[t_i,t_{i+1}]}),\qquad
    d(p,q) := \inf_{(\gamma,T,I)\in\sC_{p,q}} L(\gamma,T,I);
  \end{equation}
  here, for a curve $\beta\colon[a,b]\to(-2,2)^n$, we write $L(\beta)=\int_a^b |\beta'(t)|\,\dd t$ for its Euclidean length. The set $\sC_{p,q}$ is non-empty since the sets $U_\alpha$ cover $M$.

  In the subsequent arguments, $c,c'$ and $C$ denote constants which may vary from line to line but which are uniform for all points $p,q$ and all charts. The lower bound in~\eqref{EqbgDistNear} follows (for $C=1$) by taking $\phi_\alpha\circ\gamma$ to be the affine linear curve from $\phi_\alpha(p)$ to $\phi_\alpha(q)$. For the upper bound, consider any $(\gamma,T,I)\in\sC_{p,q}$. Suppose that $\gamma([0,1])\subset U_\alpha$. In view of the uniform bounds on the $\cC^1$-norms of the transition functions between $U_\alpha$ and $U_\beta$ with $U_\alpha\cap U_\beta\neq\emptyset$, we have $L(\gamma,T,I)\geq c|\phi_\alpha(p)-\phi_\alpha(q)|$. On the other hand, if $\gamma([0,1])$ is not contained in $U_\alpha$, there exists $t_0\in(0,1)$ with $\gamma(t_0)\in\phi_\alpha^{-1}(\pa([-\frac74,\frac74]^n))$ and $\gamma([0,t_0))\subset\phi_\alpha^{-1}((-\frac74,\frac74)^n)$. Since $\phi_\alpha(p)\in(-\frac32,\frac32)^n$, the length of $\phi_\alpha\circ\gamma|_{[0,t_0]}$ exceeds some universal constant $c>0$, which is larger than a constant times $|\phi_\alpha(p)-\phi_\alpha(q)|$ (since $|\phi_\alpha(p)-\phi_\alpha(q)|\leq|\phi_\alpha(p)|+|\phi_\alpha(q)|<4\sqrt{n}$). This establishes~\eqref{EqbgDistNear}.

  Turning to~\eqref{EqbgDistFar}, we observe that the upper bound follows by concatenating $D(p,q)+1$ many curves, the $i$-th of which is linear in $U_{\alpha_{i-1}}$. To prove the lower bound, select $(\gamma,T,I)\in\sC_{p,q}$ with $L(\gamma,T,I)\leq 2 d(p,q)$. Since $M$ is covered by the sets $\phi_\alpha^{-1}((-1,1)^n)$, we may modify $T$ and $I$ so that $\gamma(t_i)\in\phi_{\alpha_i}^{-1}([-1,1])^n)$ and $\gamma|_{[t_i,t_{i+1}]}\in\phi_{\alpha_i}^{-1}([-\frac32,\frac32]^n)$ for all $i$; in view of the uniform $\cC^1$ bounds on the $\tau_{\beta\alpha}$, we have
  \begin{equation}
  \label{EqbgDistL}
    L(\gamma,T,I) = \sum_{i=0}^{k-1} L_i \leq C d(p,q),\qquad L_i := L(\phi_{\alpha_i}\circ\gamma|_{[t_i,t_{i+1}]}).
  \end{equation}
  after this modification. For a small constant $c>0$, to be determined below, we furthermore subdivide each segment $\gamma|_{[t_i,t_{i+1}]}$ into a finite number of segments each of which has length $\leq c$; this modification does not change the value of $L(\gamma,T,I)$. We now group the terms in the sum in~\eqref{EqbgDistL}: we set $i_0=0$ and define $i_l$ for $l\geq 1$ inductively as follows, as long as $i_{l-1}\leq k$: we let $i_l\geq i_{l-1}+1$ to be the largest integer $\leq k$ so that $\sum_{i=i_{l-1}}^{i_l-1} L_i<c$. (The existence of such an index $i_l$ uses that $L_i<c$ for all $i$.) Since $\gamma(t_{i_{l-1}})\in\phi_{\alpha_{i_{l-1}}}^{-1}([-1,1]^n)$, we conclude, for sufficiently small $c$, that
  \begin{equation}
  \label{EqbgDistLCover}
    \gamma|_{[t_{i_{l-1}},t_{i_l}]}\subset\phi_{\alpha_{i_{l-1}}}^{-1}((-2,2)^n).
  \end{equation}
  Let $\bar\ell\in\N_0$ be the index with $i_{\bar\ell+1}=k$. In the case $\bar\ell=0$, the curve $\gamma$ is entirely contained in $U_{\alpha_{i_0}}$, so $D(p,q)=0$ and~\eqref{EqbgDistFar} is trivially satisfied. Otherwise, note that for $l\leq\bar\ell-1$ we have $\sum_{i=i_l}^{i_{l+2}-1} L_i\geq c$ by maximality of $i_l$, and therefore
  \[
    \frac{\bar\ell c}{2} \leq \frac12 \sum_{l=0}^{\bar\ell-1} \sum_{i=i_l}^{i_{l+2}-1} L_i \leq \sum_{i=0}^{k-1} L_i \leq C d(p,q).
  \]
  This gives $\bar\ell\leq\frac{2 C}{c}d(p,q)$. But by~\eqref{EqbgDistLCover}, we have $D(p,q)\leq\bar\ell$, and we have therefore proved~\eqref{EqbgDistFar} for $\delta_0=\frac{c}{2 C}$. The argument for the sets $U'_\alpha$ in place of $U_\alpha$ is analogous, except one replaces $[-\frac32,\frac32]^n$ above by the smaller set $[-\frac54,\frac54]^n$.

  To prove the final claim, consider the union $\tilde\fB$ of the b.g.\ structures $\fB$ and $\fB'$, which is itself a b.g.\ structure; defining a metric $\tilde d$ for it as above, one then finds that $C^{-1}\tilde d\leq d\leq C\tilde d$ for some constant $C$. The same holds for the metric $d'$, and thus $C^{-2}d'\leq d\leq C^2 d'$, from which one obtains~\eqref{EqbgDistNear} and \eqref{EqbgDistFar} for $d'$ in place of $d$, with different constants.
\end{proof}

\begin{cor}[Short distances]
\label{CorbgDistShort}
  Let $d$ be as in Proposition~\usref{PropbgDist}. There exists $\delta_1>0$ so that for $p\in\phi_\alpha^{-1}([-1,1]^n)$ and $q\in M$ with $d(p,q)<\delta_1$, we have $q\in U_\alpha=\phi_\alpha^{-1}((-2,2)^n)$ (and thus $|\phi_\alpha(p)-\phi_\alpha(q)|<C d(p,q)$).
\end{cor}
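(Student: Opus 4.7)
The argument proceeds by contradiction using the path-length construction of $d$ from the proof of Proposition~\ref{PropbgDist}. Suppose $d(p,q)<\delta_1$ (with $\delta_1$ to be chosen) and yet $q\notin U_\alpha=\phi_\alpha^{-1}((-2,2)^n)$. Pick a triple $(\gamma,T,I)\in\sC_{p,q}$ with $L(\gamma,T,I)\leq 2 d(p,q)$. The goal is to force a lower bound on $L(\gamma,T,I)$ that is incompatible with smallness of $d(p,q)$, by exploiting that any such $\gamma$ must cross a fixed ``annulus'' inside the chart $\phi_\alpha$.

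\textbf{Main steps.} Fix some $r\in(1,\tfrac32)$; I will take $r=\tfrac54$ for concreteness. Since $\gamma(0)=p\in\phi_\alpha^{-1}([-1,1]^n)\subset\phi_\alpha^{-1}((-r,r)^n)$ but $\gamma(1)=q\notin\phi_\alpha^{-1}((-r,r)^n)$, set
\[
 t_0:=\inf\{t\in[0,1]\colon \gamma(t)\notin\phi_\alpha^{-1}((-r,r)^n)\}.
\]
By continuity $\gamma(t_0)\in\phi_\alpha^{-1}(\partial([-r,r]^n))$, and in particular $\gamma(t_0)\in U'_\alpha=\phi_\alpha^{-1}((-\tfrac32,\tfrac32)^n)$. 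Restricting $\gamma$, $T$, and $I$ to $[0,t_0]$ (and shrinking the last subinterval to end at $t_0$) produces an element of $\sC_{p,\gamma(t_0)}$ whose $L$-length is bounded by $L(\gamma,T,I)$. Hence $d(p,\gamma(t_0))\leq L(\gamma,T,I)\leq 2 d(p,q)$.

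On the other hand, $p,\gamma(t_0)\in U'_\alpha$, so item~\eqref{ItbgDistNear} of Proposition~\ref{PropbgDist} gives $|\phi_\alpha(p)-\phi_\alpha(\gamma(t_0))|\leq C\,d(p,\gamma(t_0))$. At least one coordinate of $\phi_\alpha(\gamma(t_0))$ equals $\pm r$ while the corresponding coordinate of $\phi_\alpha(p)$ lies in $[-1,1]$, so $|\phi_\alpha(p)-\phi_\alpha(\gamma(t_0))|\geq r-1=\tfrac14$. Combining,
\[
 d(p,q)\geq \tfrac12 d(p,\gamma(t_0))\geq \tfrac{r-1}{2C}=\tfrac{1}{8C}.
\]
This contradicts $d(p,q)<\delta_1$ once $\delta_1\leq\tfrac{1}{8C}$. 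Thus $q$ must lie in $\phi_\alpha^{-1}((-\tfrac54,\tfrac54)^n)\subset U'_\alpha\subset U_\alpha$, and a second application of item~\eqref{ItbgDistNear}---now to the pair $(p,q)$, which both lie in $U'_\alpha$---yields $|\phi_\alpha(p)-\phi_\alpha(q)|\leq C\,d(p,q)$.

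\textbf{Anticipated difficulty.} There is no deep obstacle here; the only subtlety is choosing the threshold radius $r$ strictly between $1$ (the guarantee on $p$) and $\tfrac32$ (the threshold required to apply Proposition~\ref{PropbgDist}\eqref{ItbgDistNear}), so that $\gamma(t_0)$---and consequently $q$ itself---lies in $U'_\alpha$ and item~\eqref{ItbgDistNear} is applicable both to $(p,\gamma(t_0))$ and, at the end, to $(p,q)$. Any $r\in(1,\tfrac32)$ works and fixes the value of $\delta_1$ in terms of the constant $C$ from Proposition~\ref{PropbgDist}.
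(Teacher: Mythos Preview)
Your argument is correct, with one small expository slip: you state the contradiction hypothesis as $q\notin U_\alpha$, but the argument that follows only uses (and therefore only refutes) the weaker statement $q\notin\phi_\alpha^{-1}((-r,r)^n)$. Since you need the conclusion $q\in U'_\alpha$ at the end to apply item~\eqref{ItbgDistNear} to the pair $(p,q)$, you should simply start the contradiction with $q\notin\phi_\alpha^{-1}((-r,r)^n)$; the proof is then complete as written and indeed yields the stronger localization $q\in\phi_\alpha^{-1}((-\tfrac54,\tfrac54)^n)$ you claim.

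Your route differs from the paper's. The paper uses item~\eqref{ItbgDistFar} to first place $p$ and $q$ in a common chart $U_\beta$ (since $d(p,q)<\delta_0$ forces $D(p,q)=0$), then runs an open--closed argument along the straight segment in $\phi_\beta$-coordinates, using the uniform $\cC^1$-bounds on $\tau_{\alpha\beta}$ to trap the segment inside $\phi_\alpha^{-1}([-\tfrac32,\tfrac32]^n)$. You instead stay inside the path-length definition of $d$: pick a near-minimizing $(\gamma,T,I)$, locate the first exit time through $\phi_\alpha^{-1}(\partial[-r,r]^n)$, and invoke item~\eqref{ItbgDistNear} once for the pair $(p,\gamma(t_0))$. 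Your approach is more self-contained---it uses only item~\eqref{ItbgDistNear} and the infimum definition of $d$, and avoids both item~\eqref{ItbgDistFar} and any direct appeal to transition-function bounds---whereas the paper's argument is more geometric but brings in slightly more machinery. Both are short; neither is clearly preferable.
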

\begin{proof}
  If we take $\delta_1<\delta_0$, then by Proposition~\ref{PropbgDist}\eqref{ItbgDistFar}, there exists $U_\beta$ containing both $p$ and $q$, and $|\phi_\beta(p)-\phi_\beta(q)|<C\delta_1$. Consider the curve $\gamma(s)=\phi_\beta^{-1}(\phi_\beta(p)+s[\phi_\beta(q)-\phi_\beta(p)])$, $s\in[0,1]$, in $U_\beta$; once we show that $\gamma$ remains also in $U_\alpha$, then $|\phi_\alpha(p)-\phi_\alpha(q)|<C C'\delta_1<1$ (for small $\delta_1>0$) where $C'$ bounds the $\cC^1$-norm of the transition function $\tau_{\alpha\beta}$. Now $\gamma(0)=p\in U_\alpha$, and thus $\phi_\alpha(\gamma(s))=\tau_{\alpha\beta}(\phi_\beta(p)+s[\phi_\beta(q)-\phi_\beta(p)])$ is well-defined for small $s$ and attains a value in $[-1,1]^n$ for $s=0$. Let $I\subset[0,1]$ denote the set of all $s_0\in[0,1]$ so that $\gamma(s)\in\phi_\alpha^{-1}([-\frac32,\frac32]^n)$ for $s\in[0,s_0]$. Then $I$ is closed; but for $s_0\in I$, we have $d(\phi_\alpha(\gamma(s_0)),[-1,1]^n)\leq C C'\delta_1$ using the $\cC^1$-bounds on $\tau_{\alpha\beta}$, and thus $\gamma(s_0)\in\phi_\alpha^{-1}([-\frac54,\frac54]^n)$ if we fix $\delta_1>0$ sufficiently small. Therefore, a small neighborhood of $s_0$ lies in $I$. This implies that $I$ is open. Since $I\ni 0$ is non-empty, we are done.
\end{proof}

\subsection{Partitions and refinements}
\label{SsbgP}

We record two technical results related to covering properties of the unit cells. We begin with a partition of $M$ into finitely many unions of pairwise disjoint unit cells.

\begin{lemma}[Partition of distinguished charts]
\label{LemmabgP}
  Let $\ell<2$ and set $U_\alpha(\ell)=\phi_\alpha^{-1}((-\ell,\ell)^n)$. There exist $J<\infty$ and a partition $\sA=\bigsqcup_{j=1}^J\sA_j$ of $\sA$ so that for each $j=1,\ldots,J$, any two sets $U_\alpha(\ell),U_\beta(\ell)$ with $\alpha,\beta\in\sA_j$ are disjoint.
\end{lemma}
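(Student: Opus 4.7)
The plan is to reformulate the statement as a graph coloring problem. Let $G$ be the graph with vertex set $\sA$ and an edge $\{\alpha,\beta\}$ for each pair $\alpha\neq\beta$ with $U_\alpha(\ell)\cap U_\beta(\ell)\neq\emptyset$. If the maximum degree of $G$ is bounded by some universal $N<\infty$, then a standard greedy coloring argument produces a proper coloring of $G$ by at most $N+1$ colors; taking $\sA_j$ to be the $j$-th color class yields the desired partition with $J=N+1$.

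The content of the proof is thus a uniform upper bound on $|S_\alpha|:=|\{\beta\in\sA\setminus\{\alpha\}\colon U_\alpha(\ell)\cap U_\beta(\ell)\neq\emptyset\}|$. By Definition~\ref{DefIBdd}\eqref{ItIBddTrans}, the transition maps $\tau_{\beta\alpha}$ and their inverses $\tau_{\alpha\beta}=\tau_{\beta\alpha}^{-1}$ have uniformly bounded $\CI$-norms; in particular, their Jacobian determinants are uniformly bounded above by some $C_0<\infty$, and since $|\det J\tau_{\alpha\beta}|\cdot|\det J\tau_{\beta\alpha}|=1$ at corresponding points, also uniformly bounded below by some $c_0>0$. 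Fix a common Lipschitz constant $C_1$ for all $\tau_{\alpha\beta}$, and pick $r>0$ small enough that $C_1 r<2-\ell$. For each $\beta\in S_\alpha$, choose $q_\beta\in U_\alpha(\ell)\cap U_\beta(\ell)$ and set $W_\beta:=\phi_\beta^{-1}(B_r(\phi_\beta(q_\beta)))$. Then $\tau_{\alpha\beta}(B_r(\phi_\beta(q_\beta)))\subset B_{C_1 r}(\phi_\alpha(q_\beta))\subset(-2,2)^n$, so $W_\beta\subset U_\alpha$; moreover $\phi_\alpha(W_\beta)\subset(-\ell-C_1 r,\ell+C_1 r)^n$, and by the uniform lower Jacobian bound its Lebesgue volume is at least $c_0 v_n r^n$, where $v_n$ is the volume of the unit Euclidean ball in $\R^n$.

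Since each $W_\beta$ is contained in $U_\beta$, Definition~\ref{DefIBdd}\eqref{ItIBddFinite} forces each point of $M$ to lie in at most $A$ of the sets $\{W_\beta\colon\beta\in S_\alpha\}$, and hence each point of $(-2,2)^n$ lies in at most $A$ of the sets $\phi_\alpha(W_\beta)$. Summing the volume estimates,
\[
  |S_\alpha|\cdot c_0 v_n r^n \leq \sum_{\beta\in S_\alpha}\vol(\phi_\alpha(W_\beta)) \leq A\cdot\vol\Bigl(\bigcup_{\beta\in S_\alpha}\phi_\alpha(W_\beta)\Bigr) \leq A(2\ell+2 C_1 r)^n,
\]
so $|S_\alpha|\leq N:=A(2\ell+2 C_1 r)^n/(c_0 v_n r^n)$ independently of $\alpha$, completing the proof.

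The main obstacle is ensuring that each $W_\beta$ is uniformly ``fat'' in $\alpha$'s chart despite $q_\beta$ possibly lying close to $\partial U_\alpha(\ell)$: this is handled by the $2-\ell$ buffer (keeping $B_{C_1 r}(\phi_\alpha(q_\beta))$ safely inside $(-2,2)^n$) and by the uniform lower bound $c_0$ on $|\det J\tau_{\alpha\beta}|$, which is extracted from the $\CI$-boundedness of both $\tau_{\alpha\beta}$ and $\tau_{\beta\alpha}$ rather than from $\CI$-boundedness of $\tau_{\beta\alpha}$ alone.
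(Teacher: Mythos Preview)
Your proof is correct and takes a somewhat different route from the paper's. Both arguments amount to bounding the degree of the intersection graph on $\sA$ and then coloring greedily, but the degree bounds are obtained by different mechanisms. The paper builds the $\sA_j$ as successive maximal independent sets and argues by pigeonhole on points: if some $\alpha$ remained after $J_0$ steps, it would have witness points $p_j\in U_\alpha(\ell)\cap U_{\alpha'_j}(\ell)$ for $J_0$ distinct neighbors; since $(-\ell,\ell)^n$ is bounded, for $J_0$ large enough some $A{+}1$ of the $\phi_\alpha(p_j)$ are $\eps$-close, and the choice of $\eps$ forces a single point into $A{+}1$ distinct $U_{\alpha'_k}$, contradicting the finite intersection property. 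Your argument replaces this pigeonhole step by a volume count: each neighbor contributes a set $W_\beta\subset U_\alpha\cap U_\beta$ of definite $\phi_\alpha$-volume, and the overlap bound caps the total. Your version is perhaps more quantitative (it gives an explicit $N$ in terms of $A$, $\ell$, $C_1$, $c_0$), while the paper's is slightly lighter on the analytic input (no Jacobian lower bound is needed, only the $\cC^1$ bound on $\tau_{\beta\alpha}$).

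One small point: the sentence ``$\tau_{\alpha\beta}(B_r(\phi_\beta(q_\beta)))\subset B_{C_1 r}(\phi_\alpha(q_\beta))\subset(-2,2)^n$, so $W_\beta\subset U_\alpha$'' is circular as written, since applying $\tau_{\alpha\beta}$ to the full ball already presupposes $W_\beta\subset U_\alpha$. This is easily repaired by the standard continuation argument (compare the proof of Corollary~\ref{CorbgDistShort}): the set of $x\in B_r(\phi_\beta(q_\beta))$ with $\phi_\beta^{-1}(x)\in U_\alpha$ is open and nonempty, and it is closed in the ball because along radial segments from $\phi_\beta(q_\beta)$ the $\cC^1$ bound keeps $\tau_{\alpha\beta}$-images inside the compact set $\overline{B_{C_1 r}(\phi_\alpha(q_\beta))}\subset(-2,2)^n$; connectedness then gives $W_\beta\subset U_\alpha$.
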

\begin{proof}
  Say that a subset $\sB\subset\sA$ satisfies the \emph{disjointness property} if for all $\alpha,\beta\in\sB$ one has $U_\alpha(\ell)\cap U_\beta(\ell)=\emptyset$. Pick any $\alpha_1\in\sA$, and take $\sA_1\subset\sA$ to be a maximal subset satisfying the disjointness property. If $\sA_1\neq\sA$, pick $\alpha_2\in\sA$ and take $\sA_2\subset\sA\setminus\sA_1$ to be a maximal subset satisfying the disjointness property. Define subsets $\sA_j$ for $j=1,2,\ldots$ inductively as long as $\sA_1\cup\cdots\cup\sA_j\neq\sA$. Let $J\in\N\cup\{\infty\}$ be the supremum of $j$; we need to show that $J$ is finite.

  Fix $\eps>0$ with the following property: for all $\alpha,\beta,\gamma\in\sA$, $p\in U_\alpha(\ell)\cap U_\beta(\ell)$, and $q\in U_\alpha(\ell)\cap U_\gamma(\ell)$ with $|\phi_\alpha(p)-\phi_\alpha(q)|<\eps$, one has $q\in U_\beta$ (and symmetrically $p\in U_\gamma$); such an $\eps$ can be chosen to depend only on $\ell$ and on the $\cC^1$-norms of the transition functions $\tau_{\beta\alpha}$. Fix then $J_0$, only depending on $\eps$, $n$, and the constant $A\in\N$ from Definition~\ref{DefIBdd}\eqref{ItIBddFinite}, so that for all $J_0$-tuples of points in $(-\ell,\ell)^n$ there exists a subset of $A+1$ many points with pairwise distances less than $\eps$. We claim that $J\leq J_0$. Suppose this were false, and let $\alpha\in\sA\setminus\bigcup_{j=1}^{J_0}\sA_j$. By the maximality of the sets $\sA_j$, there exists, for each $j=1,\ldots,J_0$, a point $p_j\in U_\alpha(\ell)\cap U_{\alpha'_j}(\ell)$ for some $\alpha'_j\in\sA_j$. By relabeling the points, we may assume that $|\phi_\alpha(p_1)-\phi_\alpha(p_k)|<\eps$ for all $k=1,\ldots,A+1$. But this implies that $p_1\in\bigcap_{k=1}^{A+1}U_{\alpha'_k}$, in contradiction to Definition~\ref{DefIBdd}\eqref{ItIBddFinite}.
\end{proof}

The following result provides a convenient modification of a given b.g.\ structure which will be useful for studying compositions of ps.d.o.s\ which are given via sums of quantizations in local charts. (See the proof of Theorem~\ref{ThmSBPsdoComp}.)

\begin{lemma}[Refinement]
\label{LemmabgRefine}
  Let $1<\ell_1<\ell_2<2$. Then there exists a b.g.\ structure $\fB'$ on $M$ which is strongly compatible with $\fB$ so that if $\phi_\alpha^{-1}((-\ell_1,\ell_1)^n)\cap\phi_\beta^{-1}((-\ell_1,\ell_1)^n)\neq\emptyset$, then $\phi_\beta^{-1}((-\ell_1,\ell_1)^n)\subset\phi_\alpha^{-1}((-\ell_2,\ell_2)^n)$.
\end{lemma}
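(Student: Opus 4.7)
The plan is to construct $\fB'$ by taking many rescaled and translated sub-charts of $\fB$, centered at a well-distributed set of points in $M$. Fix a compatible metric $d$ on $M$ from Proposition~\ref{PropbgDist}, a small scaling factor $c \in (0,1/2)$ to be determined, and a maximal $\eta c$-separated subset $\{p_i\}_{i\in I} \subset M$ (in the metric $d$) for a constant $\eta\in(0,1)$ depending on the metric constants. For each $i$, choose $\alpha_i \in \sA$ with $p_i\in\phi_{\alpha_i}^{-1}((-1,1)^n)$, and set
\[
  \tilde U_i := \phi_{\alpha_i}^{-1}\bigl(\phi_{\alpha_i}(p_i)+c(-2,2)^n\bigr), \qquad \tilde\phi_i(x) := c^{-1}\bigl(\phi_{\alpha_i}(x)-\phi_{\alpha_i}(p_i)\bigr).
\]

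Next I would verify that $\fB'=\{(\tilde U_i,\tilde\phi_i)\}_{i\in I}$ is a b.g.\ structure strongly compatible with $\fB$. Coverage of $M$ follows from maximality of the separated set combined with \eqref{EqbgDistNear}: for small enough $\eta$, every point lies in the $(-1,1)^n$-shrinkage of some $\tilde\phi_i$. Finite intersection follows from the $\eta c$-separation of the $p_i$ together with the $d$-diameter bound $O(c)$ for each $\tilde U_i$, using Definition~\ref{DefIBdd}\eqref{ItIBddFinite} applied in a single chart of $\fB$ containing a given point. Uniform $\CI$-bounds on the transitions $\tilde\phi_j\circ\tilde\phi_i^{-1}$ come from Taylor-expanding $\tau_{\alpha_j\alpha_i}$ at $\phi_{\alpha_i}(p_i)$: the factor $c^{-1}$ in front of $\tilde\phi_j$ cancels against the $O(c)$ quantity $|\phi_{\alpha_j}(p_i)-\phi_{\alpha_j}(p_j)|$ (bounded via the $\cC^1$-norm of $\tau_{\alpha_j\alpha_i}$ times $d(p_i,p_j)=O(c)$), while higher derivatives gain factors $c^{|\beta|-1}\leq 1$ for $|\beta|\geq 1$. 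Strong compatibility with $\fB$ is checked analogously from the definition of $\tilde\phi_i$.

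The heart of the argument is the refinement property. Suppose $\tilde\phi_i^{-1}((-\ell_1,\ell_1)^n)\cap\tilde\phi_j^{-1}((-\ell_1,\ell_1)^n)\neq\emptyset$, with common point $p$. Working in $\phi_{\alpha_i}$-coordinates and Taylor-expanding $\tau_{\alpha_i\alpha_j}$ at $\phi_{\alpha_j}(p)$, one obtains for any $q\in\tilde\phi_j^{-1}((-\ell_1,\ell_1)^n)$ an estimate of the form
\[
  |\phi_{\alpha_i}(q)-\phi_{\alpha_i}(p_i)|_\infty \leq c\ell_1 + |D\tau_{\alpha_i\alpha_j}|_{\cC^0}\cdot 2c\ell_1 + O(c^2),
\]
so the desired containment in $c(-\ell_2,\ell_2)^n$ follows, for $c$ small, from the inequality $(1+2|D\tau|_{\cC^0})\ell_1<\ell_2$.

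The main obstacle is that the naive derivative bound $|D\tau|_{\cC^0}\leq K$ from Definition~\ref{DefIBdd}\eqref{ItIBddTrans} need not satisfy $(1+2K)\ell_1<\ell_2$, and the inequality is scale-invariant so shrinking $c$ does not help directly. I would resolve this by first replacing $\fB$ with a strongly compatible b.g.\ structure $\fB^\sharp$ whose charts are Riemannian exponential coordinates of a bounded-geometry Riemannian metric on $M$ (constructed from $\fB$ as in Remark~\ref{RmkIBddMet}), together with a consistent choice of orthonormal frames at the centers; the transitions between two such exponential charts based at nearby $p_i,p_j$ have derivatives approaching an orthogonal map as $c\to 0$, and their residual $O(c)$ error can be absorbed by a further auxiliary subdivision of each cell into sub-cubes aligned with the frame of its center, so the effective $|D\tau|_{\cC^0}$ can be pushed arbitrarily close to $1$. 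Applying the first construction to $\fB^\sharp$ with $c$ small relative to $\ell_2-\ell_1$ then yields the refinement property, and transitivity of strong compatibility gives that the resulting $\fB'$ is strongly compatible with the original $\fB$.
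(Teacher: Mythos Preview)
Your approach has a genuine gap that the Riemannian-coordinate fix does not close. The core estimate you derive,
\[
  |\phi_{\alpha_i}(q)-\phi_{\alpha_i}(p_i)|_\infty \leq \bigl(1+2|D\tau|\bigr)c\ell_1 + O(c^2),
\]
forces the containment condition $(1+2|D\tau|)\ell_1<\ell_2$. Even in the ideal situation $|D\tau|\to 1$, this reads $3\ell_1<\ell_2$, which is impossible for $1<\ell_1<\ell_2<2$. The factor $3$ (more generally $1+2K$) arises from the triangle inequality through the three points $p_i$, $p$, $p_j$ and is insensitive to the scaling parameter $c$; no amount of subdivision or improvement of the linearization constant can beat it. Moreover, in the $\ell^\infty$ norm relevant here an orthogonal derivative has operator norm up to $\sqrt n$, not $1$, so the exponential-chart argument does not even reach $|D\tau|\approx 1$ without further unjustified alignment of frames.

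The paper's proof sidesteps this obstruction by a \emph{nonlinear} reparametrization: each new chart is the composition of (translation into a small sub-cube of side $\sim 2^{-S}$) with a fixed monotone diffeomorphism $\eta\colon(-\tfrac14,\tfrac14)\to(-2,2)$ applied coordinatewise. The map $\eta$ is chosen so that $\eta^{-1}(\ell_1)=8\cdot 2^{-S}$ while $\eta^{-1}(\ell_2)=8C\cdot 2^{-S}$, where $C$ is the uniform $\cC^1$ bound on the original transitions. Thus in the \emph{physical} coordinates the $\ell_2$-cube is $C$ times larger than the $\ell_1$-cube, which is exactly what the Lipschitz estimate delivers; the ratio $\ell_2/\ell_1<2$ never enters. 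Since $\eta$ is a single fixed diffeomorphism, the new transition maps remain uniformly bounded in $\CI$, and strong compatibility with $\fB$ is immediate. The missing idea in your proposal is precisely this nonlinear stretching of the chart parametrization.
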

\begin{proof}
  Fix an integer $S\geq 3$. We subdivide
  \begin{align*}
    &\phi_\alpha^{-1}\Bigl(\Bigl(-\frac32,\frac32\Bigr)^n\Bigr) = \bigcup_{\gamma\in\{\frac18 2^S,\ldots,\frac78 2^S-4\}^n} V_{\alpha,\gamma}, \\
    &\qquad V_{\alpha,\gamma}:=\phi_\alpha^{-1}(Q_\gamma),\quad Q_\gamma = \prod_{j=1}^n \bigl(-2+4\cdot 2^{-S}\gamma_j,-2+4\cdot(2^{-S}(\gamma_j+4))\bigr).
  \end{align*}
  Suppose $p\in V_{\alpha,\gamma}\cap V_{\beta,\delta}\neq\emptyset$, and suppose that $q\in V_{\beta,\delta}$ is such that $|\phi_\beta(q)-\phi_\beta(p)|_\infty<16\cdot 2^{-S}$ (the side length of $Q_\delta$). In view of the uniform $\cC^1$-bounds on $\tau_{\alpha\beta}$, we then also have $q\in U_\alpha$ provided we choose $S$ sufficiently large, and indeed
  \[
    |\phi_\alpha(p)-\phi_\alpha(q)|_\infty < 16 C\cdot 2^{-S} < \frac12
  \]
  for some uniform constant $C>1$.

  The idea is to make $\ell_1$-cubes very small compared to $\ell_2$-cubes. To this end, fix a monotone diffeomorphism $\eta\colon\R\to\R$ with $\eta(-x)=-\eta(x)$ so that $\eta(0)=0$, $\eta(12\cdot 2^{-S}/2)=1$, $\eta(16\cdot 2^{-S}/2)=\ell_1$, $\eta(16 C\cdot 2^{-S}/2)=\ell_2$, and $\eta(\frac14)=2$. Take $U_{\alpha,\gamma}$ to be the preimage under $\phi_\alpha$ of the cube with the same center as $Q_\gamma$ but with side length $2\cdot\frac14$, and let $\phi_{\alpha,\gamma}\colon U_{\alpha,\gamma}\xra{\cong}(-2,2)^n$ to be the composition of $\phi_\alpha$ with the translation $x\mapsto(x^j+2-4\cdot 2^{-S}(\gamma_j+2))_{j=1,\ldots,n}$ (which maps $Q_\gamma$ to $(-\frac14,\frac14)^n$) followed by $\eta\times\cdots\times\eta$. It then follows that $\{(U_{\alpha,\gamma},\phi_{\alpha,\gamma})\colon\alpha\in\sA,\ \gamma\in\{\frac18 2^{-S},\ldots,\frac78 2^S-4\}^n\}$ is a b.g.\ structure with the desired properties.
\end{proof}

\subsection{Compactification and supports}
\label{SsbgCpt}

With $\fB$ fixed throughout, we drop it from the notation and thus write $\CI_{\rm uni}(M)=\CI_{{\rm uni},\fB}(M)$ etc. We moreover fix a \emph{bounded} metric
\[
  d \colon M\times M \to [0,1]
\]
on $M$ which is compatible with the bounded geometry structure in the sense that there exist constants $\delta_0<1$ and $C>1$ so that $p\in\phi_\alpha^{-1}([-1,1]^n)$ and $d(p,q)<\delta\leq\delta_0$ implies $q\in\phi_\alpha^{-1}((-2,2)^n)$ and $|\phi_\alpha(p)-\phi_\alpha(q)|<C\delta$, and conversely for $p,q\in\phi_\alpha^{-1}((-2,2)^n)$ with $|\phi_\alpha(p)-\phi_\alpha(q)|<\delta$ one has $d(p,q)<C\delta$. (In particular, points $p,q$ lying in disjoint sets $\phi_\alpha^{-1}((-1,1)^n)$, $\phi_\beta^{-1}((-1,1)^n)$ have distance $d(p,q)\geq\delta_0$.) Such a function $d$ can be defined by $d(p,q)=\min(\tilde d(p,q),1)$ where $\tilde d$ is a metric as in Proposition~\ref{PropbgDist}.

We denote by $\cC^0_{\rm uni}(M)$ the subspace of the Banach algebra $\cC^0(M)$ of continuous functions $u$ which are uniformly continuous. This means that for all $\eps>0$ there exists $\delta>0$ so that for all $\alpha\in\sA$ and $x,y\in(-2,2)^n$ with $|x-y|<\delta$ we have $|u(\phi_\alpha^{-1}(x))-u(\phi_\alpha^{-1}(y))|<\eps$; or equivalently (for a possibly different $\delta$), $d(p,q)<\delta$ implies $|u(p)-u(q)|<\eps$; or equivalently (for yet another value of $\delta$), the family $\{(\phi_\alpha)_*u\colon\alpha\in\sA\}\subset\cC^0((-2,2)^n)$ is equicontinuous. Equipped with the supremum norm, the space $\cC^0_{\rm uni}(M)$ is again a Banach algebra.

The following compactification was introduced (albeit in a different fashion) in \cite{SamuelCompactification}. We follow, to some extent, the paper \cite{WoodsSamuelCompactification}, but operate entirely in the concrete setting of interest in the present paper.

\begin{definition}[Uniform compactification of $M$]
\label{DefbgCpt}
  The \emph{uniform compactification} $\fu M$ of $(M,\fB)$ is the Gelfand spectrum $\sigma(\cC^0_{\rm uni}(M))$ of $\cC^0_{\rm uni}(M)$ as a unital *-algebra, i.e.\ the set of all continuous\footnote{We recall that continuity is automatic: $\ker\phi$ is a maximal ideal and thus necessarily closed since otherwise its closure would contain the identity element; but the set of invertible elements in a Banach algebra is open.} *-algebra homomorphisms $\phi\colon\cC^0_{\rm uni}(M)\to\C$ equipped with the Gelfand topology, which is the coarsest topology with respect to which the maps $\phi\mapsto\phi(u)$ are continuous for all $u\in\cC^0_{\rm uni}(M)$. If we wish to specify the b.g.\ structure, we write $\fu(M,\fB)$ for $\fu M$.
\end{definition}

Some authors use the terminology \emph{Samuel compactification}.

Since $\fu M$ is a closed subset of the unit ball of $(\cC_{\rm uni}^0(M))^*$ in the weak-* topology, $\fu M$ is a compact Hausdorff space.\footnote{The more well-known Stone--\v{C}ech compactification $\beta M$ is the spectrum of the algebra of bounded continuous functions. Much like $\beta M$, the space $\fu M$ is very large, see \cite[Theorem~4.9(b)]{WoodsSamuelCompactification}; for example, for $M=\R$, restricting an element of $\cC^0_{\rm uni}(\R)$ to the integer points gives a surjective *-homomorphism to $\ell^\infty(\Z)$ (bounded continuous functions on $\Z$), and thus an embedding $\beta\Z\hra\fu\R$.} Of course, in practice one does not work with $\fu M$, but rather with smaller compactifications; this is discussed in and after Lemma~\ref{LemmabgUniv} below.

\begin{prop}[Towards an alternative construction of $\fu M$]
\label{PropbgAlt}
  Let $C=\cC^0_{\rm uni}(M;[0,1])$ and consider $[0,1]^C$ equipped with the product topology.\footnote{This is thus a compact Hausdorff space by Tychonoff's theorem.} Write $\iota\colon M\to[0,1]^C$ for the map $\iota(p)_u:=u(p)$. Then the map
  \[
    j_0 \colon \cC_{\rm uni}^0(M;[0,1]) \to \cC^0\bigl(\ol{\iota(M)};[0,1]\bigr) \qquad j_0(u) \colon \bigl( [0,1]^C \ni (c_v)_{v\in C} \mapsto c_u \in [0,1] \bigr)
  \]
  extends by linearity to an (isometric) *-algebra isomorphism $j\colon\cC_{\rm uni}^0(M)\to\cC^0(\ol{\iota(M)})$.
\end{prop}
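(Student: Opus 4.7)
The plan is to extend $j_0$ by linearity to all of $\cC_{\rm uni}^0(M)$, verify the *-algebra and isometric properties using density of $\iota(M)$ in $\ol{\iota(M)}$, and then invoke the Stone--Weierstrass theorem to prove surjectivity.

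First I would observe that each coordinate projection $\pi_u\colon[0,1]^C\to[0,1]$, $(c_v)_{v\in C}\mapsto c_u$, is continuous by definition of the product topology; hence $j_0(u)$ indeed lies in $\cC^0(\ol{\iota(M)};[0,1])$ for every $u\in C$, and $j_0(u)\circ\iota=u$ by construction. The map $\iota$ itself is continuous since $\pi_u\circ\iota=u$. Extension of $j_0$ to general $u\in\cC_{\rm uni}^0(M)$ proceeds by writing $u=\Re u+i\Im u$ and expressing each real part, after rescaling by its sup-norm, as an affine combination of an element of $C$ and the unit; for instance if $\|\Re u\|\leq 1$ one sets $j(\Re u):=2\,j_0(\tfrac12(\Re u+1))-j_0(1)$, using that $1\in C$ and $j_0(1)\equiv 1$ on $\ol{\iota(M)}$. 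Well-definedness under different decompositions is automatic from the characterizing identity $j(u)\circ\iota=u$ together with density of $\iota(M)$ in $\ol{\iota(M)}$, so that $j(u)$ is uniquely determined as a continuous extension from $\iota(M)$.

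The *-homomorphism property then follows by the same density argument: for $u,v\in\cC_{\rm uni}^0(M)$, the functions $j(u+v)$ and $j(u)+j(v)$ agree on $\iota(M)$ (both pulling back via $\iota$ to $u+v$), hence on $\ol{\iota(M)}$ by continuity, and similarly for multiplication and complex conjugation. Isometry holds because
$\|j(u)\|_{\cC^0(\ol{\iota(M)})}=\sup_{y\in\iota(M)}|j(u)(y)|=\sup_{p\in M}|u(p)|=\|u\|$, where the first equality uses continuity of $j(u)$ together with density.

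It remains to show $j$ is bijective. Injectivity is immediate from isometry. For surjectivity, I would apply the Stone--Weierstrass theorem to $j(\cC_{\rm uni}^0(M))\subset\cC^0(\ol{\iota(M)})$: this is a *-subalgebra containing the constants (as $1\in C$), and it separates points because any two distinct $x,y\in\ol{\iota(M)}\subset[0,1]^C$ differ in some coordinate $u\in C$, giving $j_0(u)(x)=x_u\neq y_u=j_0(u)(y)$. Thus $j(\cC_{\rm uni}^0(M))$ is dense in $\cC^0(\ol{\iota(M)})$, and since it is also closed (being the image of a complete space under an isometry), $j$ is surjective. The main conceptual step is the combination of point-separation with Stone--Weierstrass, while the extension and algebraic properties reduce to routine bookkeeping enabled by density of $\iota(M)$ in $\ol{\iota(M)}$.
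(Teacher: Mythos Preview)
Your argument is correct. The extension, *-homomorphism, and isometry steps are the same as in the paper (though the paper is terser about them). The genuine difference is in surjectivity: you invoke Stone--Weierstrass on the compact Hausdorff space $\ol{\iota(M)}$, whereas the paper proves surjectivity directly by taking an arbitrary $\tilde u\in\cC^0(\ol{\iota(M)};[0,1])$, setting $u:=\tilde u\circ\iota$, and showing $u$ is uniformly continuous via a compactness argument: for each $\eps>0$ and each $\tilde p\in\ol{\iota(M)}$ one finds finitely many coordinate functions $u_{\tilde p,1},\ldots,u_{\tilde p,N_{\tilde p}}\in C$ controlling $\tilde u$ near $\tilde p$ to within $\eps/2$, extracts a finite subcover, and then uses the uniform continuity of the finitely many resulting $u_{\tilde p_i,j}$ to produce a $\delta$ that works for $u$. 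Your route is shorter and more conceptual; the paper's is self-contained and makes explicit the mechanism by which continuity on $\ol{\iota(M)}$ forces \emph{uniform} continuity on $M$, which is arguably the point of the proposition.
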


Since all $u\in C$ are continuous, the map $\iota$ is a continuous injection. Via $j$, we can identify $\fu M$ with $\sigma(\cC^0(\ol{\iota(M)}))$, which in turn is isomorphic to $\ol{\iota(M)}$; the latter isomorphism is given by evaluations ${\rm ev}_p$ at points $p\in\ol{\iota(M)}$. Thus,
\begin{equation}
\label{EqbgId}
  \ol{\iota(M)} \cong \fu M,\qquad p \mapsto \bigl( \cC^0_{\rm uni}(M) \ni u \mapsto j(u)(p) \bigr).
\end{equation}

\begin{proof}[Proof of Proposition~\usref{PropbgAlt}]
  Note that $j_0(u)(\iota(p))=j_0(u)(( v(p) )_{v\in C})=u(p)$. Therefore, the map $j$, defined by linear extension of $j_0$, satisfies $\|j(u)\|_{\cC^0}=\|u\|_{\cC^0}$ by the density of $\iota(M)\subset\ol{\iota(M)}$; in particular, it is injective.

  We must show that $j_0$ is surjective; that is, given $\tilde u\in\cC^0(\ol{\iota(M)};[0,1])$, we define $u(p):=\tilde u(\iota(p))$, and we need to show that $u\in\cC^0_{\rm uni}(M)$. Let $\eps>0$. For all $\tilde p\in\ol{\iota(M)}$, the continuity of $\tilde u$ at $\tilde p$ (in the product topology on $[0,1]^C$) implies the existence of $\tilde\eps_{\tilde p}>0$, $N_{\tilde p}\in\N$, and $u_{\tilde p,1},\ldots,u_{\tilde p,N_{\tilde p}}\in C\subset\cC^0_{\rm uni}(M)$ with the property that
  \begin{equation}
  \label{EqbgAltPf}
    \max_{1\leq j\leq N_{\tilde p}} | u_{\tilde p,j}(\tilde q) - u_{\tilde p,j}(\tilde p) | < \tilde\eps_{\tilde p} \implies |\tilde u(\tilde q)-\tilde u(\tilde p)| < \frac{\eps}{2}.
  \end{equation}
  (Here we write $u_{\tilde p,j}(\tilde q)=\tilde q_{u_{\tilde p,j}}$ for $\tilde q\in[0,1]^C$.) Let now
  \[
    \cU_{\tilde p} := \Bigl\{ \tilde q\in\ol{\iota(M)} \colon \max_{1\leq j\leq N_{\tilde p}} | u_{\tilde p,j}(\tilde q) - u_{\tilde p,j}(\tilde p) | < \frac{\tilde\eps_{\tilde p}}{2} \Bigr\}.
  \]
  Then $\ol{\iota(M)}=\bigcup_{\tilde p} \cU_{\tilde p}$, so by the compactness of $\ol{\iota(M)}$ there exist finitely many points $\tilde p_1,\ldots,\tilde p_N\in\ol{\iota(M)}$ so that $\ol{\iota(M)}=\bigcup_{i=1}^N \cU_{\tilde p_i}$. Let $\tilde\eps:=\min_{1\leq i\leq N}\tilde\eps_i$. Fix $\delta>0$ so that whenever $p,q\in M$ are such that $d(q,p)<\delta$, then $|u_{\tilde p_i,j}(p)-u_{\tilde p_i,j}(q)|<\frac{\tilde\eps}{2}$ for all $1\leq i\leq N$ and $1\leq j\leq N_{\tilde p_i}$; this is possible since the $u_{\tilde p_i,j}$ are uniformly continuous.

  Let now $p,q\in M$, $d(p,q)<\delta$. Take $i\in\{1,\ldots,N\}$ so that $\iota(p)\in\cU_{\tilde p_i}$, then $|u_{\tilde p_i,j}(\iota(p))-u_{\tilde p_i,j}(\tilde p_i)|<\frac{\tilde\eps}{2}$ for all $j=1,\ldots,N_{\tilde p_i}$ and thus $|\tilde u(\iota(p))-\tilde u(\tilde p_i)|<\frac{\eps}{2}$ by~\eqref{EqbgAltPf}; but also (using $u_{\tilde p_i,j}(p)=u_{\tilde p_i,j}(\iota(p))$ for $p\in M$)
  \[
    | u_{\tilde p_i,j}(\iota(q)) - u_{\tilde p_i,j}(\tilde p_i) | \leq |u_{\tilde p_i,j}(q) - u_{\tilde p_i,j}(p)| + |u_{\tilde p_i,j}(\iota(p)) - u_{\tilde p_i,j}(\tilde p_i)| < \frac{\tilde\eps}{2} + \frac{\tilde\eps}{2} = \tilde\eps,
  \]
  and therefore also $|\tilde u(\iota(q))-\tilde u(\tilde p_i)|<\frac{\eps}{2}$ by~\eqref{EqbgAltPf}; this implies that
  \[
    |u(p)-u(q)| = |\tilde u(\iota(p)) - \tilde u(\iota(q))| < \frac{\eps}{2} + \frac{\eps}{2} = \eps,
  \]
  finishing the proof.
\end{proof}

We shall henceforth typically drop the identification $j$ from the notation, and we also identify $\fu M=\ol{\iota(M)}$.

\begin{lemma}[$M$ and $\fu M$]
\label{LemmabgMsM}
  The map $\iota\colon M\to \fu M$ is open, and thus\footnote{We argued above that $\iota$ is continuous.} a homeomorphism onto its image. In particular, $\iota(M)$ is an open subset of $\fu M$, and $\pa(\fu M)=\fu M\setminus\iota(M)$ is compact.
\end{lemma}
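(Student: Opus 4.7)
The plan is to prove that $\iota$ is an open map; combined with its already established continuity and injectivity (via \eqref{EqbgId}), this immediately yields that $\iota$ is a homeomorphism onto $\iota(M)$. Applied to the particular case $U=M$, it also shows that $\iota(M)$ is open in $\fu M$, so its complement $\pa(\fu M)$ is closed in the compact space $\fu M$ and therefore compact.

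To prove openness, I fix an open $U\subset M$ and a point $p\in U$, and construct an open set $V\subset\fu M$ with $\iota(p)\in V\subset\iota(U)$. Pick $\alpha$ with $p\in\phi_\alpha^{-1}([-1,1]^n)$, and choose $\delta\in(0,\delta_0)$ small enough (depending on $U$) that $K:=\{q\in M:d(p,q)\leq\delta\}$ is contained in $U$. The compatibility of $d$ with $\fB$ (cf.\ Corollary~\ref{CorbgDistShort}) gives $K\subset\phi_\alpha^{-1}(\{|x-\phi_\alpha(p)|\leq C\delta\})$, which is the $\phi_\alpha^{-1}$-image of a compact subset of $(-2,2)^n$ provided $C\delta<1$, and hence is compact in $M$. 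Define
$$\chi(q):=\max\bigl(0,\,1-d(p,q)/\delta\bigr),$$
which is $(1/\delta)$-Lipschitz in $d$ by the triangle inequality, and hence lies in $\cC^0_{\rm uni}(M;[0,1])$; moreover $\chi(p)=1$ and $\chi$ vanishes outside $K$. Set
$$V:=\{\xi\in\fu M:j(\chi)(\xi)>1/2\}.$$
This is open in $\fu M$ as the preimage of $(1/2,1]$ under the continuous Gelfand coordinate $\xi\mapsto\xi_\chi=j(\chi)(\xi)$, and it contains $\iota(p)$ since $j(\chi)(\iota(p))=\chi(p)=1$.

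The central step is to verify $V\subset\iota(U)$. Given $\xi\in V$, the density of $\iota(M)$ in $\fu M=\ol{\iota(M)}$ yields a net $(q_\lambda)_{\lambda\in\Lambda}$ in $M$ with $\iota(q_\lambda)\to\xi$. Continuity of the $\chi$-coordinate then gives $\chi(q_\lambda)=j(\chi)(\iota(q_\lambda))\to j(\chi)(\xi)>1/2$, so eventually $q_\lambda$ lies in $\supp\chi\subset K$. By compactness of $K$ in $M$, a subnet $(q_{\lambda_\mu})$ converges to some $q\in K\subset U$; continuity of $\iota$ gives $\iota(q_{\lambda_\mu})\to\iota(q)$, and Hausdorffness of $\fu M$ forces $\xi=\iota(q)\in\iota(U)$, as required.

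The main subtlety I anticipate is that $\fu M$ is typically non-metrizable (cf.\ the footnote following Definition~\ref{DefbgCpt}), so the compactness argument in the last step must be formulated in terms of nets and subnets rather than sequences; beyond this, the remaining ingredients---producing a uniformly continuous compactly supported bump function from the metric $d$, and the interaction of the Gelfand topology on $\fu M$ with coordinate evaluations---are routine once the relative compactness of small $d$-balls has been secured through the compatibility of $d$ with $\fB$.
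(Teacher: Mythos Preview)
Your proof is correct and uses the same core idea as the paper's: produce a uniformly continuous bump function $\chi$ centered at $p$ (the paper works with $u$ playing the role of $1-\chi$) and take a superlevel set as a basic Gelfand-open neighborhood of $\iota(p)$. Your net-and-compactness argument establishing $V\subset\iota(U)$ (rather than merely $V\cap\iota(M)\subset\iota(U)$, which is all the paper's two-line proof writes out) is in fact more complete, and it is precisely this step that justifies the ``in particular'' clause that $\iota(M)$ is open in $\fu M$.
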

\begin{proof}
  Let $V\subset M$ be open and nonempty. Let $p\in V$. Pick $u\in\cC^0_{\rm uni}(M)$ so that $u(p)=0$ and $u|_{M\setminus V}\geq 1$. Then $\iota(V)$ contains the open subset $\{\tilde q\in[0,1]^C\colon|\tilde q_u|<\frac12\}\cap\iota(M)$ of $\iota(M)$.
\end{proof}

We may thus also drop $\iota$ from the notation and regard $M\subset\fu M$ as the dense interior of the compact space $\fu M$.

\begin{rmk}[Vanishing at the boundary]
\label{RmkbgVanish}
  Let $u\in\cC_{\rm uni}^0(M)$. Then $u|_{\pa(\fu M)}=0$ if and only if $u$ vanishes at infinity, i.e.\ for all $\eps>0$ there exists a compact set $K\subset M$ so that $|u|<\eps$ on $M\setminus K$. The direction `$\Longrightarrow$' follows from the continuity of $u\in\cC^0(\fu M)$, which yields an open neighborhood $U\subset\fu M$ so that $|u|<\eps$ on $U$, and thus $K:=\fu M\setminus U\subset M$ is a compact set with the desired property; the direction `$\Longleftarrow$' follows from $K\cap\pa(\fu M)=\emptyset$ for $K\subset M$, so $|u|_{\pa(\fu M)}|<\eps$ for all $\eps>0$.
\end{rmk}

The following result is stated in \cite[Theorem~1.1]{WoodsSamuelCompactification}; we give a self-contained proof for completeness. We write $\cl_{\fu M}(A)$ for the closure of $A$ in $\fu M$.

\begin{lemma}[Intersections in $\fu M$]
\label{LemmabgCap}
  Let $A,B\subset M$. Then $\cl_{\fu M}(A)\cap\cl_{\fu M}(B)\neq\emptyset$ if and only if $d(A,B)=0$.
\end{lemma}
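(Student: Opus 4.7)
My plan is to handle the two directions separately; the trivial case where $A$ or $B$ is empty (both sides then false) requires no argument.

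For the direction $(\Longleftarrow)$: if $d(A,B) = 0$, I would choose sequences $a_n \in A$ and $b_n \in B$ with $d(a_n, b_n) \to 0$, and exploit the compactness of $\fu M$ to extract (via Willard subnets, to be safe since $\fu M$ need not be sequentially compact) a subnet along which $a_{n_\gamma} \to \tilde{a} \in \cl_{\fu M}(A)$ and $b_{n_\gamma} \to \tilde{b} \in \cl_{\fu M}(B)$ simultaneously, with $n_\gamma$ eventually exceeding any fixed $N \in \N$. For any $u \in \cC^0_{\rm uni}(M)$, uniform continuity of $u$ with respect to $d$ forces $|u(a_n) - u(b_n)| \to 0$ as $n \to \infty$, and transferring this convergence along the cofinal subnet yields $u(\tilde{a}) = u(\tilde{b})$. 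Since $\cC^0_{\rm uni}(M)$ separates points of $\fu M$ (immediate from the Gelfand topology, or via the realization of $\fu M$ as a subset of $[0,1]^C$ from Proposition~\ref{PropbgAlt}), this forces $\tilde{a} = \tilde{b}$, exhibiting the required common point in $\cl_{\fu M}(A) \cap \cl_{\fu M}(B)$.

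For the direction $(\Longrightarrow)$: I would argue the contrapositive by constructing an explicit separating function. Assuming $d(A,B) = \delta > 0$, set $u(p) := \min(\delta^{-1} d(p, A), 1)$ where $d(p, A) := \inf_{a \in A} d(p, a)$. This distance function is $1$-Lipschitz by the triangle inequality, so $u$ is $\delta^{-1}$-Lipschitz with respect to $d$ and therefore uniformly continuous in the metric sense; by the equivalent characterization of uniform continuity recorded at the start of \S\ref{SsbgCpt}, this places $u \in \cC^0_{\rm uni}(M)$. By construction $u|_A \equiv 0$, and $u|_B \equiv 1$ since $d(b, A) \geq \delta$ for every $b \in B$. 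Under the identification~\eqref{EqbgId}, the continuous extension of $u$ to $\fu M$ must then vanish identically on $\cl_{\fu M}(A)$ and equal $1$ on $\cl_{\fu M}(B)$, so these closures are disjoint.

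I do not anticipate any serious obstacle; the only thing requiring care is the subnet argument in the first direction, since $\fu M$ is far from metrizable (it maps onto $\beta\Z$ already when $M = \R$) and so one cannot substitute sequential compactness for the Willard-subnet cofinality property that makes the transfer of $|u(a_n) - u(b_n)| \to 0$ legitimate.
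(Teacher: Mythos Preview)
Your proof is correct. The $(\Longrightarrow)$ direction (contrapositive) is essentially identical to the paper's: both build a separating function out of $d(\cdot,A)$ composed with a cutoff, then use the identification $\cC^0_{\rm uni}(M)\cong\cC^0(\fu M)$ to conclude that the continuous extension separates the closures.

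The $(\Longleftarrow)$ direction is where you diverge. The paper argues the contrapositive via Urysohn's lemma: if the closures are disjoint in the compact Hausdorff space $\fu M$, Urysohn produces $u\in\cC^0(\fu M)=\cC^0_{\rm uni}(M)$ with $u|_A=1$, $u|_B=0$, and then uniform continuity of $u$ with respect to $d$ forces $d(A,B)\geq\delta$ for some $\delta>0$. You instead prove the direct implication by a compactness/subnet argument, exhibiting an actual common accumulation point of the two approximating sequences. Your route is slightly more hands-on and makes the common point explicit, at the cost of having to handle the net machinery carefully (which you do). The paper's route is a bit slicker in that it treats both implications symmetrically through the single equivalence ``closures disjoint $\iff$ separating function exists,'' and sidesteps the non-metrizability of $\fu M$ entirely. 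Either argument is perfectly adequate here.
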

\begin{proof}
  By Urysohn's lemma, $\cl_{\fu M}(A)\cap\cl_{\fu M}(B)=\emptyset$ is equivalent to the existence of $u\in\cC^0(\fu M)$ with $u|_A=1$, $u|_B=0$. Given such a $u$, the uniform continuity of $u$ implies that there exists $\delta>0$ so that for all $p,q\in M$ with $d(p,q)<\delta$ one has $|u(p)-u(q)|<\frac12$, and thus $d(A,B)\geq\delta$. Conversely, suppose that $d(A,B)\geq\delta>0$, then we can construct a separating function $u$ as follows: in view of $|d(A,p)-d(A,q)|\leq d(p,q)$, we have $d(A,\cdot)\in\cC^0_{\rm uni}(M)$. Pick a continuous function $f\colon\R\to[0,1]$ which equals $1$ on $[0,\frac{\delta}{3}]$ and $0$ on $[\frac{2\delta}{3},\infty)$, then $u:=f\circ d(A,\cdot)\in\cC^0_{\rm uni}(M)$ equals $1$ on $A$ and $0$ on $B$, as required.
\end{proof}

To illustrate how one can work with $\fu M$, we now study the support of uniformly continuous functions. Another example illustrating the utility of the compactification $\fu M$ is given in~\eqref{EqbgSingSupp} below.

\begin{definition}[$\fu M$-support]
\label{DefbgSupp}
  Let $u\in\cC_{\rm uni}^0(M)$. Then its \emph{$\fu M$-support} is $\supp_{\fu M} u:=\supp j(u)\subset \fu M$.
\end{definition}

We have the usual properties
\[
  \supp_{\fu M}(u v) \subset \supp_{\fu M} u \cap \supp_{\fu M} v,\qquad
  \supp_{\fu M}(u+v) \subset \supp_{\fu M} u \cup \supp_{\fu M} v
\]
for $u,v\in\cC^0_{\rm uni}(M)$. Furthermore, one easily shows
\[
  \supp_{\fu M} u \subset M\quad\text{(i.e.\ $\supp_{\fu M} u\cap\pa(\fu M)=\emptyset$)} \iff u\in\cC_{\rm c}^0(M).
\]
Moreover, $\supp_{\fu M} u$ is equal to the closure of its interior $\{u\neq 0\}$; and in particular it does not have isolated points. Finally, note that $\supp u=M\cap\supp_{\fu M} u\subset M$ recovers the standard notion of the support of $u$ on $M$. Conversely, we have
\begin{equation}
\label{EqbgsuppuMCl}
  \supp_{\fu M} u = \cl_{\fu M}(\supp u)
\end{equation}
where $\cl_{\fu M}$ denotes the closure in $\fu M$. Indeed, the inclusion `$\supseteq$' is clear, and to prove `$\subseteq$', we note that if $p\in\supp_{\fu M} u$, then (by the density of $M\subset \fu M$) all open neighborhoods $V\subset \fu M$ of $p$ contain a point $p_V\in V\cap M$ with $u(p_V)\neq 0$ and thus $p_V\in\supp u$; so $p$ is an accumulation point of $\{p_V\}\subset\supp u$, and therefore $p\in\cl_{\fu M}(\supp u)$.

We also note the following testing definition of $\supp_{\fu M} u$: a point $p\in \fu M$ does \emph{not} lie in $\supp_{\fu M} u$ if and only if there exists $\chi\in\cC^0_{\rm uni}(M)$ with $\chi(p)\neq 0$ so that $\chi u=0$. For later purposes, we point out that one can use uniformly \emph{smooth} witnesses $\chi$ for the absence of support:

\begin{lemma}[Smooth witnesses]
\label{LemmabgSmooth}
  Let $u\in\cC^0_{\rm uni}(M)$. Then $p\in \fu M$ does not lie in $\supp_{\fu M} u$ if and only if there exists $\chi\in\CI_{\rm uni}(M)$ with $\chi(p)\neq 0$ and $\chi u=0$.
\end{lemma}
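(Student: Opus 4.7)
The reverse direction will be immediate from the testing characterization of $\supp_{\fu M}$ stated just before the lemma, since $\CI_{\rm uni}(M) \subset \cC^0_{\rm uni}(M)$. For the forward direction, my plan is to first produce a continuous witness $\tilde\chi \in \cC^0_{\rm uni}(M)$ via Urysohn and then regularize it to a smooth one, being careful not to sacrifice the global constraint $\chi u = 0$ in the process.

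To build $\tilde\chi$: since $\fu M$ is compact Hausdorff and $\{p\}$, $\supp_{\fu M} u$ are disjoint closed sets, Urysohn provides a continuous $\tilde\chi \colon \fu M \to [0,1]$ with $\tilde\chi(p) = 1$ and $\tilde\chi|_{\supp_{\fu M} u} = 0$. After composing with a continuous $[0,1]\to[0,1]$ equal to $0$ on $[0,1/3]$ and $1$ on $[2/3,1]$, I may further assume that $\tilde\chi$ equals $0$ on an $\fu M$-neighborhood of $\supp_{\fu M} u$ and $1$ on an $\fu M$-neighborhood of $p$. I then set $A := \{\tilde\chi \leq 1/3\} \cap M$ and $B := \{\tilde\chi \geq 2/3\} \cap M$, so $A \supset \supp u$, and the density of $M$ in $\fu M$ together with the fact that $\{\tilde\chi > 2/3\}$ is an open $\fu M$-neighborhood of $p$ gives $p \in \cl_{\fu M}(B)$. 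The Gelfand identification $\cC^0(\fu M) \cong \cC^0_{\rm uni}(M)$ from \eqref{EqbgId} makes $\tilde\chi|_M$ uniformly continuous, so there is $\delta > 0$ with $d(A,B) \geq \delta$.

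The main obstacle is the smoothing step, which must produce a function vanishing \emph{identically} (not just approximately) on a full $d$-neighborhood of $\supp u$; this rules out naive mollification. I plan to handle it by a partition-of-unity argument on a sufficiently fine refinement of $\fB$: by inspecting the proof of Lemma~\ref{LemmabgRefine} (with the constant $\tfrac14$ there replaced by a small enough positive number, or by finitely many iterations), I obtain a b.g.\ structure $\fB' = \{(U'_\alpha,\phi'_\alpha)\}$ strongly compatible with $\fB$ whose unit cells have $d$-diameter less than $\delta/2$. A standard bounded geometry partition of unity $\{\psi_\alpha\}$ subordinate to $\fB'$---built from a fixed $\chi_0 \in \CIc((-2,2)^n)$ with $\chi_0 \equiv 1$ on $[-1,1]^n$ via $\psi_\alpha := (\phi'_\alpha)^*\chi_0 / \sum_\beta (\phi'_\beta)^*\chi_0$, with uniform $\CI$ seminorms thanks to Definition~\ref{DefIBdd}\eqref{ItIBddFinite} and the uniform transition-function bounds---then lets me set
\[
  \chi := \sum_\alpha c_\alpha \psi_\alpha, \qquad c_\alpha := \begin{cases} 1, & U'_\alpha \cap A = \emptyset,\\ 0, & \text{otherwise}. \end{cases}
\]
Strong compatibility (Proposition~\ref{PropbgCompUniq}) places $\chi$ in $\CI_{\rm uni}(M;[0,1])$.

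The three verifications will then be routine. For $\chi u = 0$: if $q \in A$ and $\psi_\alpha(q) > 0$ then $q \in U'_\alpha \cap A$, forcing $c_\alpha = 0$, so $\chi|_A = 0$, and $\supp u \subset A$ concludes. For $\chi|_B \equiv 1$: if $q \in B$ and $\psi_\alpha(q) > 0$, then any $q' \in U'_\alpha \cap A$ would satisfy $d(q,q') \leq \mathrm{diam}(U'_\alpha) < \delta/2 < \delta$, contradicting $d(A,B) \geq \delta$; hence $c_\alpha = 1$ whenever $\psi_\alpha(q) > 0$, giving $\chi(q) = \sum_\alpha \psi_\alpha(q) = 1$. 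Finally, $\chi(p) = 1 \neq 0$ follows by continuity of the extension of $\chi$ to $\fu M$ and $p \in \cl_{\fu M}(B)$.
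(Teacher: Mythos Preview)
Your argument is correct. Both the reverse direction and the forward construction are sound: the Urysohn step and the separation $d(A,B)\geq\delta$ are clean, the refinement of $\fB$ to cells of $d$-diameter $<\delta/2$ is legitimate (it does require going into the proof of Lemma~\ref{LemmabgRefine} and shrinking the final cube side-length from $\tfrac12$ to $2c$ with $c$ small, but that modification is harmless since $c$ is fixed once and for all and the resulting transition maps remain uniformly $\CI$-bounded), and the three verifications at the end go through as written.

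The paper takes a different route. Rather than refining the bounded geometry structure, it keeps $\fB$ fixed and uses a \emph{mollification} argument: starting from a continuous witness $\chi$ (so $\chi u=0$, $\chi(p)\neq 0$), it picks a second continuous $\eta$ with $\eta(p)=1$ and $d(\supp\eta,\chi^{-1}(0))>0$, decomposes $\eta=\sum_\alpha(\phi_\alpha)^*\eta_\alpha$ via a partition of unity, and mollifies each piece to form $\eta_\eps=\sum_\alpha(\phi_\alpha)^*(\psi_\eps*\eta_\alpha)\in\CI_{\rm uni}(M)$. Convergence $\eta_\eps\to\eta$ in $\cC^0_{\rm uni}$ gives $\eta_\eps(p)\neq 0$ for small $\eps$, while the support growth estimate $d(\supp\eta_\eps,\chi^{-1}(0))\geq d(\supp\eta,\chi^{-1}(0))-C\eps>0$ yields $\eta_\eps u=(\eta_\eps/\chi)\cdot\chi u=0$. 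Your approach trades this analytic smoothing step for a combinatorial one (binary coefficients on a fine partition of unity), which is arguably more elementary but requires reaching inside the proof of the refinement lemma; the paper's approach stays within the original $\fB$ and introduces a reusable ``uniform mollification'' idiom.
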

\begin{proof}
  We use a smoothing procedure; the case $p\in M$ is clear, and thus we only consider the case $p\in\pa(\fu M)$, $p\notin\supp_{\fu M} u$. Let $\chi\in\cC^0_{\rm uni}(M)$ with $\chi(p)\neq 0$ and $\chi u=0$. Let $\eta\in\cC^0_{\rm uni}(M)=\cC^0(\fu M)$ be a function which equals $1$ at $p$ and whose $\fu M$-support is disjoint from $\chi^{-1}(0)$; thus $d(\supp\eta,\chi^{-1}(0))>0$ by Lemma~\ref{LemmabgCap}. We can write $\eta=\sum_\alpha (\phi_\alpha)^*\eta_\alpha$ where $\eta_\alpha\in\cC_{\rm c}^0((-2,2)^n)$, with $\supp\eta_\alpha\subset[-1,1]^n$ for all $\alpha$, is an equicontinuous family; and then taking $\psi_\eps(x)=\eps^{-n}\psi_1(x/\eps)$ to be a standard mollifier, with $\psi_1\in\CIc((-1,1)^n)$ and $\int\psi_1(x)\,\dd x=1$, we let $\eta_\eps:=\sum_\alpha (\phi_\alpha)^*(\psi_\eps*\eta_\alpha)$. Then $\eta_\eps\in\CI_{\rm uni}(M)$ converges to $\eta$ in $\cC^0_{\rm uni}(M)$ as $\eps\searrow 0$, and thus $\eta_\eps(p)\neq 0$ for sufficiently small $\eps>0$ (since $j(\eta_\eps)\to j(\eta)$ in $\cC^0(\fu M)$). Since $d(\supp\eta_\eps,\chi^{-1}(0))\geq d(\supp\eta,\chi^{-1}(0))-C\eps$ is positive for all sufficiently small $\eps>0$, this implies that $\eta_\eps u=\frac{\eta_\eps}{\chi}\cdot\chi u=0$ since $\frac{\eta_\eps}{\chi}\in\cC^0_{\rm uni}(M)$.
\end{proof}

\bigskip

We next describe how $\fu M$ relates to more practical (for analytic purposes) compactifications of $M$. We first record the following universal property of $\fu M$:

\begin{lemma}[Smaller compactifications]
\label{LemmabgUniv}
  Suppose $\bar M$ is a compact Hausdorff space so that $M$ is homeomorphically embedded into $\bar M$ as the interior $\bar M^\circ$; and suppose that restriction to $M$ induces a continuous map $r\colon\cC^0(\bar M)\to\cC^0_{\rm uni}(M)$. Then there exists a unique surjective continuous map $\upbeta\colon\fu M\to\bar M$ which is the identity over $M$ and has the property
  \[
    r(\cC^0(\bar M))=\{u\in\cC^0_{\rm uni}(M)\colon u\ \text{is constant on the fibers of}\ \upbeta\}.
  \]
\end{lemma}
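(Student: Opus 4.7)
The plan is to realize both $\fu M$ and $\bar M$ as Gelfand spectra and produce $\upbeta$ by dualizing $r$. Since $\bar M$ is compact Hausdorff, Gelfand--Naimark identifies it with $\sigma(\cC^0(\bar M))$ via evaluation $q\mapsto{\rm ev}_q$; and $\fu M=\sigma(\cC^0_{\rm uni}(M))$ by Definition~\ref{DefbgCpt}. The hypothesized restriction map $r$ is automatically a unital *-algebra homomorphism (constants restrict to constants, complex conjugation is preserved), so the map
\[
  \upbeta \colon \fu M \to \bar M, \qquad \phi \mapsto \phi\circ r,
\]
is well-defined and continuous for the Gelfand topologies. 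The next step will be to check that $\upbeta$ restricts to the identity on $M$ and is surjective.

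For $p\in M\subset\fu M$, the corresponding character is ${\rm ev}_p$, and $({\rm ev}_p\circ r)(f) = r(f)(p) = f(p)$ for all $f\in\cC^0(\bar M)$; so $\upbeta({\rm ev}_p)={\rm ev}_p$, which under the canonical identification is $p\in\bar M$. Surjectivity then follows since $\upbeta(\fu M)$ is compact, hence closed in the Hausdorff space $\bar M$, and contains $M$, which is dense in $\bar M$ (as the interior of a compactification).

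For the characterization of $r(\cC^0(\bar M))$, the inclusion ``$\subseteq$'' is immediate: if $u=r(f)$ and $\upbeta(\phi)=\upbeta(\psi)$, then $\phi(u)=\phi(r(f))=\psi(r(f))=\psi(u)$. For the reverse inclusion I will use that a continuous surjection from a compact space to a Hausdorff space is a quotient map. Given $u\in\cC^0_{\rm uni}(M)$ constant on the fibers of $\upbeta$, the formula $\tilde u(\upbeta(\phi)):=\phi(u)$ well-defines $\tilde u\colon\bar M\to\C$, and $\tilde u\circ\upbeta$ coincides with the continuous Gelfand transform $\phi\mapsto\phi(u)$ on $\fu M$; the quotient-map property then yields $\tilde u\in\cC^0(\bar M)$. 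Finally, evaluating at $p\in M$ gives $r(\tilde u)(p)=\tilde u(p)=({\rm ev}_p)(u)=u(p)$, so $r(\tilde u)=u$.

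Uniqueness is the easiest step: any continuous map $\fu M\to\bar M$ extending the identity on $M$ is determined on the dense subset $M\subset\fu M$ (Lemma~\ref{LemmabgMsM}), hence everywhere since $\bar M$ is Hausdorff. The main technical point is the quotient-map descent of the Gelfand transform in the third paragraph; everything else is formal manipulation of spectra and evaluation characters, and the only topological input beyond standard Gelfand duality is the density of $M$ in both $\fu M$ and $\bar M$.
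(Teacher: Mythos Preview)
Your proof is correct and follows essentially the same approach as the paper: both define $\upbeta$ as the dual $r^*$ on Gelfand spectra, obtain surjectivity from compactness plus density of $M$, and use that $\upbeta$ is a quotient map (the paper phrases this as $\bar M\cong\fu M/\!\sim$ via a continuous bijection of compact Hausdorff spaces) to descend the Gelfand transform and characterize $r(\cC^0(\bar M))$. Your presentation is slightly more explicit about verifying that $\upbeta$ is the identity on $M$ and about the ``$\subseteq$'' inclusion, but the underlying argument is the same.
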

\begin{proof}
  The map $r$ induces a continuous map of Gelfand spectra $\upbeta\colon\fu M=\sigma(\cC^0_{\rm uni}(M))\to\sigma(\cC^0(\bar M))\cong\bar M$ by mapping $\phi\colon\cC_{\rm uni}^0(M)\to\C$ to the map $r^*\phi\colon\cC^0(\bar M)\ni u\mapsto\phi(r(u))$; that is, $\upbeta=r^*$. Since $\upbeta(\fu M)\subset\bar M$ is compact, being the continuous image of a compact set, and since $\upbeta(M)=M$ is dense, we must have $\upbeta(\fu M)=\bar M$, i.e.\ $\upbeta$ is surjective. We claim that this map $\upbeta$ satisfies the desired conclusions. The uniqueness of $\upbeta$ follows from the required continuity of $\upbeta$ and the density of $M$ in $\fu M$.

  We claim that $\bar M$ carries the quotient topology of $\fu M/\sim$ where $p\sim q$ if and only if $\upbeta(p)=\upbeta(q)$. Since $\{(p,q)\in\fu M\times\fu M\colon\upbeta(p)=\upbeta(q)\}$ (the graph of $\sim$) is closed, $\fu M/\sim$ is a compact Hausdorff space \cite[\S{10}, no.~4, Proposition 8(a), (d)]{BourbakiTopology1234}. But the continuous map $\fu M\to\bar M$ factors through a continuous map $\fu M/\sim\,\to\bar M$, which is a continuous bijection of compact Hausdorff spaces and thus a homeomorphism, as desired.

  Regarding $\cC^0(\bar M)\subset\cC^0(\fu M)$ via the isometry $r$, a continuous map $u\colon\fu M\to\C$ is constant on the fibers of $\upbeta\colon\fu M\to\bar M$ if and only if it factors through a continuous map $u'\colon\bar M\to\C$, i.e.\ $u=u'\circ\upbeta$; but for $p\in M$, this means $u(p)=u'(p)$, so this is in turn equivalent to $u\in\cC^0(\bar M)$.
\end{proof}

For $u\in\cC^0_{\rm uni}(M)$, we can then consider its support as a subset of $\bar M$,
\begin{equation}
\label{EqbgSuppbarM}
  \supp_{\bar M}u := \upbeta(\supp_{\fu M}u).
\end{equation}
Equivalently, if $p\in\bar M$, then
\begin{equation}
\label{EqbgSuppbarMTest}
  p\notin\supp_{\bar M}(u)\ \iff \exists\,\chi\in\cC^0(\bar M),\ \chi(p)\neq 0,\ \text{such that}\ \chi u=0.
\end{equation}
Indeed, such a function $\chi$ restricts to $M$ as an element of $\cC^0_{\rm uni}(M)$ which does not vanish anywhere on $\upbeta^{-1}(p)$, so $\supp_{\fu M}u\cap\upbeta^{-1}(p)=\emptyset$ or equivalently $p\notin\upbeta(\supp_{\fu M}u)$. Conversely, if $p$ does not lie in the compact set $\upbeta(\supp_{\fu M}u)$, then there exists an open neighborhood $U\subset\bar M$ of $p$ disjoint from $\upbeta(\supp_{\fu M}u)$, and thus for any $\chi\in\cC^0(\bar M)$ with support in $U$ we have $\chi u=0$; choosing $\chi$ to be nonzero at $p$ finishes the proof of~\eqref{EqbgSuppbarMTest}. As a consequence, the definition~\eqref{EqbgSuppbarM} of $\supp_{\bar M}u$ coincides for $u\in\cC^0(\bar M)$ with the standard notion of support.

In the case that $\bar M$ is a smooth manifold with corners---that is, each point $p\in\bar M$ has a neighborhood diffeomorphic to $[0,\infty)^k\times\R^{n-k}$ where $k$ depends on $p$---one can use $\chi\in\CI(\bar M)$ in~\eqref{EqbgSuppbarMTest}. (For manifolds with corners, one often requires the boundary hypersurfaces of $M$ to be embedded \cite{MelroseDiffOnMwc}, but this is not needed here.)

\begin{example}[Supports on compactifications of $\R^n$]
\label{ExbgSuppRn}
  We consider the bounded geometry structure~\eqref{EqIScBdd} on $\R^2$. We write points in $\R^2$ as $(x,y)$. Consider a function $u(x,y)=\phi(x)\chi(y-y_0)$ where $\phi\in\CI(\R)$ equals $1$ on $[0,\infty)$ and $0$ on $(-\infty,-1]$, and $\chi\in\CIc(\R)$ has non-empty support $K:=\supp\chi\neq\emptyset$; we are interested in its support properties `at infinity'. We consider three compactifications of $\R^2$ to smooth manifolds (with or without boundary or corners).
  \begin{enumerate}
  \item In the one-point compactification $\bar M_1=\R^2\sqcup\{\infty\}=\Sph^2$ of $\R^2$, we have $\supp_{\bar M_1}u=\supp_{\R^2}u\cup\{\infty\}$.
  \item In the radial compactification $\bar M_2=(\R^2\sqcup([0,\infty)_\rho\times\Sph^1_\omega))/\sim$, $0\neq x=r\omega\sim(r^{-1},\omega)$ (which is a manifold with boundary $\pa\bar M_2=\Sph^1$), $\supp_{\bar M_2}u=\supp_{\R^2}u\cup\{E\}$ where $E\in\pa\bar M_2$ is the point with $\rho=0$, $\omega=(1,0)\in\Sph^1$.
  \item In the product compactification $\bar M_3=\ol\R\times\ol\R$ where $\ol\R=\R\cup\{-\infty,+\infty\}$, $\supp_{\bar M_3}u=\supp_{\R^2}u\cup(\{+\infty\}\times K)$.
  \end{enumerate}
  The conceptual benefit of working with $\fu\R^2$ is that it includes these cases (and infinitely many others) as special cases in the sense of~\eqref{EqbgSuppbarM}.
\end{example}

\bigskip

We end this section by pointing out that an advantage of compact spaces is that the validity of a local property near every point implies its global uniform validity. For example, let us define for $u\in\cC^0_{\rm uni,\fB}(M)$ its $\cC^k_{{\rm uni},\fB}(M)$-singular support by its complement
\begin{equation}
\label{EqbgSingSupp}
  \fu M\setminus \singsupp_{\cC^k_{{\rm uni},\fB}(M)}u = \bigl\{ p\in\fu M \colon \exists\,\chi\in\CI_{\rm uni}(M),\ \chi(p)\neq 0\ \text{s.t.}\ \chi u\in\cC^k_{{\rm uni},\fB}(M) \bigr\}.
\end{equation}
Then $\singsupp_{\cC^k_{{\rm uni},\fB}(M)}u=\emptyset$ if and only if $u\in\cC^k_{{\rm uni},\fB}(M)$. (By contrast, merely having $\singsupp_{\cC^k}u=\emptyset$, i.e.\ for all $p\in M$ there exists $\chi\in\CI(M)$, $\chi(p)\neq 0$, with $\chi u\in\cC^k$, does \emph{not} imply $u\in\cC^k_{{\rm uni},\fB}(M)$ when $k\geq 1$, due to a lack of uniform control on derivatives of $u$.)

\section{Scaled bounded geometry structures}
\label{SSB}

We now develop the theory of (parameterized) scaled b.g.\ structures (Definitions~\ref{DefISB} and \ref{DefIPSB}) and of the associated classes of (pseudo)differential operators and weighted function spaces in detail. The main results in this section are as follows.
\begin{itemize}
\item \S\ref{SsSBComp}: we show that the operator Lie algebra $\cV$ of a scaled b.g.\ structure $\fB_\times$ essentially determines $\fB_\times$; see Proposition~\ref{PropSBCompUniq}.
\item \S\ref{SsSB2}: we describe a secondary bounded geometry structure (corresponding to the notion of regularity induced by the operator Lie algebra) in Proposition~\ref{PropSB2}.
\item \S\ref{SsSBD}: we study $\cV$-differential operators and their principal symbols; see in particular Theorem~\ref{ThmSBDSymb}.
\item \S\ref{SsSBH}: we introduce weighted $\cV$-Sobolev spaces, and versions thereof with additional $\cW$-regularity. Differential operators act boundedly between these spaces.
\item \S\ref{SsSBPsdo}: the heart of the paper; we define $\cV$-pseudodifferential operators in Definition~\ref{DefSBPsdoV}, prove their mapping properties (Theorem~\ref{ThmSBPsdoH}), and develop their principal symbol calculus (Theorem~\ref{ThmSBPsdoComp}).
\item \S\ref{SsSBWF}: we define Sobolev wave front sets adapted to the scaled b.g.\ structure using the $\cV$-calculus.
\item \S\ref{SsSBPar}: we discuss the generalization of these notions to \emph{parameterized} scaled b.g.\ structures.
\end{itemize}

\subsection{Compatibility and uniqueness}
\label{SsSBComp}

We use the notation $\fB_\times$, $\fB$, $\cW$, $\cV$, $\cV$' from Definition~\ref{DefISB}, and recall $\tau_{\alpha\beta}=\phi_\alpha\circ\phi_\beta^{-1}$. The Lie algebra property of the space $\cV'$ is proved similarly to that of $\cW=\CI_{{\rm uni},\fB}(M;T M)$ (see~\S\ref{SsbgComp}). The properties of $\cV$ are slightly more subtle. We remark that for $\chi\in\CIc((-2,2)^n)$, we have $\phi_\alpha^*(\chi\,\rho_{\alpha,i}\pa_i)\in\cV$ for all $\alpha\in\sA$, $i=1,\ldots,n$; indeed,
\[
  (\phi_\beta)_*\phi_\alpha^*(\chi\,\rho_{\alpha,i}\pa_i)=(\tau_{\alpha\beta}^*\chi)\sum_{j=1}^n\rho_{\alpha,i}(\pa_i\tau_{\beta\alpha}^j)\pa_j = \sum_{j=1}^n V_\beta^j\rho_{\beta,j}\pa_j,\qquad V_\beta^j=(\tau_{\alpha\beta}^*\chi)\frac{\rho_{\alpha,i}\pa_i\tau_{\beta\alpha}^j}{\rho_{\beta,j}},
\]
and the smoothness of $\tau_{\alpha\beta}^*\chi$ together with the condition~\eqref{EqISB} gives the required uniform $\CI$ bounds of $V_\beta^j$. For the proof that $\cV$ is a Lie algebra, let $X,Y\in\cV$ and let $\chi\in\CIc((-2,2)^n)$; then, writing $(\phi_\alpha)_*X=\sum_{i=1}^n X^i\rho_{\alpha,i}\pa_i$ and $(\phi_\alpha)_*Y=\sum_{j=1}^n Y^j\rho_{\alpha,j}\pa_j$, we have
\[
  \chi\cdot(\phi_\alpha)_*[X,Y]=\chi\cdot[(\phi_\alpha)_*X,(\phi_\alpha)_*Y] = \chi\sum_{i,j=1}^n \bigl(X^i\rho_{\alpha,i}(\pa_i Y^j) - Y^i\rho_{\alpha,i}(\pa_i X^j)\bigr)\rho_{\alpha,j}\pa_j;
\]
note then that the expression in parentheses is uniformly bounded in $\CI$.

We shall now explain in what sense $\cV$ determines the underlying scaled b.g.\ structure.

\begin{definition}[Compatibility of scaled b.g.\ structures]
\label{DefSBComp}
  Two scaled b.g.\ structures $\fB_\times=\{(U_\alpha,\phi_\alpha,\rho_\alpha)\}$ and $\tilde\fB_\times=\{(\tilde U_{\tilde\alpha},\tilde\phi_{\tilde\alpha},\tilde\rho_{\tilde\alpha})\}$ are \emph{compatible} if the b.g.\ structures $\{(U_\alpha,\phi_\alpha)\}$ and $\{(\tilde U_{\tilde\alpha},\tilde\phi_{\tilde\alpha})\}$ are compatible and if, moreover, there exist constants $C_\gamma<\infty$ so that the transition functions $\tau_{\alpha\tilde\alpha}:=\phi_\alpha\circ\tilde\phi_{\tilde\alpha}^{-1}$ and $\tau_{\tilde\alpha\alpha}:=\tilde\phi_{\tilde\alpha}\circ\phi_\alpha^{-1}$ satisfy
  \begin{equation}
  \label{EqSBComp}
    |\pa^\gamma\tilde\rho_{\tilde\alpha,i}\pa_i\tau_{\alpha\tilde\alpha}^j| \leq C_\gamma\rho_{\alpha,j},\quad
    |\pa^\gamma\rho_{\alpha,i}\pa_i\tau_{\tilde\alpha\alpha}^j| \leq C_\gamma\tilde\rho_{\tilde\alpha,j}\qquad \forall\, i,j=1,\ldots,n,\ \ \gamma\in\N_0^n,
  \end{equation}
  on $\tilde\phi_{\tilde\alpha}(U'_\alpha\cap\tilde U'_{\tilde\alpha})$ and $\phi_\alpha(U'_\alpha\cap\tilde U'_{\tilde\alpha})$ in the notation of Definition~\usref{DefbgComp} for all $\alpha,\tilde\alpha$.
\end{definition}

\begin{prop}[Uniqueness of scaled b.g.\ structures]
\label{PropSBCompUniq}
  Let $\fB_\times$ and $\tilde\fB_\times$ be two scaled b.g.\ structures on $M$. Then the operator Lie algebras $\cV$ and $\tilde\cV$ of $\fB_\times$ and $\tilde\fB_\times$ are equal if and only if $\fB_\times$ and $\tilde\fB_\times$ are compatible. In particular, $\cV=\tilde\cV$ implies the equality also of the coefficient Lie algebras and large operator Lie algebras of $\fB_\times$ and $\tilde\fB_\times$.
\end{prop}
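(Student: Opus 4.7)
My plan is to prove the biconditional in two directions and derive the ``in particular'' clause as a byproduct.

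For the forward direction, assuming compatibility, I would show $\cV\subseteq\tilde\cV$ by direct chain-rule computation. Given $V\in\cV$ with $(\phi_\alpha)_*V=\sum_i V_\alpha^i\,\rho_{\alpha,i}\pa_i$ and $V_\alpha^i$ uniformly bounded in $\CI$, the change of coordinates $y=\tau_{\alpha\tilde\alpha}(\tilde x)$ expresses the $j$-th component of $(\tilde\phi_{\tilde\alpha})_*V$ as $\sum_i (V_\alpha^i\circ\tau_{\alpha\tilde\alpha})\bigl((\rho_{\alpha,i}\pa_i\tau_{\tilde\alpha\alpha}^j)\circ\tau_{\alpha\tilde\alpha}\bigr)$. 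By~\eqref{EqSBComp}, the second factor factors as $\tilde\rho_{\tilde\alpha,j}$ times a uniformly $\CI$-bounded function of $y$, and compatibility of the underlying b.g.\ structures (entailed by compatibility of $\fB_\times,\tilde\fB_\times$) ensures that composition with $\tau_{\alpha\tilde\alpha}$ preserves uniform $\CI$-bounds in $\tilde x$. Since every point of $\tilde U'_{\tilde\alpha}$ lies in some $\phi_\alpha^{-1}((-1,1)^n)$ with constants uniform in $(\alpha,\tilde\alpha)$, these local bounds glue to a global one, giving $V\in\tilde\cV$; symmetry handles the reverse inclusion.

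For the converse, my first step is to derive compatibility of the underlying b.g.\ structures, which rests on the identity
\[
  \cF(\cV) := \{u\in\CI(M) : uV\in\cV\ \forall\,V\in\cV\} = \CI_{{\rm uni},\fB}(M).
\]
The inclusion $\supseteq$ is immediate from the coordinate formula; for $\subseteq$ I argue by contrapositive, mimicking the proof of Proposition~\ref{PropbgCompUniq}. If $u\notin\CI_{{\rm uni},\fB}(M)$, there exist a multiindex $\beta$, indices $\alpha_k\in\sA$, and points $x_k\in[-1,1]^n$ with $|\pa^\beta(\phi_{\alpha_k}^{-1})^*u(x_k)|\to\infty$; Definition~\ref{DefIBdd}\eqref{ItIBddFinite} allows me to pass to a subsequence with pairwise disjoint $U_{\alpha_k}$. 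Then $V:=\sum_k\phi_{\alpha_k}^*(\chi\rho_{\alpha_k,1}\pa_1)$, with $\chi\in\CIc((-2,2)^n)$ equal to $1$ on $[-1,1]^n$, lies in $\cV$ (the chart expressions being uniformly bounded via~\eqref{EqISB}), but $uV\notin\cV$, since in chart $\phi_{\alpha_k}$ the rescaled coefficient of $\rho_{\alpha_k,1}\pa_1$ is $(\phi_{\alpha_k}^{-1})^*u\cdot\chi$, whose $\cC^{|\beta|}$-norms are unbounded. Applied to both scaled structures, $\cV=\tilde\cV$ now yields $\CI_{{\rm uni},\fB}=\CI_{{\rm uni},\tilde\fB}$, hence compatibility of $\fB,\tilde\fB$ (and $\cW=\tilde\cW$) by Proposition~\ref{PropbgCompUniq}\eqref{ItbgCompUniqFn}.

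For the second step, I would apply $\cV=\tilde\cV$ to the test fields $V_{\alpha,i}:=\phi_\alpha^*(\chi\rho_{\alpha,i}\pa_i)$ with $\chi\equiv 1$ on $[-\tfrac32,\tfrac32]^n$. Their $j$-th coefficient $\rho_{\alpha,i}(\chi\circ\tau_{\alpha\tilde\alpha})(\pa_i\tau_{\tilde\alpha\alpha}^j\circ\tau_{\alpha\tilde\alpha})$ in $\tilde\phi_{\tilde\alpha}$-coordinates must equal $\tilde\rho_{\tilde\alpha,j}\tilde g$ for some $\tilde g$ uniformly $\CI$-bounded in $\tilde x$. Restricting to $\tilde\phi_{\tilde\alpha}(U'_\alpha\cap\tilde U'_{\tilde\alpha})$ eliminates the cutoff, and transporting the resulting estimates back to $y$-coordinates via the chain rule---using the uniform $\CI$-bounds on $\tau_{\tilde\alpha\alpha}$ now available from the first step---yields $|\pa_y^\gamma\rho_{\alpha,i}\pa_i\tau_{\tilde\alpha\alpha}^j|\leq C_\gamma\tilde\rho_{\tilde\alpha,j}$; the other half of~\eqref{EqSBComp} follows by symmetry. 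The ``in particular'' clause is then immediate: $\cW=\tilde\cW$ has already been derived, and $\cV'=\tilde\cV'$ follows from an analogous chain-rule computation using~\eqref{EqSBComp} to express each $\tilde\rho_{\tilde\alpha,i}\pa_{\tilde i}$ as a $\CI$-bounded linear combination of $\rho_{\alpha,k}\pa_k$ (and vice versa), which converts the derivative conditions defining $\cV'$ into those defining $\tilde\cV'$. The main obstacle of the whole proof is the characterization of $\cF(\cV)$: designing a single $V\in\cV$ that exhibits non-uniform behavior of $u$ across all charts simultaneously is the key combinatorial input.
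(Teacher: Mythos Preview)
Your strategy matches the paper's: characterize $\CI_{{\rm uni},\fB}(M)$ as the multiplier algebra of $\cV$ (yielding compatibility of $\fB,\tilde\fB$ via Proposition~\ref{PropbgCompUniq}), then probe with $V_{\alpha,i}=\phi_\alpha^*(\chi\rho_{\alpha,i}\pa_i)$ to extract~\eqref{EqSBComp}. The forward direction, which the paper omits, and the ``in particular'' clause are handled adequately.

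There is, however, a gap in your second step. From $V_{\alpha,i}\in\cV=\tilde\cV$ you infer that the rescaled coefficient $\tilde g$ in $\tilde\phi_{\tilde\alpha}$-coordinates is ``uniformly $\CI$-bounded,'' and then deduce $|\pa^\gamma\rho_{\alpha,i}\pa_i\tau_{\tilde\alpha\alpha}^j|\leq C_\gamma\tilde\rho_{\tilde\alpha,j}$ with $C_\gamma$ independent of $(\alpha,\tilde\alpha)$. But membership of a \emph{single} vector field in $\tilde\cV$ gives bounds uniform only in $\tilde\alpha$; nothing in your argument controls the dependence on $\alpha$ (and $i$). The paper closes this by observing that $\{V_{\alpha,i}\}_{\alpha,i}$ is a bounded family in the Fr\'echet space $\cV$, and that the identity map $\cV\to\tilde\cV$ is continuous by the closed graph theorem---so the family is bounded in $\tilde\cV$ as well, which is precisely the required uniformity in $\alpha$. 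An alternative fix, in the spirit of your first step, would be to use Lemma~\ref{LemmabgP} to package the $V_{\alpha,i}$ into finitely many global elements of $\cV$.

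A minor point on the first step: passage to pairwise disjoint $U_{\alpha_k}$ via Definition~\ref{DefIBdd}\eqref{ItIBddFinite} requires the case distinction (constant subsequence if some index repeats infinitely often) from the proof of Proposition~\ref{PropbgCompUniq}.
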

\begin{proof}
  We only prove one direction, namely that $\cV=\tilde\cV$ implies the compatibility of $\fB_\times$ and $\tilde\fB_\times$. Write $\fB$ and $\tilde\fB$ for the b.g.\ structures underlying $\fB_\times$ and $\tilde\fB_\times$. Following the arguments after~\eqref{EqbgCompUniqCI}, one shows, now using $V=\phi_\alpha^*(\chi\cdot\rho_{\alpha,1}\pa_1)$ or $V=\sum_i\phi_{\alpha_i}^*(\chi\cdot\rho_{\alpha_i,1}\pa_1)$ in the final step of the argument by contradiction, that
  \[
    \CI_{{\rm uni},\fB}(M) = \{ u\in\CI(M) \colon u V\in\cV\ \forall\,V\in\cV \}.
  \]
  Since the space on the right is unchanged when replacing $\cV$ by $\tilde\cV$, in which case it equals $\CI_{{\rm uni},\tilde\fB}(M)$, we deduce $\CI_{{\rm uni},\fB}(M)=\CI_{{\rm uni},\tilde\fB}(M)$ and thus the compatibility of $\fB$ and $\tilde\fB$ by Proposition~\ref{PropbgCompUniq}\eqref{ItbgCompUniqFn}.

  In order to establish~\eqref{EqSBComp}, let $\chi\in\CIc((-2,2)^n)$ be equal to $1$ on $[-\frac32,\frac32]^n$, and consider $V_{\alpha,i}:=\phi_\alpha^*(\chi\cdot\rho_{\alpha,i}\pa_i)$; this is a bounded family of elements of $\cV$, where we equip $\cV$ with the Fr\'echet space structure given by the $\cC^k$-seminorms of the coefficients $V^i$ in the local coordinate description $\chi\cdot(\phi_\alpha)_*V=\sum_{i=1}^n V^i\rho_{\alpha,i}\pa_i$. As in the proof of Proposition~\ref{PropbgCompUniq}, the closed graph theorem implies the boundedness of $V_{\alpha,i}$ in $\tilde\cV$, which means in the notation of Definition~\ref{DefSBComp} that
  \[
    (\tilde\phi_{\tilde\alpha})_*V_{\alpha,i}=(\tau_{\tilde\alpha\alpha})_*(\chi\cdot\rho_{\alpha,i}\pa_i) = \sum_{j=1}^n V_{\tilde\alpha,\alpha,i}^j \tilde\rho_{\tilde\alpha,j}\pa_j
  \]
  where the $V_{\tilde\alpha,\alpha,i}^j$ are uniformly bounded in $\CI$. In particular, this implies that there exists $C<\infty$ so that on $[-\frac32,\frac32]^n$ we have
  \[
    |\rho_{\alpha,i}\pa_i\tau_{\tilde\alpha\alpha}^j|=|V_{\tilde\alpha,\alpha,i}^j\tilde\rho_{\tilde\alpha,j}|\leq C\tilde\rho_{\tilde\alpha,j}
  \]
  for all $\alpha,\tilde\alpha,i,j$, proving the second inequality in~\eqref{EqSBComp} for $\gamma=0$; and the uniform boundedness in $\CI$ of $V_{\tilde\alpha,\alpha,i}^j$ in fact gives~\eqref{EqSBComp} for all $\gamma$. The first inequality follows by exchanging the roles of $\fB_\times$ and $\tilde\fB_\times$.
\end{proof}

\subsection{Secondary bounded geometry structures}
\label{SsSB2}

We briefly address the change of perspective explained prior to Definition~\ref{DefISB}. Let $\fB_\times=\{(U_\alpha,\phi_\alpha,\rho_\alpha)\colon\alpha\in\sA\}$ be a scaled b.g.\ structure. For $D\in\N$ to be determined, and for each $\alpha\in\sA$, define the following objects:
\begin{itemize}
\item $D_{\alpha,i}\in\N$, the largest integer with $\rho_{\alpha,i}D_{\alpha,i}\leq 1$, or $D_{\alpha,i}=4$, whichever is larger;
\item for each $n$-tuple $k=(k_1,\ldots,k_n)$ of integers $0\leq k_i\leq 3 D\cdot D_{\alpha,i}-4$, set $I_{\alpha,k,i}=(-\frac32+\frac{k_i}{D\cdot D_{\alpha,i}},-\frac32+\frac{k_i+4}{D\cdot D_{\alpha,i}})$, denote by $\Psi_{\alpha,k,i}\colon I_{\alpha,k,i}\to(-2,2)$ the monotone affine linear diffeomorphism, and set
  \[
    U_{\alpha,k} := \phi_\alpha^{-1}\Biggl(\prod_{i=1}^n I_{\alpha,k,i}\Biggr),\qquad
    \phi_{\alpha,k} := \Biggl(\prod_{i=1}^n\Psi_{\alpha,k,i}\Biggr)\circ\phi_\alpha \colon U_{\alpha,k}\to(-2,2)^n;
  \]
\item $\rho\fB:=\{(U_{\alpha,k},\phi_{\alpha,k})\colon\alpha\in\sA,\ k\in\prod_{i=1}^n\{0,1,2,\ldots,D_{\alpha,i}-4\}\}$.
\end{itemize}

We refer back to the right hand side of Figure~\ref{FigISB} for an illustration of $\rho\fB$.

\begin{prop}[Secondary b.g.\ structure]
\label{PropSB2}
  In the above notation, $\rho\fB$ is a b.g.\ structure provided we choose $D$ large enough. If $\cV$, resp.\ $\cV'$ denote the operator, resp.\ large operator Lie algebra of $\fB_\times$, then
  \begin{equation}
  \label{EqSB2}
    \cV' = \CI_{{\rm uni},\rho\fB}(M;T M) = \cV \otimes_{\CI_{{\rm uni},\fB}(M)} \CI_{{\rm uni},\rho\fB}(M).
  \end{equation}
\end{prop}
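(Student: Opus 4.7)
The plan has three steps: verify the b.g.\ axioms for $\rho\fB$; identify $\cV'$ with $\CI_{{\rm uni},\rho\fB}(M;T M)$ via a direct push-forward computation in the charts $\phi_{\alpha,k}$; and decompose any $V\in\cV'$ as a $\CI_{{\rm uni},\rho\fB}(M)$-linear combination of elements of $\cV$ through a partition-of-unity argument.

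\textbf{Step 1 (b.g.\ structure).} A preliminary observation is that, since $D_{\alpha,i}\geq 4$ and $D_{\alpha,i}=\max(\lfloor\rho_{\alpha,i}^{-1}\rfloor,4)$, we have the uniform equivalence $\rho_{\alpha,i}D D_{\alpha,i}\in(D/2,\,4D]$, which I will use repeatedly. The finite intersection property is inherited from $\fB$: within each $U_\alpha$, at most $4^n$ of the subcubes $U_{\alpha,k}$ can contain a given point (each coordinate of a point in $(-\tfrac32,\tfrac32)$ lies in at most $4$ of the overlapping intervals $I_{\alpha,k,i}$), so at most $A\cdot 4^n$ cells of $\rho\fB$ cover any point. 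The covering property follows since the union of middle halves of consecutive $I_{\alpha,k,i}$ contains $[-\tfrac32+\tfrac{1}{DD_{\alpha,i}},\tfrac32-\tfrac{1}{DD_{\alpha,i}}]\supseteq[-1,1]$ (using $DD_{\alpha,i}\geq 2$), so $M=\bigcup\phi_\alpha^{-1}((-1,1)^n)\subset\bigcup\phi_{\alpha,k}^{-1}((-1,1)^n)$. For the uniform $\CI$-bounds on transition functions $\Phi:=\phi_{\beta,l}\circ\phi_{\alpha,k}^{-1}=\Psi_{\beta,l}\circ\tau_{\beta\alpha}\circ\Psi_{\alpha,k}^{-1}$, the central computation reads
\[
  \pa_{y^{i_1}}\cdots\pa_{y^{i_m}}\Phi^j = \frac{D D_{\beta,j}}{\prod_\ell D D_{\alpha,i_\ell}}\,\bigl(\pa_{i_1}\cdots\pa_{i_m}\tau^j_{\beta\alpha}\bigr)\circ\Psi_{\alpha,k}^{-1}.
\]
Writing $\pa_{i_1}\cdots\pa_{i_m}\tau^j_{\beta\alpha}=\rho_{\alpha,i_m}^{-1}\pa_{i_1}\cdots\pa_{i_{m-1}}(\rho_{\alpha,i_m}\pa_{i_m}\tau^j_{\beta\alpha})$ and invoking~\eqref{EqISB} to bound the parenthesized term by $C_{m-1}\rho_{\beta,j}$, together with $\rho_{\alpha,i}D D_{\alpha,i}\sim D$, this yields a bound of the form $C_\gamma D^{1-|\gamma|}$, uniformly in $\alpha,\beta,k,l$; choosing $D$ sufficiently large makes all of these uniformly small (for $|\gamma|\geq 1$) and, for $|\gamma|=0,1$, a similar direct estimate gives uniform boundedness.

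\textbf{Step 2 (identification $\cV'=\CI_{{\rm uni},\rho\fB}(M;TM)$).} For $V\in\cW$ with $(\phi_\alpha)_*V=\sum_i V_\alpha^i\,\rho_{\alpha,i}\pa_{x^i}$, the push-forward under the additional affine rescaling $\Psi_{\alpha,k}$ (of slope $D D_{\alpha,i}$ in the $i$-th coordinate) gives
$(\phi_{\alpha,k})_*V=\sum_i\tilde V^i_{\alpha,k}\pa_{y^i}$ with $\tilde V^i_{\alpha,k}(y)=(\rho_{\alpha,i}D D_{\alpha,i})\,V^i_\alpha(\Psi_{\alpha,k}^{-1}(y))$. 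Differentiating in $y$ and substituting $\pa_{x^j}=\rho_{\alpha,j}^{-1}(\rho_{\alpha,j}\pa_{x^j})$ yields
\[
  \pa_y^\gamma\tilde V^i_{\alpha,k}(y) = (\rho_{\alpha,i}D D_{\alpha,i})\,\prod_{j=1}^n(\rho_{\alpha,j}D D_{\alpha,j})^{-\gamma_j}\,\bigl((\rho_{\alpha,1}\pa_1)^{\gamma_1}\!\cdots(\rho_{\alpha,n}\pa_n)^{\gamma_n}V^i_\alpha\bigr)(\Psi_{\alpha,k}^{-1}(y)).
\]
Since the leading factors lie uniformly in a fixed compact subset of $(0,\infty)$, the uniform $\CI$-boundedness of the $\tilde V^i_{\alpha,k}$ in $\alpha,k$ is equivalent to the uniform boundedness of $V^i_\alpha$ together with all of its $(\rho_\alpha\pa)^\gamma$-derivatives in $\alpha$, which is precisely the condition defining $\cV'$.

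\textbf{Step 3 (tensor product) and main obstacle.} Since $\rho_{\alpha,i}\leq 1$, we have $\CI_{{\rm uni},\fB}(M)\subset\CI_{{\rm uni},\rho\fB}(M)$, so $\cV\otimes_{\CI_{{\rm uni},\fB}(M)}\CI_{{\rm uni},\rho\fB}(M)$ embeds as a submodule of $\cV'$. For the reverse inclusion, I would fix a uniform partition of unity $\{\chi_\alpha\}\subset\CI_{{\rm uni},\fB}(M)$ subordinate to $\{U_\alpha\}$ (available from Lemma~\ref{LemmabgP} combined with standard bump constructions) together with companion cutoffs $\tilde\chi_\alpha\in\CIc(U_\alpha)$ equal to $1$ on $\supp\chi_\alpha$, set $W_{\alpha,i}:=\phi_\alpha^*(\tilde\chi_\alpha\,\rho_{\alpha,i}\pa_{x^i})\in\cV$ (extended by zero), and write
\[
  V=\sum_\alpha\chi_\alpha V=\sum_{\alpha,i}(\chi_\alpha V_\alpha^i)\,W_{\alpha,i},
\]
a locally finite sum with coefficients $\chi_\alpha V_\alpha^i\in\CI_{{\rm uni},\rho\fB}(M)$ (extended by zero outside $U_\alpha$). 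The main obstacle throughout is the careful bookkeeping of the factors $\rho_{\alpha,i}$, $D_{\alpha,i}$ and $D$ in the various derivative identities: uniformity across unit cells with vastly different scalings depends on each $\rho_{\alpha,i}$ appearing in a denominator being paired with a matching $D_{\alpha,i}$, which is exactly what the definition of $D_{\alpha,i}$ guarantees.
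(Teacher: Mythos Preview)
Your Steps~1 and~2 are correct and follow essentially the paper's approach; your explicit derivative formula for the transition maps of $\rho\fB$ is in fact more detailed than what the paper writes down, and your point-counting argument for the finite intersection property ($A\cdot 4^n$) is a clean alternative to the paper's cell-to-cell intersection bound.

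Step~3 has a genuine, though easily fixable, gap. The tensor product $\cV\otimes_{\CI_{{\rm uni},\fB}(M)}\CI_{{\rm uni},\rho\fB}(M)$ consists by definition of \emph{finite} sums $\sum f_j v_j$, whereas your decomposition $V=\sum_{\alpha,i}(\chi_\alpha V_\alpha^i)\,W_{\alpha,i}$ is only locally finite: there are infinitely many $\alpha$. As written, this does not exhibit $V$ as an element of the tensor product. The correct use of Lemma~\ref{LemmabgP} is not for constructing the partition of unity (which needs no such lemma) but for grouping the index set: partition $\sA=\bigsqcup_{j=1}^J\sA_j$ into finitely many classes with pairwise disjoint $U_\alpha$'s in each class, and then set $W_{j,i}:=\sum_{\alpha\in\sA_j}W_{\alpha,i}\in\cV$ and $f_{j,i}:=\sum_{\alpha\in\sA_j}\chi_\alpha V_\alpha^i\in\CI_{{\rm uni},\rho\fB}(M)$ (both sums well-defined by disjointness and uniform bounds). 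Then $V=\sum_{j=1}^J\sum_{i=1}^n f_{j,i}W_{j,i}$ is the required finite decomposition. This is exactly the device used in the proof of Lemma~\ref{LemmaSBHVW} later in the paper. (The paper's own proof of Proposition~\ref{PropSB2} is admittedly terse on this point, merely asserting that the local characterization ``implies both equalities''.)
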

\begin{proof}
  Consider two overlapping charts $U_\alpha,U_\beta$, and let $x_0$ be the center of $\phi_\alpha(U_{\alpha,k})\subset(-\frac32,\frac32)^n$; thus $\phi_\alpha(U_{\alpha,k})$ is a cuboid centered around $x_0$ with side lengths $\frac{4}{D\cdot D_{\alpha,i}}$. Let $k'$ be such that $x'_0:=\tau_{\beta\alpha}(x_0)=\phi_\beta(\phi_\alpha^{-1}(x_0))$ lies in $\phi_\beta(U_{\beta,k'})$. Then for $x\in\phi_\alpha(U_{\alpha,k})$, the condition~\eqref{EqISB} (with $|\gamma|=1$) implies
  \[
    |\tau_{\beta\alpha}^j(x)-(x'_0)^j| = \biggl|\sum_{i=1}^n\Bigl(\int_0^1 \pa_i\tau_{\beta\alpha}^j(x_0+t(x-x_0)) \,\dd t\Bigr)\ (x^i-x_0^i)\biggr| \leq C\rho_{\alpha,j}
  \]
  for a uniform constant $C$, \emph{provided} $\phi_\alpha^{-1}(x_0+t(x-x_0))$ lies in $U_\beta$ for all $t\in[0,1]$; but this condition holds due to $x_0\in(-\frac32,\frac32)^n$, $|x-x_0|_\infty<D^{-1}$, and the uniform $\cC^1$-bounds on $\tau_{\beta\alpha}$ if we choose $D$ sufficiently large. This implies that there is a uniform constant $S<\infty$ so that at most $S$ of the sets $U_{\beta,k'}$ intersect $U_{\alpha,k}$ nontrivially. Denoting by $A$ the covering constant from Definition~\ref{DefIBdd}\eqref{ItIBddFinite}, we conclude that $A\cdot S$ is a covering constant for $\{(U_{\alpha,k})\}$.

  The uniform boundedness in $\CI$ of the transition functions for $\rho\fB$ follows \emph{a fortiori} from the bounds~\eqref{EqISB} for general $\gamma$, as in the rescaled coordinates $\tilde x^i=\frac{x^i}{\rho_{\alpha,i}}$ and $\tilde x'{}^j=\frac{x'{}^j}{\rho_{\beta,j}}$ the bounds~\eqref{EqISB} are equivalent to uniform bounds on the $\tilde x'{}^j$-components of
  \begin{equation}
  \label{EqSB2Coc}
    \pa_{\tilde x^i}(\phi_{\beta}\circ\phi_{\alpha}^{-1})
  \end{equation}
  together with all derivatives along $\rho_{\alpha,i}^{-1}\pa_{\tilde x^i}$.

  Finally, using the same rescaling, one sees that a smooth function $u$, resp.\ vector field $V$ lies in $\CI_{{\rm uni},\rho\fB}(M)$, resp.\ $\CI_{{\rm uni},\rho\fB}(M;T M)$ if and only if its pushforward along $\phi_\alpha$ is of the form $u_\alpha$, resp.\ $\sum_{i=1}^n V_\alpha^i\rho_{\alpha,i}\pa_i$, where $u_\alpha$, resp.\ $V_\alpha^i$ obeys uniform bounds $|(\rho_\alpha\pa_x)^\gamma u_\alpha|\leq C_\gamma$ for all $\alpha\in\sA$ and $\gamma\in\N_0^n$, resp.\ $|(\rho_\alpha\pa_x)^\gamma V_\alpha^i|\leq C_\gamma$ for all $\alpha,i,\gamma$; here we write
  \begin{equation}
  \label{EqSB2Der}
    (\rho_\alpha\pa_x)^\gamma=\prod_{i=1}^n (\rho_{\alpha,i}\pa_{x^i})^{\gamma_i}.
  \end{equation}
  This implies both equalities in~\eqref{EqSB2} and finishes the proof.
\end{proof}

\subsection{Weights, symbols, differential operators}
\label{SsSBD}

From now on, we fix a scaled b.g.\ structure $\fB_\times=\{(U_\alpha,\phi_\alpha,\rho_\alpha)\colon\alpha\in\sA\}$ on $M$, with underlying b.g.\ structure $\fB$, coefficient Lie algebra $\cW=\CI_{{\rm uni},\fB}(M;T M)$, and operator Lie algebra $\cV$. We begin by capturing the class of weights that we can allow in our operators and function spaces.

\begin{definition}[Weights on $(M,\fB)$]
\label{DefSBDWeights}
  Let $U'_\alpha=\phi_\alpha^{-1}((-\frac32,\frac32)^n)$. A \emph{weight} on $(M,\fB)$ is a smooth function $0<w\in\CI(M)$ so that $\frac{W w}{w}\in\CI_{{\rm uni},\fB}(M)$ for all $W\in\cW$. A \emph{weight family} is a collection $\{w_\alpha\colon\alpha\in\sA\}$ of positive real numbers for which there exists $C<\infty$ so that for all $\alpha,\beta\in\sA$ with $U'_\alpha\cap U'_\beta\neq\emptyset$, we have $C^{-1}w_\beta\leq w_\alpha\leq C w_\beta$. Two weights $w,w'$, resp.\ weight families $\{w_\alpha\}$, $\{w'_\alpha\}$ are \emph{equivalent} if $\frac{w}{w'},\frac{w'}{w}\in\CI_{{\rm uni},\fB}(M)$, resp.\ $\frac{w_\alpha}{w'_\alpha},\frac{w'_\alpha}{w_\alpha}\leq C$ (for some uniform constant $C$).
\end{definition}

Given a weight $w$, one can then define the space
\begin{equation}
\label{EqSBDwCI}
  w \CI_{\rm uni,\fB}(M) := \{ w u \colon u \in \CI_{\rm uni,\fB}(M) \}
\end{equation}
of weighted uniformly smooth functions; this space is unchanged when passing from $w$ to an equivalent weight, and the seminorms are unchanged up to equivalence.

\begin{lemma}[Weights and weight families]
\label{LemmaSBDWeights}
  Fix a nonnegative function $\chi\in\CIc((-2,2)^n)$ so that $\chi$ is strictly positive on $[-1,1]^n$.
  \begin{enumerate}
  \item\label{ItISBDWeights1} If $w$ is a weight, then $\bar\fw:=\{\sup_{U'_\alpha} w\colon\alpha\in\sA\}$ and $\ubar\fw=\{\inf_{U'_\alpha} w\colon\alpha\in\sA\}$ are weight families.
  \item\label{ItISBDWeights2} If $\fw=\{w_\alpha\colon\alpha\in\sA\}$ is a weight family, then $W(\fw):=\sum_{\alpha\in\sA} (\phi_\alpha^*\chi)w_\alpha$ is a weight.
  \item\label{ItISBDWeights3} The weights $w$ and $W(\bar\fw)$, $W(\ubar\fw)$ are equivalent; and if $w=W(\fw)$, then the weight families $\fw$ and $\bar\fw$, $\ubar\fw$ are equivalent.
  \end{enumerate}
\end{lemma}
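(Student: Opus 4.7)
\medskip

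\noindent\textit{Plan.} For part~\eqref{ItISBDWeights1}, the plan is to exploit that $w$ being a weight forces $\log w$ to have uniformly bounded coordinate derivatives in every chart. Indeed, for fixed $\alpha$ and a cutoff $\chi\in\CIc((-2,2)^n)$ equal to $1$ on $[-\tfrac{7}{4},\tfrac{7}{4}]^n$, the vector field $W_{\alpha,i}:=\phi_\alpha^*(\chi\,\pa_i)$ is a bounded subset of $\cW=\CI_{{\rm uni},\fB}(M;T M)$, so $\frac{W_{\alpha,i}w}{w}$ and all its $\cW$-derivatives are uniformly bounded; in coordinates this says $\pa^\gamma(\log(\phi_\alpha^{-1})^*w)$ is uniformly bounded on $[-\tfrac{3}{2},\tfrac{3}{2}]^n$ for every $\gamma\neq 0$. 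Since $U'_\alpha$ has coordinate diameter $\leq 3\sqrt{n}$, this gives $\sup_{U'_\alpha}w\leq e^{C_1}\inf_{U'_\alpha}w$ uniformly in $\alpha$. When $U'_\alpha\cap U'_\beta\neq\emptyset$, picking $p$ in the intersection yields $\sup_{U'_\alpha}w\leq e^{C_1}w(p)\leq e^{C_1}\sup_{U'_\beta}w$ and symmetrically for $\inf$, proving both $\bar\fw$ and $\ubar\fw$ are weight families.

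For part~\eqref{ItISBDWeights2}, I would work in the chart $\phi_\beta$, writing
\[
  (\phi_\beta^{-1})^*W(\fw)(x) = \sum_{\alpha\in\sA_\beta(x)} (\tau_{\alpha\beta}^*\chi)(x)\, w_\alpha,
\]
where $\sA_\beta(x):=\{\alpha:\tau_{\alpha\beta}(x)\in\supp\chi\}$ has cardinality at most $A$ by Definition~\ref{DefIBdd}\eqref{ItIBddFinite}. The cutoffs $\tau_{\alpha\beta}^*\chi$ are uniformly $\CI$-bounded by Definition~\ref{DefIBdd}\eqref{ItIBddTrans}. For the \emph{upper} estimate $(\phi_\beta^{-1})^*W(\fw)(x)\leq C w_\beta$ on $(-\tfrac{3}{2},\tfrac{3}{2})^n$, the indices $\alpha\in\sA_\beta(x)$ satisfy $\phi_\beta^{-1}(x)\in U_\alpha\cap U'_\beta$, so the weight family property (applied along a chain of bounded length of overlapping $U'_\gamma$'s covering the connecting segment—a standard bounded-geometry chaining argument) gives $w_\alpha\leq C w_\beta$. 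For the \emph{lower} estimate, at each $p\in U'_\beta$ the covering condition Definition~\ref{DefIBdd}(2) produces some $\alpha_0$ with $p\in\phi_{\alpha_0}^{-1}((-1,1)^n)\subset U'_{\alpha_0}$, so $\chi(\phi_{\alpha_0}(p))\geq\chi_{\min}>0$ and $U'_{\alpha_0}\cap U'_\beta\ni p$; hence $W(\fw)(p)\geq\chi_{\min} w_{\alpha_0}\geq c w_\beta$. Together, $W(\fw)/w_\beta$ is a smooth function with uniformly bounded $\CI$ seminorms and uniformly bounded away from $0$ on $\phi_\beta((-\tfrac32,\tfrac32)^n)$. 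Since $w_\beta$ is a positive constant in the $\phi_\beta$ chart, this yields $\log W(\fw)/w_\beta\in\CI_{{\rm uni},\fB}(M)$, and consequently $\frac{WW(\fw)}{W(\fw)}=W\log W(\fw)\in\CI_{{\rm uni},\fB}(M)$ for every $W\in\cW$.

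For part~\eqref{ItISBDWeights3}, combining the previous parts I would show $w\sim W(\bar\fw)$ pointwise in each $U'_\beta$: from part~\eqref{ItISBDWeights1}, $w$ is comparable to $\bar w_\beta$ throughout $U'_\beta$, and from the chart analysis of part~\eqref{ItISBDWeights2} applied to $\bar\fw$, the function $W(\bar\fw)$ is likewise comparable to $\bar w_\beta$ on $U'_\beta$. The ratios $w/W(\bar\fw)$ and $W(\bar\fw)/w$ are then smooth, bounded above and below, and all their $\cW$-derivatives are bounded by the chain rule and the $\CI$-bounds established above; the same argument handles $\ubar\fw$. Conversely, if $w=W(\fw)$, then $W(\fw)\sim w_\beta$ on $U'_\beta$ gives $\bar w_\beta=\sup_{U'_\beta}W(\fw)\sim w_\beta$ and similarly for $\ubar w_\beta$, proving $\bar\fw,\ubar\fw$ are equivalent to $\fw$ as weight families.

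The main technical obstacle I anticipate is the mismatch between the weight family condition (stated for $U'_\alpha\cap U'_\beta\neq\emptyset$) and the natural support condition arising in part~\eqref{ItISBDWeights2} (which produces pairs with $U_\alpha\cap U'_\beta\neq\emptyset$). This is resolved by a routine chaining argument: any such pair is connected by a uniformly bounded number of $U'_\gamma$'s overlapping pairwise, so the weight family property propagates with a (uniformly) enlarged constant.
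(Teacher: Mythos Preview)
Your proposal is correct and follows essentially the same route as the paper: bounding $\log w$ via the weight condition for part~\eqref{ItISBDWeights1}, and using the finite intersection property together with comparability of the $w_\alpha$ at a common point for part~\eqref{ItISBDWeights2}. You are in fact more careful than the paper on two points: you explicitly invoke a boundedness/closed-graph argument to get uniform-in-$\alpha$ control on $\partial_i\log((\phi_\alpha)_*w)$, and you identify the $U'_\alpha$ versus $U_\alpha$ mismatch (resolved by a bounded-length chaining) that the paper's proof of part~\eqref{ItISBDWeights2} passes over silently when asserting that all $w_\alpha$ with $p\in U_\alpha$ are mutually comparable.
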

\begin{proof}
  For the proof of part~\eqref{ItISBDWeights1}, let $C<\infty$ denote the supremum over $\alpha\in\sA$, $i=1,\ldots,n$ of the sup norms of $\frac{\pa_i(\phi_\alpha)_*w}{(\phi_\alpha)_*w}=\pa_i((\phi_\alpha)_*\log w)$ on $(-\frac32,\frac32)^n$. If $U'_\alpha\cap U'_\beta\neq\emptyset$, let $x_\alpha,x_\beta\in(-\frac32,\frac32)^n$ be such that $w(\phi_\alpha^{-1}(x_\alpha))<2\inf_{U'_\alpha}w=2\ubar\fw_\alpha$ and $w(\phi_\beta^{-1}(x_\beta))>\frac12\sup_{U'_\beta}w=\frac12\bar\fw_\beta$; and let $p_{\alpha\beta}\in U'_\alpha\cap U'_\beta$. Then
  \begin{align*}
    \biggl|\log\frac{w(\phi_\beta^{-1}(x_\beta))}{w(\phi_\alpha^{-1}(x_\alpha))}\biggr| &\leq \bigl|\log w(\phi_\beta^{-1}(x_\beta)) - \log w(p_{\alpha\beta})\bigr| + \bigl|\log w(p_{\alpha\beta}) - \log w(\phi_\alpha^{-1}(x_\alpha)) \bigr| \\
      &\leq C\bigl( |x_\beta-\phi_\beta(p_{\alpha\beta})| + |\phi_\alpha(p_{\alpha\beta})-x_\alpha| \bigr) \\
      &\leq C\cdot 2\cdot 4\sqrt{n}
  \end{align*}
  This implies $\frac{\bar\fw_\beta}{\ubar\fw_\alpha}\leq C':=4 e^{8 C\sqrt{n}}$ and thus also $\frac{\bar\fw_\beta}{\bar\fw_\alpha}\leq\frac{\bar\fw_\beta}{\ubar\fw_\alpha}\leq C'$ and $\frac{\ubar\fw_\beta}{\ubar\fw_\alpha}\leq\frac{\bar\fw_\beta}{\ubar\fw_\alpha}\leq C'$.

  For part~\eqref{ItISBDWeights2}, note that any $p\in M$ lies in at most $A<\infty$ pairwise distinct sets $U_\alpha$, and the maximum among the corresponding $w_\alpha$ is bounded from above by a uniform constant times the minimum. This implies that $\frac{\sum_\alpha w_\alpha W(\phi_\alpha^*\chi)}{\sum_\alpha w_\alpha\phi_\alpha^*\chi}\in\CI_{{\rm uni},\fB}(M)$ for all $W\in\cW$. The proof of part~\eqref{ItISBDWeights3} follows along similar lines and is left to the reader.
\end{proof}

The scaled b.g.\ structure gives rise to a particularly important weight:

\begin{definition}[Weight family from scaling]
\label{DefSBDScWeight}
  In terms of the scaling $\{\rho_\alpha\}$ of $\fB_\times$, we define
  \[
    \bar\rho := \{\bar\rho_\alpha \colon \alpha\in\sA\},\qquad \bar\rho_\alpha := \max_{1\leq i\leq n}\rho_{i,\alpha}.
  \]
  We furthermore write $\rho\in\CI(M)$ for a weight equivalent to $\bar\rho$ (such as $\rho=W(\bar\rho)$ in the notation of Lemma~\usref{LemmaSBDWeights}), and call $\rho$ a \emph{scaling weight}.
\end{definition}

Since the transition functions $\tau_{\beta\alpha}$ of $\fB$ are uniformly bounded in $\CI$, there exists a uniform constant $C$ so that $\rho_{\beta,j}\leq C\rho_{\alpha,i}$ for all $j,i$ when $U_\alpha\cap U_\beta\neq\emptyset$; by taking the maximum over $j,i=1,\ldots,n$, this shows that $\bar\rho$ is a weight family.

Symbols of $\cV$-(pseudo)differential operators will be functions on the cotangent bundle $T^*M$; it is convenient, however, to encode the scaling directly in the transition functions of a bundle ${}^\cV T^*M\to M$. Note that the transition maps $\tau_{\beta\alpha}$ induce maps between the cotangent bundles of open subsets of $(-2,2)^n$ in the usual fashion; taking the scaling into account, we write
\begin{equation}
\label{EqSBDSymbXi}
  (\tau_{\beta\alpha})_* \colon \sum_{i=1}^n\xi_{\alpha,i}\frac{\dd x_\alpha^i}{\rho_{\alpha,i}} \mapsto \sum_{j=1}^n\xi_{\beta,j}\frac{\dd x_\beta^j}{\rho_{\beta,j}}
\end{equation}
where $x_\alpha^i=\phi_\alpha^i$, $x_\beta^j=\phi_\beta^j$ are the coordinate functions, and $x_\beta=\tau_{\beta\alpha}(x_\alpha)$. Explicitly, since $\pa_{x_\alpha^i}=\sum_j(\pa_{x_\alpha^i}x_\beta^j)\pa_{x_\beta^j}$, we have
\begin{equation}
\label{EqSBDSymbXiCoord}
  \xi_{\alpha,i} = \frac{\rho_{\alpha,i}(\pa_{x_\alpha^i}x_\beta^j)}{\rho_{\beta,j}}\xi_{\beta,j}.
\end{equation}

\begin{definition}[$\cV$-cotangent bundle]
\label{DefSBDCotgt}
  The $\cV$-cotangent bundle ${}^\cV T^*M\to M$ is the bundle with local trivializations $U_\alpha\times\R^n$ and transition functions $(U_\alpha\cap U_\beta)\times\R^n\ni(p,\xi_\alpha)\mapsto(p,\xi_\beta)$ where $\xi_\alpha$, $\xi_\beta$ are related by~\eqref{EqSBDSymbXiCoord}. It is identified with $T^*M$ via $U_\alpha\times\R^n\ni(p,\xi_\alpha)\mapsto\sum\xi_{\alpha,i}\frac{\dd_p x_\alpha^i}{\rho_{\alpha,i}}\in T_{U_\alpha}^*M$.
\end{definition}

The transition functions of ${}^\cV T^*M$ are uniformly bounded in $\CI$ by~\eqref{EqSBDSymbXiCoord} and \eqref{EqbgAltPf}. Note that smooth bundle isomorphism ${}^\cV T^*M\to T^*M$ is not uniformly bounded (in the given trivializations for ${}^\cV T^*M$ and the standard trivializations $(p,\xi)\mapsto\sum_{i=1}^n \xi_i\,\dd x^i$ for $T^*M$) when $\inf_{\alpha,i}\rho_{\alpha,i}=0$. Thus, in this case, ${}^\cV T^*M$ and $T^*M$ are \emph{not} (canonically) isomorphic \emph{as vector bundles of bounded geometry} \cite[\S{A.1.1}]{ShubinBounded} (see also Remark~\ref{RmkSBDBundles} below).

For a function $p_\alpha=p_\alpha(x_\alpha,\xi_\alpha)$ where $x_\alpha\in(-2,2)^n$ and $\xi_\alpha\in\R^n$, we shall write
\[
  ((\tau_{\beta\alpha})_* p_\alpha)(x_\beta,\xi_\beta) = p_\alpha(x_\alpha,\xi_\alpha)
\]
for the same function on ${}^\cV T^*M$ expressed in the chart $U_\beta$. Furthermore, for $p\colon{}^\cV T^*M\to\C$, we write $(\phi_\alpha)_*p\colon(x_\alpha,\xi_\alpha)\mapsto p(\sum_{i=1}^n \xi_{\alpha,i}\frac{\dd x_\alpha^i}{\rho_{\alpha,i}})$ for $p$ expressed in the local trivialization.

\begin{definition}[$\cV$-symbols]
\label{DefSBDSymb}
  Let $m\in\R$. Then $S^m({}^\cV T^*M)$ consists of all functions $a\colon{}^\cV T^*M\to\C$ for which there exist constants $C_{\beta\gamma}$ for all $\beta,\gamma\in\N_0^n$ so that in all local trivializations of ${}^\cV T^*M$ one has
  \[
    | \pa_x^\beta\pa_\xi^\gamma ((\phi_\alpha)_*a)(x,\xi) | \leq C_{\beta\gamma}\la\xi\ra^{m-|\gamma|}\quad\forall\,\alpha\in\sA,\ x\in(-2,2)^n,\ \xi\in\R^n.
  \]
  For a weight $w$ on $(M,\fB)$, we moreover define the space of weighted symbols
  \[
    w S^m({}^\cV T^*M) := \{ w a\colon a\in S^m({}^\cV T^*M) \}.
  \]
  By $P^m({}^\cV T^*M)$ and $w P^m({}^\cV T^*M)$, we denote the subspace of fiberwise polynomials.
\end{definition}

Equipped with the seminorms
\[
  |a|_{S^m;k} := \sup_{\alpha\in\sA}\max_{|\beta|+|\gamma|\leq k} \sup_{(x,\xi)\in(-2,2)^n\times\R^n}\la\xi\ra^{-m+|\gamma|}|\pa_x^\beta\pa_\xi^\gamma((\phi_\alpha)_*a)(x,\xi)|,
\]
the space $S^m({}^\cV T^*M)$ is a Fr\'echet space.

\begin{definition}[$\cV$-differential operators]
\label{DefSBD}
  We write
  \[
    \Diff_\cV^m(M) := \left\{ a+\sum_{k=1}^K \sum_{j=1}^{J_k} V_{k,1}\cdots V_{k,J_k} \colon a\in\CI_{{\rm uni},\fB}(M),\ K,\,J_k\in\N_0,\ V_{k,j}\in\cV \right\}
  \]
  for the space $\cV$-differential operators. Furthermore, for a weight $w$ we set
  \[
    w\Diff_\cV^m(M) = \{ w P_0 \colon P_0\in\Diff_\cV^m(M) \}.
  \]
\end{definition}

It follows from the definition that the space $\Diff_\cV^m(M)$ is unchanged when passing to a compatible scaled b.g.\ structure; in view of Proposition~\ref{PropSBCompUniq}, the notation `$\Diff_\cV$' is therefore justified. We also remark that an $m$-th order differential operator $P\in\Diff^m(M)$ is an element of $\Diff_\cV^m(M)$ if and only if for all $\alpha\in\sA$,
\begin{equation}
\label{EqSBDPushfwdP}
  (\phi_\alpha)_*P = \sum_{|\gamma|\leq m} p_{\alpha\gamma}(x) (\rho_\alpha D)^\gamma,\qquad (\rho_\alpha D)^\gamma=\prod_{j=1}^n (\rho_{\alpha,j}i^{-1}\pa_{x^j})^{\gamma_j},
\end{equation}
where the $p_{\alpha\gamma}$ are uniformly bounded in $\CI((-2,2)^n)$. For $P\in w\Diff_\cV^m(M)$, one requires that $w_\alpha^{-1}p_{\alpha\gamma}$ be uniformly bounded in $\CI((-2,2)^n)$ where $w_\alpha$ is a weight family equivalent to $w$; or equivalently $((\phi_\alpha)_*w^{-1})p_{\alpha\gamma}$ is uniformly bounded.

\begin{thm}[Principal symbols of $\cV$-differential operators]
\label{ThmSBDSymb}
  Let $P\in\Diff_\cV^m(M)$. Define a function $p\in\CI({}^\cV T^*M)$ as follows. For $\alpha\in\sA$, write $(\phi_\alpha)_*P = \sum_{|\gamma|\leq m} p_{\alpha\gamma}(x)(\rho_\alpha D)^\gamma$. Set
  \[
    p_\alpha(x,\xi) = \sum_{|\gamma|\leq m} p_{\alpha\gamma}(x)\xi^\gamma,\qquad x\in U_\alpha,\ \xi\in\R^n,
  \]
  and put $p=\sum_\alpha \chi_\alpha\phi_\alpha^*p_\alpha$ where, for fixed $\chi\in\CIc((-2,2)^n)$ equal to $1$ on $[-1,1]^n$, we set $\chi_\alpha:=\frac{\phi_\alpha^*\chi}{\sum_\beta\phi_\beta^*\chi}$. Then, using the scaling weight $\rho$ from Definition~\usref{DefSBDScWeight}, the equivalence class
  \begin{equation}
  \label{EqSBDSymb}
    \upsigma_\cV^m(P) := [p] \in P^m({}^\cV T^*M) / \rho P^{m-1}({}^\cV T^*M)
  \end{equation}
  (called the \emph{$\cV$-principal symbol of $P$}) is well-defined, in the sense that \begin{enumerate*} \item it does not depend on the choice of $\chi$, and \item it does not change when computing it with respect to a compatible scaled b.g.\ structure.\end{enumerate*} It fits into the short exact sequence
  \begin{equation}
  \label{EqSBDSES}
    0 \to \rho\Diff_\cV^{m-1}(M) \hra \Diff_\cV^m(M) \xra{\upsigma_\cV^m} P^m({}^\cV T^*M) / \rho P^{m-1}({}^\cV T^*M) \to 0.
  \end{equation}
  Furthermore, for weighted operators $P\in w\Diff_\cV^m(M)$, the principal symbol
  \[
    \upsigma_\cV^{m,w}(P) := w \upsigma_\cV^m(w^{-1}P) \in w P^m({}^\cV T^*M)/w \rho P^{m-1}({}^\cV T^*M)
  \]
  is well-defined (in that it does not change when using an equivalent weight instead of $w$), and fits into an analogous short exact sequence. Finally, for $P_j\in w_j\Diff_\cV^{m_j}(M)$, we have $i[P_1,P_2]\in w_1 w_2\rho\Diff_\cV^{m_1+m_2-1}(M)$, with principal symbol given by the Poisson bracket
  \begin{equation}
  \label{EqSBDSymbComm}
    \upsigma_\cV^{m_1+m_2-1,w_1 w_2\rho}(i[P_1,P_2]) = \{ \upsigma_\cV^{m_1}(P_1),\upsigma_\cV^{m_2}(P_2) \}.
  \end{equation}
\end{thm}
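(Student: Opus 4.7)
The plan proceeds in three stages: transition invariance of the local symbols, exactness of the short exact sequence, and the commutator formula.

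\textbf{Transition law and well-definedness.} The crux is a chart-level computation on $U_\alpha\cap U_\beta$. Setting $V_i:=\rho_{\alpha,i}D_{x_\alpha^i}$, $W_j:=\rho_{\beta,j}D_{x_\beta^j}$, and $A_i^j(x_\beta):=\rho_{\alpha,i}(\pa_{x_\alpha^i}\tau_{\beta\alpha}^j)/\rho_{\beta,j}$, the chain rule yields $V_i=\sum_j A_i^jW_j$. Condition~\eqref{EqISB} implies simultaneously that (i) the $A_i^j$ are uniformly bounded in $\CI$ and (ii) $W_jA_i^k=\CO(\bar\rho_\alpha)$: one applies~\eqref{EqISB} first with $\alpha,\beta$ interchanged to obtain $\rho_{\beta,j}\pa_{x_\beta^j}\tau_{\alpha\beta}^\ell=\CO(\rho_{\alpha,\ell})$, and then once more to bound the ensuing $\pa_{x_\alpha^\ell}A_i^k$. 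Expanding $V^\gamma=V_1^{\gamma_1}\cdots V_n^{\gamma_n}$ and normal-ordering all $W$'s to the right produces (a) a principal part whose coefficients transform under the bundle law~\eqref{EqSBDSymbXiCoord} exactly, and (b) a sub-top-order remainder in which each commutator bracket $[W_j,A_i^k]$ contributes one factor of $\rho$. Hence $\phi_\alpha^*p_\alpha$ and $\phi_\beta^*p_\beta$, compared via the trivialization of Definition~\ref{DefSBDCotgt}, agree modulo $\rho P^{m-1}({}^\cV T^*M)$ on $U_\alpha\cap U_\beta$. Independence from the cutoff $\chi$ follows by adding and subtracting a fixed local representative: the difference of two partition representatives equals $\sum_\alpha(\chi_\alpha-\chi'_\alpha)(\phi_\alpha^*p_\alpha-\phi_{\alpha_0}^*p_{\alpha_0})\in\rho P^{m-1}$, since the constant term is annihilated by $\sum_\alpha(\chi_\alpha-\chi'_\alpha)=0$. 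Compatibility with another scaled b.g.\ structure is checked with the same argument using~\eqref{EqSBComp} in place of~\eqref{EqISB}.

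\textbf{The short exact sequence and weighted case.} For surjectivity, given $a\in P^m({}^\cV T^*M)$ with local polynomials $a_\alpha$, quantize locally as $A_\alpha:=\sum_{|\gamma|\leq m}a_{\alpha\gamma}(\rho_\alpha D)^\gamma$ and patch via $P:=\sum_\alpha\phi_\alpha^*\bigl(M_{\chi_\alpha}\circ A_\alpha\circ(\phi_\alpha^{-1})^*\bigr)$ (with a suitable cutoff to make each summand globally well defined on $M$); the preceding paragraph shows $\upsigma_\cV^m(P)=[a]$. For the kernel inclusion, if $\upsigma_\cV^m(P)=0$ then $[p](q)=0$ at every $q\in M$, and since every point lies in some $\phi_{\alpha'}^{-1}((-1,1)^n)$ with $\chi_{\alpha'}(q)=1$ we deduce $[p_\alpha](q)=[p_{\alpha'}](q)=[p](q)=0$, so the top-order coefficients of $p_\alpha$ vanish identically on $(-2,2)^n$ and the remaining coefficients are $\CO(\bar\rho_\alpha)$ uniformly in $\CI$; hence $P\in\rho\Diff_\cV^{m-1}(M)$. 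The weighted statement follows by linearity: writing $P=wP_0$ with $P_0\in\Diff_\cV^m(M)$, define $\upsigma_\cV^{m,w}(P):=w\upsigma_\cV^m(P_0)$, and note that replacing $w$ by an equivalent $w'$ multiplies $P_0$ by $w/w'\in\CI_{{\rm uni},\fB}(M)$, leaving the equivalence class invariant.

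\textbf{Commutator formula.} The two key algebraic inputs are (a) $[X,Y]\in\rho\cV$ for $X,Y\in\cV$, obtained from a direct chart computation that $[X,Y]=\sum_{i,j}\rho_{\alpha,i}(X^i\pa_iY^j-Y^i\pa_iX^j)\rho_{\alpha,j}\pa_j$ has coefficients of size $\CO(\bar\rho_\alpha)$; and (b) $Vf\in\rho\CI_{{\rm uni},\fB}(M)$ for $V\in\cV$, $f\in\CI_{{\rm uni},\fB}(M)$, since locally $|\rho_{\alpha,i}\pa_if|\leq\bar\rho_\alpha\|f\|_{\cC^1_{{\rm uni},\fB}}$. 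Reducing by linearity to monomials $P_j=f_j V_{j,1}\cdots V_{j,m_j}$ in $w_j\Diff_\cV^{m_j}(M)$ and expanding $[P_1,P_2]$ to normal form, every term that can contribute at order $m_1+m_2-1$ arises either from an elementary bracket $[V_{1,i},V_{2,k}]\in\rho\cV$ via (a), or from commuting a $V$ past some $f_j$ producing $Vf_j\in\rho\CI_{{\rm uni},\fB}(M)$ via (b); all other contributions have strictly lower differential order. Hence $i[P_1,P_2]\in w_1w_2\rho\Diff_\cV^{m_1+m_2-1}(M)$. The Poisson bracket identity then reduces to the base case $i[\rho_{\alpha,i}D_{x_\alpha^i},f]=\rho_{\alpha,i}\pa_{x_\alpha^i}f=\{\xi_{\alpha,i},f\}_\cV$, which extends by the Leibniz rule. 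The main obstacle throughout is accounting for the extra factor of $\rho$ in every subprincipal correction—this is precisely the content of~\eqref{EqISB}, applied once for the principal transformation and a second time to extract the $\rho$-gain from its one-derivative sharpening.
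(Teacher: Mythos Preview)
Your proof is correct and follows essentially the same approach as the paper: both arguments rest on the chart-level computation that $p_\alpha$ and $(\tau_{\beta\alpha})_*p_\alpha$ differ from $p_\beta$ by a uniformly controlled element of $\bar\rho_\beta P^{m-1}$, with the $\rho$-gain coming from the commutator terms $[W_j,A_i^k]$ exactly as you describe. The paper organizes this via a separate lemma (Lemma~\ref{LemmaSBDSymbEquiv}) identifying $P^m/\rho P^{m-1}({}^\cV T^*M)$ with equivalence classes of compatible local families $\{p_\alpha\}$, which cleanly handles both $\chi$-independence and the kernel computation at once; you do the same work inline.

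One small slip: in your kernel argument, the claim ``$\chi_{\alpha'}(q)=1$'' is false in general (the partition of unity has overlaps, so $\chi_{\alpha'}(q)<1$ typically), and you cannot deduce $p(q,\xi)=p_{\alpha'}(q,\xi)$ pointwise. The repair is immediate from what you have already shown: your transition law gives $(\phi_\beta)_*p=\sum_\alpha((\phi_\beta)_*\chi_\alpha)(\tau_{\beta\alpha})_*p_\alpha$ with each $(\tau_{\beta\alpha})_*p_\alpha-p_\beta\in\bar\rho_\beta P^{m-1}$ uniformly, hence $(\phi_\beta)_*p-p_\beta\in\bar\rho_\beta P^{m-1}$ uniformly; so $[p]=0$ forces $p_\beta\in\bar\rho_\beta P^{m-1}$ uniformly, which is precisely $P\in\rho\Diff_\cV^{m-1}(M)$.
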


In the formula~\eqref{EqSBDSymbComm}, the Poisson bracket of two symbols $p_1,p_2$ is given, in a local chart $U_\alpha$ with coordinates $x\in(-2,2)^n$ and $\xi\in\R^n$ on the fibers of ${}^\cV T^*M$, given by
\begin{equation}
\label{EqSBDPoisson}
  \{ p_1, p_2 \} = \sum_{i=1}^n \pa_{\xi_i}p_1\cdot\rho_{\alpha,i}\pa_{x^i}p_2 - \rho_{\alpha,i}\pa_{x^i}p_1\cdot\pa_{\xi_i}p_2.
\end{equation}
(This is equal to $\pa_\xi p_1 \pa_{\tilde x}p_2-\pa_{\tilde x}p_1 \pa_\xi p_2$ in the rescaled coordinates $\tilde x^i=\frac{x^i}{\rho_{\alpha,i}}$.)

The family $\{\chi_\alpha\}$ is a partition of unity subordinate to $\{U_\alpha\}$ which is uniformly bounded in $\CI$. We will use such partitions frequently in the remainder of the paper.

Only in the case that $\rho$ is equivalent to $1$ (which happens if and only if $\inf_\alpha\rho_\alpha>0$) can one identify $[p]$ with its fiberwise homogeneous degree $m$ part; otherwise, the principal symbol~\eqref{EqSBDSymb} is more precise. For the proof of Theorem~\ref{ThmSBDSymb} and its later pseudodifferential generalization, we now note:

\begin{lemma}[Equivalence class of symbols]
\label{LemmaSBDSymbEquiv}
  Let $\tilde S^m$ be the space of families $\{p_\alpha\colon\alpha\in\sA\}$ where $p_\alpha\colon(-2,2)^n\times\R^n\to\C$ is a symbol of order $m$, and $(\tau_{\beta\alpha})_*p_\alpha=p_\beta+r_{\beta\alpha}$ where $\bar\rho_\beta^{-1}r_{\beta\alpha}$ is uniformly bounded in the space of symbols of order $m-1$. Define an equivalence relation on $\tilde S^m$ as follows: $\{p_\alpha\}\sim\{p'_\alpha\}$ if and only if $\bar\rho_\alpha^{-1}(p_\alpha-p'_\alpha)$ is uniformly bounded in the space of symbols of order $m-1$; write $\tilde S^m/\rho\tilde S^{m-1}$ for the space of equivalence classes. Then the map
  \[
    S^m({}^\cV T^*M)/\rho S^{m-1}({}^\cV T^*M) \ni [p] \mapsto \{ (\phi_\alpha)_*p \colon \alpha\in\sA \} \in \tilde S^m/\rho\tilde S^{m-1}
  \]
  is a linear isomorphism. Its inverse is $[\{p_\alpha\}]\mapsto\sum_\alpha \chi_\alpha\phi_\alpha^*p_\alpha$.
\end{lemma}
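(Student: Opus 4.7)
The plan is to verify that the stated map is well-defined in both directions and that the two maps are mutual inverses, all by direct bookkeeping with the transition formulas. First I would check that the forward map is well-defined at the level of representatives: if $p\in S^m({}^\cV T^*M)$, then the cocycle condition $(\tau_{\beta\alpha})_*(\phi_\alpha)_*p=(\phi_\beta)_*p$ holds exactly on $\phi_\beta(U_\alpha\cap U_\beta)$ by the very definition of ${}^\cV T^*M$ (Definition~\ref{DefSBDCotgt}), so $r_{\beta\alpha}=0$ and $\{(\phi_\alpha)_*p\}\in\tilde S^m$. Moreover, if $p\in\rho S^{m-1}$, one writes $(\phi_\alpha)_*p=(\phi_\alpha)_*\rho\cdot q_\alpha$ with $q_\alpha$ uniformly bounded in $S^{m-1}$; since $\rho$ is a weight equivalent to $\bar\rho$ (Definition~\ref{DefSBDScWeight}), we have $(\phi_\alpha)_*\rho\sim\bar\rho_\alpha$ uniformly, so $\{(\phi_\alpha)_*p\}\sim 0$, giving a well-defined map on quotients.

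The main technical step is to show that the candidate inverse sends representatives $\{p_\alpha\}\in\tilde S^m$ to genuine symbols $p:=\sum_\alpha\chi_\alpha\phi_\alpha^*p_\alpha\in S^m({}^\cV T^*M)$, with uniform seminorms controlled by those of the input. Fixing $\beta\in\sA$, I would compute
\[
  (\phi_\beta)_*p=\sum_{\alpha:\,U_\alpha\cap U_\beta\neq\emptyset}\bigl((\phi_\beta)_*\chi_\alpha\bigr)\cdot(\tau_{\beta\alpha})_*p_\alpha,
\]
which has at most $A$ nonzero terms by Definition~\ref{DefIBdd}\eqref{ItIBddFinite}. The cutoffs $(\phi_\beta)_*\chi_\alpha$ are uniformly bounded in $\CI$ since the underlying b.g.\ structure is self-compatible. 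Each push-forward $(\tau_{\beta\alpha})_*p_\alpha$ is uniformly bounded in $S^m$ for the following reason: $x$-derivatives transform through the uniformly $\CI$-bounded $\tau_{\beta\alpha}$, while the $\xi$-transformation~\eqref{EqSBDSymbXiCoord} is governed by the matrix $(\rho_{\alpha,i}\rho_{\beta,j}^{-1}\pa_i\tau_{\beta\alpha}^j)_{i,j}$, which is uniformly bounded together with all of its $x$-derivatives by the defining condition~\eqref{EqISB} of the scaled b.g.\ structure. Well-definedness on the quotient side then follows from the same computation applied to the difference $p_\alpha-p'_\alpha$, invoking the weight-family property $\bar\rho_\alpha\sim\bar\rho_\beta$ on overlapping charts to see that the output lies in $\rho S^{m-1}({}^\cV T^*M)$.

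Finally I would verify the mutual inverse property. In the easy direction, $[p]\mapsto[\{(\phi_\alpha)_*p\}]\mapsto[\sum_\alpha\chi_\alpha\phi_\alpha^*(\phi_\alpha)_*p]=[p]$ is immediate from $\sum_\alpha\chi_\alpha=1$ wherever the argument is supported. In the other direction, using $(\tau_{\alpha\gamma})_*p_\gamma=p_\alpha+r_{\alpha\gamma}$ and $\sum_\gamma(\phi_\alpha)_*\chi_\gamma=1$ on $(-1,1)^n$ gives
\[
  (\phi_\alpha)_*\Bigl(\sum_\gamma\chi_\gamma\phi_\gamma^*p_\gamma\Bigr)=p_\alpha\sum_\gamma(\phi_\alpha)_*\chi_\gamma+\sum_\gamma\bigl((\phi_\alpha)_*\chi_\gamma\bigr)r_{\alpha\gamma}=p_\alpha+R_\alpha,
\]
where $R_\alpha$ is a finite sum of $\bar\rho_\alpha$-bounded $S^{m-1}$ symbols, hence represents the zero class. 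The only mildly delicate point---and the closest thing to an obstacle---is the bookkeeping with weights on chart overlaps, namely using that $\bar\rho$ is a weight family so that $\bar\rho_\alpha\sim\bar\rho_\beta$ whenever $U_\alpha\cap U_\beta\neq\emptyset$; this is built into the definitions and amounts to a notational rather than conceptual difficulty.
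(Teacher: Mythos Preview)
Your proposal is correct and follows essentially the same approach as the paper. The paper's proof is much terser---it only records the single computation $(\phi_\beta)_*\bigl(\sum_\alpha\chi_\alpha\phi_\alpha^*p_\alpha\bigr)=p_\beta+\sum_\alpha((\phi_\beta)_*\chi_\alpha)r_{\beta\alpha}$, which is exactly your ``other direction'' of the mutual inverse property, and leaves the well-definedness checks and the easy direction implicit; you have simply spelled out all of those routine steps.
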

\begin{proof}
  We only check the formula for the inverse: we have
  \begin{align*}
    (\phi_\beta)_*\biggl(\sum_\alpha\chi_\alpha\phi_\alpha^*p_\alpha\biggr) &= \sum_\alpha ((\phi_\beta)_*\chi_\alpha) (\tau_{\beta\alpha})_*p_\alpha \\
      &= \sum_\alpha ((\phi_\beta)_*\chi_\alpha) ( p_\beta + r_{\beta\alpha} ) = p_\beta + \sum_\alpha ((\phi_\beta)_*\chi_\alpha)r_{\beta\alpha},
  \end{align*}
  with $\bar\rho_\beta^{-1}$ times the second summand uniformly bounded in symbols of order $m-1$.
\end{proof}

\begin{proof}[Proof of Theorem~\usref{ThmSBDSymb}]
  If we write $x_\beta=\tau_{\beta\alpha}(x_\alpha)$, then
  \[
    \rho_{\alpha,i}\pa_{x_\alpha^i} = \sum_{j=1}^n\psi_i^j\rho_{\beta,j}\pa_{x_\beta^j},\qquad \psi_i^j:=\frac{\rho_{\alpha,i}\pa_{x_\alpha^i}x_\beta^j}{\rho_{\beta,j}};
  \]
  therefore,
  \[
    (\rho_\alpha\pa_{x_\alpha})^\gamma = \prod_{i=1}^n \biggl( \sum_{j=1}^n \psi_i^j\rho_{\beta,j}\pa_{x_\beta^j}\biggr)^{\gamma_i} =\ \prod_{i=1}^n\,\biggl(\,\sum_{|\delta|=\gamma_i} \prod_{j=1}^n (\psi_i^j)^{\delta_j}(\rho_{\beta,j}\pa_{x_\beta^j})^{\delta_j} + R_i\biggr),
  \]
  where the $(|\gamma|-1)$-th order operator $R_i$ is a product of coordinate derivatives of $\rho_{\beta,j}\pa_{x_\beta^j}\psi_i^l$ and compositions of $\rho_{\beta,l}\pa_{x_\beta^l}$ (for varying $l=1,\ldots,n$); in view of the uniform bounds~\eqref{EqISB}, the coefficients of $\bar\rho_\beta^{-1}R_i$ expressed in terms of $\rho_\beta\pa_{x_\beta}$ are thus uniformly bounded when $\alpha,\beta$ vary. Since the symbol of $(\rho_\alpha\pa_{x_\alpha})^\gamma$ pushes forward under $\tau_{\beta\alpha}$ to the symbol of $\prod_i\sum_\delta\prod_j(\psi_i^j)^{\delta_j}(\rho_{\beta,j}\pa_{x_\beta^j})^{\delta_j}$ (cf.\ \eqref{EqSBDSymbXi}), we conclude that $p_\alpha$ pushes forward under $\tau_{\beta\alpha}$ to $p_\beta$ plus a term which, when multiplied by $\bar\rho_\beta^{-1}$, is uniformly bounded in the space of symbols of order $m-1$. This completes the proof of the well-definedness of the principal symbol.

  Regarding the short exact sequence~\eqref{EqSBDSES}, only the surjectivity requires an argument. But given a family $\{p_\alpha\}$ of symbols as in Lemma~\ref{LemmaSBDSymbEquiv}, we may simply set
  \[
    P := \sum_\alpha \chi_\alpha \phi_\alpha^*P_\alpha,\qquad P_\alpha=p_\alpha(x,\rho_\alpha D),
  \]
  where for a function $p_\alpha(x,\xi)$ which is a polynomial in $\xi$, we write $p_\alpha(x,\rho_\alpha D)$ for the differential operator obtained by replacing $\xi_j$ by $\rho_{\alpha,j}D_j$.

  The proof of~\eqref{EqSBDSymbComm} follows from a short calculation in distinguished charts, with the weight $\rho$ coming from the weights $\rho_{\alpha,i}$ in \eqref{EqSBDPoisson}.
\end{proof}

\begin{example}[Laplacian for certain metrics on $\R^n$]
\label{ExSBDbsc}
  We fix the scaled b.g.\ structure on $\R^n$ from \S\ref{SssISc2}; thus $\cV$ is the space of linear combinations of the coordinate vector fields $\pa_{x^i}$ with coefficients which are uniformly bounded together with all derivatives along $\la x\ra\pa_x$. Moreover, $\rho=\la x\ra^{-1}$ is a scaling weight. Let $g=\sum_{i,j=1}^n g_{i j}\dd x^i\otimes\dd x^j$ where $(g_{i j})_{1\leq i,j\leq n}$ is uniformly positive definite and satisfies uniform bounds $|\pa_x^\beta g^{i j}(x)|\leq C_\beta\la x\ra^{-|\beta|}$ for all $\beta\in\N_0^n$. Set $L_z=\Delta_g-z$. Then $\upsigma_\cV^2(L_z)=|\xi|_{g^{-1}}^2-z$ (more precisely, its equivalence class in $P^2_\cV(T^*\R^n)/\la x\ra^{-1}P^1_\cV(T^*\R^n)$). This is elliptic (has an inverse modulo $S^{-1}$ in $S^{-2}({}^\cV T^*\R^n)$) if and only if $z\notin[0,\infty)$.
\end{example}

\begin{rmk}[Operators on vector bundles]
\label{RmkSBDBundles}
  The vector bundles of interest are \emph{vector bundles of bounded geometry} $E\to M$ \cite[\S{A.1.1}]{ShubinBounded}, which means that for each $\alpha\in\sA$ a trivialization $E|_{U_\alpha}\cong U_\alpha\times\C^k$ is chosen so that the transition functions (which are $GL(\C,k)$-valued smooth functions on $U_\alpha\cap U_\beta$) are uniformly bounded in $\CI$. If $E,F\to M$ are vector bundles of bounded geometry, then
  \[
    \Diff_\cV^m(M;E,F)\subset\Diff^m(M;E,F)
  \]
  is characterized as the subspace of operators which, in the charts $U_\alpha$ and in such trivializations of $E|_{U_\alpha}$ and $F|_{U_\alpha}$, are matrices of operators of the form~\eqref{EqSBDPushfwdP} (with coefficients which are uniformly bounded in $\CI$). We then have a principal symbol map
  \[
    \upsigma_\cV^m\colon\Diff_\cV^m(M;E,F) \to P^m({}^\cV T^*M;\pi^*\Hom(E,F)) / \rho P^{m-1}({}^\cV T^*M;\pi^*\Hom(E,F))
  \]
  where $\pi\colon{}^\cV T^*M\to M$ is the base projection. (The space on the right consists of equivalence classes of $\pi^*\Hom(E,F)$-valued symbols.) There is an analogue for weighted operators; and we have a principal symbol short exact sequence analogous to~\eqref{EqSBDSES}.
\end{rmk}

\subsection{Weighted \texorpdfstring{$\cV$}{V}-Sobolev spaces}
\label{SsSBH}

For $s\in\R$, the Sobolev space $H^s(\R^n)$ consists of all tempered distributions $u\in\sS'(\R^n)$ whose Fourier transform $\hat u(\xi)=\int e^{-i x\cdot\xi}u(x)\,\dd x$ defines an element of $\la\xi\ra^{-s}L^2(\R^n)$; and we fix the norm
\[
  \|u\|_{H^s(\R^n)} := \|\la\xi\ra^s\hat u\|_{L^2(\R^n_\xi)}.
\]

\begin{definition}[Weighted $\cV$-Sobolev spaces]
\label{DefSBH}
  Let $s\in\R$, and let $w\in\CI(M)$ be a weight on $(M,\fB)$ equivalent to a weight family $\{w_\alpha\colon\alpha\in\sA\}$. Fix $\chi\in\CIc((-2,2)^n)$ with $\chi=1$ on $[-1,1]^n$; let $\chi_\alpha=\phi_\alpha^*\chi$ and define the map\footnote{Thus $(S_\alpha)_*(\rho_{\alpha,i}\pa_{x_\alpha^i})=\pa_{\tilde x_\alpha^i}$.}
  \begin{equation}
  \label{EqSBHScale}
    S_\alpha \colon (-2,2)^n \ni x_\alpha \mapsto \tilde x_\alpha := \Bigl(\frac{x_\alpha^1}{\rho_{\alpha,1}},\ldots,\frac{x_\alpha^n}{\rho_{\alpha,n}}\Bigr).
  \end{equation}
  Then the space $w H_\cV^s(M)$ consists of all distributions $u\in\sD'(M)$ so that $(\phi_\alpha)_*(\chi_\alpha u)\in H^s(\R^n)$ for all $\alpha\in\sA$ and
  \begin{equation}
  \label{EqSBHNorm}
    \|u\|_{w H_\cV^s(M)}^2 := \sum_\alpha \|w_\alpha^{-1}(S_\alpha)_*(\phi_\alpha)_*(\chi_\alpha u)\|_{H^s(\R^n)}^2.
  \end{equation}
\end{definition}

\begin{rmk}[Uniform Sobolev spaces]
\label{RmkSBHUni}
  In the case that $\rho_{\alpha,i}=1$ for all $\alpha,i$, the maps $S_\alpha$ are the identity maps, and if further $w_\alpha=1$ for all $\alpha$, then the space $H_\cV^s(M)$ thus defined in the usual uniform Sobolev space for the b.g.\ structure $\fB$ (denoted $W_2^s(M)$ in \cite[Appendix~1]{ShubinBounded}).
\end{rmk}

As an illustration for general scalings $\rho_\alpha$, consider the special case $s\in\N$ and set $u_\alpha:=(\phi_\alpha)_*(\chi_\alpha u)$; then, using the notation~\eqref{EqSB2Der}, $\|(S_\alpha)_*u_\alpha\|_{H^s(\R^n)}^2$ is equivalent to (i.e.\ bounded above and below by an $\alpha$-independent constant by)
\begin{equation}
\label{EqSBHNorm2}
  \sum_{|\beta|\leq s} \int |(\rho_\alpha\pa_x)^\beta u_\alpha(x)|^2\,\frac{\dd x^1}{\rho_{\alpha,1}}\cdots\frac{\dd x^n}{\rho_{\alpha,n}} = \sum_{|\beta|\leq s} \int |\pa_{\tilde x}^\beta \tilde u_\alpha(\tilde x)|^2\,\dd\tilde x,
\end{equation}
where we write $\tilde u_\alpha(\tilde x)=u_\alpha(x)$ (i.e.\ $\tilde u_\alpha=(S_\alpha)_*u_\alpha$).

Returning to the case of general $s\in\R$, we note the following classical norm equivalence, whose proof we include for completeness.
\begin{lemma}
\label{LemmaSBHRnNormEq}
  We have $\|u\|_{H^s(\R^n)}^2 \sim \sum_{j\in\Z^n} \| \chi(\cdot)u(\cdot-j) \|_{H^s}^2$.
\end{lemma}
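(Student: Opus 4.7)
My plan is to first reduce to a translation-setup and then prove the equivalence in a few stages.

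\textbf{Reduction.} By the translation invariance of $H^s(\R^n)$, we have $\|\chi(\cdot)u(\cdot-j)\|_{H^s}=\|\chi(\cdot+j)u\|_{H^s}$. Writing $\psi_j(x):=\chi(x+j)$, it suffices to prove
\[
  \|u\|_{H^s}^2 \sim \sum_{j\in\Z^n}\|\psi_j u\|_{H^s}^2.
\]
The family $\{\psi_j\}$ has three features that drive the argument: (i) it is uniformly bounded in $\CI(\R^n)$ together with all derivatives; (ii) it has uniformly finite overlap (any point lies in the support of at most $4^n$ of the $\psi_j$); and (iii) since $\chi=1$ on $[-1,1]^n$, the $\psi_j$ with $j\in\Z^n$ cover $\R^n$, whence $\sum_j \psi_j(x)^2\geq 1$.

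\textbf{The case $s\in\N_0$.} I treat both inequalities using the characterization $\|v\|_{H^s}^2\sim\sum_{|\alpha|\leq s}\|\pa^\alpha v\|_{L^2}^2$. For the upper bound, the Leibniz rule expands $\pa^\alpha(\psi_j u)$ into a sum of $(\pa^\beta\psi_j)(\pa^\gamma u)$; summing squares over $j$ and using finite overlap plus the uniform $\CI$ bounds of $\{\pa^\beta\psi_j\}$ yields $\sum_j\|\psi_j u\|_{H^s}^2\leq C\|u\|_{H^s}^2$. For the lower bound I induct on $s$. The base $s=0$ is immediate from $\sum_j\psi_j^2\geq 1$ and finite overlap. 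For $s\geq 1$, I use $\|u\|_{H^s}^2\sim\|u\|_{H^{s-1}}^2+\sum_i\|\pa_i u\|_{H^{s-1}}^2$, apply the inductive hypothesis to each term, and rewrite $\psi_j\pa_i u=\pa_i(\psi_j u)-(\pa_i\psi_j)u$; the first piece is bounded by $\|\psi_j u\|_{H^s}$, and after summing in $j$ the second contributes $\sum_j\|(\pa_i\psi_j)u\|_{H^{s-1}}^2$, which is controlled by $\|u\|_{H^{s-1}}^2$ via the (already established) upper bound applied to the family $\{\pa_i\psi_j\}$, and then by $\sum_j\|\psi_j u\|_{H^{s-1}}^2\leq\sum_j\|\psi_j u\|_{H^s}^2$ via the inductive hypothesis.

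\textbf{Extension to $s\in\R$.} Negative integers follow by duality: $H^{-s}=(H^s)^*$, and the adjoint of the bounded map $T_s\colon u\mapsto(\psi_j u)_{j}$, $H^s\to\ell^2(\Z^n;H^s)$, equals $T_{-s}^*\colon(v_j)\mapsto\sum_j\psi_j v_j$ up to the obvious pairing; the $s\in\N_0$ bounds translate into the corresponding bounds at $-s$. For general real $s$, I interpolate the boundedness of $T_s$ between integer values using complex interpolation of Sobolev spaces (and dually for its partial inverse $(v_j)\mapsto\sum_j\eta_j v_j$ with $\eta_j=\psi_j/\sum_k\psi_k^2$, which reconstructs $u$ from $(\psi_j u)$).

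\textbf{Main obstacle.} The only subtle point is the lower bound for noninteger or negative $s$: the duality/interpolation shortcut works, but one must verify that the reconstruction map from $(v_j)$ back to a function is bounded $\ell^2(\Z^n;H^s)\to H^s$ uniformly in $j$. This in turn reduces to the same type of Leibniz-plus-finite-overlap estimates, so in practice this step is again handled by the integer-order arguments and interpolation.
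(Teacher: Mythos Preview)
Your proposal is correct and follows essentially the same approach as the paper: both prove boundedness of the localization map $u\mapsto(\psi_j u)_j$ and of the synthesis map $(v_j)\mapsto\sum_j\psi_j v_j$ for nonnegative integer orders via the Leibniz rule, finite overlap, and induction, then extend to all real $s$ by duality and complex interpolation, and close the lower bound via a reconstruction identity (the paper uses $\tfrac{1}{X}\Psi_s\Phi_s=\Id$ with $X=\sum_j\chi(\cdot-j)^2$, which is exactly your $\eta_j=\psi_j/\sum_k\psi_k^2$). The only cosmetic difference is that the paper packages the two bounds as boundedness of a pair of adjoint operators $\Phi_s,\Psi_s$ from the outset, whereas you organize the argument around upper/lower bounds directly.
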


The right hand side here is essentially the same as (and in any case equivalent to) the norm~\eqref{EqSBHNorm} for $w=1$ for the bounded geometry structure~\eqref{EqIScBdd} on $\R^n$.

\begin{proof}[Proof of Lemma~\usref{LemmaSBHRnNormEq}]
  We claim that the operators
  \begin{alignat*}{4}
    \Phi_s &\colon H^s(\R^n) &&\ni u &&\mapsto \bigl(\chi(\cdot)u(\cdot+j)\bigr)_{j\in\Z^n} &&\in \ell^2\bigl(\Z^n;H^s(\R^n)\bigr), \\
    \Psi_s &\colon \ell^2\bigl(\Z^n;H^s(\R^n)\bigr) &&\ni (u_j)_{j\in\Z^n} &&\mapsto \sum_{j\in\Z^n} \chi(\cdot-j)u_j(\cdot-j) &&\in H^s(\R^n)
  \end{alignat*}
  are well-defined and bounded. Note that $\Psi_{-s}=\Phi_s^*$. By complex interpolation, it thus suffices to prove the boundedness of $\Phi_s$ and $\Psi_s$ for $s\in\N_0$. We omit the subscripts `$s$' henceforth. For $s=0$, we have
  \[ 
    \sum_{j\in\Z^n} \|\chi(\cdot)u(\cdot+j)\|_{L^2} = \sum_{j\in\Z^n}\int \chi(x-j)^2|u(x)|^2\,\dd x \leq C\int |u(x)|^2\,\dd x=C\|u\|_{L^2}^2;
  \]
  this proves the boundedness of $\Phi$ and thus of $\Psi=\Phi^*$ for $s=0$. For $s\in\N$, we argue inductively: the boundedness of $\Phi$ follows from
  \begin{align*}
    \sum_{j\in\Z^n} \|\pa_x^{\leq 1}\chi(\cdot)u(\cdot+j)\|_{H^{s-1}}^2 &\lesssim \sum_{j\in\Z^n} \|\chi(\cdot)\pa_x^{\leq 1}u(\cdot+j)\|_{H^{s-1}}^2 + \|[\pa_x,\chi]u(\cdot+j)\|_{H^{s-1}}^2 \\
      &\lesssim \|\pa_x^{\leq 1}u\|_{H^{s-1}}^2 + \|u\|_{H^{s-1}}^2 \\
      &\lesssim \|u\|_{H^s}^2,
  \end{align*}
  where in the second inequality we used that $[\pa_x,\chi]=\tilde\chi[\pa_x,\chi]$ where $\tilde\chi\in\CIc((-2,2)^n)$ equals $1$ on $\supp\chi$ together with the inductive hypothesis (applied with $\tilde\chi$ in place of $\chi$). For $\Psi$, we estimate
  \begin{align*}
    &\|\Psi((u_j)_{j\in\Z^n})\|_{H^s}^2 \\
    &\qquad \sim \|\pa_x^{\leq 1}\Psi((u_j)_{j\in\Z^n})\|_{H^{s-1}}^2 = \Biggl\| \sum_{j\in\Z^n} \pa_x^{\leq 1}(\chi(\cdot-j)u_j(\cdot-j)) \Biggr\|_{H^{s-1}}^2 \\
    &\qquad\lesssim \Biggl\| \sum_{j\in\Z^n} \chi(\cdot-j)\pa_x^{\leq 1}u_j(\cdot-j)\Biggr\|_{H^{s-1}}^2 + \Biggl\|\sum_{j\in\Z^n}\tilde\chi(\cdot-j)[\pa_x,\chi(\cdot-j)]u_j(\cdot-j) \Biggr\|_{H^{s-1}}^2 \\
    &\qquad\lesssim \|(\pa_x^{\leq 1}u_j)_{j\in\Z^n}\|_{\ell^2 H^{s-1}}^2 + \|([\pa_x,\chi]u_j)_{j\in\Z^n}\|_{\ell^2 H^{s-1}}^2 \\
    &\qquad\lesssim \|(u_j)_{j\in\Z^n}\|_{\ell^2 H^s}^2.
  \end{align*}

  We can now complete the proof. Let $X=\sum_{j\in\Z^n}\chi(\cdot-j)^2$; then $1\leq X\leq C:=\sup X<\infty$, and all derivatives of $X$ are uniformly bounded. The boundedness of $\Phi_s$ gives $\|\Phi_s u\|_{\ell^2 H^s}\lesssim\|u\|_{H^s}$. On the other hand, $\frac{1}{X}\Psi_s\Phi_s u=u$ for all $u\in H^s$; since multiplication by $\frac{1}{X}$ is bounded on every Sobolev space, this implies $\|u\|_{H^s}\lesssim\|\Psi_s\Phi_s u\|_{H^s}\lesssim\|\Phi_s u\|_{\ell^2 H^s}$.
\end{proof}

Lemma~\ref{LemmaSBHRnNormEq} implies, in view of the second expression in~\eqref{EqSBHNorm2}, that $H_\cV^s(M)$ is the uniform Sobolev space for the secondary b.g.\ structure $\rho\fB$; thus, recalling the large operator Lie algebra $\cV'$ from Definition~\ref{DefISB}, we have
\[
  H_{\cV'}^s(M) = H_\cV^s(M).
\]
We stick to the notation $H_\cV^s(M)$ however, as it is cleaner notationally; and, more subtly, only through working with $\cV$ and the scaled b.g.\ structure $\fB_\times$ is it possible to define spaces with variable decay order when $\inf_\alpha\bar\rho_\alpha=0$, cf.\ \S\ref{SsFVar}.

It is easy to see (using the invariance of Sobolev spaces under compactly supported coordinate changes) that $H_\cV^s(M)$, and thus also $w H_\cV^s(M)$, as a Hilbert space, is unchanged (up to equivalence of norms) when defined with respect to a b.g.\ structure compatible with $\rho\fB$; correspondingly, these spaces are unchanged when defined with respect to a scaled b.g.\ structure compatible with $\fB_\times$. Similarly, these spaces are unchanged when different cutoffs $\chi$ are used, or indeed when a family $\chi_\alpha\in\CIc(U_\alpha)$, $\alpha\in\sA$, of cutoffs is used subject only to the conditions that $(\phi_\alpha)_*\chi_\alpha\in\CIc((-2,2)^n)$ is uniformly bounded, all $\chi_\alpha$ are nonnegative, and $\inf_M\sum_\alpha\chi_\alpha>0$. One can then easily show that every $V\in\cV$ defines a bounded linear map
\[
  V \colon w H_\cV^s(M) \to w H_\cV^{s-1}(M).
\]
Indeed, using the norm~\eqref{EqSBHNorm}, this follows from the fact that $\tilde V_\alpha:=(S_\alpha)_*(\phi_\alpha)_*V=\sum_{i=1}^n \tilde V_\alpha^i\pa_{\tilde x^i}$, where the $\tilde V_\alpha^i$ have uniformly bounded $\CI$-seminorms on $S_\alpha((-2,2)^n)\subset\R^n$; and moreover the commutators $[V,\chi_\alpha]=V(\chi_\alpha)$ are supported in $U_\alpha$ and have uniformly bounded (in $\CI$) pushforwards along $S_\alpha\circ\phi_\alpha$ (in fact, already their pushforwards along $\phi_\alpha$ are uniformly bounded).

\bigskip

Mixed $\cV$-$\cW$-function spaces, which capture additional integer degrees of $\cW$-regularity, feature frequently in applications where solutions of a $\cV$-equation turn out to have $\cW$-regularity; see Remark~\ref{RmkIMixed}. They are also the right spaces for characterizing the mapping properties of residual operators (Definition~\ref{DefSBPsdoRes}).

\begin{definition}[Weighted $\cV$-Sobolev spaces with additional $\cW$-regularity]
\label{DefSBHVW}
  Let $s\in\R$, $k\in\N_0$, and let $w\in\CI(M)$ be a weight on $(M,\fB)$. Then $w H_{\cV;\cW}^{(s;k)}(M)$ is defined as the space of all $u\in w H_\cV^s(M)$ so that $W_1\cdots W_j u\in w H_\cV^s(M)$ for all $j\leq k$ and $W_1,\ldots,W_j\in\cW$.
\end{definition}

\begin{lemma}[Towards a norm]
\label{LemmaSBHVW}
  There exists a finite subset $\sW\subset\cW$ which spans $\cW$ over $\CI_{{\rm uni},\fB}(M)$. 
\end{lemma}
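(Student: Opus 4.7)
The plan is to construct the finite set explicitly by partitioning the index set $\sA$ into finitely many classes of ``sparsely placed'' charts, then globalizing the coordinate vector fields on each class.

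First, I fix $\chi\in\CIc((-2,2)^n)$ with $\chi=1$ on $[-1,1]^n$, and pick $\ell\in(1,2)$ so that $\supp\chi\subset[-\ell,\ell]^n$. By Lemma~\ref{LemmabgP}, there exist an integer $J<\infty$ and a partition $\sA=\bigsqcup_{j=1}^J\sA_j$ such that for each $j$, the sets $U_\alpha(\ell)=\phi_\alpha^{-1}((-\ell,\ell)^n)$, $\alpha\in\sA_j$, are pairwise disjoint. For $j=1,\ldots,J$ and $i=1,\ldots,n$, I define
\[
  W_{j,i} := \sum_{\alpha\in\sA_j}\phi_\alpha^*(\chi\,\pa_i).
\]
Since the supports $\supp\phi_\alpha^*(\chi\,\pa_i)\subset\phi_\alpha^{-1}([-\ell,\ell]^n)\subset U_\alpha(\ell)$ are pairwise disjoint for $\alpha\in\sA_j$, this sum is locally finite and defines a smooth vector field. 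The membership $W_{j,i}\in\cW$ follows from Definition~\ref{DefIBdd}\eqref{ItIBddFinite} (only at most $A$ summands are nonzero in any chart $U_\beta$) together with the uniform $\CI$-bounds on the transition functions $\tau_{\beta\alpha}$, which give uniform bounds on the pushforwards $(\phi_\beta)_*\phi_\alpha^*(\chi\,\pa_i)$.

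Second, I take the partition of unity $\chi_\alpha:=\phi_\alpha^*\chi/\sum_\beta\phi_\beta^*\chi$ (already used in the proof of Theorem~\ref{ThmSBDSymb}), which is uniformly bounded in $\CI$. Given $V\in\cW$ with local expression $(\phi_\alpha)_*V=\sum_{i=1}^n V_\alpha^i\pa_i$, and writing $f_\alpha^i:=\phi_\alpha^*V_\alpha^i\in\CI(U_\alpha)$, I observe that on $\supp\chi_\alpha\subset\phi_\alpha^{-1}([-1,1]^n)$ one has $\phi_\alpha^*\chi=1$ while, for $\alpha\in\sA_{j(\alpha)}$ and $\beta\in\sA_{j(\alpha)}\setminus\{\alpha\}$, the support $\supp\phi_\beta^*\chi$ is disjoint from $\supp\phi_\alpha^*\chi$; therefore $W_{j(\alpha),i}|_{\supp\chi_\alpha}=\phi_\alpha^*\pa_i$. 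Hence
\[
  V \;=\; \sum_\alpha \chi_\alpha V \;=\; \sum_{j=1}^J\sum_{i=1}^n g_{j,i}\,W_{j,i},\qquad g_{j,i}:=\sum_{\alpha\in\sA_j}\chi_\alpha f_\alpha^i,
\]
where each summand $\chi_\alpha f_\alpha^i$ is extended by zero to $M$.

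Third, I check that $g_{j,i}\in\CI_{{\rm uni},\fB}(M)$. The supports of the summands $\chi_\alpha f_\alpha^i$ are contained in the pairwise disjoint sets $U_\alpha(\ell)$, $\alpha\in\sA_j$, so in any distinguished chart $U_\beta$ at most $A$ of them contribute; each contributes a function with $\CI((-2,2)^n)$-seminorms controlled uniformly in $\alpha,\beta$, since $\chi_\alpha$ is uniformly $\CI$-bounded and $V_\alpha^i$ is uniformly $\CI$-bounded by $V\in\cW$. Thus $\{W_{j,i}\colon 1\leq j\leq J,\,1\leq i\leq n\}\subset\cW$ is a finite spanning set over $\CI_{{\rm uni},\fB}(M)$.

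There is no substantial obstacle here: the only mildly delicate point is ensuring the partition parameter $\ell$ is chosen so that $\supp\chi\subset[-\ell,\ell]^n$, which makes the disjointness from Lemma~\ref{LemmabgP} match the supports of the cutoffs used in defining $W_{j,i}$.
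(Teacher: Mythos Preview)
Your approach is sound, but there is a concrete slip in the second step. You write ``on $\supp\chi_\alpha\subset\phi_\alpha^{-1}([-1,1]^n)$ one has $\phi_\alpha^*\chi=1$.'' This inclusion is false: since $\chi_\alpha=\phi_\alpha^*\chi/\sum_\beta\phi_\beta^*\chi$, one has $\supp\chi_\alpha=\phi_\alpha^{-1}(\supp\chi)$, and because $\chi$ is smooth with $\chi\equiv 1$ on $[-1,1]^n$, its support strictly contains $[-1,1]^n$. On the annulus $\phi_\alpha^{-1}(\supp\chi\setminus[-1,1]^n)$ one only gets $W_{j(\alpha),i}=\phi_\alpha^*(\chi\,\pa_i)=(\phi_\alpha^*\chi)\,\phi_\alpha^*\pa_i$ with $\phi_\alpha^*\chi<1$ somewhere, so the identity $\chi_\alpha V=\sum_i(\chi_\alpha f_\alpha^i)W_{j(\alpha),i}$ fails there, and with it the claimed decomposition $V=\sum_{j,i}g_{j,i}W_{j,i}$. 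The fix is immediate: use nested cutoffs. Take a second cutoff $\tilde\chi\in\CIc((-2,2)^n)$ with $\supp\tilde\chi\subset[-1,1]^n$ and $\tilde\chi>0$ on $(-1,1)^n$ (so that $\sum_\beta\phi_\beta^*\tilde\chi>0$ on $M$ by Definition~\ref{DefIBdd}(2)), and form the partition of unity from $\tilde\chi$ instead. Then $\supp\tilde\chi_\alpha\subset\phi_\alpha^{-1}([-1,1]^n)\subset\{\phi_\alpha^*\chi=1\}$, and your argument goes through verbatim with $g_{j,i}=\sum_{\alpha\in\sA_j}\tilde\chi_\alpha f_\alpha^i$.

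For comparison, the paper's proof avoids this issue by a different mechanism: rather than a single partition of unity, it proceeds iteratively, subtracting at step $j$ the combination $\sum_i f_{(j)}^i W_{j,i}$ with $f_{(j)}^i=\sum_{\alpha\in\sA_j}\phi_\alpha^*(\chi Z_\alpha^i)$ built from the coefficients of the \emph{current remainder} $Z_{(j-1)}$. This kills the remainder on $\bigcup_{\alpha\in\sA_j}\phi_\alpha^{-1}([-1,1]^n)$ (where $\chi^2=1$) while preserving vanishing from earlier steps, and after $J$ steps nothing is left. Your direct approach is arguably cleaner once the cutoffs are chosen correctly; the paper's peeling argument trades that care for a short induction.
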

\begin{proof}
  Fix $\chi\in\CIc((-2,2)^n)$ with $\chi=1$ on $[-1,1]^n$ and consider the vector fields $W_{\alpha,i}:=\phi_\alpha^*(\chi\pa_{x^i})$. These are uniformly bounded in $\cW$. In the notation of Lemma~\ref{LemmabgP}, we now set, for $j=1,\dots,J$ and $i=1,\ldots,n$,
  \[
    W_{j,i} := \sum_{\alpha\in\sA_j} W_{\alpha,i} \in \cW.
  \]
  We claim that $\sW=\{W_{j,i}\}$ spans $\cW$ over $\CI_{{\rm uni},\fB}(M)$. Indeed, given $Z\in\cW$, write $(\phi_\alpha)_*Z=\sum_{i=1}^n Z_\alpha^i \pa_{x^i}$. Set
  \begin{align*}
    &Z_{(1)}:=Z-\sum_{\alpha\in\sA_1}\sum_{i=1}^n \phi_\alpha^*(\chi Z_\alpha^i)W_{\alpha,i} = Z-\sum_{i=1}^n f_{(1)}^i W_{1,i}, \\
    &\qquad f^i_{(1)} := \sum_{\alpha\in\sA_1} \phi_\alpha^*(\chi Z_\alpha^i) \in \CI_{{\rm uni},\fB}(M).
  \end{align*}
  Then for $\alpha\in\sA_1$, the vector field $Z_{(1)}$ pushes forward along $\phi_\alpha$ to $(\phi_\alpha)_*Z-\sum_{i=1}^n \chi Z_\alpha^i \chi\pa_{x^i}=\sum_{i=1}^n (1-\chi^2)Z_\alpha^i\pa_{x^i}$; therefore, $Z_{(1)}$ vanishes on $\bigcup_{\alpha\in\sA_1}\phi_\alpha^{-1}([-1,1]^n)$. The construction of $Z_{(1)}$ moreover ensures that $\supp_M Z_{(1)}\subset\supp_M Z$. We now apply the same procedure to $Z_{(1)}$ and $\sA_2$, thus producing $f_{(2)}^i\in\CI_{{\rm uni},\fB}(M)$ so that $Z_{(2)}:=Z_{(1)}-\sum_{i=1}^n f_{(2)}^i W_{2,i}$ has $M$-support contained in that of $Z_{(1)}$ (thus vanishing on $\bigcup_{\alpha\in\sA_1}\phi_\alpha^{-1}([-1,1]^n)$ still) and vanishes in $\bigcup_{\alpha\in\sA_2}\phi_\alpha^{-1}([-1,1]^n)$. After $J$ steps, we obtain
  \[
    Z_{(J+1)}=Z-\sum_{j=1}^J\sum_{i=1}^n f_{(j)}^i W_{j,i}
  \]
  for suitable $f_{(j)}^i\in\CI_{{\rm uni},\fB}(M)$, with $Z_{(J+1)}$ vanishing on $\bigcup_{\alpha\in\sA_j}\phi_\alpha^{-1}([-1,1]^n)$ for all $j=1,\ldots,J$; thus, $Z_{(J+1)}=0$, and the proof is complete.
\end{proof}

We can now define a norm on $w H_{\cV;\cW}^{(s;k)}(M)$ by setting
\[
  \|u\|_{w H_{\cV;\cW}^{(s;k)}}^2 := \sum_{j=0}^k \sum_{W_1,\ldots,W_j\in\sW} \|W_1\cdots W_j u\|_{w H_\cV^s}^2.
\]
Since multiplication by a function in $\CI_{{\rm uni},\fB}(M)$ defines a bounded linear map on every weighted $\cV$-Sobolev space, this norm is independent (up to equivalence) of the chosen spanning set $\sW$. Furthermore, in the notation of~\eqref{EqSBHNorm}, we have the norm equivalence
\[
  \|u\|_{w H_{\cV;\cW}^{(s;k)}}^2 \sim \sum_\alpha \sum_{|\beta|\leq k} \bigl\|w_\alpha^{-1} (S_\alpha)_*\bigl( \pa_x^\beta(\phi_\alpha)_*(\chi_\alpha u) \bigr) \bigr\|_{H^s(\R^n)}^2
\]

\begin{prop}[Boundedness of $\cV$-differential operators]
\label{PropBHDiff}
  Let $w$, $w'\in\CI(M)$ be weights on $(M,\fB)$; let $s\in\R$, $m\in\N_0$. Then every $P\in w\Diff_\cV^m(M)$ defines a bounded linear map
  \[
    P \colon w' H_\cV^s(M) \to w w' H_\cV^{s-m}(M).
  \]
  More generally, for $k\in\N_0$, we have $P\colon w' H_{\cV;\cW}^{(s;k)}(M)\to w w' H_{\cV;\cW}^{(s-m;k)}(M)$.
\end{prop}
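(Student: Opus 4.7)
The plan is to reduce the bound to uniform estimates in individual rescaled charts, using the explicit local form of $P$ together with the norm equivalence for the mixed spaces stated after Lemma~\ref{LemmaSBHVW}. Writing $(\phi_\alpha)_*P = \sum_{|\gamma|\le m}p_{\alpha\gamma}(x)(\rho_\alpha D)^\gamma$ with $w_\alpha^{-1}p_{\alpha\gamma}$ uniformly bounded in $\CI((-2,2)^n)$ (for a weight family $\{w_\alpha\}$ equivalent to $w$), pushing forward by the rescaling $S_\alpha$ from~\eqref{EqSBHScale} yields
\[
  \tilde P_\alpha := (S_\alpha)_*(\phi_\alpha)_*P = \sum_{|\gamma|\le m}\tilde p_{\alpha\gamma}(\tilde x)\,D_{\tilde x}^\gamma,\qquad \tilde p_{\alpha\gamma}(\tilde x) = p_{\alpha\gamma}(\rho_\alpha\tilde x),
\]
with rescaled coefficients obeying the crucial bound $\pa_{\tilde x}^{\beta'}\tilde p_{\alpha\gamma} = \rho_\alpha^{\beta'}(\pa_x^{\beta'}p_{\alpha\gamma})(\rho_\alpha\tilde x) = O(w_\alpha\rho_\alpha^{\beta'})$ uniformly in $\alpha$. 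In particular $w_\alpha^{-1}\tilde P_\alpha$ is a uniformly bounded family of $m$-th order differential operators on $\R^n$, hence uniformly bounded as a map $H^s(\R^n)\to H^{s-m}(\R^n)$.

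For the base case $k=0$ I would fix $\tilde\chi\in\CIc((-2,2)^n)$ equal to $1$ on $\supp\chi$ and set $\tilde\chi_\alpha=\phi_\alpha^*\tilde\chi$. Since $P$ is local and $\chi_\alpha=\chi_\alpha\tilde\chi_\alpha$, one has $\chi_\alpha Pu = \chi_\alpha P(\tilde\chi_\alpha u)$, so the rescaled pushforward becomes
\[
  (S_\alpha)_*(\phi_\alpha)_*(\chi_\alpha Pu) = \bigl((S_\alpha)_*(\phi_\alpha)_*\chi_\alpha\bigr)\cdot\tilde P_\alpha\tilde u_\alpha,\qquad \tilde u_\alpha:=(S_\alpha)_*(\phi_\alpha)_*(\tilde\chi_\alpha u).
\]
The multiplier $(S_\alpha)_*(\phi_\alpha)_*\chi_\alpha$ has $\CI(\R^n)$-seminorms uniformly bounded in $\alpha$ (its $\tilde x$-derivatives come with factors $\rho_\alpha^{\beta}\le 1$), so the uniform mapping property above yields $\|(w_\alpha w'_\alpha)^{-1}(S_\alpha)_*(\phi_\alpha)_*(\chi_\alpha Pu)\|_{H^{s-m}} \le C\|(w'_\alpha)^{-1}\tilde u_\alpha\|_{H^s}$. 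Squaring and summing over $\alpha$ delivers the $k=0$ bound, once one notes that replacing $\chi_\alpha$ by $\tilde\chi_\alpha$ in the definition of the norm produces an equivalent norm: each $\tilde\chi_\alpha u$ decomposes as $\sum_\beta\chi_\beta\tilde\chi_\alpha u$ over a uniformly bounded number of $\beta$ with $U_\beta\cap\supp\tilde\chi_\alpha\neq\emptyset$.

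For the general case, I would use the coordinate characterization of the mixed norm to rewrite
\[
  \|u\|_{w'H_{\cV;\cW}^{(s;k)}}^2 \sim \sum_\alpha (w'_\alpha)^{-2}\sum_{|\beta|\le k}\rho_\alpha^{-2\beta}\|\pa_{\tilde x}^\beta\tilde u'_\alpha\|_{H^s(\R^n)}^2
\]
(using $(S_\alpha)_*\pa_{x^i} = \rho_{\alpha,i}^{-1}\pa_{\tilde x^i}$), with $\tilde u'_\alpha=(S_\alpha)_*(\phi_\alpha)_*(\chi_\alpha u)$, and similarly for $Pu$. Combining Leibniz on the cutoff (whose $\pa_{\tilde x}^{\beta_1}$-derivatives are of order $O(\rho_\alpha^{\beta_1})$) with the commutator expansion
\[
  \pa_{\tilde x}^\beta\tilde P_\alpha = \tilde P_\alpha\pa_{\tilde x}^\beta + \sum_{0<\beta'\le\beta}\binom{\beta}{\beta'}\sum_\gamma (\pa_{\tilde x}^{\beta'}\tilde p_{\alpha\gamma})\,D_{\tilde x}^\gamma\pa_{\tilde x}^{\beta-\beta'}
\]
and the bound $\pa_{\tilde x}^{\beta'}\tilde p_{\alpha\gamma} = O(w_\alpha\rho_\alpha^{\beta'})$ gives
\[
  \rho_\alpha^{-\beta}\|\pa_{\tilde x}^\beta\tilde P_\alpha\tilde u_\alpha\|_{H^{s-m}} \le C w_\alpha\sum_{|\beta''|\le|\beta|}\rho_\alpha^{-\beta''}\|\pa_{\tilde x}^{\beta''}\tilde u_\alpha\|_{H^s}.
\]
Squaring, summing over $\alpha$ and $|\beta|\le k$ with weight $(w_\alpha w'_\alpha)^{-2}$, and reconciling $\chi_\alpha$ vs.\ $\tilde\chi_\alpha$ as in the base case, yields the estimate.

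The principal technical point---and the only place where the scaled structure really enters---is the coefficient bound $\pa_{\tilde x}^{\beta'}\tilde p_{\alpha\gamma}=O(w_\alpha\rho_\alpha^{\beta'})$. Precisely this factor of $\rho_\alpha^{\beta'}$ cancels the rescaling weight $\rho_\alpha^{-\beta}$ built into the mixed norm after each commutation of $\pa_{\tilde x}$ through $\tilde P_\alpha$; without it the naive Leibniz expansion in rescaled coordinates would produce unbounded factors $\rho_{\alpha,i}^{-1}$. The remaining bookkeeping (cutoffs, finite overlap, equivalence of $\chi_\alpha$- and $\tilde\chi_\alpha$-norms) is routine.
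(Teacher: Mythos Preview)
Your argument is correct. The paper does not give a proof but only records the hint ``uses $[\cW,\cV]\subset\cW$'' and leaves the details to the reader; your coordinate computation is precisely a concrete implementation of that hint. The key bound $\pa_{\tilde x}^{\beta'}\tilde p_{\alpha\gamma}=O(w_\alpha\rho_\alpha^{\beta'})$ you isolate is exactly the local-chart content of the statement that coefficients of $\cV$-operators lie in $\CI_{{\rm uni},\fB}(M)$ (equivalently, are $\cW$-regular), which is what makes the commutator inclusion $[\cW,\cV]\subset\cW$ hold. Your handling of the cutoffs and the $\chi_\alpha$ vs.\ $\tilde\chi_\alpha$ norm equivalence is also in line with the remarks following Definition~\ref{DefSBH}.
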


The straightforward proof (which uses $[\cW,\cV]\subset\cW$) is left to the reader. We moreover have a useful density result; we first introduce for $\rho=(\rho_1,\ldots,\rho_n)\in(0,1]^n$ the space $H^{(s;k)}_\rho(\R^n):=H^{s+k}(\R^n)$ with norm
\begin{equation}
\label{EqSBHsk}
  \|u\|_{H_\rho^{(s;k)}(\R^n)}^2 := \sum_{|\beta|\leq k} \| (\rho^{-1}\pa_{\tilde x})^\beta u \|_{H^s(\R^n)}^2,\qquad (\rho^{-1}\pa_{\tilde x})^\beta := \prod_{j=1}^n (\rho_j^{-1}\pa_{\tilde x^j})^{\beta_j}.
\end{equation}

\begin{lemma}[Density]
\label{LemmaSBHDense}
  The space $\CIc(M)$ is dense in $w H_\cV^s(M)$ for all weights $w\in\CI(M)$ on $(M,\fB)$ and $s\in\R$, and also in $w H_{\cV;\cW}^{(s;k)}(M)$ for all $k\in\N_0$.
\end{lemma}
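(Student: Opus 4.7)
First I reduce to the unweighted case and to the full mixed space. Multiplication by the smooth positive function $w$ is a continuous bijection $H_{\cV;\cW}^{(s;k)}(M)\to w H_{\cV;\cW}^{(s;k)}(M)$ with inverse multiplication by $w^{-1}$ (using $\frac{Ww}{w}\in\CI_{{\rm uni},\fB}(M)$ for $W\in\cW$, iterated $\leq k$ times); it maps $\CIc(M)$ onto itself. Moreover $H_{\cV;\cW}^{(s;0)}(M)=H_\cV^s(M)$, so the general claim follows from density of $\CIc(M)$ in $H_{\cV;\cW}^{(s;k)}(M)$.

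Next I carry out a truncation to compact support. With $\chi_\alpha=\phi_\alpha^*\chi$ as in Definition~\ref{DefSBH}, set $\eta_\alpha := \chi_\alpha/\sum_\beta\chi_\beta\in\CIc(U_\alpha)$; this is a partition of unity uniformly bounded in $\CI_{{\rm uni},\fB}(M)$ by the finite overlap property~\eqref{ItIBddFinite}. Enumerate $\sA=\{\alpha_j\}$ (if $\sA$ is finite, $M$ is compact and the truncation step is trivial) and set $\sigma_N:=\sum_{j=1}^N \eta_{\alpha_j}\in\CIc(M)$; the $\sigma_N$ are uniformly bounded in $\CI_{{\rm uni},\fB}(M)$, so multiplication by $\sigma_N$ is uniformly bounded on $H_{\cV;\cW}^{(s;k)}(M)$. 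For any fixed $\alpha\in\sA$, only finitely many $\alpha'\in\sA$ have $U_\alpha\cap U_{\alpha'}\neq\emptyset$, so $\sigma_N\equiv 1$ on $U_\alpha$ for all sufficiently large $N$; hence every summand in the norm equivalence
\[
  \|u-\sigma_N u\|_{H_{\cV;\cW}^{(s;k)}}^2 \sim \sum_{\alpha}\sum_{|\beta|\leq k} \bigl\|(S_\alpha)_*\bigl(\pa_x^\beta(\phi_\alpha)_*(\chi_\alpha(1-\sigma_N)u)\bigr)\bigr\|_{H^s(\R^n)}^2
\]
vanishes for $N$ large (depending on $\alpha$), while each summand is bounded uniformly in $N$ by a constant times the corresponding summand of $\|u\|_{H_{\cV;\cW}^{(s;k)}}^2$ (expanding out $\pa_x^\beta$ on the product and using the uniform $\CI_{{\rm uni},\fB}$-bound on $\sigma_N$ and $\chi_\alpha$). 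Dominated convergence on the sum over $\alpha$ yields $\sigma_N u\to u$.

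It remains to approximate a compactly supported $u\in H_{\cV;\cW}^{(s;k)}(M)$ by elements of $\CIc(M)$. Writing $u=\sum_{\alpha\in F}\eta_\alpha u$ for a finite set $F$, it suffices to handle each $\eta_\alpha u$ separately. In the rescaled chart $S_\alpha\circ\phi_\alpha$, this pushes forward to a compactly supported element of $H^{(s;k)}_{\rho_\alpha}(\R^n)$ (cf.~\eqref{EqSBHsk}) that already lies in $H^{s+k}(\R^n)$. Convolving with a standard mollifier $\psi_\delta(\tilde x)=\delta^{-n}\psi(\tilde x/\delta)$, $\psi\in\CIc(\R^n)$, $\int\psi=1$, in the $\tilde x$-variables produces a $\CIc$ approximation whose support, for $\delta$ sufficiently small, remains inside $S_\alpha(\phi_\alpha(U_\alpha))$; since $\psi_\delta*$ commutes with the rescaled derivatives $(\rho_\alpha^{-1}\pa_{\tilde x})^\beta$ and converges to the identity on each $H^s(\R^n)$, we obtain convergence in $H^{(s;k)}_{\rho_\alpha}(\R^n)$ as $\delta\searrow 0$. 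Pulling back via $S_\alpha\circ\phi_\alpha$ and summing over $F$ gives a sequence in $\CIc(M)$ converging to $u$.

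The main (mild) obstacle is ensuring that the truncation cutoffs are bounded uniformly in $\CI_{{\rm uni},\fB}(M)$---the regularity scale of $\cW$---rather than merely in the coarser secondary scale $\CI_{{\rm uni},\rho\fB}(M)$. This forces the use of a partition of unity built from the original charts $\phi_\alpha$ (whose finite overlap gives $\cW$-uniform estimates on $\sigma_N$), rather than cutoffs adapted to the finer $\rho\fB$-structure; once this is done, the mollification step is entirely local in a finite collection of charts and reduces to the classical Euclidean density statement.
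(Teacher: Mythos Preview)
Your proof is correct and follows essentially the same approach as the paper's: a partition-of-unity truncation built from the $\fB$-charts (to remain in $\CI_{{\rm uni},\fB}$, hence $\cW$-uniformly bounded) followed by mollification in the rescaled coordinates $\tilde x$, using that convolution commutes with $(\rho_\alpha^{-1}\pa_{\tilde x})^\beta$. The only differences are cosmetic: you add an explicit reduction to $w=1$ and spell out the dominated-convergence justification for $\sigma_N u\to u$, whereas the paper simply asserts convergence of the series $\sum_\alpha \chi_\alpha u$ in $w H_{\cV;\cW}^{(s;k)}(M)$ and then mollifies each localization.
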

\begin{proof}
  Fixing a nonnegative function $\chi\in\CIc((-2,2)^n)$ with $\chi=1$ on $[-1,1]^n$, set $\chi_\alpha=\frac{\phi_\alpha^*\chi}{\sum_\beta\phi_\beta^*\chi}$. Let $u\in w H_{\cV;\cW}^{(s;k)}(M)$. Then the series $\sum_{\alpha\in\sA}\chi_\alpha u$ converges in $w H_{\cV;\cW}^{(s;k)}(M)$ to $u$, and thus the subspace consisting of function supported in finitely many distinguished open sets is dense. It remains to smooth out the localizations $u_\alpha:=(S_\alpha)_*(\phi_\alpha)_*(\chi_\alpha u)\in H_{\rho_\alpha}^{(s;k)}(\R^n)$. Let $\phi_\eps(\tilde x)=\eps^{-n}\phi(\tilde x/\eps)$, $\phi\in\CIc(\R^n)$, $\int\phi(\tilde x)\,\dd\tilde x=1$. We claim that $\phi_\eps*u_\alpha\to u_\alpha$ in $H_{\rho_\alpha}^{(s;k)}(\R^n)$ as $\eps\searrow 0$. For $k=0$, this follows by taking Fourier transforms in $\tilde x$, i.e.\ $\wh{\phi}(\eps\xi)\wh{u_\alpha}(\xi)\to\wh{u_\alpha}(\xi)$ in $\la\xi\ra^s L^2(\R^n_\xi)$. The case of general $k$ reduces to the case $k=0$ in view of $(\rho_\alpha^{-1}\pa_{\tilde x})^\beta(\phi_\eps*u_\alpha)=\phi_\eps*(\rho_\alpha^{-1}\pa_{\tilde x})^\beta u_\alpha$.
\end{proof}

We end this section by recording two elementary but crucial compactness and duality results.

\begin{thm}[Rellich compactness]
\label{ThmBHRellich}
  Let $w,w'\in\CI(M)$ be weights on $(M,\fB)$ with $w/w'\to 0$ at infinity. Let $s,s'\in\R$. Suppose that $s>s'$. Then the inclusion map
  \begin{equation}
  \label{EqBHRellich}
    w H_\cV^s(M) \hra w' H_\cV^{s'}(M)
  \end{equation}
  is compact. More generally, if $k,k'\in\N_0$, and $s\geq s'$ and $k\geq k'$, with at least one inequality strict, then the inclusion map
  \begin{equation}
  \label{EqBHRellich2}
    w H_{\cV;\cW}^{(s;k)}(M) \hra w' H_{\cV;\cW}^{(s';k')}(M)
  \end{equation}
  is compact.
\end{thm}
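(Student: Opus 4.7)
The plan is to prove the mixed statement~\eqref{EqBHRellich2} directly, from which \eqref{EqBHRellich} follows as the case $k=k'=0$. Fix a bounded sequence $(u_\nu)\subset w H_{\cV;\cW}^{(s;k)}(M)$, and, using the partition-of-unity based definition of the norm, set $v_{\nu,\alpha}:=(S_\alpha)_*(\phi_\alpha)_*(\chi_\alpha u_\nu)\in H_{\rho_\alpha}^{(s;k)}(\R^n)$ in the notation of~\eqref{EqSBHsk}, where $\chi_\alpha=\phi_\alpha^*\chi$ for a fixed $\chi\in\CIc((-2,2)^n)$, $\chi=1$ on $[-1,1]^n$. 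Replacing $w$ by an equivalent weight family $\{w_\alpha\}$ (and likewise $w'$ by $\{w'_\alpha\}$) as in Lemma~\usref{LemmaSBDWeights}, the bound $\|u_\nu\|_{w H_{\cV;\cW}^{(s;k)}}^2\le C$ translates into the uniform $\ell^2$-bound
\[
  \sum_{\alpha\in\sA} \|w_\alpha^{-1}v_{\nu,\alpha}\|_{H_{\rho_\alpha}^{(s;k)}(\R^n)}^2 \le C',\qquad \nu\in\N.
\]
Crucially, for each fixed $\alpha$ the distribution $v_{\nu,\alpha}$ is supported in the \emph{fixed} (i.e.\ $\nu$-independent) set $S_\alpha(\supp\chi)\subset\R^n$; although this set may be very large when $\rho_\alpha$ is small, it is bounded, so classical Rellich compactness applies to it.

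For each $\alpha\in\sA$, the space $H_{\rho_\alpha}^{(s;k)}(\R^n)$ is, for fixed $\rho_\alpha$, equivalent as a Hilbert space to $H^{s+k}(\R^n)$, and likewise $H_{\rho_\alpha}^{(s';k')}(\R^n)\sim H^{s'+k'}(\R^n)$; the hypothesis $s\ge s'$, $k\ge k'$ with at least one inequality strict gives $s+k>s'+k'$. Classical Rellich compactness on $\R^n$ for distributions supported in a fixed bounded set therefore implies that the inclusion
\[
  \{f\in H_{\rho_\alpha}^{(s;k)}(\R^n):\supp f\subset S_\alpha(\supp\chi)\} \hra H_{\rho_\alpha}^{(s';k')}(\R^n)
\]
is compact. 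Since $\sA$ is countable, a standard diagonal argument produces a subsequence of $(u_\nu)$, which we continue to denote $(u_\nu)$, for which $v_{\nu,\alpha}$ converges in $H_{\rho_\alpha}^{(s';k')}(\R^n)$ as $\nu\to\infty$ for every $\alpha\in\sA$.

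It remains to use the decay $w/w'\to 0$ at infinity to upgrade this pointwise-in-$\alpha$ convergence to convergence in $w' H_{\cV;\cW}^{(s';k')}(M)$. The decay translates into the statement that for every $\eps>0$ there exists a \emph{finite} subset $\sA_0\subset\sA$ with $w_\alpha/w'_\alpha<\eps$ for $\alpha\notin\sA_0$. Splitting the norm accordingly,
\[
  \|u_\nu-u_\mu\|_{w' H_{\cV;\cW}^{(s';k')}}^2 \sim \sum_{\alpha\in\sA_0} \bigl\|(w'_\alpha)^{-1}(v_{\nu,\alpha}-v_{\mu,\alpha})\bigr\|_{H_{\rho_\alpha}^{(s';k')}}^2 + \sum_{\alpha\notin\sA_0} \Bigl(\frac{w_\alpha}{w'_\alpha}\Bigr)^{\!2} \bigl\|w_\alpha^{-1}(v_{\nu,\alpha}-v_{\mu,\alpha})\bigr\|_{H_{\rho_\alpha}^{(s';k')}}^2,
\]
the tail is bounded by $\eps^2\cdot 4 C'$ using $H_{\rho_\alpha}^{(s';k')}\subset H_{\rho_\alpha}^{(s;k)}$, while the finite sum over $\sA_0$ tends to $0$ as $\nu,\mu\to\infty$ by the convergence obtained in the previous step. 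Hence $(u_\nu)$ is Cauchy in $w' H_{\cV;\cW}^{(s';k')}(M)$, and completeness yields the desired convergent subsequence.

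The main subtle point is the fact that when $\inf_\alpha\bar\rho_\alpha=0$ the local sets $S_\alpha((-2,2)^n)$ are unbounded as a family, so there is no uniform Rellich constant; the remedy is that one only needs compactness chart by chart, with uniformity being supplied by the weight tail estimate rather than by uniform local compactness. Everything else is routine once the pushforward $(S_\alpha)_*(\phi_\alpha)_*(\chi_\alpha\cdot)$ is recognized as the correct way to decouple the $\alpha$-dependence of both the scaling and the weight.
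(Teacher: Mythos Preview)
Your proof is correct and follows essentially the same strategy as the paper: localize via $(S_\alpha)_*(\phi_\alpha)_*(\chi_\alpha\,\cdot)$, apply classical Rellich chart by chart, extract a diagonal subsequence, and control the tail using the decay of $w_\alpha/w'_\alpha$. Two minor differences are worth noting. First, you prove that the subsequence is Cauchy and invoke completeness, whereas the paper constructs the limit $v=\sum_\alpha\phi_\alpha^*S_\alpha^*v_\alpha$ explicitly and checks $v\in H^{s'}_\cV$ via Fatou; both are fine. Second, for the mixed case you observe directly that $H_{\rho_\alpha}^{(s;k)}\cong H^{s+k}$ (with $\alpha$-dependent constants) and apply Rellich once, which handles $s>s'$, $k=k'$ and $s=s'$, $k>k'$ simultaneously; the paper instead treats the first case by applying Rellich to each $(\rho_\alpha^{-1}\pa_{\tilde x})^\gamma u_{j,\alpha}$ separately and reduces the second case to the first via the factorization $wH^{(s;k)}\hra wH^{(s+1;k-1)}\hra w'H^{(s;k-1)}$. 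Your route is slightly more economical here.
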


This includes the compact inclusion of standard weighted Sobolev spaces $\la x\ra^l H^s(\R^n)\to\la x\ra^{l'}H^{s'}(\R^n)$ (when $s>s'$, $l>l'$) as a special case, and also the inclusion of b-Sobolev spaces $\Hb^{s,l}(\bar M)\to\Hb^{s',l'}(\bar M)$ on compact manifolds $\bar M$ with boundary. Theorem~\ref{ThmBHRellich} has a converse: if $w\leq C w'$ and $s\geq s'$ are such that~\eqref{EqBHRellich} is compact, then $w/w'\to 0$ at infinity and $s>s'$. We leave the proof to the interested reader.

\begin{proof}[Proof of Theorem~\usref{ThmBHRellich}]
  For completeness, we give a proof (which is essentially standard). It suffices to consider the case $w'=1$, so $w\to 0$ at infinity. If $\{w_\alpha\}$ is an equivalent weight family, this implies that $w_\alpha\to 0$ when $\alpha\to\infty$ in $\sA$ (with the discrete topology, i.e.\ for all $\eps>0$ there exists a finite subset $\sA(\eps)\subset\sA$ so that $w_\alpha<\eps$ for $\alpha\in\sA\setminus\sA(\eps)$). Let $\chi\in\CIc((-2,2)^n)$ be nonnegative with $\chi=1$ on $[-1,1]^n$, and set $\chi_\alpha=\frac{\phi_\alpha^*\chi}{\sum_\beta\phi_\beta^*\chi}$.

  We first consider~\eqref{EqBHRellich}. Let $\{u_j\}_{j\in\N}\subset w H_\cV^s(M)$ be a bounded sequence. For each $\alpha\in\sA$, also $\{u_{j,\alpha}\}_{j\in\N}$ with $u_{j,\alpha}=(S_\alpha)_*(\phi_\alpha)_*(\chi_\alpha u)\in H^s(\R^n)$ is a bounded sequence, with norm bounded by $C w_\alpha$ where $C$ can be taken to be a constant times $\|u\|_{w H_\cV^s(M)}$, and with compact support $\supp u_{j,\alpha}\subset S_\alpha((-2,2)^n)$. We may thus pass to a subsequence of $u_{j,\alpha}$ which converges in $H^{s'}(\R^n)$ to some $v_\alpha$ with support in $S_\alpha((-2,2)^n)$. Passing to a diagonal subsequence, we may assume that we have convergence $u_{j,\alpha}\to v_\alpha\in H^{s'}(\R^n)$ for all $\alpha\in\sA$. Set
  \[
    v := \sum_\alpha \phi_\alpha^*S_\alpha^*v_\alpha.
  \]
  Then $v\in H^{s'}_\cV$ since (by Fatou's lemma)
  \[
    \sum_\alpha \|v_\alpha\|_{H^{s'}}^2 \leq \sum_\alpha \liminf_{j\to\infty} \|u_{j,\alpha}\|_{H^{s'}}^2 \leq \liminf_{j\to\infty} \sum_\alpha \|u_{j,\alpha}\|_{H^{s'}}^2 \leq C\liminf_{j\to\infty}\|u_j\|_{H^{s'}}^2 < \infty.
  \]
  Furthermore, for any fixed $\eps>0$, we have $\eps w_\alpha^{-1}\geq 1$ for $\alpha\notin\sA(\eps)$ and thus
  \[
    \|v-u_j\|_{H^{s'}}^2 \leq C\Biggl(\sum_{\alpha\in\sA(\eps)} \|v_\alpha-u_{j,\alpha}\|_{H^{s'}}^2 + \eps \sum_{\alpha\notin\sA(\eps)}w_\alpha^{-1}\|v_\alpha\|_{H^{s'}}^2+\eps\sum_{\alpha\notin\sA(\eps)} w_\alpha^{-1}\|u_{j,\alpha}\|_{H^{s'}}^2\Biggr).
  \]
  The first (finite) sum converges to $0$ as $j\to\infty$. We can estimate the third sum by $C\eps\|u_j\|_{w H_\cV^{s'}(M)}\leq C'\eps$. The second sum is estimated similarly using Fatou's lemma. We conclude that $u_j\to v$ in $H^{s'}_\cV(M)$, as claimed.

  Next, consider~\eqref{EqBHRellich2} for $s>s'$ and $k=k'$. Then all derivatives $(\rho_\alpha^{-1}\pa_{\tilde x})^\gamma u_{j,\alpha}$ for $\gamma\in\N_0^n$, $|\gamma|\leq k$, are uniformly bounded in $H^s$ by $C w_\alpha$; passing to a subsequence, we have $(\rho_\alpha^{-1}\pa_{\tilde x})^\gamma u_{j,\alpha}\to v^{(\gamma)}_\alpha\in H^{s'}(\R^n)$, and in fact $(\rho_\alpha^{-1}\pa_{\tilde x})^\gamma v_\alpha=v_\alpha^{(\gamma)}$ (where $v_\alpha=v_\alpha^{(0)}$) since differentiation commutes with distributional limits; thus $u_{j,\alpha}\to v_\alpha$ in $H_{\rho_\alpha}^{(s;k)}(\R^n)$. The remainder of the proof is then the same, \emph{mutatis mutandis}.

  Finally, when $s=s'$ but $k>k'$ (so $k-1\geq k'$), we write the inclusion~\eqref{EqBHRellich2} as a composition
  \[
    w H_{\cV;\cW}^{(s;k)}(M)\hra w H_{\cV;\cW}^{(s+1;k-1)}(M)\hra w' H_{\cV;\cW}^{(s;k-1)}(M)\hra w' H_{\cV;\cW}^{(s',k')}(M).
  \]
  The first and third arrow are continuous and the middle arrow is compact by what we have shown. This completes the proof.
\end{proof}

\begin{prop}[Duality]
\label{PropSBHDual}
  Fix a uniformly positive $\cV$-density $\mu$ on $M$, by which we mean that $(\phi_\alpha)_*\mu=\mu_\alpha(x)|\frac{\dd x_\alpha^1}{\rho_{\alpha,1}}\cdots\frac{\dd x_\alpha^n}{\rho_{\alpha,n}}|$ where $\mu_\alpha\in\CI((-2,2)^n)$ is uniformly bounded and $\inf_\alpha\inf_{(-2,2)^n}\mu_\alpha>0$. Let $w\in\CI(M)$ be a weight on $(M,\fB)$, and let $s\in\R$. Then the $L^2(M;\mu)$-pairing $(u,v)\mapsto\la u,v\ra:=\int_M u\bar v\,\dd\mu$ extends from $\CIc(M)\times\CIc(M)\to\C$ to a nondegenerate pairing $w H_\cV^s(M)\times w^{-1}H_\cV^{-s}(M)\to\C$, and thus
  \begin{equation}
  \label{EqSBHDual}
    (w H_\cV^s(M))^*=w^{-1}H_\cV^{-s}(M).
  \end{equation}
\end{prop}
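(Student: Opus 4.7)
The plan is to verify that the $L^2(\mu)$-pairing is bounded on $\CIc(M)\times\CIc(M)$ with respect to the norms on $wH_\cV^s(M)$ and $w^{-1}H_\cV^{-s}(M)$, extend it by density (Lemma~\ref{LemmaSBHDense}), and then show that the induced map $\Phi\colon w^{-1}H_\cV^{-s}(M)\to(wH_\cV^s(M))^*$, $v\mapsto\la\cdot,v\ra$, is a topological isomorphism. Injectivity is immediate: if $\la u,v\ra=0$ for all $u\in\CIc(M)$, then $v=0$ as a distribution on $M$, hence as an element of $w^{-1}H_\cV^{-s}(M)$.

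For boundedness, fix a partition of unity $\chi_\alpha=\phi_\alpha^*\chi/\sum_\beta\phi_\beta^*\chi$ (with $\chi\in\CIc((-2,2)^n)$ equal to $1$ on $[-1,1]^n$), so $\supp\chi_\alpha\subset U_\alpha$ and $\sum_\alpha\chi_\alpha=1$. Write $\la u,v\ra=\sum_{\alpha,\beta}\int(\chi_\alpha u)\overline{(\chi_\beta v)}\,d\mu$; only pairs $\alpha\sim\beta$ with $U'_\alpha\cap U'_\beta\neq\emptyset$ contribute, and each $\alpha$ has uniformly boundedly many such partners by Definition~\ref{DefIBdd}\eqref{ItIBddFinite}, with $w_\alpha\sim w_\beta$ on such pairs. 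Computing each nonzero term in the $\phi_\alpha$-chart and rescaling by $S_\alpha$ (so the measure becomes $\mu_\alpha(S_\alpha^{-1}\tilde x)\,d\tilde x$, uniformly equivalent to Lebesgue measure), the standard $H^s(\R^n)$-$H^{-s}(\R^n)$ duality bounds the term by $C\|\tilde u_\alpha\|_{H^s}\|(S_\alpha)_*(\phi_\alpha)_*(\chi_\beta v)\|_{H^{-s}}$, with $\tilde u_\alpha=(S_\alpha)_*(\phi_\alpha)_*(\chi_\alpha u)$. The uniform $\CI$-boundedness of the scaled transition $S_\alpha\circ\phi_\alpha\circ\phi_\beta^{-1}\circ S_\beta^{-1}$---a consequence of~\eqref{EqISB} as exploited in the proof of Proposition~\ref{PropSB2}---lets one replace the second factor by a uniform multiple of $\|\tilde v_\beta\|_{H^{-s}}$. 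Cauchy--Schwarz applied to the sparse double sum $\sum_\alpha\sum_{\beta\sim\alpha}(w_\alpha^{-1}\|\tilde u_\alpha\|_{H^s})(w_\beta\|\tilde v_\beta\|_{H^{-s}})$ then yields $|\la u,v\ra|\leq C\|u\|_{wH_\cV^s}\|v\|_{w^{-1}H_\cV^{-s}}$.

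For surjectivity, the main tool is the isometric embedding
\[
  T\colon wH_\cV^s(M)\hookrightarrow\ell^2(\sA;H^s(\R^n)),\qquad Tu=(w_\alpha^{-1}\tilde u_\alpha)_{\alpha\in\sA},
\]
whose range is closed. Given $\ell\in(wH_\cV^s(M))^*$, extend $\ell\circ T^{-1}$ from $\ran T$ to all of $\ell^2(\sA;H^s(\R^n))$ via Hahn--Banach; the self-duality $\bigl(\ell^2(\sA;H^s(\R^n))\bigr)^*\cong\ell^2(\sA;H^{-s}(\R^n))$ then furnishes $(v_\alpha)_\alpha\in\ell^2(\sA;H^{-s}(\R^n))$ with $\ell(u)=\sum_\alpha\la\tilde u_\alpha,w_\alpha^{-1}v_\alpha\ra$. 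Denoting by $\tilde\mu_\alpha(\tilde x):=\mu_\alpha(S_\alpha^{-1}\tilde x)$ the density of $\mu$ in the rescaled chart, I assemble
\[
  v:=\sum_\alpha\chi_\alpha\cdot(S_\alpha\circ\phi_\alpha)^*\bigl[w_\alpha^{-1}\tilde\mu_\alpha^{-1}v_\alpha\bigr]\in\sD'(M),
\]
the sum converging locally finitely since $\supp\chi_\alpha\subset U_\alpha$. A change of variables in each chart (in which the reciprocal density $\tilde\mu_\alpha^{-1}$ cancels $\tilde\mu_\alpha$ in $d\mu=\tilde\mu_\alpha\,d\tilde x$) directly verifies $\la u,v\ra=\ell(u)$ for every $u\in\CIc(M)$.

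The main obstacle is to show that this $v$ lies in $w^{-1}H_\cV^{-s}(M)$ with norm controlled by $\|\ell\|$. The strategy is to estimate each chart localization $(S_\beta)_*(\phi_\beta)_*(\chi_\beta v)$ in $H^{-s}(\R^n)$: by the locally finite construction, it is a sum over uniformly boundedly many $\alpha$ with $U_\alpha\cap\supp\chi_\beta\neq\emptyset$, each summand a compactly supported multiple of $v_\alpha$ transported via the uniformly $\CI$-bounded diffeomorphism $S_\beta\circ\phi_\beta\circ\phi_\alpha^{-1}\circ S_\alpha^{-1}$ (with density factor $\tilde\mu_\alpha^{-1}$ harmless since $\tilde\mu_\alpha$ and its inverse are uniformly bounded). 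Hence its $H^{-s}$-norm is bounded by $Cw_\alpha^{-1}\|v_\alpha\|_{H^{-s}}\sim Cw_\beta^{-1}\|v_\alpha\|_{H^{-s}}$. Squaring, summing in $\ell^2_\beta$, and invoking the bounded overlap of $\{\alpha\sim\beta\}$ then yields $\|v\|_{w^{-1}H_\cV^{-s}}^2\leq C\sum_\alpha\|v_\alpha\|_{H^{-s}}^2\leq C\|\ell\|^2$, completing the argument.
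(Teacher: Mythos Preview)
Your proof is correct and follows essentially the same approach as the paper: both establish boundedness of the pairing via a partition of unity and the local $H^s\times H^{-s}$ duality, and both prove surjectivity by isometrically embedding $wH_\cV^s(M)$ into $\ell^2(\sA;H^s(\R^n))$, applying Hahn--Banach to extend the functional, and then assembling $v$ from the resulting $(v_\alpha)$ with the density correction. Your argument is in fact slightly more detailed than the paper's, which simply asserts $v\in w^{-1}H_\cV^{-s}(M)$ without spelling out the norm estimate you give in the final paragraph.
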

\begin{proof}
  Let $\chi\in\CIc((-2,2)^n)$ be nonnegative and equal to $1$ on $[-1,1]^n$, and set $\chi_\alpha=\frac{\phi_\alpha^*\chi}{\sum_\beta\phi_\beta^*\chi}$. Note now that for $u,v\in\CIc(M)$, we can write $\la u,v\ra=\sum_{\alpha,\beta}\la \chi_\alpha u,\chi_\beta v\ra$, where for each $\alpha$ there is a uniform finite upper bound on the number of $\beta$ for which $\chi_\alpha\chi_\beta\neq 0$. Using the well-definedness of the $L^2$-pairing $H^s(\R^n)\times H^{-s}(\R^n)\to\C$ and the coordinate-invariance of $H^s$-spaces, this implies the well-definedness of the pairing~\eqref{EqSBHDual}.

  Next, let $\lambda\in(w H_\cV^s(M))^*$. Since $u\mapsto((S_\alpha)_*(\phi_\alpha)_*\chi_\alpha u)_{\alpha\in\sA}$ defines a continuous map
  \[
    \Phi\colon w H_\cV^s(M)\to\ell^2(\sA;w_\alpha H^s(\R^n))
  \]
  (where $\{w_\alpha\}$ is equivalent to $w$) with $\|\Phi(u)\|_{\ell^2 w_\alpha H^s}=\|u\|_{w H_\cV^s(M)}$, we can extend the functional $\lambda'\colon\Phi(w H_\cV^s(M))\to\C$, $\lambda'(\Phi(u)):=\lambda(u)$, to a continuous linear functional on $\ell^2 w_\alpha H^s$ which is thus realized by $(v_\alpha)_{\alpha\in\sA}\in\ell^2 w_\alpha^{-1}H^{-s}$. That is,
  \[
    \lambda(u)=\sum_\alpha\la (S_\alpha)_*(\phi_\alpha)_*\chi_\alpha u,v_\alpha\ra_{L^2(\R^n;|\dd\tilde x|)}.
  \]
  Let $f_\alpha\in\CI((-2,2)^n)$ be the (uniformly bounded) family of functions for which we have $(S_\alpha)_*(f_\alpha(\phi_\alpha)_*\mu)=|\dd\tilde x|$. We then conclude that $\lambda(u)=\la u,v\ra$ where
  \[
    v=\sum_\alpha \chi_\alpha(\phi_\alpha)^*f_\alpha S_\alpha^*v_\alpha\in w^{-1}H_\cV^{-s}(M),
  \]
  finishing the proof.
\end{proof}

\subsection{Pseudodifferential operators}
\label{SsSBPsdo}

Let $\cB_\times=\{(U_\alpha,\phi_\alpha,\rho_\alpha)\colon\alpha\in\sA\}$ denote a scaled b.g.\ structure on $M$, and let $\cV$ be its operator Lie algebra. Since our aim is to define $\cV$-pseudodifferential operators as tools for microlocal analysis, we begin by defining the locus where microlocal analysis will take place in~\S\ref{SssSBTs} before defining $\cV$-quantization maps and residual operators in~\S\ref{SssSBPsdo}. In~\S\ref{SssSBPsdo2}, we define $\cV$-pseudodifferential operators and study their composition and symbolic properties.

\subsubsection{Phase space b.g.\ structure; microlocalization locus}
\label{SssSBTs}

We shall define a scaled b.g.\ structure on ${}^\cV T^*M$, obtained by combining $\cB_\times$ with the b.g.\ structure on $\R^n$ from \S\ref{SssISc2}. Symbols will then be certain types of weighted uniformly smooth functions.

\begin{definition}[Phase space b.g.\ structure]
\label{DefSBTs}
  We use the notation~\eqref{EqISc2Sets}. For $\alpha\in\sA$, we set
  \begin{equation}
  \label{EqSBTs}
  \begin{alignedat}{2}
    U_{(\alpha,0)} &:= U_\alpha \times (-4,4)^n,&\qquad
    U_{(\alpha,j,k,\pm 1)} &:= U_\alpha \times U_{j,k,\pm 1}, \\
    \phi_{(\alpha,0)} &:= \phi_\alpha \times \phi_0, &\qquad
    \phi_{(\alpha,j,k,\pm 1)} &:= \phi_\alpha \times \phi_{j,k,\pm 1}.
  \end{alignedat}
  \end{equation}
  Set $\sA^*=\{(\alpha,0),\ (\alpha,j,k,\pm 1)\colon \alpha\in\sA,\ j=1,\ldots,n,\ k\in\N_0\}$. Then the \emph{phase space b.g.\ structure} on ${}^\cV T^*M$, with respect to the trivializations of ${}^\cV T^*M$ over each $U_\alpha$ given by Definition~\usref{DefSBDCotgt}, is
  \[
    \fB^* = \{ (U_{\alpha^*},\phi_{\alpha^*}) \colon \alpha^*\in\sA^* \}.
  \]
\end{definition}

\begin{lemma}[Symbols and $\fB^*$]
\label{LemmaSBTsSymbols}
  We use the notation~\eqref{EqSBDwCI}.
  \begin{enumerate}
  \item\label{ItSBTsSymbols0}{\rm (Symbols of order $0$.)} $S^0({}^\cV T^*M)=\CI_{\rm uni,\fB^*}({}^\cV T^*M)$.
  \item\label{ItSBTsSymbols1}{\rm (Elliptic weights.)} Let $\lambda\in S^{-1}({}^\cV T^*M)$ with $\lambda>0$ and $\lambda^{-1}\in S^1({}^\cV T^*M)$. Then $\lambda$ is a weight on $({}^\cV T^*M,\fB^*)$. Moreover, such functions $\lambda$ exist.
  \item\label{ItSBTsSymbols}{\rm (General symbols.)} $S^m({}^\cV T^*M)=\lambda^{-m}\CI_{\rm uni,\fB^*}({}^\cV T^*M)$.
  \end{enumerate}
\end{lemma}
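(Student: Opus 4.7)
The plan is to prove the three parts in order, with (1) providing the dictionary that makes (2) and (3) essentially formal consequences of the defining inequalities.

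For (1), I would work chart by chart in $\fB^*$. In the bounded chart $\phi_{(\alpha,0)}$, the fiber variable $\xi$ varies in $(-4,4)^n$, so $\la\xi\ra$ is uniformly equivalent to $1$ and the symbol bounds with $m=0$ reduce directly to uniform $\CI$-boundedness. In a chart $\phi_{(\alpha,j,k,\pm 1)}$ at fiber infinity, $\la\xi\ra \sim 2^k$, and the computation recorded at~\eqref{EqISc2Charts} shows that the chart-coordinate fiber derivatives $\pa_{X^1},\ldots,\pa_{X^n}$ are $\CI_{\rm uni}$-linear combinations of $\la\xi\ra\pa_{\xi^i}$ (with the reverse relation also holding). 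Iterating, an expression $\pa_x^\beta \pa_X^\gamma a$ becomes a $\CI_{\rm uni}$-linear combination of terms of the form $\la\xi\ra^{|\gamma|}\pa_x^\beta \pa_\xi^\delta a$ with $|\delta|\leq|\gamma|$, and one reads off the equivalence of symbol bounds of order $0$ with uniform boundedness in the chart.

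For (2), recalling Definition~\ref{DefSBDWeights}, I must verify that $\frac{W\lambda}{\lambda} \in \CI_{\rm uni,\fB^*}({}^\cV T^*M)$ for every coefficient vector field $W$ of $\fB^*$; by (1) this means $\frac{W\lambda}{\lambda} \in S^0$. Combining $\lambda\in S^{-1}$ with $\lambda^{-1}\in S^1$ yields the two-sided bound $\lambda\sim\la\xi\ra^{-1}$, so $\lambda^{-1}\in S^1$ and $W\lambda\in S^{-1}$ (since the $\fB^*$-vector fields, by the reading of (1), are $\CI_{\rm uni}$-combinations of $\pa_{x^i}$ and $\la\xi\ra\pa_{\xi^i}$), giving $\frac{W\lambda}{\lambda}\in S^0$ by the routine Leibniz closure of symbol classes under products. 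For existence, I construct $\lambda$ explicitly: fix a partition of unity $\{\chi_\alpha\} \subset \CIc(U_\alpha)$ with $\chi_\alpha = (\phi_\alpha^*\chi)/\sum_\beta \phi_\beta^*\chi$ as in Theorem~\ref{ThmSBDSymb}, and set
\[
  q(p,\xi) := 1 + \sum_{\alpha\in\sA} \chi_\alpha(p)\,|\xi_\alpha|^2, \qquad \lambda := q^{-1/2},
\]
where $\xi_\alpha \in \R^n$ is the local trivialization of $\xi \in {}^\cV T_p^* M$ over $U_\alpha$. The uniform boundedness of the transition matrices $M_{ij}^{\alpha\beta} := \rho_{\alpha,i}(\pa_{x_\alpha^i}x_\beta^j)/\rho_{\beta,j}$ and their inverses (which is the content of the scaling condition~\eqref{EqISB} together with its mirror image obtained by swapping $\alpha,\beta$) forces $|\xi_\alpha|^2$ and $|\xi_\beta|^2$ to be comparable on $U_\alpha \cap U_\beta$ with all derivatives controlled, so $q$ lies in $S^2({}^\cV T^*M)$ and $q \geq 1$; hence $\lambda = q^{-1/2} \in S^{-1}$ and $\lambda^{-1} = q^{1/2} \in S^1$, as required.

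For (3), the inclusion $\lambda^{-m}\CI_{\rm uni,\fB^*}({}^\cV T^*M) \subseteq S^m({}^\cV T^*M)$ follows once I show $\lambda^{-m} \in S^m$ and $\lambda^m \in S^{-m}$, since then multiplicativity of symbol classes (proven by Leibniz applied to the defining inequality) yields $\lambda^{-m}\cdot S^0 \subseteq S^m$ and $\lambda^m \cdot S^m \subseteq S^0$, giving the reverse inclusion $S^m \ni a \mapsto \lambda^m a \in S^0 = \CI_{\rm uni,\fB^*}({}^\cV T^*M)$ by (1). To obtain $\lambda^{-m}\in S^m$ for real $m$, I factor $\lambda^{-m} = \la\xi\ra^m \cdot (\la\xi\ra\lambda)^{-m}$ in any local trivialization; the factor $f := \la\xi\ra\lambda$ lies in $S^0$ with $f^{-1} \in S^0$ by the established multiplicativity and $f \sim 1$, whence $f^{-m} = \exp(-m\log f) \in S^0$ by the chain rule applied to the smooth function $t\mapsto t^{-m}$ on the compact subset of $(0,\infty)$ in which $f$ takes values. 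Summed over a partition, this reconstructs $\lambda^{-m}\in S^m$.

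The main obstacle is the verification in paragraph one that the scaled chart-coordinate fiber derivatives in $\fB^*$ are interchangeable, up to $\CI_{\rm uni}$-factors, with $\la\xi\ra \pa_\xi$ uniformly across all $(\alpha,j,k,\pm 1)$; this is essentially extracted from the local computation~\eqref{EqISc2Charts} and is the core reason the $\fB^*$-structure precisely encodes symbol estimates of order $0$.
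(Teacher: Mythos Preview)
Your proof is correct and follows essentially the same approach as the paper: part~(1) via the dyadic characterization of order-zero symbols, part~(2) via the observation that $\la\xi\ra\lambda\in S^0$ is bounded away from zero (together with a patched construction of $\lambda$), and part~(3) as a formal consequence. You are considerably more explicit than the paper in part~(3), where the paper simply says ``immediate consequence of the first two parts''; your factorization $\lambda^{-m}=\la\xi\ra^m(\la\xi\ra\lambda)^{-m}$ in local trivializations is exactly what underlies that remark, though the final phrase ``summed over a partition, this reconstructs $\lambda^{-m}\in S^m$'' is superfluous---membership in $S^m$ is checked chartwise and your local argument already establishes it uniformly.
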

\begin{proof}
  Part~\eqref{ItSBTsSymbols0} is a variant of the well-known characterization of symbols of order $0$ on $\R^n_\xi$ as those smooth functions $a=a(\xi)$ whose pushforwards along $\xi\mapsto 2^{-k}\xi$ define a uniformly bounded (in $k\in\N_0$) family of smooth functions on the annulus $1<|\xi|<4$. For part~\eqref{ItSBTsSymbols1}, we note that in the trivializations of ${}^\cV T^*M$ over $U_\alpha$, the functions $a_\alpha(x,\xi):=\la\xi\ra\lambda(x,\xi)$ are uniformly bounded in $S^0$, and $|a_\alpha|$ has a uniform positive lower bound. Since $\frac{W\la\xi\ra}{\la\xi\ra}\in S^0$ for all vector fields $W\in\{\la\xi\ra\pa_{\xi_i}\}$ (which span the space $\cW$ in~\eqref{EqISc2W}, with $\xi$ in place of $x$), we conclude that $\lambda$ is a weight indeed. The existence of such a weight follows by patching together $\la\xi\ra^{-1}$ in each chart using a uniform non-negative partition of unity subordinate to the cover $\{U_\alpha\}$ of $M$. Part~\eqref{ItSBTsSymbols} is an immediate consequence of the first two parts.
\end{proof}

Spaces of weighted symbols $w S^m({}^\cV T^*M)$ as in Definition~\ref{DefSBDSymb} are well-defined also for weights $w$ on phase space, i.e.\ for $w\in\CI({}^\cV T^*M)$ which are weights on $({}^\cV T^*M,\fB^*)$. A typical example of a weight $w$ on phase space is $w=\lambda^{-m}$, in which case $w S^0=S^m$ by Lemma~\ref{LemmaSBTsSymbols}\eqref{ItSBTsSymbols}. More generally, if $w_0\in\CI(M)$ is a weight on the base, then $w=w_0\lambda^{-m}$ is a phase space weight, and $w S^0=w_0 S^m$.

\begin{definition}[Microlocalization locus]
\label{DefSBTsMicro}
  Denote by $\fu{}^\cV T^*M$ the uniform compactification of $({}^\cV T^*M,\fB^*)$ (see Definition~\usref{DefbgCpt}), and recall from Proposition~\ref{PropbgAlt} the continuous extension map $j\colon\cC^0_{\rm uni,\fB^*}({}^\cV T^*M)\to\cC^0(\fu{}^\cV T^*M)$. Recall the weights $\rho,\lambda$ from Definition~\usref{DefSBDScWeight} and Lemma~\usref{LemmaSBTsSymbols}\eqref{ItSBTsSymbols1}. The \emph{microlocalization locus} of $(M,\fB_\times)$ is then the set
  \[
    \fM := \fu{}^\cV T^*M \cap \{ j(\lambda \rho)=0 \}
  \]
  Let $w\in\CI({}^\cV T^*M)$ be a weight on $({}^\cV T^*M,\fB^*)$. For a symbol $a\in w S^0({}^\cV T^*M)$, we define its \emph{elliptic set} by\footnote{We use that $a b,1\in S^0({}^\cV T^*M)=\CI_{{\rm uni},\fB^*}({}^\cV T^*M)$.}
  \[
    \Ell^w_\fM(a) := \{ \varpi\in\fM \colon \exists\,b\in w^{-1}S^0({}^\cV T^*M),\ \varpi\notin\supp_{\fu{}^\cV T^*M}(a b-1) \} \subset \fM.
  \]
  The \emph{characteristic set} is $\Char_\fM^w(a)=\fM\setminus\Ell_\fM^w(a)$. The \emph{essential support} $\esssupp^w_\fM a\subset\fM$ of $a\in\bigcup_{l,m} w\rho^{-l}S^m({}^\cV T^*M)$ is defined by
  \[
    \fM\ni\varpi\notin\esssupp^w_\fM a \iff \exists\,\chi\in S^0({}^\cV T^*M),\ \chi(\varpi)\neq 0\ \text{s.t.}\ \chi a\in w\rho^\infty S^{-\infty}({}^\cV T^*M),
  \]
  where we write $w\rho^\infty S^{-\infty}=\bigcap_{m,l} w\rho^l S^m$.
\end{definition}

Thus, $\Ell_\fM^w(a)$, resp.\ $\Char_\fM^w(a)$ and $\esssupp^w_\fM(a)$ are open, resp.\ closed subsets of $\fM\subset\fu{}^\cV T^*M$. These are the most general notions of elliptic and characteristic set and essential support which more standard versions are images of, analogously to~\eqref{EqbgSuppbarM}; this is discussed below.

In the special case $w=\lambda^{-m}$, one typically writes $\Ell_\fM^m(a)$, $\esssupp_\fM a$ instead of $\Ell_\fM^w(a)$, $\esssupp_\fM^w(a)$. (Note that for weights $w$ which are products of powers of $\rho$ and $\lambda$, one has $w\rho^\infty S^{-\infty}=\rho^\infty S^{-\infty}$.) We also remark that if $\inf\rho>0$, then $\fM=\fu{}^\cV T^*M\cap\{j(\lambda)=0\}$ simply, and all weights in $\rho$ may be omitted in Definition~\ref{DefSBTsMicro}.

\begin{rmk}[Restricting to $\{\lambda=0\}\cup\{\rho=0\}$]
  The stated condition for membership in $\Ell_\fM^w(a)$ makes sense for any $\varpi\in\fu{}^\cV T^*M$. The reason for restricting attention to $\varpi\in\fM$ is that symbols of $\cV$-(pseudo)differential operators are only well-defined modulo symbols with an additional order $\lambda\rho$ of vanishing; cf.\ Theorem~\ref{ThmSBDSymb}. Similarly, the essential support of ps.d.o.s is a reasonable notion only as a subset of $\fM$; see Definition~\ref{DefSBPsdoV}.
\end{rmk}

\begin{definition}[Fully symbolic operator Lie algebras]
\label{DefSBTsSymbolic}
  We say that $\cV$ is \emph{fully symbolic} if one of the following equivalent conditions holds:
  \begin{enumerate}
  \item $\fM=\pa(\fu{}^\cV T^*M)$.
  \item\label{ItSBTsSymbolic2} For all $\eps>0$ there exists a compact subset $K'\subset{}^\cV T^*M$ so that $\rho\lambda<\eps$ on ${}^\cV T^*M\setminus K$.
  \item\label{ItSBTsSymbolic3} For all $\eps>0$ there exists a compact subset $K\subset M$ so that $\rho<\eps$ on $M\setminus K$.
  \end{enumerate}
\end{definition}

The equivalence of the first two definitions is due to Remark~\usref{RmkbgVanish}. That condition~\eqref{ItSBTsSymbolic2} implies \eqref{ItSBTsSymbolic3} follows by projecting $K'$ to $K$ and noting that on $M\setminus K$, which we identify with the zero section $o$ in ${}^\cV T^*M$ over $M\setminus K$, we have $\rho=\lambda^{-1}\rho\lambda<\lambda^{-1}\eps$, and $\lambda^{-1}|_o$ is uniformly bounded. Conversely, if $\rho<\eps$ on $M\setminus K$, set $K':=\pi^{-1}(K)\cap\{\lambda\geq\eps(\sup_K\rho)^{-1}\}$; then on ${}^\cV T^*M\setminus K'$, we are either over $M\setminus K$ and thus $\rho<\eps$, so $\rho\lambda\leq\|\lambda\|_{L^\infty}\eps$; or we are over $K$, and then $\rho\lambda<\eps$.

For fully symbolic $\cV$, elliptic $\cV$-(pseudo)differential operators are Fredholm; see~\S\ref{SssSBPsdoApp}. The key point is that
\begin{equation}
\label{EqSBTsSymbolicEmbed}
  \text{$\cV$ is fully symbolic} \iff \text{$w\rho^l H_\cV^s(M)\to w\rho^{l'}H_\cV^{s'}(M)$\ is compact for $s>s'$, $l>l'$,}
\end{equation}
likewise for $H_{\cV;\cW}$ spaces. Indeed, since $\rho^l/\rho^{l'}=\rho^{l-l'}\to 0$ at infinity in $M$, this follows at once from Theorem~\ref{ThmBHRellich}.

In analogy to~\eqref{EqbgSingSupp}, we point out:

\begin{lemma}[Trivial essential support]
\label{LemmaSBTsEsssuppGlob}
  Let $a\in\bigcup_{l,m}w\rho^{-l}S^m({}^\cV T^*M)$. Then
  \begin{equation}
  \label{EqSBTsEsssuppGlob}
    \esssupp_\fM^w a=\emptyset \iff a\in w\rho^\infty S^{-\infty}({}^\cV T^*M).
  \end{equation}
\end{lemma}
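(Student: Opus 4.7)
My plan is as follows. The direction $a \in w\rho^\infty S^{-\infty} \Rightarrow \esssupp_\fM^w a = \emptyset$ is immediate from the definition by taking $\chi = 1 \in S^0$. For the converse, I assume $\esssupp_\fM^w a = \emptyset$ and, for each $\varpi \in \fM$, pick $\chi_\varpi \in S^0({}^\cV T^*M)$ with $j(\chi_\varpi)(\varpi) \neq 0$ and $\chi_\varpi a \in w\rho^\infty S^{-\infty}$. Continuity of $j(\chi_\varpi)$ on $\fu{}^\cV T^*M$ provides an open neighborhood $V_\varpi \subset \fu{}^\cV T^*M$ of $\varpi$ on which $|j(\chi_\varpi)| \geq c_\varpi > 0$. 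Since $\fM$ is closed in the compact space $\fu{}^\cV T^*M$, it is itself compact, so I extract a finite subcover $V_1, \ldots, V_N$ with corresponding $\chi_i$ and $c_i$; setting $\psi := \sum_{i=1}^N |\chi_i|^2 \in S^0$, I obtain $j(\psi) \geq c^2$ on the open neighborhood $U := \bigcup_i V_i$ of $\fM$, for $c := \min_i c_i$.

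The complement $K := \fu{}^\cV T^*M \setminus U$ is compact and disjoint from $\fM = \{j(\lambda\rho) = 0\}$, so continuity of $j(\lambda\rho)$ yields $j(\lambda\rho) \geq c' > 0$ on $K$. I then choose $\tilde\chi \in \CI(\R)$ with $\tilde\chi \equiv 1$ on $(-\infty, c^2/4]$ and $\tilde\chi \equiv 0$ on $[c^2/2, \infty)$, and set $\chi := 1 - \tilde\chi$, which can be written $\chi(t) = t\sigma(t)$ for a smooth $\sigma$ (vanishing near $0$). Composition with the bounded symbol $\psi \in S^0$ yields $\tilde\chi(\psi), \sigma(\psi) \in S^0({}^\cV T^*M)$ by the Faà di Bruno calculation in local charts. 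I now decompose $a = \chi(\psi) a + \tilde\chi(\psi) a$. The first piece equals $\sigma(\psi)\psi a = \sum_i \sigma(\psi)\bar\chi_i (\chi_i a)$, which lies in $w\rho^\infty S^{-\infty}$ since each factor $\sigma(\psi)\bar\chi_i$ is in $S^0$ and each $\chi_i a$ is in $w\rho^\infty S^{-\infty}$.

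For the second piece, $\supp_{\fu{}^\cV T^*M} \tilde\chi(\psi) \subset \{j(\psi) \leq c^2/2\} \subset K$, so $\lambda\rho \geq c'$ throughout $\supp \tilde\chi(\psi) \subset {}^\cV T^*M$. Since $\lambda \sim \la\xi\ra^{-1}$ is bounded, this forces both $\la\xi\ra \leq C''$ and $\bar\rho_\alpha \geq c''$ uniformly on this support in every chart. Writing $a \in w\rho^{-l}S^m$ for some fixed $l, m$, the standard symbol bounds $|\pa_x^\beta\pa_\xi^\gamma a| \leq C w_\alpha \bar\rho_\alpha^{-l}\la\xi\ra^{m-|\gamma|}$ on $\supp\tilde\chi(\psi)$ can therefore be upgraded, with constants depending on $L, M, c', c''$, into bounds of the form $w_\alpha \bar\rho_\alpha^L \la\xi\ra^{-M-|\gamma|}$ for \emph{any} $L, M \in \N$; applying Leibniz with $\tilde\chi(\psi) \in S^0$ then gives $\tilde\chi(\psi) a \in w\rho^L S^{-M}$ for all $L, M$, i.e., $\tilde\chi(\psi) a \in w\rho^\infty S^{-\infty}$, completing the proof. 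The main obstacle is translating the abstract topological positivity of $j(\lambda\rho)$ on the compact set $K \subset \fu{}^\cV T^*M$ into uniform symbolic estimates on $\supp\tilde\chi(\psi) \subset {}^\cV T^*M$; compactness of $\fM$ is the critical input that turns the pointwise assumption $\esssupp_\fM^w a = \emptyset$ into a global finite decomposition.
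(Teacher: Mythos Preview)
Your proof is correct and follows essentially the same approach as the paper: both use compactness of $\fM$ to extract finitely many $\chi_i$ with $\chi_i a$ residual, form $\psi=\sum|\chi_i|^2$, and then handle the region where $\psi$ is small by exploiting that $\lambda\rho$ is bounded away from zero there. The only implementation difference is that the paper, instead of your cutoff decomposition $a=\chi(\psi)a+\tilde\chi(\psi)a$ with explicit symbol estimates on the second piece, picks $\eta\in S^0$ with $\supp\eta\cap\fM=\emptyset$ and $\{\eta\neq0\}\cup\{\psi>0\}=\fu{}^\cV T^*M$, observes that such $\eta$ is automatically in $\rho^\infty S^{-\infty}$, and then writes $a=(\psi+\eta^2)^{-1}(\psi+\eta^2)a$ with $(\psi+\eta^2)^{-1}\in S^0$; this avoids the Leibniz computation but is otherwise the same argument.
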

\begin{proof}
  Only the direction `$\Longrightarrow$' requires an argument. There exists a finite collection $\chi_1,\ldots,\chi_N\in S^0({}^\cV T^*M)$, with $\bigcup_{j=1}^N\{\chi_j\neq 0\}$ covering $\fM$, so that $\chi_j a\in w\rho^\infty S^{-\infty}$ for all $j$. Set $\chi:=\sum_{j=1}^N\chi_j^2$; this has a positive lower bound on $\fM$, and $\chi a\in w\rho^\infty S^{-\infty}$. Pick (using a construction as in the proof of Lemma~\ref{LemmabgSmooth}) a function $\eta\in S^0({}^\cV T^*M)$ with $\supp\eta\cap\fM=\emptyset$ and $\{\eta\neq 0\}\cup\{\chi>0\}=\fu{}^\cV T^*M$. Then $\eta\in\rho^\infty S^{-\infty}({}^\cV T^*M)$ (since $\eta$ remains in $S^0$ upon multiplication by any power of $\lambda,\rho$), and thus $(\chi+\eta^2)a\in w\rho^\infty S^{-\infty}$. Since $\inf(\chi+\eta^2)>0$, this finally gives $a=(\chi+\eta^2)^{-1}\cdot(\chi+\eta^2)a\in w\rho^\infty S^{-\infty}$.
\end{proof}

Similarly, we have
\begin{equation}
\label{EqSBTsEllGlob}
  \Ell_\fM^w(a) = \fM \iff \exists\,b\in w^{-1}S^0({}^\cV T^*M)\ \text{s.t.}\ a b-1 \in \rho S^{-1}({}^\cV T^*M).
\end{equation}
The direction `$\Longleftarrow$' is a consequence of (the proof of) the following lemma.

\begin{lemma}[Elliptic set]
\label{LemmaSBTsEll}
  The set $\Ell_\fM^w(a)$ only depends on the equivalence class $[a]\in w S^0({}^\cV T^*M)/w\rho S^{-1}({}^\cV T^*M)$.
\end{lemma}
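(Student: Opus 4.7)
The plan is to show that for any $r \in w\rho S^{-1}({}^\cV T^*M)$, writing $a' := a + r$, one has $\Ell_\fM^w(a') = \Ell_\fM^w(a)$. Since $-r \in w\rho S^{-1}$ as well, by symmetry it suffices to prove $\Ell_\fM^w(a) \subseteq \Ell_\fM^w(a')$. So I would fix $\varpi \in \Ell_\fM^w(a)$; by definition (and the smoothing procedure in the proof of Lemma~\usref{LemmabgSmooth}), there exist $b \in w^{-1}S^0$ and a \emph{smooth} cutoff $\chi \in S^0$ with $\chi(\varpi) \neq 0$ and $\chi(ab-1) = 0$. The goal is to perturb $b$ to some $b' \in w^{-1}S^0$ with $a'b' - 1$ vanishing in an open $\fu{}^\cV T^*M$-neighborhood of $\varpi$.

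The key observation is that $rb \in \rho S^{-1}$ extends continuously to $\fu{}^\cV T^*M$ and vanishes at $\varpi$. Indeed, using $\rho \in \CI_{{\rm uni},\fB}(M)$ and $S^{-1} = \lambda S^0$, any element of $\rho S^{-1}$ factors as $\rho\lambda \cdot g$ with $g \in S^0 \subset \cC^0(\fu{}^\cV T^*M)$; its continuous extension thus vanishes wherever $j(\rho\lambda)$ does, which by Definition~\usref{DefSBTsMicro} includes all of $\fM \ni \varpi$. By continuity there is an open neighborhood $V \subset \fu{}^\cV T^*M$ of $\varpi$ on which $|rb| < \tfrac12$.

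Next I would construct, via Urysohn's lemma on the compact Hausdorff space $\fu{}^\cV T^*M$ combined with the chartwise mollification used in the proof of Lemma~\usref{LemmabgSmooth}, two nested cutoffs $\chi_1, \tilde\eta \in S^0$ with $\chi_1(\varpi) \neq 0$, $\supp_{\fu{}^\cV T^*M}\chi_1 \subset \{\chi \neq 0\}$, $\tilde\eta \equiv 1$ on $\supp_{\fu{}^\cV T^*M}\chi_1$, and $\supp_{\fu{}^\cV T^*M}\tilde\eta \subset V$. Then $\tilde\eta\, rb \in S^0$ satisfies $|\tilde\eta\, rb| \leq \tfrac12$ pointwise, so $1 + \tilde\eta\, rb \in S^0$ has modulus bounded below by $\tfrac12$; its reciprocal lies again in $S^0$ because differentiating $g \cdot g^{-1} = 1$ inductively expresses derivatives of $g^{-1}$ as polynomials in $g^{-1}$ and derivatives of $g$, all of which are uniformly bounded in the $\fB^*$-charts. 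Setting
\[
  b' := b\,(1+\tilde\eta\, rb)^{-1} \in w^{-1}S^0({}^\cV T^*M),
\]
a direct computation on $\supp_{\fu{}^\cV T^*M}\chi_1$, where both $ab = 1$ and $\tilde\eta = 1$, gives
\[
  a'b' = (ab + rb)(1+\tilde\eta\, rb)^{-1} = (1+rb)(1+rb)^{-1} = 1,
\]
hence $\chi_1(a'b' - 1) = 0$ with $\chi_1(\varpi) \neq 0$, proving $\varpi \in \Ell_\fM^w(a')$.

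The main (mild) obstacle is producing the nested smooth cutoffs $\chi_1 \prec \tilde\eta$ in $S^0$ with the required support relations; Lemma~\usref{LemmabgSmooth} by itself yields only a single smooth witness, so I would need to combine Urysohn's lemma in $\fu{}^\cV T^*M$ (to obtain the nesting at the level of continuous functions) with a standard mollification in the phase space b.g.\ charts to lift the construction to $S^0$. Everything else reduces to the algebraic identity above and to the closure of $S^0$ under inversion of uniformly elliptic elements.
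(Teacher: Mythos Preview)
Your proposal is correct and follows essentially the same approach as the paper: both exploit that the perturbation $rb\in\rho S^{-1}$ vanishes at $\varpi\in\fM$, localize it by a cutoff so that $1+(\text{localized }rb)$ is invertible in $S^0$, and take $b'=b(1+\cdots)^{-1}$.

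The only difference is bookkeeping. The paper uses a \emph{single} cutoff $\chi\in S^0$ with $\chi=1$ near $\varpi$ and $|\chi\,rb|<\tfrac12$, writes $(a+r)b=1+\chi\,rb+r'$ with $r'=(1-\chi)rb+(ab-1)$, and observes $\varpi\notin\supp_{\fu{}^\cV T^*M}r'$; then $(a+r)\cdot b(1+\chi\,rb)^{-1}-1=r'(1+\chi\,rb)^{-1}$ has $\fu$-support missing $\varpi$. This sidesteps precisely the nested-cutoff construction you flag as your main obstacle: there is no need to arrange $\supp\chi_1\subset\{ab=1\}$, because the residual $ab-1$ is simply absorbed into $r'$ rather than killed. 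Your two-cutoff version works too, but the paper's single-cutoff variant is the cleaner packaging of the same idea.
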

\begin{proof}
  Let $b\in w^{-1}S^0$ be such that $\varpi\in\fM\setminus\supp_{\fu{}^\cV T^*M}r$ where $a b=1+r$. If $\tilde a\in w\rho S^{-1}({}^\cV T^*M)$, then $(a+\tilde a)b=1+\tilde a b+r$. But $\tilde a b\in\rho S^{-1}({}^\cV T^*M)=\lambda\rho\CI_{{\rm uni},\fB^*}({}^\cV T^*M)$ vanishes at $\varpi$; therefore, there exists $\chi\in S^0({}^\cV T^*M)$ with $\chi=1$ near $\varpi$ so that $|\chi\tilde a b|<\frac12$. We can then further write $1+\tilde a b+r=1+\chi\tilde a b+r'$ where the support of $r'=(1-\chi)\tilde a b+r$ does not contain $\varpi$; and $(1+\chi\tilde a b)^{-1}\in S^0$. Thus, $\varpi\notin\supp_{\fu{}^\cV T^*M}[(a+\tilde a)\cdot b(1+\chi\tilde a b)^{-1}-1]$, as desired.
\end{proof}

For the direction `$\Longrightarrow$' of~\eqref{EqSBTsEllGlob}, we get finite collections $b_1,\ldots,b_N\in w^{-1}S^0$ and $\chi_1,\ldots,\chi_N\in S^0$ with $(a b_j-1)\chi_j=0$ and $\fM\subset\bigcup_{j=1}^N\{\chi_j\neq 0\}$. For $b_0:=\sum_{j=1}^N b_j\chi_j^2$, this gives $a b_0=\chi:=\sum_{j=1}^N\chi_j^2$. Arguing as in the proof of Lemma~\ref{LemmaSBTsEsssuppGlob}, there exists $\eta\in\rho^\infty S^{-\infty}$ with $\supp\eta\cap\fM=\emptyset$ so that $a b_0+\eta^2=\chi+\eta^2$ has a positive lower bound. For $b:=b_0(\chi+\eta^2)^{-1}$ we conclude that $\fM\cap\supp_{\fu{}^\cV T^*M}(a b-1)=\emptyset$, which implies~\eqref{EqSBTsEllGlob}.

To end this general discussion, we record a technical result on the asymptotic summation of symbols:

\begin{lemma}[Asymptotic summation]
\label{LemmaSBTsAsySum}
  Let $w\in\CI({}^\cV T^*M)$ be a weight on $({}^\cV T^*M,\fB^*)$, and let $m\in\R$. Suppose we are given $a_j\in w\rho^{-l_j}S^{m_j}({}^\cV T^*M)$ for $j\in\N_0$, where $l:=l_0\geq l_1\geq\cdots\to-\infty$ and $m:=m_0\geq m_1\geq\cdots\to-\infty$. Then there exists a symbol $a\in w\rho^{-l} S^m({}^\cV T^*M)$ so that, for all $J\in\N$,
  \begin{equation}
  \label{EqSBTsAsySum}
    a-\sum_{j=0}^{J-1}a_j \in w\rho^{-l_J}S^{m_J}({}^\cV T^*M).
  \end{equation}
  This symbol is unique modulo the space $w\rho^\infty S^{-\infty}({}^\cV T^*M)=\bigcap_{m\in\R} w\rho^m S^{-m}({}^\cV T^*M)$ of residual symbols. Furthermore, if $U\subset\fM$ is an open set with $U\cap\esssupp_\fM^w(a_j)=\emptyset$ for all $j$, then also $U\cap\esssupp_\fM^w(a)=\emptyset$.
\end{lemma}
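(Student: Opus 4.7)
The strategy is a Borel-type asymptotic summation, using cutoffs concentrated near the microlocalization locus $\fM$. Fix $\chi_0\in\CI([0,\infty))$ with $\chi_0\equiv 0$ on $[0,1]$ and $\chi_0\equiv 1$ on $[2,\infty)$; for $\delta\in(0,1]$ set $\psi_\delta:=1-\chi_0(\lambda\rho/\delta)\in S^0({}^\cV T^*M)$, so that $\psi_\delta=1$ on $\{\lambda\rho\le\delta\}$ and $\supp\psi_\delta\subset\{\lambda\rho\le 2\delta\}$. Since $\lambda$ and $\rho$ are weights on $({}^\cV T^*M,\fB^*)$ by Lemma~\ref{LemmaSBTsSymbols}\eqref{ItSBTsSymbols1}, the function $f:=\lambda\rho$ satisfies $|\pa^\mu f|\lesssim f$ in scaled derivatives; combined with the compact support of $\chi_0^{(k)}$ in $[1,2]$, this ensures that all $S^0$-seminorms of $\psi_\delta$ are bounded uniformly in $\delta\in(0,1]$.

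The key pointwise bound, using $\rho\le 1$ and $\lambda\le\sup\lambda<\infty$, is $\rho^p\lambda^q\le C(2\delta)^{\min(p,q)}$ on $\supp\psi_\delta$ for $p,q\ge 0$. In local chart coordinates (where $\la\xi\ra^{-1}\sim\lambda$) this translates, via Leibniz, into
\[
 \|\psi_\delta a_j\|_{w\rho^{-l_J}S^{m_J};N}\le C_{j,J,N}\,\delta^{\min(l_J-l_j,\,m_J-m_j)}\,\|a_j\|_{w\rho^{-l_j}S^{m_j};N'}
\]
whenever $l_J\ge l_j$ and $m_J\ge m_j$. Since $l_j,m_j\to-\infty$, for each fixed $J$ the exponent $\min(l_J-l_j,m_J-m_j)$ is strictly positive for all but finitely many $j\ge J$. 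I then choose $\delta_j\in(0,1]$ inductively so that $\|\psi_{\delta_j}a_j\|_{w\rho^{-l_J}S^{m_J};N}\le 2^{-j}$ for all $J,N\le j$ with $\min(l_J-l_j,m_J-m_j)>0$ (finitely many conditions per $j$), and set
\[
 a:=a_0+\sum_{j\ge 1}\psi_{\delta_j}a_j.
\]
Convergence in each $w\rho^{-l_J}S^{m_J}$ follows: the ``good'' terms give a summable $O(2^{-j})$ tail, while the finitely many ``bad'' terms (with tie $l_j=l_J$ or $m_j=m_J$) contribute bounded constants. The identity
\[
 a-\sum_{j=0}^{J-1}a_j=-\sum_{j=1}^{J-1}(1-\psi_{\delta_j})a_j+\sum_{j\ge J}\psi_{\delta_j}a_j
\]
then lies in $w\rho^{-l_J}S^{m_J}$: the infinite tail is controlled as above, while each summand in the finite head is supported where $\lambda\rho\ge\delta_j$, forcing $\lambda,\rho\gtrsim\delta_j$ so that any negative powers can be absorbed into $j$-dependent constants.

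Uniqueness is automatic since $\bigcap_J w\rho^{-l_J}S^{m_J}=w\rho^\infty S^{-\infty}$ when $l_J,m_J\to-\infty$. For the essential support claim, given $\varpi\in U$, I select a cutoff $\chi\in S^0({}^\cV T^*M)$ with $\chi(\varpi)\neq 0$ and $\supp_{\fu{}^\cV T^*M}\chi\cap\fM\subset U$: this exists by Urysohn's lemma applied on the compact space $\fu{}^\cV T^*M$ followed by the smoothing construction in Lemma~\ref{LemmabgSmooth} (with respect to $\fB^*$). By locality of essential support, $\esssupp_\fM^w(\chi a_j)\subset\supp_{\fu{}^\cV T^*M}\chi\cap\esssupp_\fM^w a_j=\emptyset$, so $\chi a_j\in w\rho^\infty S^{-\infty}$ by Lemma~\ref{LemmaSBTsEsssuppGlob}. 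Multiplying the asymptotic identity by $\chi$ and using $w\rho^\infty S^{-\infty}\subset w\rho^{-l_J}S^{m_J}$ gives $\chi a\in w\rho^{-l_J}S^{m_J}$ for every $J$, hence $\chi a\in w\rho^\infty S^{-\infty}$, i.e.\ $\varpi\notin\esssupp_\fM^w a$. The principal technical subtlety is the potential degeneracy $\min(l_J-l_j,m_J-m_j)=0$ arising from ties in the only-weakly-decreasing sequences; this is resolved by noting that for each $J$ only finitely many such bad indices $j$ exist, so the corresponding bounded contributions still sum to a finite constant.
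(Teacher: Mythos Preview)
Your proof is correct and follows essentially the same Borel-type argument as the paper, using cutoffs of the form $\psi(\lambda\rho/\delta)$ supported near $\{\lambda\rho=0\}$. The only organizational difference is that the paper first reduces to the strictly decreasing case $l_j=l-j$, $m_j=m-j$ by grouping consecutive $a_j$ with equal orders into single terms; this eliminates the ``tie'' indices and the attendant case distinction you carry through, but otherwise the mechanics (uniform $S^0$ bounds on the cutoffs, the gain $\delta^{\min(l_J-l_j,\,m_J-m_j)}$ on the support, the observation that $(1-\psi_{\delta_j})a_j$ is residual, and the essential support argument via $\chi a_j\in w\rho^\infty S^{-\infty}$) are identical.
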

\begin{proof}
  This is a variant of the standard asymptotic summation argument (similar to the proof of Borel's lemma). By summing finitely many successive $a_j$ and relabeling, we may assume that $l_j=l-j$ and $m_j=m-j$. Let $\lambda\in S^{-1}({}^\cV T^*M)$ be as in Lemma~\ref{LemmaSBTsSymbols}\eqref{ItSBTsSymbols1}. Let $\psi\in\CIc([0,2))$ be equal to $1$ on $[0,1]$. For a sequence $\eps_j\to 0$, we define the locally finite sum
  \begin{equation}
  \label{EqSBTsAsySumPf}
    a := \sum_{j=0}^\infty \psi\Bigl(\frac{\rho\lambda}{\eps_j}\Bigr) a_j.
  \end{equation}
  For $k\in\N_0$, $l,m\in\R$ and $b\in w\rho^{-l}S^m({}^\cV T^*M)$, denote by
  \[
    |b|_{w\rho^{-l}S^m;k} := \sup_{\alpha\in\sA}\max_{|\beta|,|\gamma|\leq k} w_\alpha^{-1}\bar\rho_\alpha^l\la\xi\ra^{-m+|\gamma|}|\pa_x^\beta\pa_\xi^\gamma((\phi_\alpha)_*a)(x,\xi)|
  \]
  the $k$-th uniform symbol seminorm of $b$. For $k\in\N$ and $j\geq k+1$, we then demand that
  \[
    \Bigl|\psi\Bigl(\frac{\rho\lambda}{\eps_j}\Bigr)a_j\Bigr|_{w\rho^{-l+k}S^{m-k};k} \leq 2^{-j};
  \]
  for each index $j\in\N_0$, this gives a finite number of conditions on $\eps_j$ which are satisfied when $\eps_j$ is sufficiently small since, a fortiori, $a_j\in w\rho^{-l+k+1}S^{m-k-1}({}^\cV T^*M)$, and on the support of $\psi(\rho\lambda/\eps_j)$ we have $\rho\lambda<2\eps_j$. Sufficiently late tails of the sum~\eqref{EqSBTsAsySumPf} then converge in any fixed uniform symbol seminorm, and thus $a\in w\rho^{-l}S^m$.

  The property~\eqref{EqSBTsAsySum} is a consequence of the fact that $(1-\psi(\frac{\rho\lambda}{\eps_j}))a_j\in w\rho^\infty S^{-\infty}$. The uniqueness statement follows from the fact that the difference of two asymptotic sums $a,a'$ satisfies $a-a'\in w\rho^{-l_j}S^{m_j}$ for all $j$ and thus is a residual symbol.

  For the final statement, consider any $\chi\in S^0({}^\cV T^*M)$ with $\fM\cap\supp\chi\subset U$. Then $\chi a_j$ is a residual symbol for all $j$, and thus so is every finite truncation of the sum~\eqref{EqSBTsAsySumPf}. But the sum over $j\geq J$ converges in $w\rho^{-l+J}S^{m-J}$; hence $\chi a\in\bigcap_{J\in\N} w\rho^{-l+J}S^{m-J}$ is residual.
\end{proof}

\subsubsection{\texorpdfstring{$\cV$}{V}-pseudodifferential operators I: quantizations and their mapping properties}
\label{SssSBPsdo}

Fix a cutoff function $\psi\in\CIc((-\frac14,\frac14)^n)$ with $\psi=1$ near $0$. For $a\in S^m(T^*\R^n)$ with $\supp a\subset(-\frac54,\frac54)^n\times\R^n$, we define a local quantization map in $\phi_\alpha(U_\alpha)$ in the coordinates $x_\alpha$, where $x_\alpha^j=\phi_\alpha^j$, by
\begin{equation}
\label{EqSBPsdoOpalpha}
\begin{split}
  \Op_\alpha(a)(x_\alpha,x'_\alpha) &:= (2\pi)^{-n} \biggl(\int_{\R^n} \exp\biggl(i\sum_{j=1}^n (x_\alpha^j-x_\alpha^{\prime j})\frac{\xi_j}{\rho_{\alpha,j}}\biggr) \psi(x_\alpha-x'_\alpha) \\
    &\quad\hspace{4.5em}\times a(x_\alpha^1,\ldots,x_\alpha^n,\xi_1,\ldots,\xi_n)\,\dd\xi_1\cdots\dd\xi_n\biggr)\,\frac{|\dd x_\alpha^{\prime 1}\cdots\dd x_\alpha^{\prime n}|}{\rho_{\alpha,1}\cdots\rho_{\alpha,n}}\,.
\end{split}
\end{equation}
We remark that $x\in(-\frac54,\frac54)^n$, $x'\in\R^n$ and $x-x'\in\supp\psi$ implies $x'\in(-\frac32,\frac32)^n$. (The quantization~\eqref{EqSBPsdoOpalpha} acts on a function or distribution $u$ on $\R^n$ via
\[
  (\Op_\alpha(a)u)(x_\alpha)=\int\Op_\alpha(a)(x_\alpha,x'_\alpha)u(x'_\alpha);
\]
the integration density is subsumed into $\Op_\alpha(a)(x_\alpha,x'_\alpha)$.)

Equivalently, we can pass to the perspective of quantizing symbols defined on $\sim\rho_{\alpha,1}^{-1}\times\cdots\times\rho_{\alpha,n}^{-1}$ grids of unit cells of the secondary bounded geometry structure $\rho\fB$ from~\S\ref{SsSB2}; this amounts to introducing
\[
  \tilde x_\alpha^j := \frac{x_\alpha^j}{\rho_{\alpha,j}}\qquad \biggl(\text{thus,}\ \ \xi_{\alpha,j}\frac{\dd x_\alpha^j}{\rho_{\alpha,j}} = \xi_{\alpha,j}\,\dd\tilde x_\alpha^j\biggr),
\]
and similarly $\tilde x'_\alpha=\frac{x_\alpha^{\prime j}}{\rho_{\alpha,j}}$, and noting that
\begin{equation}
\label{EqSBPsdoOpalpha2}
  \Op_\alpha(a)(\tilde x_\alpha,\tilde x'_\alpha) = (2\pi)^{-n} \biggl(\int_{\R^n} e^{i(\tilde x_\alpha-\tilde x'_\alpha)\cdot\xi}\psi(x_\alpha-x'_\alpha) a(\rho_{\alpha,1}\tilde x_\alpha^1,\ldots,\rho_{\alpha,n}\tilde x_\alpha^n,\xi)\,\dd\xi\biggr)\,|\dd\tilde x_\alpha'|
\end{equation}
Crucially, the cutoff $\psi(x'_\alpha-x_\alpha)$ localizes not to unit cells of $\rho\fB$, but to those of $\fB$.

\begin{definition}[$\cV$-quantization]
\label{DefSBPsdo}
  Fix a nonnegative function $\chi\in\CIc((-\frac54,\frac54)^n)$ with $\chi|_{[-1,1]^n}=1$, and let $\chi_\alpha=\frac{\phi_\alpha^*\chi}{\sum_\beta \phi_\beta^*\chi}$. Let $a\in S^m({}^\cV T^*M)$, or more generally $a\in w S^m({}^\cV T^*M)$ where $w\in\CI(M)$ is a weight on $(M,\fB)$ (see Definition~\usref{DefSBDWeights}). Then we define $\Op_\cV(a)=\sum_\alpha\phi_\alpha^*\Op_\alpha((\phi_\alpha)_*(\chi_\alpha a))$, that is, for $u\in\CI(M)$ and $p\in M$,
  \begin{equation}
  \label{EqSBPsdo}
    (\Op_\cV(a)u)(p) := \sum_{\alpha\in\sA} \phi_\alpha^*\Bigl( \bigl[\Op_\alpha\bigl( (\phi_\alpha)_*(\chi_\alpha a)\bigr)((\phi_\alpha)_*u)\bigr](\phi_\alpha(p)) \Bigr).
  \end{equation}
\end{definition}

By property~\eqref{ItIBddFinite} from Definition~\ref{DefIBdd}, and using the standard mapping properties of quantizations of symbols on $\R^n$, such quantizations $\Op_\cV(a)$ define bounded maps on $\CIc(M)$, $\CI(M)$, and indeed $w'\CI_{{\rm uni},\fB}(M)\to w w'\CI_{{\rm uni},\fB}(M)$ for any weight $w'$. (Thus, they can be composed.) More generally:

\begin{prop}[Boundedness of quantizations on mixed Sobolev spaces]
\label{PropSBPsdoH}
  Let $w,w'\in\CI(M)$ be weights on $(M,\fB)$. Let $s,m\in\R$, $k\in\N_0$, and let $a\in w S^m({}^\cV T^*M)$. Then $\Op_\cV(a)$ defines a bounded linear map $w' H_{\cV;\cW}^{(s;k)}(M)\to w w' H_{\cV;\cW}^{(s-m;k)}(M)$.
\end{prop}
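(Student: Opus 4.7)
My plan is to reduce the statement to a uniform estimate, chart by chart, for standard pseudodifferential operators on $\R^n$, using the rescaling maps $S_\alpha$ from \eqref{EqSBHScale} to trade the $\cV$-scaling for ordinary coordinate derivatives, and exploit the finite-overlap property of $\fB$ to sum contributions from different charts. Concretely, I would first decompose $\Op_\cV(a) = \sum_\alpha A_\alpha$ with $A_\alpha = \phi_\alpha^* \Op_\alpha((\phi_\alpha)_*(\chi_\alpha a)) (\phi_\alpha)_*$; the cutoff $\chi_\alpha$ together with the cutoff $\psi(x-x')$ in \eqref{EqSBPsdoOpalpha} ensures that the Schwartz kernel of $A_\alpha$ is supported in $U_\alpha' \times U_\alpha'$ for a slightly enlarged $U_\alpha'$, so the norm equivalence for $w w' H_{\cV;\cW}^{(s-m;k)}(M)$ reduces, via Definition~\ref{DefIBdd}\eqref{ItIBddFinite}, to bounding $\|A_\alpha u\|$ in local chart $\alpha$, uniformly in $\alpha$, and then summing.

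Next I would apply the push-forward $(S_\alpha)_*(\phi_\alpha)_*$ to realize $A_\alpha$, conjugated by the scaling, as the standard quantization of an amplitude $\tilde b_\alpha(\tilde x, \tilde x', \xi) = \tilde\psi_\alpha(\tilde x - \tilde x')\, \tilde a_\alpha(\tilde x, \xi)$ on $\R^n_{\tilde x}$, where $\tilde a_\alpha(\tilde x, \xi) = (\phi_\alpha)_*(\chi_\alpha a)(\rho_\alpha \tilde x, \xi)$ and $\tilde\psi_\alpha(\tilde x) = \psi(\rho_\alpha \tilde x)$. The crucial observation is that although $\tilde\psi_\alpha$ may have large support in $\tilde x$, its derivatives $\pa_{\tilde x}^\beta \tilde\psi_\alpha$ are uniformly bounded by $\rho_\alpha^{|\beta|} \leq 1$; and $\tilde a_\alpha$ is uniformly bounded in $w_\alpha S^m(T^*\R^n)$ since $a \in wS^m({}^\cV T^*M)$. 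With this in hand, the case $k=0$ is handled by the standard boundedness of amplitude quantizations on $\R^n$ between Sobolev spaces (Calder\'on--Vaillancourt, or the amplitude theorem), yielding the uniform-in-$\alpha$ bound with operator norm $\le C w_\alpha$.

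To handle $k \ge 1$, I would proceed inductively using a direct commutator computation rather than introducing an auxiliary $\cW$-differential $\cV$-pseudodifferential calculus. Integrating by parts in $x'$ in the oscillatory integral gives the identity
\[
  \pa_{x^i}\,\Op_\alpha(b) = \Op_\alpha(b)\,\pa_{x^i} + \Op_\alpha(\pa_{x^i} b) + \Op_\alpha^{(\pa_{x^i}\psi)}(b),
\]
where the superscript denotes replacing $\psi(x-x')$ by its derivative. Iterating, $\pa_{x_\alpha}^\gamma A_\alpha u$ becomes a finite sum of quantizations of the form $\Op_\alpha^{(\psi_{\delta \eta})}(\pa_{x_\alpha}^\delta b)\, \pa_{x_\alpha}^\eta u$ with $|\delta| + |\eta| \le |\gamma|$ and $\psi_{\delta \eta}$ a derivative of $\psi$; since coordinate derivatives of $b = (\phi_\alpha)_*(\chi_\alpha a)$ remain uniformly bounded in $w_\alpha S^m$, each summand is again an amplitude quantization to which the $k=0$ boundedness applies. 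Under $S_\alpha$-rescaling, $\pa_{x_\alpha}^\eta$ becomes $(\rho_\alpha^{-1}\pa_{\tilde x})^\eta$, matching exactly the derivatives appearing in the norm \eqref{EqSBHsk} on $H^{(s;k)}_{\rho_\alpha}(\R^n)$, so the inductive estimate closes.

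Finally, to assemble the global estimate, I would expand $u$ using the partition of unity $u = \sum_\beta \chi_\beta u$, use the equivalence of $w_\alpha, w_\beta, w_\gamma$ (and similarly for $w'$) when the charts $U_\alpha, U_\beta, U_\gamma$ overlap, and change coordinates between overlapping charts via the uniform $\CI$-bounds on the transition maps $\tau_{\beta \alpha}$, so that $H^{(s;k)}_{\rho_\alpha}$-norms of localizations in chart $\alpha$ are controlled by $H^{(s;k)}_{\rho_\beta}$-norms in chart $\beta$ with uniform constants. The finite-overlap property of $\fB$ then bounds the number of $(\alpha,\beta,\gamma)$-triples contributing to each $\beta$, giving the desired bound on $\|\Op_\cV(a)u\|_{w w' H^{(s-m;k)}_{\cV;\cW}}$. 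The main obstacle I anticipate is bookkeeping the $\cW$-derivative iteration in the presence of the scale-dependent cutoff $\tilde\psi_\alpha$: one must verify that all derivatives of $\tilde\psi_\alpha$ remain uniformly bounded (which they do, thanks to $\rho_\alpha \le 1$) and that amplitude operators with such cutoffs obey Sobolev bounds uniformly in $\alpha$; the latter is essentially standard but requires tracking that the operator norm depends only on finitely many symbol seminorms of the amplitude, all uniform in $\alpha$.
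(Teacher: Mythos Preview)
Your approach is essentially the same as the paper's: reduce to uniform local estimates via the rescaling $S_\alpha$, invoke standard $\R^n$ boundedness for $k=0$, handle $k\ge 1$ by commuting $\pa_{x_\alpha^i}$ through the local quantization, and sum using finite overlap. One small slip: your commutator identity has a spurious term, since in fact $\pa_{x^i}\Op_\alpha(b)=\Op_\alpha(b)\pa_{x^i}+\Op_\alpha(\pa_{x^i}b)$ with no $\Op_\alpha^{(\pa_{x^i}\psi)}(b)$ contribution---the paper makes this transparent by writing $\pa_{x_\alpha^i}K=(\pa_{x_\alpha^i}+\pa_{x'^i_\alpha})K-\pa_{x'^i_\alpha}K$ and observing that $\pa_{x_\alpha^i}+\pa_{x'^i_\alpha}$ annihilates both the phase and the cutoff $\psi(x_\alpha-x'_\alpha)$, so it only hits the symbol; your extra term is harmless (you correctly note it is uniformly bounded), but it simply cancels.
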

\begin{proof}
  Consider first the case $k=0$. Let $\{w_\alpha\}$ and $\{w'_\alpha\}$ be weight families equivalent to $w$ and $w'$, respectively. The symbols $a_\alpha:=w_\alpha^{-1}(\phi_\alpha)_*(\chi_\alpha a)\in S^m(T^*\R^n)$ are then uniformly bounded, and correspondingly their quantizations via $\Op_\alpha$ (using the formula~\eqref{EqSBPsdoOpalpha2}) are uniformly bounded as maps $(S_\alpha)_*\Op_\alpha(a_\alpha) S_\alpha^*\colon H^s(\R_{\tilde x}^n)\to H^{s-m}(\R_{\tilde x}^n)$ (see \cite[Corollary~4.34]{HintzMicro}). But then, for $\ell:=\frac32$ and for $\alpha,\beta\in\sA$ with $U_\alpha(\ell)\cap U_\beta(\ell)\neq\emptyset$ (recalling $U_\alpha=\phi_\alpha^{-1}((-\ell,\ell)^n)$), we can estimate, for $v\in H^s(\R^n)$,
  \begin{align*}
    &\|w^{\prime -1}_\alpha(S_\alpha)_*(\phi_\alpha)_*(\chi_\alpha \phi_\beta^*\Op_\beta(a_\beta)(S_\beta^*v))\|_{H^{s-m}} \\
    &\qquad \leq C \| w^{\prime-1}_\beta(S_\beta)_*(\phi_\beta)_*(\chi_\alpha\phi_\beta^*\Op_\beta(a_\beta)(S_\beta^*v))\|_{H^{s-m}} \\
    &\qquad\leq C\bigl\| w_\beta^{\prime-1}((S_\beta)_*((\phi_\beta)_*\chi_\alpha)) (S_\beta)_*\Op_\beta(a_\beta)(S_\beta^*v) \bigr\|_{H^{s-m}} \\
    &\qquad\leq C\| w^{\prime-1}_\beta v \|_{H^s}
  \end{align*}
  for a uniform (in $\alpha,\beta$) constant $C$ (which may vary from line to line); in the first estimate, we use the uniform boundedness in $\CI$ of the transition maps $(S_\beta\circ\phi_\beta)\circ(S_\alpha\circ\phi_\alpha)^{-1}$, cf.\ the discussion of~\eqref{EqSB2Coc}. Since for fixed $\alpha\in\sA$, the number of $\beta\in\sA$ with $U_\beta(\ell)\cap U_\alpha(\ell)\neq\emptyset$ is bounded by a constant independent of $\alpha$ (see the proof of Lemma~\ref{LemmabgP}), we can add the squares of these inequalities for $v=(S_\beta)_*(\phi_\beta)_* u$ and thus arrive at the desired bound
  \[
    \|\Op_\cV(a)u\|^2_{w w' H_\cV^{s-m}(M)} \leq C\|u\|^2_{w' H_\cV^s(M)}.
  \]

  Consider now $k=1$. It again suffices to consider individual quantizations $\Op_\alpha(a_\alpha)$. Writing $\kappa_\alpha(x_\alpha,x'_\alpha)=(2\pi)^{-n}\int_{\R^n}e^{i\sum_{j=1}^n(x_\alpha^j-x_\alpha^{\prime j})\xi_j/\rho_{\alpha,j}}\psi(x'_\alpha-x_\alpha)a_\alpha(x_\alpha,\xi)\,\dd\xi$, note then that
  \begin{align}
    &\pa_{x_\alpha^i}(\Op_\alpha(a_\alpha)f)(x_\alpha) \nonumber\\
    &\qquad = \int_{\R^n} \pa_{x_\alpha^i}\kappa_\alpha(x_\alpha,x'_\alpha) f(x'_\alpha)\,\frac{\dd x_\alpha^{\prime 1}}{\rho_{\alpha,1}}\cdots\frac{\dd x_\alpha^{\prime n}}{\rho_{\alpha,n}} \nonumber\\
  \label{EqSBPsdoHk}
    &\qquad = \int \Bigl(\bigl((\pa_{x_\alpha^i}+\pa_{x_\alpha^{\prime i}})\kappa_\alpha(x_\alpha,x'_\alpha)\bigr) f(x'_\alpha) + \kappa_\alpha(x_\alpha,x'_\alpha)(\pa_{x_\alpha^{\prime i}}f)(x'_\alpha)\Bigr)\,\frac{\dd x_\alpha^{\prime 1}}{\rho_{\alpha,1}}\cdots\frac{\dd x_\alpha^{\prime n}}{\rho_{\alpha,n}}.
  \end{align}
  But
  \begin{align*}
    &(\pa_{x_\alpha^i}+\pa_{x_\alpha^{\prime i}})\kappa_\alpha(x_\alpha,x'_\alpha) \\
    &\qquad = (2\pi)^{-n}\int_{\R^n}e^{i\sum_{j=1}^n(x_\alpha^j-x_\alpha^{\prime j})\xi_j/\rho_{\alpha,j}} \psi(x'_\alpha-x_\alpha) \pa_{x_\alpha^i}a_\alpha(x_\alpha,\xi)\,\dd\xi.
  \end{align*}
  The key point is that the $x_\alpha$- and $x'_\alpha$-derivatives here only fall on the symbol $a$ \emph{which remains bounded (uniformly) upon such differentiations}. Using the uniform boundedness in $\CI$ of the transition functions $\phi_\beta\circ\phi_\alpha^{-1}$, we now obtain from~\eqref{EqSBPsdoHk} for $f=S_\beta^*v$ the uniform bounds
  \[
    \bigl\| w_\alpha^{\prime-1}(S_\alpha)_*\bigl(\pa_{x_\alpha^i}(\phi_\alpha)_*(\chi_\alpha\phi_\beta^*\Op_\beta(a_\beta)(S_\beta^*v))\bigr) \bigr\|^2_{H^{s-m}} \leq C \sum_{|\gamma|\leq 1}\bigl\| w_\beta^{\prime-1}(S_\beta)_*\bigl( \pa_{x_\beta}^\gamma (S_\beta)^*v \bigr) \bigr\|^2_{H^s},
  \]
  and thus upon summing over $\alpha,\beta$ with $v=(\phi_\beta)_*u$ the bound
  \[
    \|\Op_\cV(a)u\|^2_{w w' H_{\cV;\cW}^{(s-m;1)}(M)} \leq C\|u\|^2_{w' H_{\cV;\cW}^{(s;k)}(M)}.
  \]

  The case of general $k\in\N_0$ is completely analogous.
\end{proof}

\begin{rmk}[Regularity of the symbol]
\label{RmkSBPsdoHReg}
  For the case $k=0$, the proof shows that one only needs to require uniform bounds on the symbols $(\tilde x_\alpha,\xi)\mapsto a_\alpha(\rho_{\alpha,1}\tilde x_\alpha^1,\ldots,\rho_{\alpha,n}\tilde x_\alpha^n,\xi)$ in $S^m$. Put differently, one only needs uniform bounds by $\la\xi\ra^{m-|\gamma|}$ on $|(\rho_\alpha\pa_{x_\alpha})^\beta\pa_\xi^\gamma a_\alpha(x_\alpha,\xi)|$, corresponding to `$\cV$-regular' symbols $a$. (This statement does not use the scaled b.g.\ structure, but only the secondary b.g.\ structure $\rho\fB$. In other words, this is the well-known statement that uniform ps.d.o.s are bounded between uniform Sobolev spaces; cf.\ Remark~\ref{RmkSBHUni}.) More generally, for the boundedness of $\Op_\cV(a)$ between spaces with $k$ orders of $\cW$-regularity, it suffices for the local symbols $a_\alpha$ to be $\cV$-regular in this sense, together with their derivatives in $x_\alpha$ of orders up to $k$.
\end{rmk}

Since $\cV$-quantizations enlarge supports (unless $a$ is polynomial in the fibers of ${}^\cV T^*M)$, one expects, in general, only to be able to write $\Op_\cV(a)\circ\Op_\cV(b)=\Op_\cV(c)+R$ for some `residual' operator $R$. Let us write $d_\fB$ for a metric compatible with $\fB$, as constructed in Proposition~\ref{PropbgDist}. Residual operators should then have Schwartz kernels supported a finite distance (with respect to $d_\fB$) from the diagonal $\diag_M\subset M\times M$ since this is true for operators $\Op_\cV(a)$ and thus for compositions of finitely many such operators. Furthermore, residual operators should be smoothing (on the level of $\cV$-regularity) and improve $\rho$-weights by arbitrary powers $\rho^N$; on the other hand, they typically do not improve, but instead only \emph{preserve}, $\cW$-regularity.

\begin{definition}[Residual operators]
\label{DefSBPsdoRes}
  Let $w\in\CI(M)$ be a weight on $(M,\fB)$; recall the scaling weight $\rho\in\CI(M)$ from Definition~\usref{DefSBDScWeight}. Then the space
  \[
    w\rho^\infty\Psi_\cV^{-\infty}(M)
  \]
  of \emph{residual operators} consists of all linear operators $R\colon\CIc(M)\to\CIc(M)$ with the following properties.
  \begin{enumerate}
  \item For all weights $w'\in\CI(M)$ on $(M,\fB)$ and for all $N\in\R$, $k\in\N_0$, the operator
  \begin{equation}
  \label{EqSBPsdoResMap}
    R \colon w'\rho^{-N}H_{\cV;\cW}^{(-N;k)}(M)\to w w'\rho^N H_{\cV;\cW}^{(N;k)}(M).
  \end{equation}
  is bounded.
  \item\label{ItSBPsdoResAdj} Let $\mu\in\CI_{{\rm uni},\fB}(M;{}^\cV\Omega M)$ be a uniformly positive $\cV$-density. Then also the formal $L^2(M;\mu)$-adjoint $R^*$ of $R$ is bounded as a map~\eqref{EqSBPsdoResMap} for all $w',N,k$.
  \end{enumerate}
  We say that $R$ is \emph{localized near the diagonal} if its Schwartz kernel $K_R$ is supported uniformly closely to the diagonal, i.e.\ there exists a constant $C=C(R)$ so that $d_\fB(p,q)\leq C$ for $(p,q)\in\supp K_R\subset M\times M$. 
\end{definition}

\begin{rmk}[Localization near the diagonal]
\label{RmkSBPsdoResLoc}
  The composition of operators (residual or not) which are localized near the diagonal is again localized near the diagonal. Even though we shall not require this localization in our further development of $\cV$-ps.d.o.s, it can thus always be recovered after the fact if needed (provided of course that all operators involved in one's computation are localized near the diagonal).
\end{rmk}

We note that in the case of standard b.g.\ structures (i.e.\ $\rho=1$) where $\cW=\cV$, we have $H_{\cV;\cW}^{(s;k)}(M)=H_\cV^{s+k}(M)$, i.e.\ residual operators are simply required to be smoothing in the sense of $\cV$-regularity. We further note that condition~\eqref{ItSBPsdoResAdj} only needs to be verified for a single choice of the density $\mu$ since adjoints with respect to another density are conjugations of the original adjoint by the quotient of the densities---but such quotients lie in $\CI_{{\rm uni},\fB}(M)$, and multiplication by elements of $\CI_{{\rm uni},\fB}(M)$ defines bounded operators on every weighted (mixed) $\cV$-Sobolev space.

While typically only the mapping properties encoded in Definition~\ref{DefSBPsdoRes} matter, it is occasionally convenient to have a Schwartz kernel description of residual operators which are localized near the diagonal. Recall the spaces~\eqref{EqSBHsk}.

\begin{lemma}[Schwartz kernels of residual operators]
\label{LemmaSBPsdoResSK}
  Let $\chi\in\CIc((-2,2)^n)$ with $\chi=1$ on $[-1,1]^n$. Recall the notation $S_\alpha\colon x_\alpha\mapsto\tilde x_\alpha=(\frac{x_\alpha^j}{\rho_{\alpha,j}})_{j=1,\ldots,n}$ from Definition~\usref{EqSBHScale}. Suppose that a linear operator $R$ on $\CIc(M)$ is a residual operator with weight $w$ and equivalent weight family $\{w_\alpha\}$. Then for all $N$, the operators
  \[
    R_{\alpha,\alpha'}:=w_\alpha^{-1}\bar\rho_\alpha^{-N}\bar\rho_{\alpha'}^{-N}(S_\alpha)_*\chi (\phi_\alpha)_* R \phi_{\alpha'}^*\chi S_{\alpha'}^*\colon\CIc(S_{\alpha'}((-2,2)^n))\to\CIc(S_\alpha((-2,2)^n))
  \]
  as well as their adjoints (defined with respect to the Lebesgue measure on $\R^n$) are, for all $k\in\N_0$, uniformly bounded as operators
  \begin{equation}
  \label{EqSBPsdoResSKMap}
    R_{\alpha,\alpha'},\ R_{\alpha,\alpha'}^* \colon H^{(-N;k)}_{\rho_{\alpha'}}(\R^n)\to H^{(N;k)}_{\rho_\alpha}(\R^n).
  \end{equation}
  Conversely, if the operators~\eqref{EqSBPsdoResSKMap} are uniformly bounded and $R$ is localized near the diagonal, then $R\in w\rho^\infty\Psi_\cV^{-\infty}(M)$.
\end{lemma}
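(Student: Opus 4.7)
\textbf{Proof plan for Lemma~\ref{LemmaSBPsdoResSK}.}

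For the forward direction, given $v\in H^{(-N;k)}_{\rho_{\alpha'}}(\R^n)$, I would form the test element $u:=\phi_{\alpha'}^*\chi\,S_{\alpha'}^*v\in\CIc(M)$ supported in $U_{\alpha'}$ and compute its $\rho^{-N}H^{(-N;k)}_{\cV;\cW}(M)$-norm directly from the definition~\eqref{EqSBHNorm}: only the finitely many $\beta$ with $U_\beta\cap U_{\alpha'}\neq\emptyset$ contribute, and on these the transition $(S_\beta\circ\phi_\beta)\circ(S_{\alpha'}\circ\phi_{\alpha'})^{-1}$ is uniformly $\CI$-bounded (cf.~\eqref{EqSB2Coc}) while $\bar\rho_\beta\sim\bar\rho_{\alpha'}$, yielding $\|u\|_{\rho^{-N}H^{(-N;k)}_{\cV;\cW}}\lesssim \bar\rho_{\alpha'}^N\|v\|_{H^{(-N;k)}_{\rho_{\alpha'}}}$. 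Applying the residual mapping hypothesis~\eqref{EqSBPsdoResMap} with $w'=1$ gives $\|Ru\|_{w\rho^N H^{(N;k)}_{\cV;\cW}}\lesssim \bar\rho_{\alpha'}^N\|v\|$. Extracting the single local piece at index $\alpha$, i.e.\ $\|(S_\alpha)_*\chi(\phi_\alpha)_*Ru\|_{H^{(N;k)}_{\rho_\alpha}}\lesssim w_\alpha\bar\rho_\alpha^N\bar\rho_{\alpha'}^N\|v\|$ (this local quantity is dominated by the full norm on the norm side), and multiplying by $w_\alpha^{-1}\bar\rho_\alpha^{-N}\bar\rho_{\alpha'}^{-N}$ yields the desired uniform bound on $R_{\alpha,\alpha'}$. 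The uniform bound on the adjoint $(R_{\alpha,\alpha'})^*$ follows by applying the same argument to $R^*$ (using Definition~\ref{DefSBPsdoRes}\eqref{ItSBPsdoResAdj}) after observing that, in the rescaled coordinates $\tilde x_\alpha$, the density $(S_\alpha)_*(\phi_\alpha)_*\mu$ has uniformly bounded positive smooth density with respect to Lebesgue measure, so the Lebesgue adjoint $(R_{\alpha,\alpha'})^*$ differs from $(R^*)_{\alpha',\alpha}$ only by uniformly bounded multiplicative factors.

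For the converse, fix a partition of unity $\{\chi_{\alpha'}\}$ subordinate to the cover by $U'_{\alpha'}:=\phi_{\alpha'}^{-1}((-1,1)^n)$, chosen so $\supp\chi_{\alpha'}\subset\phi_{\alpha'}^{-1}([-1,1]^n)$ with uniform $\CI$-bounds; in particular $\chi_{\alpha'}=\chi_{\alpha'}\cdot\phi_{\alpha'}^*\chi$. For $u\in w'\rho^{-N}H^{(-N;k)}_{\cV;\cW}(M)$, introduce the rescaled localizations
\[
  v_{\alpha'} := (w'_{\alpha'}\bar\rho_{\alpha'}^{-N})^{-1}(S_{\alpha'})_*(\phi_{\alpha'})_*(\chi_{\alpha'}u), \qquad \tilde v_\alpha := (w w'_\alpha\bar\rho_\alpha^N)^{-1}(S_\alpha)_*(\phi_\alpha)_*(\chi_\alpha Ru),
\]
so that $\sum_{\alpha'}\|v_{\alpha'}\|^2\sim\|u\|^2$ and $\sum_\alpha\|\tilde v_\alpha\|^2\sim\|Ru\|^2$ (in their respective mixed Sobolev spaces).

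Decomposing $Ru=\sum_{\alpha'}R\chi_{\alpha'}u$ and using $\chi_{\alpha'}u=\phi_{\alpha'}^*\chi\cdot(\phi_{\alpha'}^*S_{\alpha'}^*)(w'_{\alpha'}\bar\rho_{\alpha'}^{-N}v_{\alpha'})$ together with the analogous identity $\chi_\alpha=\chi_\alpha\cdot\phi_\alpha^*\chi$, a direct computation gives
\[
  \tilde v_\alpha \;=\; \sum_{\alpha'\in F(\alpha)} \frac{w'_{\alpha'}}{w'_\alpha}\cdot\bigl((S_\alpha)_*(\phi_\alpha)_*\chi_\alpha\bigr)\cdot R_{\alpha,\alpha'}v_{\alpha'},
\]
where $F(\alpha):=\{\alpha':\,\supp(R\chi_{\alpha'}u)\cap U_\alpha\neq\emptyset\}$. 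The diagonal localization hypothesis on $R$, combined with Proposition~\ref{PropbgDist}\eqref{ItbgDistFar} and the bounded-overlap property of the cover, makes $|F(\alpha)|$ and $|F^{-1}(\alpha')|$ uniformly bounded; Lemma~\ref{LemmaSBDWeights} further ensures $w'_{\alpha'}/w'_\alpha$ is uniformly bounded for $\alpha'\in F(\alpha)$. Multiplication by $(S_\alpha)_*(\phi_\alpha)_*\chi_\alpha$ is uniformly bounded on $H^{(N;k)}_{\rho_\alpha}$ (each $\pa_{\tilde x^j}$-derivative of this cutoff produces a factor $\rho_{\alpha,j}\leq 1$ times a uniform constant), so the uniform bound on $R_{\alpha,\alpha'}$ gives $\|\tilde v_\alpha\|_{H^{(N;k)}_{\rho_\alpha}}^2\lesssim\sum_{\alpha'\in F(\alpha)}\|v_{\alpha'}\|_{H^{(-N;k)}_{\rho_{\alpha'}}}^2$. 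Summing in $\alpha$ and applying Fubini with the uniform bound on $|F^{-1}(\alpha')|$ yields $\|Ru\|_{ww'\rho^N H^{(N;k)}_{\cV;\cW}}^2\lesssim\|u\|_{w'\rho^{-N}H^{(-N;k)}_{\cV;\cW}}^2$, establishing~\eqref{EqSBPsdoResMap}. The analogous bound for $R^*$ follows from the uniform boundedness of $R_{\alpha,\alpha'}^*$ (equivalently, of $(R^*)_{\alpha',\alpha}$ modulo uniform multiplications) by an identical argument.

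The principal difficulty is purely bookkeeping: correctly tracking the four independent families of scaling factors ($w_\alpha$, $w'_\alpha$, $\bar\rho_\alpha^{\pm N}$, $\prod_i\rho_{\alpha,i}$), and verifying that the different cutoffs ($\chi$ used to define $R_{\alpha,\alpha'}$ versus the partition $\chi_\alpha$ used in the Sobolev norm) are compatible via the condition $\supp\chi_\alpha\subset\phi_\alpha^{-1}([-1,1]^n)$ where $\phi_\alpha^*\chi=1$. No new analytic ingredient beyond the definitions is required.
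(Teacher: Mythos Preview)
Your proof is correct and follows essentially the same approach as the paper's: the forward direction inserts a single localized test function and reads off one term of the target Sobolev norm, while the converse decomposes via a partition of unity supported inside $\chi^{-1}(1)$, uses the diagonal localization together with Proposition~\ref{PropbgDist}\eqref{ItbgDistFar} to bound the number of interacting pairs, and sums. The paper organizes the converse with a double decomposition $R=\sum_{\beta,\alpha'}\eta_\beta R\eta_{\alpha'}$ rather than your single right-side decomposition, but this is only a cosmetic difference; your identity for $\tilde v_\alpha$ and the ensuing $\ell^2$ summation are exactly the same mechanism.
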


As a consequence of~\eqref{EqSBPsdoResSKMap}, the Schwartz kernels of $R_{\alpha,\alpha'}$, $R^*_{\alpha,\alpha'}$ (and thus of $R$, $R^*$) are $\CI$.

\begin{proof}[Proof of Lemma~\usref{LemmaSBPsdoResSK}]
  For $u\in H_{\rho_{\alpha'}}^{(-N;k)}(\R^n)$ with norm $\leq 1$, the distribution $\bar\rho_{\alpha'}^{-N}\phi_{\alpha'}^*\chi S_{\alpha'}^*u$ has uniformly (i.e.\ independently of $\alpha'$) bounded $\rho^{-N}H_{\cV;\cW}^{(-N;k)}(M)$-norm, and therefore $R(\bar\rho_{\alpha'}^{-N}\phi_{\alpha'}^*\chi S_{\alpha'}^*u)$ has uniformly bounded $w\rho^N H_{\cV;\cW}^{(N;k)}(M)$-norm. This implies that $R_{\alpha,\alpha'}u$ is uniformly bounded in $H^{(N;k)}_{\rho_\alpha}(\R^n)$; similarly for $R_{\alpha,\alpha'}^*$.

  For the converse, fix a nonnegative function $\eta\in\CIc((-2,2)^n)$ so that $\eta=1$ on $[-1,1]$ and $\supp\eta\subset\chi^{-1}(1)$; set $\eta_\alpha=\frac{\phi_\alpha^*\eta}{\sum_\beta\phi_\beta^*\eta}$. Then
  \begin{equation}
  \label{EqSBPsdoResSK}
    w_\alpha^{-1}\bar\rho_\alpha^{-N}\bar\rho_{\alpha'}^{-N} (S_\alpha)_*(\phi_\alpha)_*\eta_\alpha R\eta_{\alpha'} \phi_{\alpha'}^* S_{\alpha'}^* = ((S_\alpha)_*\tilde\eta_\alpha) R_{\alpha,\alpha'} ((S_{\alpha'})_*\tilde\eta_{\alpha'});
  \end{equation}
  here $\tilde\eta_\alpha=\frac{(\phi_\alpha)_*\eta_\alpha}{\chi}$ is uniformly bounded in $\CI((-2,2)^n)$, and thus $(S_\alpha)_*\tilde\eta_\alpha$ is uniformly bounded as a multiplication operator on $H^{(s;k)}_{\rho_\alpha}(\R^n)$ for all $s,k$; therefore, the operators~\eqref{EqSBPsdoResSK} are uniformly bounded from $H_{\rho_{\alpha'}}^{(-N;k)}(\R^n)\to H_{\rho_\alpha}^{(N;k)}(\R^n)$. Write now $R=\sum_{\beta,\alpha'\in\sA} \eta_\beta R\eta_{\alpha'}$. For $u\in w'\rho^{-N}H_{\cV;\cW}^{(-N;k)}(M)$, and writing $\chi_\alpha=\phi_\alpha^*\chi$, we then have
  \[
    (\phi_\alpha)_*(\chi_\alpha R u) = \sum_{\beta,\alpha'}(\phi_\alpha)_*(\chi_\alpha \eta_\beta R\eta_{\alpha'}u).
  \]
  The support condition on the Schwartz kernel of $R$ implies that there exists a constant $J<\infty$ so that for all $\alpha\in\sA$ there are at most $J$ pairwise distinct pairs $(\beta,\alpha')\in\sA\times\sA$ for which $\chi_\alpha\eta_\beta R\eta_{\alpha'}$ is not the zero operator. Let $\sA(\alpha)$ denote the set of all $\alpha'$ in such pairs; thus $|\sA(\alpha)|\leq J$, and also $d_\fB(p,q)$ has a uniform upper bound for all $p\in U_\alpha$ and $q\in U_{\alpha'}$. We then conclude that
  \begin{align*}
    \|R u\|_{w w'\rho^N H_{\cV;\cW}^{(N;k)}(M)}^2 &= \sum_\alpha \| (w_\alpha w'_\alpha)^{-1}(S_\alpha)_*(\phi_\alpha)_*(\chi_\alpha R u)\|_{H^N_{\rho_\alpha}(\R^n)}^2 \\
      &\leq C\sum_\alpha \sum_{\alpha'\in\sA(\alpha)} \| w^{\prime-1}_\alpha (S_{\alpha'})_*(\phi_{\alpha'})_*(\chi_{\alpha'}u) \|_{H^{-N}_{\rho_{\alpha'}}(\R^n)}^2 \\
      &\leq C\|u\|_{w' H_{\cV;\cW}^{(-N;k)}(M)}^2,
  \end{align*}
  where the constant $C$ may change from line to line; in the second inequality, we use that for every $\alpha'\in\sA$ there are at most $J'$ many $\alpha\in\sA$ with $\alpha'\in\sA(\alpha)$, where $J'$ does not depend on $\alpha'$. This completes the proof.
\end{proof}

\begin{rmk}[Topology on $w\rho^\infty\Psi_\cV^{-\infty}(M)$]
\label{RmkSBPsdoResTop}
  On the space $w\rho^\infty\Psi_\cV^{-\infty}(M)$, we take the operator norms of the maps~\eqref{EqSBPsdoResMap} and of their $L^2$-adjoints as seminorms. Thus, $w\rho^\infty\Psi_\cV^{-\infty}(M)$ is a Fr\'echet space.
\end{rmk}

\begin{cor}[Localizing residual Schwartz kernels]
\label{CorSBPsdoResSKMult}
  Write $\fB\times\fB=\{(U_\alpha\times U_{\alpha'},\phi_\alpha\times\phi_{\alpha'})\colon\alpha,\alpha'\in\sA\}$ for the product b.g.\ structure on $M\times M$, and let
  \[
    \CI_{{\rm uni},\fB\times\fB,\diag}(M\times M)\subset\CI_{{\rm uni},\fB\times\fB}(M)
  \]
  denote the subring of functions $f\in\CI_{{\rm uni},\fB\times\fB}(M\times M)$ for which there exists $C<\infty$ so that $f$ is constant in $\{(p,q)\in M\times M\colon d_\fB(p,q)>C\}$. Then the space $w\rho^\infty\Psi_\cV^{-\infty}(M)$ is a module over $\CI_{{\rm uni},\fB\times\fB,\diag}(M\times M)$ via pointwise multiplication of Schwartz kernels. In fact, it is a module over $\bigcup_{N,N'\in\R}(\pi_1^*\rho)^{-N}(\pi_2^*\rho)^{-N'}\CI_{{\rm uni},\fB\times\fB,\diag}(M\times M)$ where $\pi_1,\pi_2\colon M\times M\to M$ are the two projection maps.
\end{cor}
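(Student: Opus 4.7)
The plan is to reduce to $f$ having support in a uniform neighborhood of the diagonal, and then verify the residual mapping properties via the Schwartz kernel characterization of Lemma \ref{LemmaSBPsdoResSK}.

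First, since $f$ is constant on $\{d_\fB > C\}$ for some $C<\infty$, write $f = c + f_0$ with $c \in \C$ and $f_0 \in \CI_{{\rm uni},\fB\times\fB}(M\times M)$ having support in $\{d_\fB \leq C\}$. The summand $cR$ is clearly in $w\rho^\infty\Psi_\cV^{-\infty}(M)$. The Schwartz kernel of $f_0 R$ inherits $f_0$'s support, so $f_0 R$ is localized near the diagonal in the sense of Definition \ref{DefSBPsdoRes}. Applying the forward direction of Lemma \ref{LemmaSBPsdoResSK} to $R$ provides uniform bounds on $R_{\alpha,\alpha'}$ and its adjoint in the maps \eqref{EqSBPsdoResSKMap}; it remains to transfer these bounds to $(f_0 R)_{\alpha,\alpha'}$ (and similarly for its adjoint) and apply the converse direction of the lemma.

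Write $\chi_\alpha$ for a partition of unity subordinate to $\{U_\alpha\}$ as in Theorem \ref{ThmSBDSymb}. For each pair $(\alpha,\alpha')$ contributing to $f_0$---of which only uniformly finitely many per $\alpha$ appear due to the support in $\{d_\fB\leq C\}$---the function $\chi_\alpha(p)\chi_{\alpha'}(q)f_0(p,q)$ pulls back via $\phi_\alpha \times \phi_{\alpha'}$ to a uniformly (in $\alpha,\alpha'$) bounded element of $\CIc((-2,2)^{2n})$. After a smooth periodic extension with period $P > 4$, we expand in Fourier series, obtaining
\[
  \chi_\alpha(p)\chi_{\alpha'}(q)f_0(p,q) = \sum_{j,l\in\Z^n} c^{\alpha,\alpha'}_{j,l}\,h_{\alpha,j}(p)\,h_{\alpha',l}(q),
\]
where $h_{\alpha,j}(p) := \chi_1(\phi_\alpha(p))e^{2\pi i j\cdot\phi_\alpha(p)/P}$ (with $\chi_1\in\CIc((-2,2)^n)$ equal to $1$ on $\supp\chi$, extended by zero outside $U_\alpha$), and $|c^{\alpha,\alpha'}_{j,l}| \leq C_M\la j\ra^{-M}\la l\ra^{-M}$ uniformly in $\alpha,\alpha'$ for every $M$. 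The key estimate is that multiplication by $h_{\alpha,j}$ is bounded on every $w''H^{(s;k)}_{\cV;\cW}(M)$ with norm $\leq C_{s,k}\la j\ra^{|s|+k}$ uniformly in $\alpha$: in rescaled coordinates $\tilde x_\alpha = S_\alpha(x_\alpha)$, the function $h_{\alpha,j}$ becomes $\chi_1(\rho_\alpha\tilde x_\alpha)e^{2\pi ij\cdot\rho_\alpha\tilde x_\alpha/P}$, so each application of $\rho_{\alpha,i}^{-1}\partial_{\tilde x_\alpha^i}$ produces the uniformly controlled factor $i\cdot 2\pi j/P$ rather than the potentially unbounded $\rho_{\alpha,i}^{-1}j$. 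Combining this polynomial-in-$|j|$ bound with the uniform bound on $R_{\alpha,\alpha'}$ and the rapid decay of $c^{\alpha,\alpha'}_{j,l}$ yields uniform bounds on $(f_0 R)_{\alpha,\alpha'}$; bounds on the adjoint follow from the identical argument applied to $R^*$ and to $\tilde f_0(p,q) := \overline{f_0(q,p)} \in \CI_{{\rm uni},\fB\times\fB,\diag}(M\times M)$. Lemma \ref{LemmaSBPsdoResSK} then gives $f_0 R \in w\rho^\infty\Psi_\cV^{-\infty}(M)$, whence $fR \in w\rho^\infty\Psi_\cV^{-\infty}(M)$.

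For the weighted module structure, any $f' = \pi_1^*\rho^{-N}\pi_2^*\rho^{-N'}f$ gives $f'R = M_{\rho^{-N}}\circ(fR)\circ M_{\rho^{-N'}}$ at the level of Schwartz kernels. Since multiplication by $\rho^{\pm N}$ defines bounded operators that shift the Sobolev weight by $\rho^{\pm N}$, and since $w\rho^\infty\Psi_\cV^{-\infty}(M)$ is stable under pre- and post-composition with such multiplications---because $\rho^\infty$ is invariant under multiplication by any finite power of $\rho$---we conclude $f'R \in w\rho^\infty\Psi_\cV^{-\infty}(M)$. The main technical obstacle is establishing the polynomial-in-$|j|$ bound on $\|M_{h_{\alpha,j}}\|$ uniformly in $\alpha$ on \emph{mixed} Sobolev spaces; this is handled by noting that the $\cW$-derivative structure corresponds to derivatives in the unrescaled coordinates $x_\alpha$, under which $h_{\alpha,j}$ itself has derivatives of uniformly controlled polynomial growth in $|j|$.
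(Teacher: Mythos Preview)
Your proof is correct and follows essentially the same route as the paper's: reduce to $f$ supported near the diagonal, invoke Lemma~\ref{LemmaSBPsdoResSK}, expand the localized $f$ in Fourier series in chart coordinates, exploit that the exponentials $e^{2\pi i j\cdot x_\alpha/P}$ are in the \emph{unrescaled} coordinates (so that $\pa_{\tilde x_\alpha}$ produces factors $\rho_\alpha j$ rather than $j$, and $\rho_\alpha^{-1}\pa_{\tilde x_\alpha}$ produces $j$ rather than $\rho_\alpha^{-1}j$), and then sum using the rapid decay of the Fourier coefficients. The only cosmetic differences are that the paper first reduces to $k=0$ before running the Fourier argument, whereas you handle general $k$ in one pass, and that the paper works directly with $f_{\alpha,\alpha'}=(\phi_\alpha\times\phi_{\alpha'})_*f$ rather than first applying a partition of unity on $M\times M$; neither affects the substance.
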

\begin{proof}
  The final statement is a straightforward consequence of the first. Let thus $f\in\CI_{{\rm uni},\fB\times\fB,\diag}(M\times M)$; it suffices to consider the situation that $f$ is localized near the diagonal. Set $f_{\alpha,\alpha'}=(\phi_\alpha\times\phi_{\alpha'})_*f$, which is thus uniformly bounded in $\CI((-2,2)^n\times(-2,2)^n)$. By Lemma~\ref{LemmaSBPsdoSKRes} with $\chi\in\CI((-\frac32,\frac32)^n)$, it suffices to show that if $R_{\alpha,\alpha'}\colon H_{\rho_{\alpha'}}^{(-N;k)}(\R^n)\to H_{\rho_\alpha}^{(N;k)}(\R^n)$, with uniformly bounded operator norm, has (smooth) Schwartz kernel $K_{R_{\alpha,\alpha'}}$ supported in $S_\alpha((-\frac32,\frac32)^n)\times S_{\alpha'}((-\frac32,\frac32)^n)$, then the operator norm of the operator with Schwartz kernel $((S_\alpha\times S_{\alpha'})_*f_{\alpha,\alpha'})K_{R_{\alpha,\alpha'}}$ as a map $H_{\rho_{\alpha'}}^{(-N;k)}(\R^n)\to H_{\rho_\alpha}^{(N;k)}(\R^n)$ is uniformly bounded as well. (The bounds for $R_{\alpha,\alpha'}^*$ are then proved in exactly the same fashion.) Using the uniform boundedness of the $\CI$ seminorms of $f_{\alpha,\alpha'}$, it suffices to verify this in the case $k=0$.

  We restate the remaining task as follows: if $f\in\CIc((0,1)^n\times(0,1)^n)$ and $R\colon H^{-N}(\R^n)\to H^N(\R^n)$, $(R u)(\tilde x)=\int K_R(\tilde x,\tilde x')u(\tilde x')\,\dd\tilde x'$, has operator norm $\leq 1$ and satisfies $\supp K_R\subset S_\alpha((0,1)^n)\times S_{\alpha'}((0,1)^n)$, then the operator with Schwartz kernel $((S_\alpha\times S_{\alpha'})_*f)K_R$ maps $H^{-N}(\R^n)\to H^N(\R^n)$ with operator norm bounded by some $\CI$-seminorm of $f$. Let $\chi\in\CIc((0,1)^n)$ be such that $\supp f\subset\chi^{-1}(1)\times\chi^{-1}(1)$, and expand $f$ in Fourier series via
  \[
    f(x,x') = \chi(x)\chi(x')f(x,x') = \sum_{k,k'\in\Z^n} f_{k,k'} \chi(x)e^{2\pi i k\cdot x} \chi(x')e^{2\pi i k'\cdot x'},
  \]
  where, for all $N'\in\N$, we have the bound
  \[
    |f_{k,k'}|\leq C_{N'}(1+|k|+|k'|)^{-2 N'}\leq C'_{N'}(1+|k|)^{-N'}(1+|k'|)^{-N'}
  \]
  where $C'_{N'}$ is bounded by a seminorm of $f$. Consider now a term
  \begin{align*}
    &(S_\alpha\times S_{\alpha'})_*\bigl(\chi(x)e^{2\pi i k\cdot x} \chi(x')e^{2\pi i k'\cdot x'}\bigr) K_R(\tilde x,\tilde x') \\
    &\qquad = \chi(S_\alpha^{-1}\tilde x) e^{2\pi i k\cdot S_\alpha^{-1}(\tilde x)} K_R(\tilde x,\tilde x') \chi(S_{\alpha'}^{-1}\tilde x') e^{2\pi i k'\cdot S_{\alpha'}^{-1}(\tilde x')}.
  \end{align*}
  Now, multiplication with $\chi(S_\alpha^{-1}\tilde x)=\chi(\rho_{\alpha,1}\tilde x^1,\ldots,\rho_{\alpha,n}\tilde x^n)$ is uniformly (i.e.\ independently of $\rho_\alpha\in(0,1]^n$) bounded on $H^s(\R^n)$, as follows for $s\in\N_0$ by direct differentiation, thus for $s\geq 0$ by interpolation, and finally for $s\leq 0$ by duality. On the other hand, the operator norm on $H^s(\R^n)$ of multiplication by $e^{2\pi i k'\cdot S_\alpha^{-1}(\tilde x)}=\exp(2\pi i \rho_{\alpha,j}k'_j \tilde x^j)$ is bounded by $\breve C_s(1+|k'|)^s$ for $s\in\N_0$, thus also for $s\geq 0$ by interpolation, and thus by $\breve C_{|s|}(1+|k'|)^{|s|}$ for $s\in\R$ by duality. For any $N\geq 0$, we thus have an operator norm bound
  \begin{align*}
    &\|(S_\alpha\times S_{\alpha'})_*\bigl(f_{k,k'}\chi(x)e^{2\pi i k\cdot x} \chi(x')e^{2\pi i k'\cdot x'}\bigr) K_R(\tilde x,\tilde x')\|_{\cL(H^{-N}(\R^n),H^N(\R^n))} \\
    &\qquad \leq C'_{N'} \breve C_N (1+|k|)^{-N'+N} (1+|k'|)^{-N'+N}.
  \end{align*}
  For fixed $N$, we then take $N'>N+n$ and obtain, by summing over $k,k'\in\Z^n$, the desired operator norm bound on $((S_\alpha\times S_{\alpha'})_*f)K_R$.
\end{proof}

That Definition~\ref{DefSBPsdoRes} is consistent with the properties of quantizations of residual symbols is the content of the following result.

\begin{prop}[Quantizations of residual symbols]
\label{PropSBPsdoOpRes}
  Consider a residual symbol $a\in w\rho^\infty S^{-\infty}({}^\cV T^*M)$. Then $\Op_\cV(a)\in w\rho^\infty\Psi_\cV^{-\infty}(M)$. More generally, if $A$ is an operator which can be written as $A=\Op_\cV(a_m)$ with $a_m\in w\rho^m S^{-m}({}^\cV T^*M)$ for a sequence of orders $m$ tending to $-\infty$, then $A\in w\rho^\infty\Psi_\cV^{-\infty}(M)$.
\end{prop}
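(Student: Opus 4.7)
The plan is to verify the mapping conditions in Definition~\ref{DefSBPsdoRes} directly, using Proposition~\ref{PropSBPsdoH} as the main workhorse. First observe that the cutoffs $\psi(x_\alpha-x'_\alpha)$ built into each local quantization $\Op_\alpha$ force the Schwartz kernel of $\Op_\cV(a)$ to be supported in $\{(p,q)\in M\times M : d_\fB(p,q)\leq C\}$ for some constant $C$; hence $\Op_\cV(a)$ is automatically localized near the diagonal. Next, for the first mapping condition in \eqref{EqSBPsdoResMap}, fix $N\in\R$, $k\in\N_0$, and an arbitrary weight $w'$. Since $a\in w\rho^\infty S^{-\infty}({}^\cV T^*M)$, in particular $a\in(w\rho^{2N})S^{-2N}({}^\cV T^*M)$, and because powers of $\rho$ are weights (their logarithmic derivatives along $\cW$ are uniformly bounded), so is $w\rho^{2N}$ and so is the source weight $w'\rho^{-N}$. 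Applying Proposition~\ref{PropSBPsdoH} with weight $w\rho^{2N}$, base weight $w'\rho^{-N}$, regularity order $s=-N$, and symbol order $m=-2N$ then yields directly
\[
  \Op_\cV(a)\colon w'\rho^{-N}H_{\cV;\cW}^{(-N;k)}(M)\to w w'\rho^N H_{\cV;\cW}^{(N;k)}(M).
\]

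For the adjoint bound, I would argue that, locally in each chart, the $L^2(\mu)$-formal adjoint operation preserves the relevant uniform symbol class. In the rescaled coordinates $\tilde x_\alpha=S_\alpha x_\alpha$, each local quantization $\Op_\alpha((\phi_\alpha)_*(\chi_\alpha a))$ is a standard ps.d.o.\ on $\R^n$ whose symbol is uniformly bounded in $w_\alpha\bar\rho_\alpha^{N'}S^{-N'}(T^*\R^n)$ for every $N'$. The $L^2(\mu)$-adjoint acts in each chart via the local density $\mu_\alpha|dx/\rho_\alpha|$ whose coefficient $\mu_\alpha$ is uniformly bounded above and below in $\CI$, and the standard adjoint expansion on $\R^n$ preserves the residual symbol class while only introducing additional factors built from derivatives of $\mu_\alpha$ and of the partition functions $\chi_\alpha$. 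Unwinding the sum $\Op_\cV(a)^* = \sum_\alpha[(\phi_\alpha)^*\Op_\alpha(\chi_\alpha a)(\phi_\alpha)_*]^*$, I would show it can be written as a sum $\sum_\alpha \phi_\alpha^*\Op_\alpha(b_\alpha)(\phi_\alpha)_*$ where the family $\{b_\alpha\}$ belongs to the residual symbol class with the same uniformity. Applying Proposition~\ref{PropSBPsdoH} once more (or invoking Lemma~\ref{LemmaSBPsdoResSK} in the converse direction from uniform bounds on the rescaled local Schwartz kernels) then yields the required adjoint bound.

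The second (more general) statement then requires no new ideas: the definition of residual operator imposes conditions separately for each pair $(N,k)$, and for each such pair the required mapping property is implied by considering only those indices $m$ with $m\leq -2N$ in the decomposition $A=\Op_\cV(a_m)$, reducing to the case already handled. The main obstacle I expect is the second paragraph: handling the $L^2(\mu)$-adjoint across multiple overlapping charts is bookkeeping-heavy, since the adjoint of $\sum_\alpha \phi_\alpha^*F_\alpha(\phi_\alpha)_*$ with respect to $\mu$ is not literally the sum of the local adjoints, and one must carefully track how the partition of unity factors $\chi_\alpha$ and the density coefficients $\mu_\alpha$ redistribute; however, all such redistributions remain within $\CI_{\rm uni,\fB}(M)$ and therefore preserve membership in the residual symbol class.
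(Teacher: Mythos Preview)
Your approach is essentially the same as the paper's: the forward bound is Proposition~\ref{PropSBPsdoH} applied with the weight $w\rho^{2N}$, and the adjoint bound is handled by observing that the local adjoints of $\Op_\alpha$ are again quantizations of uniformly controlled symbols. The paper dispatches your bookkeeping worry cleanly by invoking the left/right reduction result (Lemma~\ref{LemmaSBPsdoLeft}): the adjoint of a left quantization is a right quantization of the conjugate symbol, which is then reduced back to a left quantization with uniformly controlled seminorms---no delicate redistribution of $\chi_\alpha$ or $\mu_\alpha$ is needed, since taking adjoints is linear and each summand $\phi_\alpha^*\Op_\alpha(\cdot)$ can be treated on its own.
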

\begin{proof}
  Only the mapping properties of the adjoint $\Op_\cV(a)^*$ with respect to a uniformly positive $\cV$-density in Definition~\ref{DefSBPsdoRes}\eqref{ItSBPsdoResAdj} do not follow directly from Proposition~\ref{PropSBPsdoH}. But the adjoints of the local quantizations $\Op_\alpha$ in~\eqref{EqSBPsdo} are again quantizations of symbols with uniformly controlled seminorms by Lemma~\ref{LemmaSBPsdoLeft} below; and these are in turn left quantizations of uniformly controlled symbols by the same Lemma. Thus (the proof of) Proposition~\ref{PropSBPsdoH} again applies and finishes the proof.
\end{proof}

To complete the proof, we need two lemmas (which are essentially standard, cf.\ \cite[Proposition~4.10, Theorem~4.8]{HintzMicro}).

\begin{lemma}[Schwartz kernels as oscillatory integrals]
\label{LemmaSBPsdoSKRes}
  Let $\alpha\in\sA$ and set $x_\alpha^i=\phi_\alpha^i$ and $\tilde x_\alpha^i=\frac{x_\alpha^i}{\rho_{\alpha,i}}$, likewise $\tilde x_\alpha^{\prime i}=\frac{x^{\prime i}_\alpha}{\rho_{\alpha,i}}$. Let $\kappa=\kappa(x_\alpha,x'_\alpha)$. Suppose that for all $\beta,\beta',\gamma,N$,
  \begin{equation}
  \label{EqSBPsdoSKRes}
    |(\rho_\alpha\pa_{x_\alpha})^\beta(\rho_\alpha\pa_{x_\alpha'})^{\beta'}(\pa_{x_\alpha}+\pa_{x_\alpha'})^\gamma\kappa(x_\alpha,x'_\alpha)| \leq C_{\beta\beta'\gamma N}(1+|\tilde x_\alpha-\tilde x'_\alpha|)^{-N}
  \end{equation}
  in the notation~\eqref{EqSB2Der}. Then there exists a unique symbol $a\in S^{-\infty}(T^*\R^n)$ so that
  \[
    \kappa(x_\alpha,x'_\alpha) = (2\pi)^{-n}\int_{\R^n} e^{i\sum_{j=1}^n (x_\alpha^j-x_\alpha^{\prime j})\xi_{\alpha,j}/\rho_{\alpha,j}} a(x_\alpha,\xi_\alpha)\,\dd\xi_\alpha.
  \]
  We have $\supp a\subset\pi_1(\supp\kappa)\times\R^n$ where $\pi_1\colon\R^n\times\R^n\to\R^n$ is the projection to the first factor. Moreover, the symbol seminorms of $a$ are bounded by uniform constants (i.e.\ independent of $\alpha$) times the optimal constants $C_{\beta\beta'\gamma N}$ in~\eqref{EqSBPsdoSKRes}.
\end{lemma}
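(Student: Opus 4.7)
The plan is to derive $a$ by Fourier inversion along fibers, then verify the symbol bounds via integration by parts, keeping careful track of the rescaling $x_\alpha^j\mapsto\tilde x_\alpha^j=x_\alpha^j/\rho_{\alpha,j}$ so that the final estimates come out uniform in $\alpha$. Writing $\rho_\alpha\tilde x:=(\rho_{\alpha,1}\tilde x^1,\ldots,\rho_{\alpha,n}\tilde x^n)$ and $\tilde\kappa(\tilde x,\tilde x'):=\kappa(\rho_\alpha\tilde x,\rho_\alpha\tilde x')$, the desired identity reads
\[
  \tilde\kappa(\tilde x,\tilde x')=(2\pi)^{-n}\int_{\R^n} e^{i(\tilde x-\tilde x')\cdot\xi} a(\rho_\alpha\tilde x,\xi)\,\dd\xi,
\]
which is a standard Fourier inversion in $\tilde x'$ at fixed $\tilde x$. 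This forces uniqueness and gives the candidate
\[
  a(\rho_\alpha\tilde x,\xi)=\int_{\R^n} e^{-i y\cdot\xi}\,\tilde\kappa(\tilde x,\tilde x-y)\,\dd y.
\]
The integral converges absolutely (together with all $\tilde x,\xi$-derivatives) because hypothesis~\eqref{EqSBPsdoSKRes} with $\beta=\beta'=\gamma=0$ yields $|\tilde\kappa(\tilde x,\tilde x-y)|\leq C_N(1+|y|)^{-N}$ for every $N$. The support claim is immediate: if $(\rho_\alpha\tilde x,\rho_\alpha\tilde x-\rho_\alpha y)\notin\supp\kappa$ for every $y$, then the integrand vanishes identically, so $a(x,\xi)=0$ whenever $x\notin\pi_1(\supp\kappa)$.

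For the symbol estimates, I would show that $\xi^{\beta'''}\pa_\xi^{\gamma''}\pa_x^{\beta''} a$ is uniformly bounded for all $\beta'',\beta''',\gamma''$ (which, since this holds for every $\beta'''$, gives $|\pa_x^{\beta''}\pa_\xi^{\gamma''}a|\leq C_{\beta''\gamma''N}\la\xi\ra^{-N}$ for all $N$, i.e.\ $a\in S^{-\infty}$). The $\xi$-derivatives bring down factors of $(-iy)^{\gamma''}$, and $\xi^{\beta'''}$ is converted into $\pa_y^{\beta'''}$ via integration by parts. Using
\[
  \pa_{y^i}\tilde\kappa(\tilde x,\tilde x-y)=-(\pa_{\tilde x^{\prime i}}\tilde\kappa)(\tilde x,\tilde x-y)=-\rho_{\alpha,i}\bigl((\pa_{x_\alpha^{\prime i}}\kappa)\bigr)(\rho_\alpha\tilde x,\rho_\alpha(\tilde x-y)),
\]
each such integration by parts produces exactly a factor $\rho_{\alpha,i}\pa_{x_\alpha^{\prime i}}$ acting on $\kappa$, which is bounded uniformly in $\alpha$ by hypothesis~\eqref{EqSBPsdoSKRes}. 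For $x$-derivatives I use that $\pa_{x_\alpha^i}a(\rho_\alpha\tilde x,\xi)=\rho_{\alpha,i}^{-1}\pa_{\tilde x^i}[a(\rho_\alpha\tilde x,\xi)]$, and the chain rule yields
\[
  \rho_{\alpha,i}^{-1}\pa_{\tilde x^i}\bigl[\tilde\kappa(\tilde x,\tilde x-y)\bigr]=\bigl((\pa_{x_\alpha^i}+\pa_{x_\alpha^{\prime i}})\kappa\bigr)(\rho_\alpha\tilde x,\rho_\alpha(\tilde x-y)).
\]
It is precisely for this reason that hypothesis~\eqref{EqSBPsdoSKRes} involves $(\pa_{x_\alpha}+\pa_{x_\alpha'})^\gamma$ without a $\rho_\alpha$ prefactor. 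Combining these two computations, $\pa_x^{\beta''}\pa_\xi^{\gamma''}\xi^{\beta'''}a(\rho_\alpha\tilde x,\xi)$ equals, after integration by parts, an absolutely convergent integral of $\pm(iy)^{\gamma''}$ times a derivative expression of the form appearing on the left of~\eqref{EqSBPsdoSKRes} with $|\beta|=|\beta''|$, $|\beta'|=|\beta'''|$, and $|\gamma|=|\beta''|$; the $(1+|\tilde x-(\tilde x-y)|)^{-N}=(1+|y|)^{-N}$ decay absorbs the polynomial factor $y^{\gamma''}$ for any $N$ at the cost of raising $N$ in~\eqref{EqSBPsdoSKRes}.

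The main obstacle is bookkeeping: in particular, matching up the two distinct types of derivatives (the scaled ones $\rho_\alpha\pa$ and the unscaled sum $\pa_x+\pa_{x'}$) with the right pieces of the Fourier inversion, so that the constants in the resulting symbol seminorm bounds depend only on the constants $C_{\beta\beta'\gamma N}$ of~\eqref{EqSBPsdoSKRes} and not on $\alpha$ or on $\rho_\alpha$. Once this dictionary is set up, no further estimates are required; the proof is purely algebraic manipulation of the Fourier inversion formula combined with standard integration by parts.
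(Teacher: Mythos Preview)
Your proof is correct and follows essentially the same approach as the paper's: recover $a$ by Fourier inversion in the rescaled difference variable $\tilde z=\tilde x-\tilde x'$, then obtain $\xi$-decay by integrating by parts (producing $\rho_\alpha\pa_{x'_\alpha}$-derivatives on $\kappa$) and $x$-regularity from the $(\pa_{x_\alpha}+\pa_{x'_\alpha})$-regularity of $\kappa$. One minor bookkeeping slip in your last paragraph: no $(\rho_\alpha\pa_{x_\alpha})^\beta$ derivatives actually arise in your argument, so the hypothesis is invoked with $\beta=0$, $\beta'=\beta'''$, $\gamma=\beta''$ (not $|\beta|=|\beta''|$).
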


Since $\rho_{\alpha,j}\pa_{x_\alpha^{\prime j}}=-\rho_{\alpha,j}\pa_{x_\alpha^j}+\rho_{\alpha,j}(\pa_{x_\alpha^j}+\pa_{x_\alpha^{\prime j}})$, requiring~\eqref{EqSBPsdoSKRes} only for $\beta'=0$ implies~\eqref{EqSBPsdoSKRes} for all $\beta'$ (and likewise with $\beta$ in place of $\beta'$).

\begin{proof}[Proof of Lemma~\usref{LemmaSBPsdoSKRes}]
  Since $\kappa$ is the inverse Fourier transform $\check a(x_\alpha,\tilde z)|_{\tilde z^j=(x^j_\alpha-x^{\prime j}_\alpha)/\rho_{j,\alpha}}$ of $a$ in $\xi_\alpha$, we must have (writing $x^{\prime j}_\alpha=x_\alpha^j-\rho_{\alpha,j}\tilde z^j$)
  \[
    a(x_\alpha,\xi_\alpha) = \int_{\R^n} e^{-i\tilde z\cdot\xi_\alpha}\kappa(x_\alpha^1,\ldots,x_\alpha^n,x_\alpha^1-\rho_{\alpha,1}\tilde z^1,\ldots,x_\alpha^n-\rho_{\alpha,n}\tilde z^n)\,\dd\tilde z^1\cdots\dd\tilde z^n.
  \]
  The rapid $\tilde z$-decay of the integrand implies the convergence of the integral and all of its $\xi_\alpha$-derivatives. Furthermore, $|\xi_\alpha|^{2 N} e^{-i\tilde z\cdot\xi_\alpha}=(-\pa_{\tilde z}^2)^N e^{-i\tilde z\cdot\xi_\alpha}$; integrating by parts in $\tilde z$ produces $2 N$-fold derivatives of $\kappa$ along the vector fields $\rho_{\alpha,j}\pa_{x_\alpha^{\prime j}}$. Finally, the $\pa_{x_\alpha}$-regularity of $a$ follows from the $(\pa_{x_\alpha}+\pa_{x'_\alpha})$-regularity of $\kappa$.
\end{proof}

\begin{lemma}[Left and right reduction]
\label{LemmaSBPsdoLeft}
  Let $a=a(x_\alpha,x'_\alpha,\xi_\alpha)\in S^m(\R^n\times\R^n;\R^n)$; that is, for all $k\in\N_0$,
  \[
    |a|_{S^m;k} := \max_{|\beta|+|\beta'|+|\gamma|\leq k} \sup_{(x_\alpha,x'_\alpha,\xi_\alpha)\in\R^n\times\R^n\times\R^n}\la\xi\ra^{-m+|\gamma|}|\pa_{x_\alpha}^\beta\pa_{x'_\alpha}^{\beta'}\pa_{\xi_\alpha}^\gamma a(x_\alpha,x'_\alpha,\xi_\alpha)| < \infty.
  \]
  Then there exists a unique symbol $a_L\in S^m(T^*\R^n)$ so that
  \begin{align}
  \label{EqSBPsdoLeftQuant}
    &\Op'_\alpha(a) := (2\pi)^{-n}\int_{\R^n} e^{i\sum_{j=1}^n(x_\alpha^j-x_\alpha^{\prime j})\xi_{\alpha,j}/\rho_{\alpha,j}} a(x_\alpha,x'_\alpha,\xi_\alpha)\,\dd\xi_\alpha \\
    &\quad =\Op'_\alpha(a_L) = (2\pi)^{-n}\int_{\R^n} e^{i\sum_{j=1}^n(x_\alpha^j-x_\alpha^{\prime j})\xi_{\alpha,j}/\rho_{\alpha,j}}a_L(x_\alpha,\xi_\alpha)\,\dd\xi_\alpha. \nonumber
  \end{align}
  The symbol $a_L$ has the following additional properties:
  \begin{enumerate}
  \item\label{ItSBPsdoLeft1} for $k\in\N_0$, define (using the notation~\eqref{EqSB2Der})
    \[
      q_k(x_\alpha,\xi_\alpha) := a_L(x_\alpha,\xi_\alpha) - \sum_{|\beta|\leq k-1} \frac{1}{\beta!} \bigl(\pa_{\xi_\alpha}^\beta(\rho_\alpha D_{x'_\alpha})^\beta a(x_\alpha,x'_\alpha,\xi_\alpha)\bigr)\big|_{x'_\alpha=x_\alpha}.
    \]
    Then the $S^{m-k}(T^*\R^n)$-seminorms of $\bar\rho_\alpha^{-k}q_k$ are bounded uniformly (i.e.\ with constant independent of $\alpha$) by the $S^m$-seminorms of $a$;
  \item\label{ItSBPsdoLeft2} if $\chi,\tilde\chi\in S^0(T^*\R^n)$ are such that $\supp\chi\subset\tilde\chi^{-1}(1)$ and
  \[
    a_{\tilde\chi}(x_\alpha,x'_\alpha,\xi_\alpha):=\tilde\chi(x_\alpha,\xi_\alpha)a(x_\alpha,x'_\alpha,\xi_\alpha)
  \]
  is an element of $S^{m'}$, then $\chi a_L\in S^{m'}$, with seminorms bounded uniformly by those of $a_{\tilde\chi}$. The same conclusion holds under the assumption that $(x_\alpha,x'_\alpha,\xi_\alpha)\mapsto\tilde\chi(x'_\alpha,\xi_\alpha)a(x_\alpha,x'_\alpha,\xi_\alpha)$ lies in $S^{m'}$.
  \end{enumerate}
  Similarly, $\Op'_\alpha(a)=\Op'_\alpha(a_R)$ for a unique $a_R=a_R(x'_\alpha,\xi_\alpha)\in S^m$ which is an asymptotic sum $a_R\sim\sum_\beta \frac{1}{\beta!}(\pa_{\xi_\alpha}^\beta(-\rho_\alpha D_{x_\alpha})^\beta a)|_{x_\alpha=x'_\alpha}$.
\end{lemma}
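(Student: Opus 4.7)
The plan is to reduce to the classical left reduction on $\R^n$ via the change of variables $\tilde x_\alpha^j := x_\alpha^j/\rho_{\alpha,j}$, under which the phase $\sum_j(x_\alpha^j-x_\alpha^{\prime j})\xi_{\alpha,j}/\rho_{\alpha,j}$ becomes the standard $(\tilde x_\alpha-\tilde x'_\alpha)\cdot\xi_\alpha$ and the integration density $\prod_j\dd x_\alpha^{\prime j}/\rho_{\alpha,j}$ becomes $\dd\tilde x'_\alpha$. Setting $\tilde a(\tilde x_\alpha,\tilde x'_\alpha,\xi_\alpha) := a(x_\alpha,x'_\alpha,\xi_\alpha)$ with $x_\alpha^j = \rho_{\alpha,j}\tilde x_\alpha^j$ and $x_\alpha^{\prime j} = \rho_{\alpha,j}\tilde x_\alpha^{\prime j}$, the kernel $\Op'_\alpha(a)$ agrees in the $(\tilde x_\alpha,\tilde x'_\alpha)$ coordinates with the standard Kohn--Nirenberg kernel of $\tilde a$. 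Since $\pa_{\tilde x_\alpha}^\beta\pa_{\tilde x'_\alpha}^{\beta'}\pa_{\xi_\alpha}^\gamma\tilde a=\rho_\alpha^{\beta+\beta'}(\pa_{x_\alpha}^\beta\pa_{x'_\alpha}^{\beta'}\pa_{\xi_\alpha}^\gamma a)$ and $\rho_{\alpha,j}\leq 1$, the standard $S^m$-seminorms of $\tilde a$ are bounded uniformly in $\alpha$ by those of $a$. The classical left-reduction theorem \cite[Theorem~18.1.17]{HormanderAnalysisPDE3} then produces a unique $\tilde a_L\in S^m(T^*\R^n)$ whose Kohn--Nirenberg kernel equals that of $\tilde a$, and setting $a_L(x_\alpha,\xi_\alpha):=\tilde a_L(\tilde x_\alpha,\xi_\alpha)$ gives~\eqref{EqSBPsdoLeftQuant}; the identities $D_{\tilde x'_\alpha}^\beta\tilde a|_{\tilde x'_\alpha=\tilde x_\alpha}=(\rho_\alpha D_{x'_\alpha})^\beta a|_{x'_\alpha=x_\alpha}$ and $(\rho_\alpha\pa_{x_\alpha})^\beta a_L=\pa_{\tilde x_\alpha}^\beta\tilde a_L$ convert the standard asymptotic expansion of $\tilde a_L$ into the one claimed for $a_L$.

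For property~\eqref{ItSBPsdoLeft1}, the integral-Taylor form of the standard remainder $\tilde q_k := \tilde a_L-\sum_{|\beta|<k}\frac{1}{\beta!}\pa_{\xi_\alpha}^\beta D_{\tilde x'_\alpha}^\beta \tilde a|_{\tilde x'_\alpha=\tilde x_\alpha}$ involves $k$-fold derivatives $\pa_{\tilde x'_\alpha}^\beta\tilde a = \rho_\alpha^\beta\,\pa_{x'_\alpha}^\beta a$ with $|\beta|=k$, contributing an explicit prefactor $\rho_\alpha^\beta\leq\bar\rho_\alpha^k$. Standard nonstationary-phase integration-by-parts arguments in the $(\tilde z,\eta)$ oscillatory integral then yield uniform standard $S^{m-k}$-bounds on $\bar\rho_\alpha^{-k}\tilde q_k$ in terms of finitely many $S^m$-seminorms of $a$. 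Translating back via $(\rho_\alpha\pa_{x_\alpha})^\beta q_k=\pa_{\tilde x_\alpha}^\beta\tilde q_k$ yields the required $\cV$-adapted $S^{m-k}$-bounds on $\bar\rho_\alpha^{-k}q_k$, uniformly in $\alpha$.

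For property~\eqref{ItSBPsdoLeft2}, the key point is that the full-symbol formula $\tilde a_L(\tilde x_\alpha,\xi_\alpha)=(2\pi)^{-n}\iint e^{-i\tilde z\cdot\eta}\tilde a(\tilde x_\alpha,\tilde x_\alpha+\tilde z,\xi_\alpha+\eta)\,\dd\tilde z\,\dd\eta$ is pseudolocal in the symbolic sense: decomposing $a = a_{\tilde\chi}+(1-\tilde\chi)a$, the reduction of $a_{\tilde\chi}$ lies in $S^{m'}$ by applying the main argument to it, while on $\supp\chi$ the factor $1-\tilde\chi(x_\alpha,\xi_\alpha)$ vanishes and nonstationary phase in $(\tilde z,\eta)$ produces arbitrary $\xi_\alpha$-decay of the reduction of $(1-\tilde\chi)a$, so $\chi a_L$ differs from $\chi(a_{\tilde\chi})_L$ by a residual term. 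The variant with $\tilde\chi(x'_\alpha,\xi_\alpha)$ is handled symmetrically. The right reduction $a_R$ is obtained by repeating the argument with the roles of $\tilde x_\alpha$ and $\tilde x'_\alpha$ interchanged, producing the sign $-\rho_\alpha D_{x_\alpha}$ in the Taylor expansion. The main obstacle will be extracting the prefactor $\bar\rho_\alpha^k$ in the remainder estimate explicitly, as this is the new ingredient (beyond the black-box left reduction) on which the Schwartz-kernel characterization in Proposition~\ref{PropSBPsdoOpRes} relies.
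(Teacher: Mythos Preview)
Your change of variables $\tilde x_\alpha^j=x_\alpha^j/\rho_{\alpha,j}$ correctly reduces the oscillatory integral to the standard one, but black-boxing H\"ormander's left reduction for $\tilde a$ does \emph{not} prove the lemma as stated. The lemma asserts $a_L\in S^m(T^*\R^n)$ and part~\eqref{ItSBPsdoLeft1} requires the \emph{standard} $S^{m-k}(T^*\R^n)$-seminorms of $\bar\rho_\alpha^{-k}q_k$ to be controlled, i.e.\ bounds on $\pa_{x_\alpha}^\beta\pa_{\xi_\alpha}^\gamma q_k$. What the black-box gives you is $\tilde a_L,\tilde q_k\in S^{m},S^{m-k}$ in the $(\tilde x_\alpha,\xi_\alpha)$ variables; via your own identity $(\rho_\alpha\pa_{x_\alpha})^\beta q_k=\pa_{\tilde x_\alpha}^\beta\tilde q_k$ this yields only bounds on $(\rho_\alpha\pa_{x_\alpha})^\beta q_k$. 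Since $\pa_{x_\alpha}^\beta=\rho_\alpha^{-\beta}(\rho_\alpha\pa_{x_\alpha})^\beta$ and $\rho_{\alpha,j}$ may tend to $0$, this is strictly weaker than what is required. Your sentence ``yields the required $\cV$-adapted $S^{m-k}$-bounds'' is precisely where the argument fails: the lemma does not ask for $\cV$-adapted bounds but for standard ones, and these are exactly what the downstream applications (mapping properties on $H_{\cV;\cW}^{(s;k)}$ for $k\geq 1$, the coordinate-change Lemma~\ref{LemmaSBPsdoCh}, the composition in Theorem~\ref{ThmSBPsdoComp}) rely on.

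The paper's proof avoids this loss by never changing to $\tilde x_\alpha$: it Taylor expands $a$ in the scaled displacement $(x'_\alpha-x_\alpha)/\rho_\alpha$, converts $((x'_\alpha-x_\alpha)/\rho_\alpha)^\beta$ into $(-D_{\xi_\alpha})^\beta$ on the exponential, and thus produces terms $\pa_{\xi_\alpha}^\beta(\rho_\alpha D_{x'_\alpha})^\beta a|_{x'_\alpha=x_\alpha}$ which manifestly retain full $\pa_{x_\alpha}$-regularity (inherited from $a$) while carrying the explicit prefactor $\rho_\alpha^\beta$; the tail is handled via Lemma~\ref{LemmaSBPsdoSKRes}, whose hypotheses~\eqref{EqSBPsdoSKRes} are again verified directly in $(x_\alpha,x'_\alpha)$. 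If you want to salvage your approach, you must supplement the black-box by the observation that left reduction commutes with the diagonal vector field $\pa_{x_\alpha}+\pa_{x'_\alpha}$ (equivalently, $\pa_{x_\alpha}^\beta a_L$ is the left reduction of $(\pa_{x_\alpha}+\pa_{x'_\alpha})^\beta a$), and then feed the stronger input bound $|\pa_{\tilde x_\alpha}^\beta\pa_{\tilde x'_\alpha}^{\beta'}\pa_{\xi_\alpha}^\gamma\tilde a|\leq\rho_\alpha^{\beta+\beta'}C\la\xi\ra^{m-|\gamma|}$ through the argument; merely knowing $\tilde a\in S^m$ is not enough.
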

\begin{proof}
  We write the Taylor expansion of $a$ around the diagonal $x'_\alpha=x_\alpha$ as
  \begin{align*}
    a(x_\alpha,x'_\alpha,\xi_\alpha) &= \sum_{|\beta|\leq k-1} \frac{1}{\beta!}((\rho_\alpha\pa_{x_\alpha'})^\beta a)(x_\alpha,x_\alpha,\xi_\alpha) \Bigl(\frac{x'_\alpha-x_\alpha}{\rho_\alpha}\Bigr)^\beta + r_k(x_\alpha,x'_\alpha,\xi_\alpha), \\
    r_k(x_\alpha,x'_\alpha,\xi_\alpha) &= \sum_{|\beta|=k} \frac{k}{\beta!} \int_0^1 (1-t)^{k-1} ((\rho_\alpha\pa_{x'_\alpha})^\beta a)(x_\alpha,x_\alpha+t(x'_\alpha-x_\alpha),\xi_\alpha) \Bigl(\frac{x'_\alpha-x_\alpha}{\rho_\alpha}\Bigr)^\beta\,\dd t.
  \end{align*}
  Here, for $z=(z^1,\ldots,z^n)\in\R^n$, we write $(z/\rho_\alpha)^\beta:=\prod_{j=1}^n (z^j/\rho_{\alpha,j})^{\beta_j}$. In the oscillatory integral~\eqref{EqSBPsdoLeftQuant}, write then $(\frac{x'_\alpha-x_\alpha}{\rho_\alpha})^\beta e^{i(x_\alpha-x'_\alpha)\cdot\xi_\alpha/\rho_\alpha}=(-D_{\xi_\alpha})^\beta e^{i(x_\alpha-x'_\alpha)\cdot\xi_\alpha/\rho_\alpha}$ and integrate by parts in $\xi_\alpha$. Let $\tilde a_L$ denote an asymptotic sum of $\frac{1}{\beta!}(\pa_{\xi_\alpha}^\beta(\rho_\alpha D_{x'_\alpha})^\beta a)|_{x'_\alpha=x_\alpha}$ (see Lemma~\ref{LemmaSBTsAsySum}). Then the difference
  \[
    \kappa_R := \Op'_\alpha(a) - \Op'_\alpha(\tilde a_L),
  \]
  a function of $(x_\alpha,x'_\alpha)$, can be written as
  \[
    \kappa_R = \Op'_\alpha(q_k) + \Op'_\alpha(r_k).
  \]
  For any fixed $\beta,\beta',\gamma,N$, the estimate~\eqref{EqSBPsdoSKRes} holds for both $\Op'_\alpha(q_k)$ and $\Op'_\alpha(r_k)$ (by the usual integration by parts arguments) when $k$ is sufficiently large (thus $q_k$ is a symbol of sufficiently negative order). Lemma~\ref{LemmaSBPsdoSKRes} thus implies that $\kappa_R=\Op'_\alpha(r)$ for a symbol $r=r(x_\alpha,\xi_\alpha)$ so that the $S^{m-k}$-seminorms of $\bar\rho_\alpha^{-k}r$ are bounded uniformly by the $S^m$-seminorms of $a$ for all $k$. Setting $a_L:=\tilde a_L+r$ completes the construction; part~\eqref{ItSBPsdoLeft1} is immediate.

  For part~\eqref{ItSBPsdoLeft2}, we only need to note that for any differential operator $D$ in $(x_\alpha,x'_\alpha,\xi_\alpha)$, one has $\chi D a_{\tilde\chi}=\chi\tilde\chi D a+\chi[D,\tilde\chi]a=\chi D a$.
\end{proof}

Finally, we record that localizations of quantizations of general symbols away from the diagonal (as measured using $d_\fB$) are residual; this allows one to localize Schwartz kernels arbitrarily closely to the diagonal.

\begin{lemma}[Off-diagonal behavior of quantizations]
\label{LemmaSBPsdoOffDiag}
  Let $\eps>0$, and let $\eta\in\CI_{{\rm uni},\fB\times\fB}(M\times M)$ be $1$ in the $\eps$-neighborhood $\{(p,q)\in M\times M\colon d_\fB(p,q)<\eps\}$ of the diagonal in $M\times M$. Let $w$ be a weight, and let $a\in w S^m({}^\cV T^*M)$. With $\Op_\cV(a)$ denoting the Schwartz kernel, we then have $(1-\eta)\Op_\cV(a)\in w\rho^\infty\Psi_\cV^{-\infty}(M)$.
\end{lemma}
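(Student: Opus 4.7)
The argument has three steps.

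\emph{Step 1 (support of the Schwartz kernel).} I first observe that $\Op_\cV(a)$ itself is localized near the diagonal in the sense of Definition~\ref{DefSBPsdoRes}. Indeed, the cutoff $\psi(x_\alpha - x'_\alpha)$ in the local quantization~\eqref{EqSBPsdoOpalpha} forces the Schwartz kernel of each summand in~\eqref{EqSBPsdo} to be supported where $|\phi_\alpha(p) - \phi_\alpha(q)|_\infty < \tfrac14$; by the compatibility of $d_\fB$ with $\fB$ (Proposition~\ref{PropbgDist}), this entails $d_\fB(p, q) \leq C_0$ for a uniform $C_0$. Consequently, the Schwartz kernel of $(1-\eta) \Op_\cV(a)$ is supported in an annular region $\{c\eps \leq d_\fB(p, q) \leq C_0\}$, where $c>0$ again comes from the compatibility of $d_\fB$ with the chart coordinates; thus $(1-\eta)\Op_\cV(a)$ is itself localized near the diagonal.

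\emph{Step 2 (pointwise estimates via integration by parts).} In chart coordinates $x_\alpha = \phi_\alpha$, a generic summand of the Schwartz kernel of $(1-\eta)\Op_\cV(a)$ is
\[
  \kappa_\alpha(x, x') = (1-\eta_\alpha)(x, x')\,(2\pi)^{-n} \int_{\R^n} e^{i\sum_j (x^j - x^{\prime j})\xi_j/\rho_{\alpha,j}} \psi(x-x')\, a_\alpha(x, \xi)\, d\xi,
\]
where $\eta_\alpha = (\phi_\alpha \times \phi_\alpha)_*\eta$ and $a_\alpha = (\phi_\alpha)_*(\chi_\alpha a)$ is uniformly bounded in $w_\alpha S^m$ (for an equivalent weight family $\{w_\alpha\}$). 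On $\supp(1-\eta_\alpha) \cap \supp \psi(\cdot - \cdot)$ the compatibility of $d_\fB$ with $\fB$ yields $|x-x'|_\infty \geq c_0 \eps$ uniformly. Using
\[
  \Bigl(\sum_{j=1}^n \Bigl(\frac{x^j - x^{\prime j}}{\rho_{\alpha,j}}\Bigr)^2\Bigr)^N e^{i\sum_j (x^j - x^{\prime j})\xi_j/\rho_{\alpha,j}} = (-\Delta_\xi)^N e^{i\sum_j (x^j - x^{\prime j})\xi_j/\rho_{\alpha,j}}
\]
and integrating by parts $N$ times, with $2N > m+n$ chosen so that $(-\Delta_\xi)^N a_\alpha \in S^{m-2N}$ is $L^1$ in $\xi$, one obtains a uniform pointwise bound
\[
  \Bigl(\sum_j \Bigl(\frac{x^j - x^{\prime j}}{\rho_{\alpha,j}}\Bigr)^2\Bigr)^N |\kappa_\alpha(x,x')| \leq C_N w_\alpha.
\]
Combined with $\sum_j ((x^j - x^{\prime j})/\rho_{\alpha,j})^2 \geq (c_0\eps/\bar\rho_\alpha)^2$, this gives $|\kappa_\alpha| \leq C'_N w_\alpha \bar\rho_\alpha^{2N}$. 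The same reasoning, retaining the factor $\bar\rho_\alpha^{2N}$ and inserting additional powers of $(1 + |(x-x')/\rho_\alpha|)^{-N}$, applies to every derivative of $\kappa_\alpha$ along $\rho_{\alpha,i}\pa_{x^i}$, $\rho_{\alpha,i}\pa_{x'^i}$, or $\pa_{x^i} + \pa_{x'^i}$: such derivatives fall on $\psi$, $(1-\eta_\alpha)$, or on $a_\alpha$ (yielding uniformly bounded symbols since $\rho_{\alpha,j} \leq 1$), and the increase in $\xi$-order is absorbed by further integrations by parts.

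\emph{Step 3 (conclusion via Lemma~\ref{LemmaSBPsdoResSK}).} Passing to the rescaled coordinates $\tilde x = S_\alpha(x)$ of \eqref{EqSBHScale}, the bounds of Step~2 show that the Schwartz kernel of each operator $R_{\alpha,\alpha'}$ in Lemma~\ref{LemmaSBPsdoResSK} is smooth and satisfies uniform decay estimates of the form $C_{\beta\beta'N,N'} \bar\rho_\alpha^N \bar\rho_{\alpha'}^N (1 + |\tilde x - \tilde x'|)^{-N'}$ for arbitrary $N, N'$ and $|\beta|, |\beta'| \leq k$; note that nonzero contributions arise only for pairs $\alpha, \alpha'$ with $U_\alpha \cap U_{\alpha'} \neq \emptyset$, in which case $\bar\rho_\alpha$ and $\bar\rho_{\alpha'}$ are comparable (by the definition of scaled b.g.\ structure). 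The Schur test then gives the required uniform bounds $R_{\alpha,\alpha'} \colon H^{(-N;k)}_{\rho_{\alpha'}}(\R^n) \to H^{(N;k)}_{\rho_\alpha}(\R^n)$, and the same applies verbatim to the formal adjoints by symmetry of the structural estimates on $\kappa_\alpha$ (or alternatively using Lemma~\ref{LemmaSBPsdoLeft} to rewrite the adjoint quantization as a left quantization of a symbol with uniform seminorms). Invoking Lemma~\ref{LemmaSBPsdoResSK} yields $(1-\eta)\Op_\cV(a) \in w\rho^\infty\Psi_\cV^{-\infty}(M)$. The principal technical point, which is also the main obstacle, is the uniform-in-$\alpha$ conversion of the coordinate-distance lower bound $|x-x'|_\infty \geq c\eps$ into arbitrary powers of $\bar\rho_\alpha$ via integration by parts, together with the bookkeeping of all derivative weights through this process.
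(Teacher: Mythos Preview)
Your proof is correct. The underlying mechanism---integration by parts in $\xi$, converting the lower bound $|x_\alpha - x'_\alpha| \gtrsim \eps$ on $\supp(1-\eta_\alpha)$ into arbitrary powers of $\bar\rho_\alpha$---agrees with the paper's, but the implementation is genuinely different. The paper first invokes Corollary~\ref{CorSBPsdoResSKMult} to reduce to a cutoff $\eta_0(x_\alpha - x'_\alpha)$ depending only on the coordinate difference, and then integrates by parts using the operator $L = |x_\alpha - x'_\alpha|^{-2}\sum_j \rho_{\alpha,j}(x_\alpha^j - x_\alpha^{\prime j})\,i^{-1}\pa_{\xi_j}$ (which satisfies $L e^{i\phi} = e^{i\phi}$); since each application replaces $a_\alpha$ by a combination of $\rho_{\alpha,j}\pa_{\xi_j}a_\alpha \in \bar\rho_\alpha S^{m-1}$ while the remaining $(x_\alpha,x'_\alpha)$-dependent prefactors are smooth functions of $x_\alpha - x'_\alpha$, the result is still a local quantization of a symbol in $\bar\rho_\alpha^N S^{m-N}$, so Proposition~\ref{PropSBPsdoOpRes} applies directly. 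You instead work with the general $\eta$, use $(-\Delta_\xi)^N$ to obtain pointwise bounds $|\kappa_\alpha| \lesssim w_\alpha |\tilde x_\alpha - \tilde x'_\alpha|^{-2N}$ together with their derivative analogues, and conclude via the Schwartz kernel criterion of Lemma~\ref{LemmaSBPsdoResSK} and a Schur estimate. The paper's route is shorter and stays within the quantization framework; yours is more hands-on about the kernel and avoids the preliminary reduction, at the cost of the extra bookkeeping in Step~3.
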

\begin{proof}
  In view of Corollary~\ref{CorSBPsdoResSKMult}, it suffices to note that if $\eta_0\in\CIc(\R^n)$ is equal to $1$ near $0$ (and is chosen to satisfy $\supp((\phi_\alpha\times\phi_\alpha)^*\eta_0)\subset\{d_\fB(p,q)<\eps\}$ for all $\alpha\in\sA$), then in the integral kernel
  \[
    \int_{\R^n} e^{i\sum_{j=1}^n(x_\alpha^j-x_\alpha^{\prime j})\xi_j/\rho_{\alpha,j}} \psi(x'_\alpha-x_\alpha) \bigl(1-\eta_0(x_\alpha-x'_\alpha)\bigr) a_\alpha(x_\alpha,\xi)\,\dd\xi
  \]
  we can act on the exponential factor with $|x_\alpha-x_\alpha'|^{-2}\sum_{j=1}^n\rho_{\alpha,j}(x_\alpha^j-x_\alpha^{\prime j})i^{-1}\pa_{\xi_j}$, and integrate by parts in $\xi$. If $a_\alpha$ is uniformly bounded in $S^m$, say, then $\rho_{\alpha,j}\pa_{\xi_j}a_\alpha(x_\alpha,\xi)$ is uniformly bounded in $\bar\rho_\alpha S^{m-1}$ for all $j=1,\ldots,n$. By iterating this argument, one can replace $a_\alpha$ by a symbol with uniform bounds in $\bar\rho_\alpha^N S^{m-N}$ for any desired $N$. Thus, Proposition~\ref{PropSBPsdoOpRes} applies and finishes the proof.
\end{proof}

\subsubsection{\texorpdfstring{$\cV$}{V}-pseudodifferential operators II: definition, symbolic properties, composition}
\label{SssSBPsdo2}

We now merge Definitions~\ref{DefSBPsdo} and \ref{DefSBPsdoRes} into the central definition of the present paper.

\begin{definition}[$\cV$-ps.d.o.s]
\label{DefSBPsdoV}
  Let $m\in\R$, and let $w\in\CI(M)$ be a weight on $(M,\fB)$. Then the space of \emph{weighted $\cV$-pseudodifferential operators (of order $m$ with weight $w$)} is defined by
  \[
    w\Psi_\cV^m(M) := \{ \Op_\cV(a) + R \colon a\in w S^m({}^\cV T^*M),\ R\in w\rho^\infty\Psi_\cV^{-\infty}(M) \}.
  \]
  In the notation of Definition~\usref{DefSBTsMicro}, the \emph{principal symbol} of $A=\Op_\cV(a)+R$ is
  \[
    \upsigma_\cV^{m,w}(A) := [a] \in w S^m({}^\cV T^*M)/w\rho S^{m-1}({}^\cV T^*M),
  \]
  and its \emph{operator wave front set} is $\WF^{\prime w}_\cV(A):=\esssupp_\fM^w a$.
\end{definition}

When the weight $w$ is clear from the context, we shall write $\WF'_\cV(A)$ instead of $\WF^{\prime w}_\cV(A)$. Directly from the definition, we have a short exact sequence
\[
  0 \to w\rho\Psi_\cV^{m-1}(M) \hra w\Psi_\cV^m(M) \xra{\upsigma_\cV^m} w S^m({}^\cV T^*M)/w\rho S^{m-1}({}^\cV T^*M) \to 0.
\]

Using Lemma~\ref{LemmaSBTsEll}, we define the \emph{elliptic set} of $A$ as $\Ell_\cV^{m,w}(A) := \Ell^{w\lambda^{-m}}_\fM(\sigma_\cV^{m,w}(A))$; and the \emph{characteristic set} as $\Char_\cV^{m,w}(A)=\fM\setminus\Ell_\cV^{m,w}(A)$.

\begin{thm}[Boundedness of $\cV$-ps.d.o.s]
\label{ThmSBPsdoH}
  Let $w,w'\in\CI(M)$ be weights on $(M,\fB)$; let $s,m\in\R$. Then every $A\in w\Psi_\cV^m(M)$ defines a bounded linear map $w' H_\cV^s(M)\to w w' H_\cV^{s-m}(M)$, and more generally $w' H_{\cV;\cW}^{(s;k)}(M)\to w w' H_{\cV;\cW}^{(s-m;k)}(M)$ for all $k\in\N_0$.
\end{thm}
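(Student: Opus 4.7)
The plan is to exploit the decomposition built into Definition \ref{DefSBPsdoV}: any $A \in w\Psi_\cV^m(M)$ can be written as $A = \Op_\cV(a) + R$ with $a \in w S^m({}^\cV T^*M)$ and $R \in w\rho^\infty\Psi_\cV^{-\infty}(M)$, and I will handle the two summands separately by invoking results already at hand. The boundedness of the quantization piece $\Op_\cV(a)$ as a map $w' H_{\cV;\cW}^{(s;k)}(M) \to w w' H_{\cV;\cW}^{(s-m;k)}(M)$ (which specializes to $w' H_\cV^s(M) \to w w' H_\cV^{s-m}(M)$ when $k=0$) is precisely the statement of Proposition \ref{PropSBPsdoH}, so no further argument is needed there.

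For the residual remainder $R$, the plan is to extract the required bound from the defining mapping property in Definition \ref{DefSBPsdoRes}. That definition provides, for every weight $\tilde w$ on $(M,\fB)$, every $N \in \R$, and every $k \in \N_0$, the boundedness
\[
  R \colon \tilde w\rho^{-N} H_{\cV;\cW}^{(-N;k)}(M) \to w\tilde w\rho^N H_{\cV;\cW}^{(N;k)}(M).
\]
Given $s,m \in \R$ and the weight $w'$ prescribed by the theorem, I will apply this with $\tilde w := w' \rho^N$ and with $N \in \N$ chosen so that $N \geq \max(-s,\, s-m)$. With this choice one has $\tilde w\rho^{-N} = w'$ and $w\tilde w\rho^N = w w'\rho^{2 N}$. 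Because the scaling weight $\rho$ is equivalent to a weight family taking values in $(0,1]$, multiplication by $\rho^{2 N}$ is a bounded inclusion between weighted Sobolev spaces; combined with the monotonicity of $H_{\cV;\cW}^{(\cdot\,;k)}$ in the first order, this gives continuous inclusions
\[
  w' H_{\cV;\cW}^{(s;k)}(M) \hookrightarrow w'H_{\cV;\cW}^{(-N;k)}(M), \qquad w w' \rho^{2 N} H_{\cV;\cW}^{(N;k)}(M) \hookrightarrow w w' H_{\cV;\cW}^{(s-m;k)}(M),
\]
so that composing with the residual bound above yields the desired boundedness of $R$ as a map $w' H_{\cV;\cW}^{(s;k)}(M) \to w w' H_{\cV;\cW}^{(s-m;k)}(M)$.

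Adding the contributions from $\Op_\cV(a)$ and $R$ then gives the theorem. There is no genuine obstacle: the substance of the statement has already been absorbed into Proposition \ref{PropSBPsdoH} (which carefully identifies the local symbol estimates needed to control each additional order of $\cW$-regularity in the target; see Remark \ref{RmkSBPsdoHReg}) and into the very definition of residual operators. The only point requiring a moment's thought is the correct choice of auxiliary weight $\tilde w$ for the residual piece, arranged so that the $\rho$-powers supplied by Definition \ref{DefSBPsdoRes} absorb the mismatch between the reference orders $\pm N$ used there and the orders $s, s-m$ appearing in the theorem.
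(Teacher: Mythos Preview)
Your proposal is correct and follows exactly the approach the paper takes: the paper's proof is the single sentence ``This is an immediate consequence of Proposition~\ref{PropSBPsdoH} and Definition~\ref{DefSBPsdoRes},'' and you have simply unpacked what ``immediate'' means for the residual piece (the choice $\tilde w = w'\rho^N$ with $N$ large). There is no gap and no substantive difference from the paper.
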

\begin{proof}
  This is an immediate consequence of Proposition~\ref{PropSBPsdoH} and Definition~\ref{DefSBPsdoRes}.
\end{proof}

\begin{rmk}[Other function spaces]
\label{RmkSBSobOther}
  Since uniform ps.d.o.s on $\R^n$ are bounded also between $L^p$-based Sobolev spaces for $p\in(1,\infty)$ as well as H\"older spaces of non-integer order, Theorem~\ref{ThmSBPsdoH} generalizes to the boundedness of $\cV$-ps.d.o.s also between analogues of such spaces for b.g.\ structures (and also to refinements of such spaces which measure additional $k\in\N_0$ degrees of $\cW$-regularity). This only requires the function spaces to contain, resp.\ be contained in, $\bigcap_{m,l} w\rho^l H_\cV^m(M)$, resp.\ $\bigcap_{m,l} w\rho^l H_\cV^m(M)$ (and the versions with additional $\cW$-regularity).
\end{rmk}

\begin{rmk}[Ps.d.o.s on vector bundles]
\label{RmkSBPsdoVB}
  When $E,F\to M$ are vector bundles of bounded geometry (see Definition~\ref{RmkSBDBundles}), one defines $w\Psi_\cV^m(M;E,F)$ as sums of matrices of quantizations of symbols of class $w S^m$ in local charts and residual operators which are smoothing (in the sense of $\cV$-regularity and $\rho$-decay) between Sobolev spaces of sections of $E,F$. We leave the details to the interested reader.
\end{rmk}

\begin{rmk}[Topology on $w\Psi_\cV^m(M)$]
\label{RmkSBPsdoTop}
  Let $\chi,\tilde\chi\in\CIc((-2,2)^n)$ be equal to $1$ on $[-1,1]^n$, with $\tilde\chi=1$ on $\supp\chi$; set $\chi_\alpha:=\frac{\phi_\alpha^*\chi}{\sum_\beta\phi_\beta^*\chi}$ and $\tilde\chi_\alpha:=\phi_\alpha^*\tilde\chi$. Given $A\in w\Psi_\cV^m(M)$, consider then the decomposition $A=\sum_\alpha \tilde\chi_\alpha A\chi_\alpha+R$ where $R=\sum_\alpha(1-\tilde\chi_\alpha)A\chi_\alpha\in w\rho^\infty\Psi_\cV^{-\infty}(M)$. As seminorms on $w\Psi_\cV^m(M)$, we then take the $w\rho^\infty\Psi_\cV^{-\infty}(M)$-seminorms of $R$ (see Remark~\ref{RmkSBPsdoResTop}) as well as the supremum over $\alpha\in\sA$ of the $S^m$-seminorms of the unique symbols $a_\alpha\in S^m((-2,2)^n;\R^n)$ so that $\tilde\chi_\alpha A\chi_\alpha$, in the chart $\phi_\alpha$, is given by $\Op'_\alpha(a_\alpha)$ where we define $\Op'_\alpha$ as in~\eqref{EqSBPsdoOpalpha} but (to enforce uniqueness) without the cutoff $\psi$.
\end{rmk}

\begin{rmk}[Relationship with $\cV'$-ps.d.o.s]
\label{RmkSBPsdoVprime}
  Recall Proposition~\ref{PropSB2}, and define $\Psi_{\cV'}^m(M)$ using Definition~\ref{DefSBPsdoV} with respect to the b.g.\ structure $\rho\fB$ and the trivial scaling (i.e.\ all scalings are $1$). (Thus, the operator and coefficient Lie algebras coincide and are equal to $\cV'$.) Then
  \[
    \Psi_\cV^m(M)\subset\Psi_{\cV'}^m(M);
  \]
  similarly for spaces of weighted ps.d.o.s. We stress that the validity of this inclusion rests in particular on the fact that we do not require residual $\cV'$-ps.d.o.s to be localized near the diagonal (as measured using a metric $d_{\rho\fB}$ as given in Proposition~\ref{PropbgDist}): after all, when $\inf_{\alpha,i}\rho_{\alpha,i}=0$, Schwartz kernels of $\cV$-quantizations are only $d_\fB$-, but \emph{not} $d_{\rho\fB}$-localized near the diagonal.
\end{rmk}

We proceed to develop further basic properties of $w\Psi_\cV^m(M)$:
\begin{itemize}
\item $w\Psi_\cV^m(M)$ only depends on $\cV$ and $m,w$;
\item $\bigcup_{m,w} w\Psi_\cV^m(M)$ is an algebra, and the principal symbol map is multiplicative.
\end{itemize}

Definition~\ref{DefSBPsdoRes} ensures that the space $w\rho^\infty\Psi_\cV^{-\infty}(M)$ is unchanged when passing from $\fB_\times$ to a compatible scaled b.g.\ structure $\tilde\fB_\times$. We proceed to justify the notation $w\Psi_\cV^m(M)$ (and also of $\upsigma_\cV^{m,w}(A)$ and $\WF'_\cV(A)$) by showing that $w\Psi_\cV^m(M)$ is unchanged when using $\tilde\fB_\times$ instead of $\fB_\times$ in its definition. In view of Proposition~\ref{PropSBCompUniq}, it suffices to study the (uniform) effect of coordinate changes on quantizations $\Op_\alpha(a_\alpha)$ in local charts where $a_\alpha\in S^m(T^*\R^n)$ has support in $(-\frac54,\frac54)^n\times\R^n$. Moreover, by Lemma~\ref{LemmaSBPsdoOffDiag}, we may furthermore localize the Schwartz kernels of such local quantizations to a neighborhood $|x_\alpha-x'_\alpha|<\eps$ for any fixed $\alpha$-independent $\eps>0$. Using a uniform partition of unity subordinate to the b.g.\ structure $\{(\tilde U_{\tilde\alpha},\tilde\phi_{\tilde\alpha})\}$ underlying $\tilde\fB_\times$, with the supports of the cutoff functions contained in $\tilde\phi_{\tilde\alpha}^{-1}((-\frac32,\frac32)^n)$, we may furthermore localize $a_\alpha=a_\alpha(x,\xi)$ in $x$ to have $x$-support in $(-\frac32,\frac32)^n\cap(\phi_\alpha\circ\tilde\phi_{\tilde\alpha}^{-1}((-\frac32,\frac32)^n))$, and thus (using the near-diagonal localization) the Schwartz kernel of $\Op_\alpha(a_\alpha)$ to have support in $[(-\frac74,\frac74)^n\cap(\phi_\alpha\circ\tilde\phi_{\tilde\alpha}^{-1}((-\frac74,\frac74)^n))]^2$. We now describe the symbol $\tilde a_{\tilde\alpha}$ for which $\Op_\alpha(a_\alpha)=\Op_{\tilde\alpha}(\tilde a_{\tilde\alpha})$; for simplicity of notation, the roles of $a_\alpha,\alpha,\tilde\alpha$ will be played by $a,\alpha,\beta$, and we consider the coordinate change $\tau_{\alpha\beta}$ between $\phi_\alpha$ and $\phi_\beta$.

\begin{lemma}[Coordinate change]
\label{LemmaSBPsdoCh}
  Fix $C=\sup_{\alpha,\beta\in\sA}\sum_{j=1}^n\|\tau_{\beta\alpha}^j\|_{\cC^1}$. Fix $0<\eps<(4 C)^{-1}$ and $0<l<2-2\eps$. Let $\alpha,\beta\in\sA$, and suppose $a\in S^m(T^*\R^n)$ has support in $((-l+2\eps,l-2\eps)^n\cap\tau_{\alpha\beta}((-l+2 C\eps,l-2 C\eps)^n))\times\R^n$ For $\delta>0$, let $\eta_\delta\in\CIc((-2\delta,2\delta)^n)$ be equal to $1$ on $[-\delta,\delta]^n$. Define the operator $A_\alpha$ to have Schwartz kernel
  \begin{align*}
    &K_{A_\alpha}(x_\alpha,x'_\alpha) := \kappa_{A_\alpha}(x_\alpha,x'_\alpha)\frac{|\dd x^{\prime 1}_\alpha\cdots\dd x^{\prime n}_\alpha|}{\rho_{\alpha,1}\cdots\rho_{\alpha,n}},  \\
    &\qquad \kappa_{A_\alpha}(x_\alpha,x'_\alpha) := (2\pi)^{-n} \int_{\R^n} e^{i\sum_{j=1}^n (x_\alpha^j-x_\alpha^{\prime j})\xi_{\alpha,j}/\rho_{\alpha,j}} \eta_\eps(x'_\alpha-x_\alpha) a(x_\alpha,\xi_\alpha)\,\dd\xi_\alpha.
  \end{align*}
  Define the operator $A_\beta$ by $A_\beta u:=\tau_{\alpha\beta}^*(A_\alpha(\tau_{\beta\alpha}^*u))$ (whose Schwartz kernel $K_{A_\beta}$ is supported in $(-l+2 C\eps,l-2 C\eps)^n\times(-l,l)^n$). Then we can write its Schwartz kernel as $K_{A_\beta}(x_\beta,x'_\beta)=\kappa_{A_\beta}(x_\beta,x'_\beta)\frac{|\dd x^{\prime 1}_\beta\cdots\dd x^{\prime n}_\alpha|}{\rho_{\beta,1}\cdots\rho_{\beta,n}}$ where
  \[
    \kappa_{A_\beta}(x_\beta,x'_\beta) = (2\pi)\int_{\R^n} e^{i\sum_{j=1}^n(x_\beta^j-x_\beta^{\prime j})\xi_{\beta,j}/\rho_{\beta,j}}\eta_{2 C\eps}(x'_\beta-x_\beta)b(x_\beta,\xi_\beta)\,\dd\xi_\beta
  \]
  for a symbol $b\in S^m(T^*\R^n)$ with support in $(-l+2 C\eps,l-2 C\eps)^n\times\R^n$ which has the following properties:
  \begin{enumerate}
  \item\label{ItSBPsdoChTransf} the $S^{m-1}(T^*\R^n)$-seminorms of $\bar\rho_\alpha^{-1}(b-(\tau_{\beta\alpha})_*a)$ are uniformly bounded by those of $a$;
  \item\label{ItSBPsdoChEsssupp} if $\chi\in S^0(T^*\R^n)$ is such that $\chi a\in S^{m'}$, then $((\tau_{\beta\alpha})_*\chi)b\in S^{m'}$, with seminorms bounded uniformly by those of $\chi a$.
  \end{enumerate}
\end{lemma}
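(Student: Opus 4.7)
The plan is to rewrite $\kappa_{A_\beta}$ as an $\Op'_\beta$-oscillatory integral of an explicit double symbol and then invoke Lemma~\usref{LemmaSBPsdoLeft} to reduce it to a left symbol $b=b(x_\beta,\xi_\beta)$. First, the substitution $x'_\alpha=\tau_{\alpha\beta}(x'_\beta)$ in the Schwartz kernel of $A_\alpha$ yields
\[
  \kappa_{A_\beta}(x_\beta,x'_\beta)=\kappa_{A_\alpha}\bigl(\tau_{\alpha\beta}(x_\beta),\tau_{\alpha\beta}(x'_\beta)\bigr)\,|\det d\tau_{\alpha\beta}(x'_\beta)|\,\frac{\rho_{\beta,1}\cdots\rho_{\beta,n}}{\rho_{\alpha,1}\cdots\rho_{\alpha,n}}.
\]
Using the Taylor identity $\tau_{\alpha\beta}^j(x_\beta)-\tau_{\alpha\beta}^j(x'_\beta)=\sum_k(x_\beta^k-x_\beta^{\prime k})T_{j,k}$ with $T_{j,k}(x_\beta,x'_\beta)=\int_0^1\pa_k\tau_{\alpha\beta}^j(x'_\beta+t(x_\beta-x'_\beta))\,dt$, the original phase rewrites as $\sum_k(x_\beta^k-x_\beta^{\prime k})(M\xi_\alpha)_k/\rho_{\beta,k}$, where
\[
  M_{k,j}(x_\beta,x'_\beta):=\rho_{\beta,k}T_{j,k}(x_\beta,x'_\beta)/\rho_{\alpha,j}.
\]
The diagonal value $M(x_\beta,x_\beta)$ is precisely the $\cV$-cotangent transition matrix from~\eqref{EqSBDSymbXiCoord}, and the scaled compatibility~\eqref{EqISB} applied to both $\tau_{\alpha\beta}$ and $\tau_{\beta\alpha}$ guarantees that $M^{\pm 1}$ are smooth with uniformly bounded derivatives of all orders on $\{|x'_\beta-x_\beta|<2C\eps\}$ once $\eps<(4C)^{-1}$.

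Changing the fiber variable via $\xi_\beta=M(x_\beta,x'_\beta)\xi_\alpha$ then produces
\[
  \kappa_{A_\beta}(x_\beta,x'_\beta)=(2\pi)^{-n}\int_{\R^n} e^{i\sum_k (x_\beta^k-x_\beta^{\prime k})\xi_{\beta,k}/\rho_{\beta,k}}\,\tilde b(x_\beta,x'_\beta,\xi_\beta)\,d\xi_\beta,
\]
with
\[
  \tilde b(x_\beta,x'_\beta,\xi_\beta):=a\bigl(\tau_{\alpha\beta}(x_\beta),M(x_\beta,x'_\beta)^{-1}\xi_\beta\bigr)\,\eta_\eps\bigl(\tau_{\alpha\beta}(x'_\beta)-\tau_{\alpha\beta}(x_\beta)\bigr)\,J(x_\beta,x'_\beta),
\]
where $J$ is the smooth Jacobian factor combining $|\det d\tau_{\alpha\beta}|$, the scalings, and $|\det M|^{-1}$, and a direct computation gives $J(x_\beta,x_\beta)\equiv 1$. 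The uniform bounds on $M^{\pm 1}$ and $J$ yield $\tilde b\in S^m(\R^n\times\R^n;\R^n)$ uniformly, with seminorms bounded by those of $a$, and restricting to the diagonal one reads off $\tilde b(x_\beta,x_\beta,\xi_\beta)=((\tau_{\beta\alpha})_*a)(x_\beta,\xi_\beta)$ in view of~\eqref{EqSBDSymbXiCoord}.

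Applying Lemma~\usref{LemmaSBPsdoLeft} with $\rho_\beta$ in place of $\rho_\alpha$ now produces a unique $b\in S^m(T^*\R^n)$ with $\Op'_\beta(\tilde b)=\Op'_\beta(b)$ and uniform $S^m$-seminorms, and part~\usref{ItSBPsdoLeft1} (with $k=1$) gives $b-(\tau_{\beta\alpha})_*a\in\bar\rho_\beta S^{m-1}$ uniformly; combined with $\bar\rho_\alpha\sim\bar\rho_\beta$ on $U_\alpha\cap U_\beta$, this proves part~\usref{ItSBPsdoChTransf}. The $x_\beta$-support of $b$ lies in $(-l+2C\eps,l-2C\eps)^n$ because it is inherited from that of $\tilde b$, which follows from $\supp a\subset\tau_{\alpha\beta}((-l+2C\eps,l-2C\eps)^n)\times\R^n$. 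Since the Schwartz kernel of $\Op'_\beta(b)=\Op'_\beta(\tilde b)$ is already supported in $\{|x'_\beta-x_\beta|_\infty<2C\eps\}$, where $\eta_{2C\eps}(x'_\beta-x_\beta)\equiv 1$, multiplying the oscillatory integral by this cutoff leaves it unchanged as a distribution, yielding the claimed representation. For part~\usref{ItSBPsdoChEsssupp}, one picks $\tilde\chi\in S^0$ equal to $1$ near $\supp((\tau_{\beta\alpha})_*\chi)$ and, enlarging $\chi$ slightly in the $(x_\alpha,\xi_\alpha)$-picture to absorb the $\cO(|x'_\beta-x_\beta|)$-discrepancy between $M(x_\beta,x_\beta)^{-1}$ and $M(x_\beta,x'_\beta)^{-1}$ on the cutoff's support, transfers the $S^{m'}$-bound on $\chi a$ to a uniform $S^{m'}$-bound on $\tilde\chi(x_\beta,\xi_\beta)\tilde b(x_\beta,x'_\beta,\xi_\beta)$; Lemma~\usref{LemmaSBPsdoLeft}\usref{ItSBPsdoLeft2} then yields $((\tau_{\beta\alpha})_*\chi)b\in S^{m'}$ uniformly.

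The main technical obstacle is precisely the tracking of the $\rho$-scalings through the cocycle $M$: the full strength of~\eqref{EqISB} (uniform bounds on derivatives of all orders, not merely the zeroth-order content) is essential to guarantee that $\tilde b$ has uniform $S^m$-seminorms independent of $\alpha,\beta$. Once this is secured, the left-reduction machinery is insensitive to the specific scalings entering the phase and applies verbatim, and all remaining bookkeeping is routine.
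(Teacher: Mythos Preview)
Your approach is essentially identical to the paper's: rewrite the kernel via the Kuranishi trick using the matrix $M$ (the paper calls it $\Phi$), change fiber variables, and apply the left-reduction lemma. The diagonal identification $\tilde b|_{x'_\beta=x_\beta}=(\tau_{\beta\alpha})_*a$ and the use of Lemma~\ref{LemmaSBPsdoLeft}\eqref{ItSBPsdoLeft1} for part~\eqref{ItSBPsdoChTransf} are exactly as in the paper.

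There is one genuine gap. You assert that $M^{\pm 1}$ are uniformly smooth on the whole region $\{|x'_\beta-x_\beta|<2C\eps\}$ ``once $\eps<(4C)^{-1}$''. Invertibility of $M$ near the diagonal requires the perturbation $M(x_\beta,x'_\beta)-M(x_\beta,x_\beta)$ to be small relative to $\|M(x_\beta,x_\beta)^{-1}\|^{-1}$; the latter is controlled by the constants $C_\gamma$ in~\eqref{EqISB} (applied to $\tau_{\beta\alpha}$), not by the $\cC^1$-constant $C$ of the lemma. So the specific threshold $\eps<(4C)^{-1}$ does not by itself guarantee that $M$ is invertible throughout the support of the kernel. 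The paper handles this by introducing a \emph{separate} uniform constant $\delta_0>0$ (depending on the constants in~\eqref{EqISB}) such that $\Phi$ is invertible on $\{|x_\beta-x'_\beta|<4\delta_0\}$, multiplying by an additional cutoff $\eta_{2\delta_0}(x'_\beta-x_\beta)$, and showing that the complementary piece $(1-\eta_{2\delta_0})\kappa_{A_\beta}$ is residual via Lemma~\ref{LemmaSBPsdoSKRes}. Your argument is correct whenever $2C\eps<4\delta_0$, but as written it does not cover the full range of $\eps$ allowed by the lemma; inserting the additional localization (or simply noting it) closes the gap.
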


Part~\eqref{ItSBPsdoChTransf} explains why the principal symbol of an element of $\Psi_\cV^m(M)$ is well-defined in $S^m/\rho S^{m-1}({}^\cV T^*M)$; part~\eqref{ItSBPsdoChEsssupp}, applied to $\chi$ which are the pushforward to $U_\alpha$ of an element of $S^0({}^\cV T^*M)$, is the reason for the well-definedness of the essential support of $\cV$-ps.d.o.s.

\begin{proof}[Proof of Lemma~\usref{LemmaSBPsdoCh}]
  The Schwartz kernel of $A_\beta$ is equal to
  \[
    K_{A_\beta}(x_\beta,x'_\beta) = \kappa_{A_\alpha}\bigl(\tau_{\alpha\beta}(x_\beta),\tau_{\alpha\beta}(x'_\beta)\bigr)\frac{|\dd(\tau_{\alpha\beta}^1(x'_\beta))\cdots\dd(\tau_{\alpha\beta}^n(x'_\beta))|}{\rho_{\alpha,1}\cdots\rho_{\alpha,n}}.
  \]
  The density factor is equal to
  \[
    \frac{\rho_{\beta,1}\cdots\rho_{\beta,n}}{\rho_{\alpha,1}\cdots\rho_{\alpha,n}} \bigl|\det \bigl(\pa_i\tau_{\alpha\beta}^j(x'_\beta))_{i,j=1,\ldots,n}\bigr)\bigr| \frac{|\dd x_\beta^{\prime 1}\cdots\dd x_\beta^{\prime n}|}{\rho_{\beta,1}\cdots\rho_{\beta,n}} = \biggl|\det\biggl(\frac{\rho_{\beta,i}\pa_i\tau_{\alpha\beta}^j(x'_\beta)}{\rho_{\alpha,j}}\biggr)\biggr|\frac{|\dd x_\beta^{\prime 1}\cdots\dd x_\beta^{\prime n}|}{\rho_{\beta,1}\cdots\rho_{\beta,n}},
  \]
  with the determinant factor being uniformly bounded in $\CI$ in view of~\eqref{EqISB}. In the oscillatory integral $\kappa_{A_\alpha}$ on the other hand, we write (using the `Kuranishi trick' \cite[\S{2.1}]{HormanderFIO1})
  \begin{align*}
    &\sum_{j=1}^n \bigl(\tau_{\alpha\beta}^j(x_\beta)-\tau_{\alpha\beta}^j(x'_\beta)\bigr)\frac{\xi_{\alpha,j}}{\rho_{\alpha,j}} = \sum_{i,j=1}^n (x_\beta^i-x_\beta^{\prime i})\frac{\Phi_i^j(x_\beta,x'_\beta)\xi_{\alpha,j}}{\rho_{\beta,i}}, \\
    &\hspace{8em} \Phi_i^j(x_\beta,x'_\beta)=\int_0^1\frac{\rho_{\beta,i}\pa_i\tau_{\alpha\beta}^j(x'_\beta+s(x_\beta-x'_\beta))}{\rho_{\alpha,j}}\,\dd s.
  \end{align*}
  In view of~\eqref{EqISB}, there exists $\delta_0>0$ (independent of $\alpha,\beta$) so that the matrix $\Phi(x_\beta,x'_\beta):=(\Phi_i^j(x_\beta,x'_\beta))_{i,j=1,\ldots,n}$ is invertible (and with $|\det\Phi(x_\beta,x'_\beta)|$ bounded above and below by a uniform constant times $|\det\Phi(x_\beta,x_\beta)|$) whenever $|x_\beta-x'_\beta|<4\delta_0$. For the localization $\kappa_{A_\beta,2\delta_0}(x_\beta,x'_\beta):=\eta_{2\delta_0}(x'_\beta-x_\beta)\cdot\kappa_{A_\beta}(x_\beta,x'_\beta)$, we can then perform the change coordinates $\xi_\alpha=\Phi(x_\beta,x'_\beta)^{-1}\theta_\beta$. The left reduction (see Lemma~\ref{LemmaSBPsdoLeft} below) applied to the symbol
  \[
    \eta_\eps(\tau_{\alpha\beta}(x_\beta')-\tau_{\alpha\beta}(x_\beta)) \eta_{2\delta_0}(x'_\beta-x_\beta) a\bigl(\tau_{\alpha\beta}(x_\beta), \Phi(x_\beta,x'_\beta)^{-1}\theta_\beta\bigr) J(x_\beta,x'_\beta),
  \]
  where $J(x_\beta,x'_\beta)=|\det(\frac{\rho_{\beta,i}\pa_i\tau_{\alpha\beta}^j(x'_\beta)}{\rho_{\alpha,j}})|\cdot|\det\Phi(x_\beta,x'_\beta)|^{-1}$ equals $1$ at $x'_\beta=x_\beta$, produces the desired expression for $\kappa_{A_\beta,2\delta_0}(x_\beta,x'_\beta)$ and the statements~\eqref{ItSBPsdoChTransf}--\eqref{ItSBPsdoChEsssupp}. Note also that $\sum_{j=1}^n\Phi_i^j(x_\beta,x_\beta)\xi_{\alpha,j}=\xi_{\beta,i}$ according to~\eqref{EqSBDSymbXiCoord} (with $\alpha,\beta$ exchanged). The operator $(1-\eta_{2\delta_0})\kappa_{A_\beta}$ is residual, and thus also of the stated form by Lemma~\ref{LemmaSBPsdoSKRes} (and it does not affect the validity of~\eqref{ItSBPsdoChTransf}--\eqref{ItSBPsdoChEsssupp}).
\end{proof}

Passing back to the notation from the discussion preceding Lemma~\ref{LemmaSBPsdoCh}, we have now expressed $\Op_\alpha(a_\alpha)=\Op_{\tilde\alpha}(\tilde a_{\tilde\alpha})$ with full control on $\tilde a_{\tilde\alpha}$. The following result completes the proof that $\sum_{\tilde\alpha\in\tilde\sA}\Op_{\tilde\alpha}(\tilde a_{\tilde\alpha})$ is of the form $\widetilde{\Op}_\cV(\tilde a)+\tilde R$ where we write $\widetilde{\Op}_\cV(\tilde a)$ for the quantization map relative to $\tilde\fB_\times$, and where $\tilde R\in w\rho^\infty\Psi_\cV^{-\infty}(M)$. Again, we work with $\fB_\times$ and drop tildes for simplicity of notation.

\begin{prop}[Sums of local quantizations]
\label{PropSBPsdoSum}
  Let $w\in\CI(M)$ be a weight and $\{w_\alpha\}$ an equivalent weight family. For each $\alpha\in\sA$, suppose we are given $a_\alpha\in S^m(T^*\R^n)$ with support in $(-\frac32,\frac32)^n\times\R^n$, and suppose that all seminorms of $w_\alpha^{-1}a_\alpha$ in $S^m(T^*\R^n)$ are uniformly bounded. Then there exist $a\in w S^m({}^\cV T^*M)$ and $R\in w\rho^\infty\Psi_\cV^{-\infty}(M)$ so that
  \begin{equation}
  \label{EqSBPsdoSumOp}
    \sum_{\alpha\in\sA}\phi_\alpha^*\Op_\alpha(a_\alpha)=\Op_\cV(a)+R.
  \end{equation}
  Setting $a_0:=\sum_{\alpha\in\sA} \phi_\alpha^*a_\alpha\in S^m({}^\cV T^*M)$, where $\phi_\alpha\colon{}^\cV T^*_{U_\alpha}M\to(-2,2)^n\times\R^n$ (analogously to the notation used in Definition~\usref{DefSBDSymb}), we moreover have
  \begin{equation}
  \label{EqSBPsdoSumSym}
    a - a_0 \in w\rho S^{m-1}({}^\cV T^*M),\qquad
    \Ell_\fM^w(a)=\Ell_\fM^w(a_0).
  \end{equation}
  Finally, if $\chi\in S^0({}^\cV T^*M)$ is such that $((\phi_\alpha)_*\chi)a_\alpha$ is uniformly bounded in $S^{-\infty}(T^*\R^n)$, then $\esssupp_\fM^w(a)\cap\{\chi\neq 0\}=\emptyset$.
\end{prop}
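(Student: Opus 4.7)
The plan is to establish the stronger assertion $\chi a\in w\rho^\infty S^{-\infty}({}^\cV T^*M)$; once this is in hand, for every $\varpi\in\{\chi\neq 0\}\cap\fM$ the cutoff $\chi$ itself (an element of $S^0$ nonvanishing at $\varpi$) witnesses $\varpi\notin\esssupp_\fM^w(a)$ via Definition~\ref{DefSBTsMicro}. Concretely, I plan to verify in each chart $\phi_\beta$ that $((\phi_\beta)_*\chi)\cdot(\phi_\beta)_*a$ is uniformly bounded in $S^{-\infty}(T^*\R^n)$ with bounds scaled appropriately by $w_\beta$ and $\bar\rho_\beta^N$ for all $N$.

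The construction of $a$ in the earlier parts of the proof assembles $(\phi_\beta)_*a$ from the $a_\alpha$'s by two basic operations. First, for each $\alpha$ with $U_\alpha\cap U_\beta\neq\emptyset$ (a uniformly finite collection, by the covering property), the Schwartz kernel of $\phi_\alpha^*\Op_\alpha(a_\alpha)$---after near-diagonal localization via Lemma~\ref{LemmaSBPsdoOffDiag}, the off-diagonal part being residual and absorbed into $R$---is re-expressed in the chart $\phi_\beta$ via Lemma~\ref{LemmaSBPsdoCh} as $\phi_\beta^*\Op_\beta(b_{\alpha,\beta})$ for a left-reduced symbol $b_{\alpha,\beta}$ equal to $(\tau_{\beta\alpha})_*a_\alpha$ modulo $\bar\rho_\beta S^{m-1}$. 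Second, the per-$\beta$ finite sums $\tilde a_\beta:=\sum_\alpha b_{\alpha,\beta}$ are combined into a global symbol $a$ modulo residuals by an asymptotic summation (Lemma~\ref{LemmaSBTsAsySum}), with the partition of unity $\{\chi_\beta\}$ providing the patching. Each of these steps carries a functorial clause preserving vanishing essential support. Since $\chi$ is a genuine function on ${}^\cV T^*M$, its local representatives transform by $(\tau_{\beta\alpha})_*(\phi_\alpha)_*\chi=(\phi_\beta)_*\chi$ on overlaps; applying Lemma~\ref{LemmaSBPsdoCh}\eqref{ItSBPsdoChEsssupp} with $\chi_0:=(\phi_\alpha)_*\chi$ transfers uniform $S^{-\infty}$-boundedness of $((\phi_\alpha)_*\chi)a_\alpha$ to $((\phi_\beta)_*\chi)b_{\alpha,\beta}$. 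The asymptotic summation preserves vanishing essential support by the final clause of Lemma~\ref{LemmaSBTsAsySum}. Summing over $\alpha$ (finite sum per $\beta$), one concludes that $((\phi_\beta)_*\chi)\cdot(\phi_\beta)_*a$ has uniform $S^{-\infty}$ bounds in each chart.

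The main obstacle lies in reconciling the scalings: the hypothesis provides an unweighted uniform $S^{-\infty}$ bound on $((\phi_\alpha)_*\chi)a_\alpha$, while the target space $w\rho^\infty S^{-\infty}$ requires bounds carrying the prefactor $w_\beta$ and gaining arbitrary positive powers of $\bar\rho_\beta$. I expect this to be resolved by tracking the linear dependence of each construction step on the input $a_\alpha$: since $a_\alpha$ is normalized by $w_\alpha$, the $b_{\alpha,\beta}$ inherit $w_\alpha$-scaling, and the equivalence of $w_\alpha$ with $w_\beta$ on chart overlaps (Definition~\ref{DefSBDWeights}) converts between them uniformly. The $\bar\rho$-gains arise from the $\bar\rho_\beta$-improvement in the error term of Lemma~\ref{LemmaSBPsdoCh}\eqref{ItSBPsdoChTransf} and, in the asymptotic summation, from the extra vanishing factor $\psi(\rho\lambda/\eps_j)$ in the proof of Lemma~\ref{LemmaSBTsAsySum}, which may be arranged to produce arbitrarily high powers of $\bar\rho_\beta$ in residual tails. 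Combining these scalings with the uniform $S^{-\infty}$-bound propagated in the previous paragraph yields the desired $w_\beta\bar\rho_\beta^N$-weighted uniform $S^{-\infty}$ bound on $((\phi_\beta)_*\chi)(\phi_\beta)_*a$ for all $N$, that is, $\chi a\in w\rho^\infty S^{-\infty}({}^\cV T^*M)$.
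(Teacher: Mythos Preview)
Your approach---propagate the $S^{-\infty}$ bound on $((\phi_\alpha)_*\chi)a_\alpha$ through coordinate changes via Lemma~\ref{LemmaSBPsdoCh}\eqref{ItSBPsdoChEsssupp} and then through the asymptotic sum via the final clause of Lemma~\ref{LemmaSBTsAsySum}---matches the paper's. Your description of how $a$ is assembled is garbled, though: the paper constructs $a$ as an asymptotic sum $a\sim a_0+a_1+\cdots$ of an \emph{iterated} sequence of global correction symbols (each $a_{k+1}$ arising by re-applying the coordinate-change step to the previous error), and Lemma~\ref{LemmaSBTsAsySum} is applied to that sequence, not to the per-chart pieces $\tilde a_\beta$ you describe.

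The genuine gap is in your last paragraph. You correctly flag the mismatch between the unweighted $S^{-\infty}$ hypothesis and the target $w\rho^\infty S^{-\infty}$, but neither of your proposed sources of $\bar\rho$-decay closes it. The $\bar\rho_\beta$-gain in Lemma~\ref{LemmaSBPsdoCh}\eqref{ItSBPsdoChTransf} supplies one power per iteration, so $a_k\in w\rho^k S^{m-k}$---but for $k=0$ that is no gain at all, and the leading term $a_0=\sum_\alpha\phi_\alpha^*a_\alpha$ inherits from the hypothesis only a uniform \emph{unweighted} $S^{-\infty}$ bound on $((\phi_\beta)_*\chi)(\phi_\beta)_*a_0$. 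The cutoff $\psi(\rho\lambda/\eps_j)$ in the proof of Lemma~\ref{LemmaSBTsAsySum} equals $1$ on $\{\rho\lambda\le\eps_j\}$; it \emph{localizes} to small $\rho\lambda$ rather than vanishing there, so it produces no $\rho$-decay either. In fact, under the literal hypothesis the conclusion can fail: with $w=1$, $\chi\equiv 1$, $\bar\rho_\alpha\to 0$, and $a_\alpha(x,\xi)=\psi_0(x)\psi_0(\xi)$ for a fixed nonnegative bump $\psi_0$, the local representative $(\phi_\beta)_*a_0$ is uniformly bounded below near the origin, so $a_0\notin\rho^\infty S^{-\infty}$; since $a-a_0\in\rho S^{m-1}$, the same lower bound persists for $a$, whence $\esssupp_\fM(a)\neq\emptyset$. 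The paper's one-line treatment glosses over the same point; what is actually available in every application (coordinate-invariance of $\WF'_\cV$ and the proof of~\eqref{EqSBPsdoCompWF}) is the stronger hypothesis that $((\phi_\alpha)_*\chi)a_\alpha$ be uniformly bounded in $w_\alpha\bar\rho_\alpha^N S^{-N}$ for every $N$, and under that hypothesis both your sketch and the paper's argument go through directly.
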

\begin{proof}
  Fix a uniform partition of unity $\{\chi_\alpha\}$ as in Definition~\ref{DefSBPsdo}. Then
  \begin{align*}
    &\biggl(\sum_\alpha \phi_\alpha^*\Op_\alpha(a_\alpha)\biggr) - \Op_\cV(a_0) \\
    &\qquad = \sum_{\alpha,\beta} \biggl( \phi_\alpha^*\Op_\alpha\Bigl(\bigl((\phi_\alpha)_*\chi_\beta\bigr)a_\alpha\Bigr) - \phi_\beta^*\Op_\beta\Bigl(\bigl((\phi_\beta)_*\chi_\beta\bigr)(\tau_{\beta\alpha})_*a_\alpha\Bigr) \biggr).
  \end{align*}
  Using Lemma~\ref{LemmaSBPsdoCh}, we can write
  \[
    \phi_\alpha^*\Op_\alpha\Bigl(\bigl((\phi_\alpha)_*\chi_\beta\bigr)a_\alpha\Bigr) = \phi_\beta^*\Op_\beta\Bigl((\tau_{\beta\alpha})_*\bigl((\phi_\alpha)_*\chi_\beta\bigr)a_\alpha + r_{\beta\alpha}\Bigr) + Q_{\beta\alpha}
  \]
  where $r_{\beta\alpha}$ is uniformly bounded in $w_\beta\rho_\beta S^{m-1}$, moreover $r_{\beta\alpha}$ and $Q_{\beta\alpha}$ (the latter capturing residual error terms from localizing to a neighborhood of the diagonal) are nonzero only when $U_\beta\cap U_\alpha\neq\emptyset$, and $Q_1:=\sum_{\alpha,\beta} Q_{\beta\alpha}\in w\rho^\infty\Psi_\cV^{-\infty}(M)$. Since $(\tau_{\beta\alpha})_*((\phi_\alpha)_*\chi_\beta)a_\alpha=((\phi_\beta)_*\chi_\beta)(\tau_{\beta\alpha})_*a_\alpha$, we conclude that
  \[
    \biggl(\sum_\alpha \phi_\alpha^*\Op_\alpha(a_\alpha)\biggr) - \Op_\cV(a_0) = \sum_\beta \phi_\beta^*\Op_\beta(a_{1,\beta}) + Q_1
  \]
  where $a_{1,\beta}=\sum_\alpha r_{\beta\alpha}$ is uniformly bounded in $w_\beta\rho_\beta S^{m-1}$. Therefore, the collection $\{a_{1,\beta}\}$ satisfies the same hypotheses as $\{a_\alpha\}$ except with $m$ shifted by $-1$ and $w_\alpha$ replaced by $w_\alpha\bar\rho_\alpha$.

  Proceeding iteratively, we thus obtain symbols $a_j=\sum_\beta\phi_\beta^*a_{j,\beta}\in w\rho^j S^{m-j}({}^\cV T^*M)$ and residual operators $Q_j\in w\rho^\infty\Psi_\cV^{-\infty}(M)$ so that for $k\geq 1$,
  \[
    \sum_\alpha \phi_\alpha^*\Op_\alpha(a_\alpha) - \Op_\cV(a_0+\cdots+a_{k-1}) = \sum_\beta \phi_\beta^*\Op_\beta(a_{k,\beta}) + Q_k,
  \]
  with $a_{k,\beta}$ uniformly bounded in $w_\beta\bar\rho_\beta^k S^{m-k}$. Moreover, the Schwartz kernels of each $Q_{(j)}$ are supported in a fixed neighborhood (with respect to $d_\fB$) of the diagonal in $M\times M$. In view of part~\eqref{ItSBPsdoChEsssupp} of Lemma~\ref{LemmaSBPsdoCh}, the essential support of each $\sum_\beta\phi_\beta^*a_{k,\beta}$ satisfies the same condition as that of $a$ in the statement of the proposition.

  Let now $a\in w S^m({}^\cV T^*M)$ be an asymptotic sum of the $a_i$, $i\in\N_0$. Then
  \begin{equation}
  \label{EqSBPsdoSumR}
  \begin{split}
    R &:= \sum_\alpha \phi_\alpha^*\Op_\alpha(a_\alpha) - \Op_\cV(a) \\
      &\,= \Op_\cV(r_k) + \sum_\beta \phi_\beta^*\Op_\beta(a_{k,\beta}) + Q_{(k)},
  \end{split}
  \end{equation}
  with $r_k:=(a_0+\cdots+a_{k-1})-a\in w\rho^k S^{m-k}({}^\cV T^*M)$. By (the proof of) Proposition~\ref{PropSBPsdoH}, the operator $R$ thus defines a bounded map from $w'H_{\cV;\cW}^{(-N;j)}(M)$ to $w w'\rho^k H_{\cV;\cW}^{(-N-m+k;j)}(M)$ for all $w',N,j,k$, and likewise for $R^*$. Therefore, $R$ is residual, and the proof is complete.
\end{proof}

\bigskip

We next turn to algebra properties of $\Psi_\cV(M)$.

\begin{thm}[Adjoints, compositions, and commutators of $\cV$-ps.d.o.s]
\label{ThmSBPsdoComp}
  Let $w,w_1,w_2\in\CI(M)$ be weights on $(M,\fB)$. Let $m,m_1,m_2,l,l_1,l_2\in\R$. Fix a uniformly positive $\cV$-density on $M$. Then
  \begin{align*}
    A \in w\rho^{-l}\Psi_\cV^m(M) &\implies A^* \in w\rho^{-l}\Psi_\cV^m(M), \\
    A_j \in w_j\rho^{-l_j}\Psi_\cV^{m_j}(M) &\implies A_1\circ A_2 \in (w_1 w_2)\rho^{-(l_1+l_2)}\Psi_\cV^{m_1+m_2}(M).
  \end{align*}
  Furthermore, $\upsigma_\cV^{m,l}(A^*)=\ol{\upsigma_\cV^{m,l}(A)}$ and $\upsigma_\cV^{m_1+m_2,l_1+l_2}(A_1\circ A_2)=\upsigma_\cV^{m_1,l_1}(A_1)\cdot\upsigma_\cV^{m_2,l_2}(A_2)$. The principal symbol of $i[A_1,A_2]\in w\rho^{-(l_1+l_2-1)}\Psi_\cV^{m_1+m_2-1}(M)$ is the Poisson bracket of the principal symbols of $A_1,A_2$. Finally,
  \begin{equation}
  \label{EqSBPsdoCompWF}
    \WF_\cV^{\prime w_1 w_2}(A_1\circ A_2) \subset \WF_\cV^{\prime w_1}(A_1)\cap\WF_\cV^{\prime w_2}(A_1).
  \end{equation}
  The map $A\mapsto A^*$ is continuous, and the map $(A_1,A_2)\mapsto A_1\circ A_2$ is jointly continuous.
\end{thm}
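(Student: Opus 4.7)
The plan is to reduce all statements to their well-known counterparts for uniform pseudodifferential operators on $\R^n$ applied chart-by-chart, and then reassemble via Proposition~\ref{PropSBPsdoSum}. For the \textbf{adjoint} statement, write $A=\Op_\cV(a)+R$ with $a\in w\rho^{-l}S^m({}^\cV T^*M)$ and $R$ residual. The symmetry built into Definition~\ref{DefSBPsdoRes}\eqref{ItSBPsdoResAdj} gives $R^*\in w\rho^\infty\Psi_\cV^{-\infty}(M)$ immediately. In each chart $U_\alpha$, the adjoint of $\Op_\alpha((\phi_\alpha)_*(\chi_\alpha a))$ with respect to the fixed $\cV$-density is a quantization (in the sense of~\eqref{EqSBPsdoLeftQuant}) of a symbol in $(x_\alpha,x'_\alpha,\xi_\alpha)$, whose $S^m$-seminorms (modulated by $w_\alpha\bar\rho_\alpha^{-l}$) are uniformly bounded. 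Lemma~\ref{LemmaSBPsdoLeft} converts this to a left quantization of a uniformly bounded $w_\alpha\bar\rho_\alpha^{-l}S^m$-symbol, whose leading part is $\overline{a_\alpha}$; Proposition~\ref{PropSBPsdoSum} then reassembles into $\Op_\cV(a^\star)+R'$ with $[a^\star]=[\bar a]$ and $R'$ residual.

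For \textbf{compositions}, expand $A_1\circ A_2=(\Op_\cV(a_1)+R_1)(\Op_\cV(a_2)+R_2)$. The three terms involving any $R_j$ are residual: Theorem~\ref{ThmSBPsdoH} gives bounded mapping on every weighted mixed Sobolev space, and for the adjoint criterion one uses the just-proven fact that $(\Op_\cV(a_j))^*$ is itself a $\cV$-ps.d.o., whence Definition~\ref{DefSBPsdoRes}\eqref{ItSBPsdoResAdj} applies to $(\Op_\cV(a_j) R_k)^*=R_k^*(\Op_\cV(a_j))^*$. For the main term, expand the double sum $\sum_{\alpha,\beta}\phi_\alpha^*\Op_\alpha(a_{1,\alpha})\phi_\beta^*\Op_\beta(a_{2,\beta})$, where the Schwartz kernel localization via $\psi(x_\alpha-x'_\alpha)$ forces $U_\alpha\cap U_\beta\neq\emptyset$ for nonzero summands. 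Apply Lemma~\ref{LemmabgRefine} to pass to a refinement $\fB'$ in which both cells fit inside a common chart; then Lemma~\ref{LemmaSBPsdoCh} transforms the left factor into a quantization in the $\beta$-chart, incurring only uniformly controlled $\rho$-suppressed correction terms. The resulting composition in a single chart is handled by the standard asymptotic formula $a_{1,\beta}\#a_{2,\beta}\sim\sum_\gamma \frac{1}{\gamma!}(\pa_\xi^\gamma a_{1,\beta})(\rho_\beta D_x)^\gamma a_{2,\beta}$ (reduced via Lemma~\ref{LemmaSBPsdoLeft}), producing a uniformly bounded $w_{1,\beta}w_{2,\beta}\bar\rho_\beta^{-(l_1+l_2)}S^{m_1+m_2}$-symbol. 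Proposition~\ref{PropSBPsdoSum} then delivers the composition as $\Op_\cV(a)+R$ with $[a]=[a_1 a_2]$, giving both multiplicativity of the principal symbol and, via the $\gamma=0$ vanishing of the antisymmetric combination, the Poisson bracket formula~\eqref{EqSBDPoisson} for commutators (the weight $\rho$ arising from the single factor of $\rho_{\beta,i}$ in the $|\gamma|=1$ term).

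The \textbf{wave front set} inclusion~\eqref{EqSBPsdoCompWF} follows from Lemma~\ref{LemmaSBPsdoCh}\eqref{ItSBPsdoChEsssupp} and the essential-support clause of Proposition~\ref{PropSBPsdoSum}: if $\chi\in S^0({}^\cV T^*M)$ vanishes on $\WF_\cV^{\prime w_1}(A_1)$, then $\chi a_{1,\beta}$ is uniformly residual, whence the standard composition formula shows $\chi(a_1\#a_2)$ is uniformly residual; symmetrically for $a_2$. For \textbf{continuity}, note that in the topology of Remark~\ref{RmkSBPsdoTop}, each step above (Lemmas~\ref{LemmaSBPsdoLeft}, \ref{LemmaSBPsdoCh}, and Proposition~\ref{PropSBPsdoSum}) produces symbol seminorms bounded by finite sums of products of input symbol seminorms, while residual remainders are controlled by $w\rho^\infty\Psi_\cV^{-\infty}(M)$-seminorms via Proposition~\ref{PropSBPsdoOpRes} applied to the tail terms appearing in~\eqref{EqSBPsdoSumR}; continuity of $A\mapsto A^*$ and bilinearity-continuity of $(A_1,A_2)\mapsto A_1\circ A_2$ follow.

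The main obstacle will be the composition step, specifically organizing the double sum over $(\alpha,\beta)$ so that all coordinate changes produce \emph{uniformly} bounded symbols with \emph{uniform} control on their essential supports. The refinement Lemma~\ref{LemmabgRefine} is crucial here: without it, the Kuranishi-type construction in Lemma~\ref{LemmaSBPsdoCh} could fail to apply uniformly when unit cells are in a borderline overlap configuration. A secondary subtlety is that the factors of $\rho$ in the bound~\eqref{EqISB} are what turn the naive commutator bound $[w_1\rho^{-l_1}S^{m_1},w_2\rho^{-l_2}S^{m_2}]\subset w_1 w_2\rho^{-(l_1+l_2)}S^{m_1+m_2}$ into the improved $w_1 w_2\rho^{-(l_1+l_2)+1}S^{m_1+m_2-1}$ claimed in the theorem; this is precisely where the scaled b.g. hypothesis pays off (cf.\ Remark~\ref{RmkISBLie}).
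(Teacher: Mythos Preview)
Your proposal is correct and follows essentially the same route as the paper: reduce to local quantizations via the refinement Lemma~\ref{LemmabgRefine}, change coordinates with Lemma~\ref{LemmaSBPsdoCh}, compose in a single chart, and reassemble via Proposition~\ref{PropSBPsdoSum}. The only minor technical variation is that the paper handles the single-chart composition by writing $\Op_\alpha(a)\circ\Op_\alpha(b)=\Op'_\alpha(a_L)\circ\Op'_\alpha(b_R)=\Op'_\alpha(a_L b_R)$ (passing to the cutoff-free quantization $\Op'_\alpha$ and using a left symbol times a right symbol, which gives an exact identity) before left-reducing, whereas you invoke the $\#$-expansion directly; these are equivalent up to standard bookkeeping.
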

\begin{proof}
  The membership $A^*\in w\rho^{-l}\Psi_\cV^m(M)$ is a simple application of the left reduction result (Lemma~\ref{LemmaSBPsdoLeft}). To study $A_1\circ A_2$, write
  \[
    A_j = \Op_\cV(a_j) + R_j,\qquad a_j \in w_j\rho^{-l_j}S^{m_j}({}^\cV T^*M),\quad R_j\in w_j\rho^\infty\Psi_\cV^{-\infty}(M).
  \]
  The membership $R_1\circ R_2\in(w_1 w_2)\rho^\infty\Psi_\cV^{-\infty}(M)$ is an immediate consequence of Definition~\ref{DefSBPsdoRes}.

  Consider the composition $\Op_\cV(a_1)\circ\Op_\cV(a_2)$. We define $\Op_\cV$ using the b.g.\ structure from Lemma~\ref{LemmabgRefine} with $\ell_1=\frac32$, $\ell_2=\frac74$, and with the scaling on all cells arising from the subdivision of $U_\alpha$ given by $\rho_\alpha$. For a term $\phi_\alpha^*\Op_\alpha(a_{1,\alpha})\circ\phi_\beta^*\Op_\beta(a_{2,\beta})$ with $\phi_\alpha^{-1}((-\frac32,\frac32)^n)\cap\phi_\beta^{-1}((-\frac32,\frac32)^n)\neq\emptyset$, we then write $\phi_\beta^*\Op_\beta(a_{2,\beta})=\phi_\alpha^*(\tau_{\beta\alpha}^*\Op_\beta(a_{2,\beta}))$ as $\phi_\alpha^*\Op_\alpha(a_{2,\beta,\alpha})$ using Lemma~\ref{LemmaSBPsdoCh}. Thus, it suffices to study compositions in a single chart. But using Lemma~\ref{LemmaSBPsdoLeft}, we can write $\Op_\alpha(a)\circ\Op_\alpha(b)=\Op'_\alpha(a_L)\circ\Op'_\alpha(b_R)=\Op'_\alpha(a_L b_R)$ where $a_L=a_L(x_\alpha,\xi_\alpha)$ and $b_R=b_R(x'_\alpha,\xi_\alpha)$; and then further $\Op'_\alpha(a_L b_R)$ equals $\Op_\alpha(c)$ plus a residual operator. The uniformity statements in Lemmas~\ref{LemmaSBPsdoLeft} and \ref{LemmaSBPsdoCh} imply, together with Proposition~\ref{PropSBPsdoSum}, that $\Op_\cV(a_1)\circ\Op_\cV(a_2)=\Op_\cV(a)+R'$ where $a\in(w_1 w_2)\rho^{-(l_1+l_2)}S^{m_1+m_2}({}^\cV T^*M)$ (with the usual full symbol expansion in each chart) and $R\in(w_1 w_2)\rho^\infty\Psi_\cV^{-\infty}(M)$.

  Finally, Proposition~\ref{PropSBPsdoH} implies that the compositions $\Op_\cV(a_1)\circ R_2$ and $R_1\circ\Op_cV(a_2)$ are residual. We leave the proof of the continuity statements to the reader.

  The principal symbol of the commutator arises as usual from the $|\beta|=1$ terms of the local expressions for the left reduced full symbols of $A_1\circ A_2$ and $A_2\circ A_1$.
\end{proof}

\subsubsection{Application: Fredholm theory for elliptic operators in fully symbolic settings}
\label{SssSBPsdoApp}

As a simple applications of the `$\cV$-calculus' $\Psi_\cV(M):=\bigcup_{m,w} w\Psi_\cV^m(M)$ (with $w\in\CI(M)$ denoting weights on $(M,\fB)$), we discuss the Fredholm theory for elliptic operators in the case that $\cV$ is fully symbolic (see Definition~\ref{DefSBTsSymbolic}). (One often says instead that the \emph{$\cV$-calculus is fully symbolic}, as e.g.\ in \cite{SussmanKG}.) The scattering calculus discussed in~\S\ref{SssISc2} is an example; further examples are the 00- and desc-algebras from \S\ref{SssIF}\eqref{ItI00}, \eqref{ItIdescb}.

\begin{thm}[Elliptic $\cV$-ps.d.o.s are Fredholm]
\label{ThmSBPsdoFred}
  Suppose $\cV$ is fully symbolic. Let $P\in w\Psi_\cV^m(M)$ be elliptic, i.e.\ $\upsigma_\cV^{m,w}(P)\in w S^m/w\rho S^{m-1}({}^\cV T^*M)$ is elliptic. Then, for all weights $w'\in\CI(M)$ on $(M,\fB)$ and for all $s\in\R$, the operator
  \[
    P \colon w' H_\cV^s(M) \to w w' H_\cV^{s-m}(M)
  \]
  is Fredholm. The analogous conclusion holds also on $H_{\cV;\cW}^{(s;k)}$ spaces.
\end{thm}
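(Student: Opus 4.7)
The plan is to construct a two-sided parametrix modulo compact operators, following the standard scheme but checking that the full-symbolic assumption provides the required compactness.

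First, I would produce a rough parametrix. Since $\upsigma_\cV^{m,w}(P)$ is elliptic as an equivalence class in $wS^m/w\rho S^{m-1}({}^\cV T^*M)$, the global ellipticity characterization~\eqref{EqSBTsEllGlob} (applied, after multiplication by a suitable power of $\lambda$, to a representative of the principal symbol, which is microlocally elliptic on all of $\fM$) yields $q_0\in w^{-1}S^{-m}({}^\cV T^*M)$ with $\upsigma_\cV^{m,w}(P)\cdot q_0-1\in\rho S^{-1}({}^\cV T^*M)$. Setting $Q_0:=\Op_\cV(q_0)\in w^{-1}\Psi_\cV^{-m}(M)$, Theorem~\usref{ThmSBPsdoComp} gives $Q_0 P\in\Psi_\cV^0(M)$ with principal symbol $1$, so that
\[
  R_0 := Q_0 P - I \in \rho\,\Psi_\cV^{-1}(M).
\]

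Second, I would upgrade this to a parametrix with residual error by a Neumann-type asymptotic summation. The iterated compositions satisfy $(-R_0)^j Q_0\in w^{-1}\rho^j\Psi_\cV^{-j-m}(M)$ by Theorem~\usref{ThmSBPsdoComp}. Applying Lemma~\usref{LemmaSBTsAsySum} to the local full symbols of these compositions, and assembling the result via Proposition~\usref{PropSBPsdoSum}, one obtains a single operator $Q\in w^{-1}\Psi_\cV^{-m}(M)$ which satisfies $Q-\sum_{j=0}^{N-1}(-R_0)^j Q_0\in w^{-1}\rho^N\Psi_\cV^{-N-m}(M)$ for every $N$. Combined with the telescoping identity $\sum_{j=0}^{N-1}(-R_0)^j(I+R_0)=I-(-R_0)^N$ and one more application of Theorem~\usref{ThmSBPsdoComp}, this yields $QP-I\in\rho^N\Psi_\cV^{-N}(M)$ for every $N\in\N$. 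A symmetric argument applied to $P^*$ (which lies in $w\Psi_\cV^m(M)$ and is still elliptic by Theorem~\usref{ThmSBPsdoComp}) produces a right parametrix $Q'\in w^{-1}\Psi_\cV^{-m}(M)$ with $PQ'-I\in\rho^N\Psi_\cV^{-N}(M)$ for every $N$.

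Third, I would harvest compactness from the full-symbolic hypothesis. Writing $K:=QP-I$, Theorem~\usref{ThmSBPsdoH} shows that $K$ factors as
\[
  w'H_\cV^s(M)\xra{K} w'\rho\,H_\cV^{s+1}(M)\hra w'H_\cV^s(M).
\]
Since $\cV$ is fully symbolic, the weight ratio $(w'\rho)/w'=\rho$ tends to $0$ at infinity in $M$ by Definition~\usref{DefSBTsSymbolic}, so the inclusion above is compact by~\eqref{EqSBTsSymbolicEmbed} (equivalently Theorem~\usref{ThmBHRellich}). Hence $K$ is a compact endomorphism of $w'H_\cV^s(M)$; the same applies to $PQ'-I$ on $ww'H_\cV^{s-m}(M)$. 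Standard Fredholm theory then gives that $P\colon w'H_\cV^s(M)\to ww'H_\cV^{s-m}(M)$ is Fredholm. For the mixed case one simply repeats the argument verbatim on the spaces $w'H_{\cV;\cW}^{(s;k)}(M)$, noting that $Q$, $Q'$, and the residual operators act boundedly on these spaces (Theorem~\usref{ThmSBPsdoH}) and that the inclusion $w'\rho\,H_{\cV;\cW}^{(s+1;k)}(M)\hra w'H_{\cV;\cW}^{(s;k)}(M)$ is compact by Theorem~\usref{ThmBHRellich} (strict improvement of the differential order together with $\rho\to 0$ suffices).

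The only nontrivial step is the second: one must verify that the asymptotic sum assembling $Q$ produces a bona fide element of $w^{-1}\Psi_\cV^{-m}(M)$ whose remainders after $N$ terms genuinely sit in $w^{-1}\rho^N\Psi_\cV^{-N-m}(M)$, so that the telescoping identity translates into the desired residual property of $QP-I$. This rests on the careful bookkeeping provided by Lemma~\usref{LemmaSBTsAsySum} together with Propositions~\usref{PropSBPsdoOpRes} and~\usref{PropSBPsdoSum}; everything else reduces to already-established mapping and algebra properties.
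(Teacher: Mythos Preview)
Your proof is correct and follows the same approach as the paper: construct a symbolic parametrix and invoke the compact embedding \eqref{EqSBTsSymbolicEmbed} coming from the fully symbolic hypothesis. The only difference is that your second step (the Neumann-type asymptotic improvement to a residual error) is unnecessary here: the paper stops after the rough parametrix, observing that already $R_0=Q_0P-I\in\rho\Psi_\cV^{-1}(M)$ is compact on $w'H_\cV^s(M)$, and likewise $PQ_0-I$ on $ww'H_\cV^{s-m}(M)$; indeed the paper notes in a footnote that one \emph{can} refine to a residual remainder, but this is not needed for the Fredholm conclusion.
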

\begin{proof}
  Let $Q=\Op_\cV(q)\in w^{-1}\Psi_\cV^{-m}(M)$ where $q\in w^{-1}S^{-m}({}^\cV T^*M)$ satisfies $q p-1\in \rho S^{-1}({}^\cV T^*M)$. Then $Q P=I+R$ where $R\in\rho\Psi_\cV^{-1}(M)$ is compact on $w' H_\cV^s(M)$ by~\eqref{EqSBTsSymbolicEmbed}.\footnote{As usual, using the principal symbol calculus, one can refine $Q$ to yield a residual remainder $R\in\rho^\infty\Psi_\cV^{-\infty}(M)$.} Therefore, $P$ has finite-dimensional kernel and closed range. Similarly, the fact that $P Q-I$ is compact on $w w' H_\cV^{s-m}(M)$ implies that $P$ has finite-dimensional cokernel.
\end{proof}

\begin{example}[Geometric operators for metrics on $\R^n$]
\label{ExSBPsdoFred}
  Returning to Example~\usref{ExSBDbsc}, we conclude that $L_z=\Delta_g-z$ is Fredholm as a map
  \begin{equation}
  \label{EqSBPsdoFredLz}
    L_z \colon w H_\cV^s(M)\to w H_\cV^{s-2}(M),\qquad z\in\C\setminus[0,\infty),
  \end{equation}
  for all weights $w\in\CI(\R^n)$ on $(\R^n,\fB)$ for the scaled b.g.\ structure from~\S\ref{SssISc2}; in fact, $L_z$ is \emph{invertible}. Indeed, all such weights $w$ can easily be seen to polynomially bounded, and thus (using an elliptic parametrix) $\ker_{w H_\cV^s(M)}L_z\subset\la x\ra^{-\infty}H^\infty(\R^n)=\sS(\R^n)$; but then $L_z u=0$ implies $0=\la L_z u,u\ra_{L^2(\R^n)}$ and thus $u=0$. The same argument applies to $L_z^*$. --- Note that if instead we use the standard b.g.\ structure~\eqref{EqIScBdd} on $\R^n$, i.e.\ with scaling $\rho_{\alpha,i}=1$, then e.g.\ $w=e^{c x^1}$ is a weight on $\R^n$, but the Fredholm property of~\eqref{EqSBPsdoFredLz} typically fails (e.g.\ for $c\geq 1$); and this (trivially scaled) b.g.\ structure is \emph{not} fully symbolic.
\end{example}

\subsection{\texorpdfstring{$\cV$}{V}-microlocalization: wave front sets}
\label{SsSBWF}

In order to facilitate the analysis of non-elliptic problems, we define here notions of wave front sets associated with $\cV$.

\begin{definition}[$\cV$-Sobolev wave front set]
\label{DefSBWF}
  Let $w\in\CI(M)$ be a weight on $(M,\fB)$, recall the scaling weight $\rho\in\CI(M)$ from Definition~\usref{DefSBDScWeight}, and recall the microlocalization locus $\fM$ from Definition~\usref{DefSBTsMicro}. Let $u\in w\rho^{-\infty}H_\cV^{-\infty}(M)=\bigcup_{m,l} w\rho^l H_\cV^m(M)$. Then
  \[
    \WF_\cV^{m,w,l}(u) \subset \fM
  \]
  is the complement of the set of all $\varpi\in\fM$ for which there exists an operator $A\in\Psi_\cV^0(M)$ which is elliptic at $\varpi$ and for which $A u\in w\rho^l H_\cV^m(M)$. For $k\in\N_0$, the set $\WF_{\cV;\cW}^{(m;k),w,l}(u)\subset\fM$ is defined analogously for $u\in w\rho^{-\infty}H_{\cV;\cW}^{(-\infty;k)}(M)$.
\end{definition}

\begin{prop}[Properties of the $\cV$-Sobolev wave front set]
\label{PropSBWF}
  Let $u\in w\rho^{-\infty}H_\cV^{-\infty}(M)$ and $A\in w'\Psi_\cV^{m'}(M)$, $B\in\Psi_\cV^0(M)$.
  \begin{enumerate}
  \item\label{ItSBWFEmpty} $u\in w\rho^l H_\cV^m(M)$ if and only if $\WF_\cV^{m,w,l}(u)=\emptyset$.
  \item{\rm (Microlocality.)} $\WF_\cV^{m-m',w w',l}(A u)\subset\WF_\cV^{m,w,l}(u)\cap\WF_\cV^{\prime w'}(A)$.
  \item{\rm (Microlocal elliptic regularity.)} If $\WF_\cV'(B)\subset\Ell_\cV^{m,w}(A)$, then for any $N$ there exists $C_N$ so that
    \[
      \|B u\|_{w\rho^l H_\cV^m(M)}\leq C_N\Bigl(\|A u\|_{w w'\rho^l H_\cV^{m-m'}(M)}+\|u\|_{w\rho^{-N}H_\cV^{-N}}\Bigr).
    \]
  \end{enumerate}
  
\end{prop}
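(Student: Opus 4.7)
The three parts all rest on the compactness of $\fM$ (cf.\ Remark~\ref{RmkISBMicro}) together with a microlocal parametrix construction within the $\cV$-calculus.

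For part (1), the direction $u\in w\rho^l H_\cV^m(M)\Longrightarrow\WF_\cV^{m,w,l}(u)=\emptyset$ is trivial (take $A=I$). For the converse, fix $u$ with empty wave front set; for each $\varpi\in\fM$ choose $A_\varpi\in\Psi_\cV^0(M)$ elliptic at $\varpi$ with $A_\varpi u\in w\rho^l H_\cV^m(M)$. The open sets $\Ell_\cV^{0,1}(A_\varpi)$ cover the compact set $\fM$, so finitely many, corresponding to $A_1,\ldots,A_N$, already do. Then $A:=\sum_{j=1}^N A_j^*A_j\in\Psi_\cV^0(M)$ is elliptic on all of $\fM$. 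By~\eqref{EqSBTsEllGlob} applied to the principal symbol, followed by iteration using the composition calculus (Theorem~\ref{ThmSBPsdoComp}) and asymptotic summation (Lemma~\ref{LemmaSBTsAsySum}), one constructs $Q\in\Psi_\cV^0(M)$ with $QA=I+R$ and $R\in\rho^\infty\Psi_\cV^{-\infty}(M)$. Since $Au=\sum_j A_j^*(A_j u)\in w\rho^l H_\cV^m(M)$, and since Definition~\ref{DefSBPsdoRes} gives $Ru\in w\rho^l H_\cV^m(M)$ (starting from $u\in w\rho^{-N'}H_\cV^{-N'}(M)$ for some $N'$), we conclude $u=QAu-Ru\in w\rho^l H_\cV^m(M)$.

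The heart of the remaining two parts is the following microlocal parametrix lemma. Given $A\in w'\Psi_\cV^{m'}(M)$ and $B\in\Psi_\cV^0(M)$ with $\WF_\cV'(B)\subset\Ell_\cV^{m',w'}(A)$, there exist $Q\in{w'}^{-1}\Psi_\cV^{-m'}(M)$ and a residual $R'$ with $QA=B+R'$; dually, if $P\in w'\Psi_\cV^{m'}(M)$ with $\WF_\cV^{\prime w'}(P)\subset\Ell_\cV^{0,1}(B)$, then there exist $E\in w'\Psi_\cV^{m'}(M)$ and a residual $R''$ with $P=EB+R''$. The proof is a standard iterative symbol construction: fix $\chi\in S^0({}^\cV T^*M)$ with $\chi\equiv 1$ near the relevant wave front set and with essential support contained in the relevant elliptic set (possible by compactness of $\fM$), take the leading symbol of $Q_0$, resp.\ $E_0$, to be $\chi\upsigma_\cV(B)/\upsigma_\cV(A)$, resp.\ $\chi\upsigma_\cV(P)/\upsigma_\cV(B)$, well-defined modulo $\rho S^{-1}$ by ellipticity, and iterate---each step improving the remainder by one power of $\rho$ and one order of differentiation. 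Asymptotic summation (Lemma~\ref{LemmaSBTsAsySum}) and quantization (Definition~\ref{DefSBPsdoV}) produce the claimed $Q$ and $E$.

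With the lemma in hand, the rest is short. For part (2), let $\varpi\notin\WF_\cV^{m,w,l}(u)\cap\WF_\cV^{\prime w'}(A)$. If $\varpi\notin\WF_\cV^{\prime w'}(A)$, choose $C\in\Psi_\cV^0(M)$ elliptic at $\varpi$ with $\WF_\cV'(C)\cap\WF_\cV^{\prime w'}(A)=\emptyset$; then~\eqref{EqSBPsdoCompWF} gives $\WF_\cV^{\prime w'}(CA)=\emptyset$, so Lemma~\ref{LemmaSBTsEsssuppGlob} and Proposition~\ref{PropSBPsdoOpRes} imply $CA$ is residual, whence $CAu\in ww'\rho^\infty H_\cV^\infty(M)\subset ww'\rho^l H_\cV^{m-m'}(M)$. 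If instead $\varpi\notin\WF_\cV^{m,w,l}(u)$, pick $B\in\Psi_\cV^0(M)$ elliptic at $\varpi$ with $Bu\in w\rho^l H_\cV^m(M)$ and $C\in\Psi_\cV^0(M)$ elliptic at $\varpi$ with $\WF_\cV'(C)\subset\Ell_\cV^{0,1}(B)$; the second form of the parametrix lemma applied to $P=CA$ yields $CA=EB+R''$, and thus $CAu=E(Bu)+R''u\in ww'\rho^l H_\cV^{m-m'}(M)$ by Theorem~\ref{ThmSBPsdoH}. Either way, $\varpi\notin\WF_\cV^{m-m',ww',l}(Au)$. For part (3), the first form of the parametrix lemma gives $QA=B+R'$, so $Bu=QAu-R'u$, and Theorem~\ref{ThmSBPsdoH} together with Definition~\ref{DefSBPsdoRes} bound the two terms by $C\|Au\|_{ww'\rho^l H_\cV^{m-m'}(M)}$ and $C_N\|u\|_{w\rho^{-N}H_\cV^{-N}(M)}$, respectively. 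The genuine technical obstacle is the iterative parametrix construction producing a residual---rather than merely lower-order---remainder; this succeeds precisely because of compactness of $\fM$, which ensures the existence of the global symbolic cutoffs $\chi$ and convergence of asymptotic sums across all of phase space.
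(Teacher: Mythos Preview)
Your proof is correct and follows essentially the same approach as the paper. Part~(1) is identical in spirit to the paper's argument (compactness of $\fM$, sum of squares of finitely many local witnesses to obtain a globally elliptic operator, then a parametrix); for parts~(2) and~(3), the paper simply refers to a standard reference, and you have supplied the expected microlocal parametrix details. One minor comment: in your closing remark, the asymptotic summation step (Lemma~\ref{LemmaSBTsAsySum}) does not itself rely on compactness of $\fM$---compactness enters only in producing the finite covers and the global cutoffs $\chi$.
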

\begin{proof}
  The first part is analogous to the discussion of~\eqref{EqbgSingSupp}. The non-trivial direction `$\Longleftarrow$' follows, by compactness of $\fM$, from the existence of a finite collection $a_1,\ldots,a_N\in S^0({}^\cV T^*M)$ of symbols with $\Op_\cV(a_j)u\in w\rho^l H_\cV^m(M)$ and $\bigcup_{j=1}^N\Ell^w_\fM(a_j)=\fM$: we then have $A u\in w\rho^l H_\cV^m(M)$ for the elliptic operator $A=\sum_{j=1}^N\Op_\cV(a_j)^2\in\Psi_\cV^0(M)$. With $B\in\Psi_\cV^0(M)$ denoting a parametrix, i.e.\ $B A=I+R$ for $R\in\rho^\infty\Psi_\cV^{-\infty}(M)$, we then have $u=B(A u)-R u\in w\rho^l H_\cV^m(M)$, as desired.

  The proofs of the remaining two parts are standard applications of the principal symbol calculus, see e.g.\ \cite[Proposition~6.3]{HintzMicro}.
\end{proof}

We shall not attempt to study the propagation of singularities in $\fM$ here. We only remark that in applications where a concrete compactification of ${}^\cV T^*M$ is given, one can use the $\cV$-calculus to prove results on the real principal type or radial point propagation for solutions of hyperbolic equations in the usual symbolic manner, as described e.g.\ in \cite{VasyMinicourse,DyatlovZworskiBook,HintzMicro}.

\subsection{Parameterized scaled bounded geometry structures}
\label{SsSBPar}

We now describe the generalization of scaled b.g.\ structures to parameterized scaled b.g.\ structures with parameter set $P$,
\[
  \{\fB_{p,\times}\colon p\in P\},\qquad
  \fB_{p,\times}=\{(U_{p,\alpha},\phi_{p,\alpha},\rho_{p,\alpha})\colon\alpha\in\sA_p\};
\]
see Definition~\ref{DefIPSB}. Denote the b.g.\ structure underlying $\fB_{p,\times}$ by $\fB_p=\{(U_{p,\alpha},\phi_{p,\alpha})\}$, and write $\fB_\times:=(\fB_{p,\times})_{p\in P}$ for the collection of all scaled b.g.\ structures.

On the space $P\times M=\bigsqcup_{p\in P}\{p\}\times M$, with $P$ given the discrete topology, we can again consider the space of uniformly continuous functions $u=u(p,x)$, i.e.\ $u(p,\cdot)\in\cC^0_{{\rm uni},\fB_p}(M)$ which have a $p$-independent modulus of continuity. Correspondingly, the constructions in~\S\ref{SsbgCpt} apply and give the uniform compactification
\[
  \fu(P\times M)
\]
of $P\times M$. When $P$ is finite, we have $\fu(P\times M)=\bigsqcup_{p\in P}\{p\}\times\fu(M,\fB_p)$ (which is $P\times\fu M$ when all $\fB_p$ are compatible). In general, however, the compactification $\fu(P\times M)$ also contains, among other things, accumulation points of sequences $(p_j,x)$ where $\{p_j\}$ is a sequence in $P$.

We denote by $\cV_p$ and $\cW_p$ the operator, resp.\ coefficient Lie algebras of $\fB_{p,\times}$, and by
\[
  {}^{\cV_p}T^*M \to M
\]
the $\cV_p$-cotangent bundle (Definition~\ref{DefSBDCotgt}), which is equipped with its scaled b.g.\ structure $\fB_{p,\times}^*$ from Definition~\ref{DefSBTs}. We may then apply the constructions in~\S\ref{SsbgCpt} to the space
\[
  {}^\cV T^*M:=\bigsqcup_{p\in P}\{p\}\times{}^{\cV_p}T^*M.
\]
Denote by $\rho_p\in\CI(M)$ a scaling weight for $(M,\fB_{p,\times})$ from Definition~\ref{DefSBDScWeight} (with constants which are independent of $p$, cf.\ the discussion following Definition~\ref{DefSBParSymb} below), and by $\lambda_p\in S^{-1}({}^{\cV_p}T^*M)$ a uniformly bounded family of weights as in Lemma~\ref{LemmaSBTsSymbols} for which $\lambda^{-1}\in S^1({}^{\cV_p}T^*M)$ is uniformly bounded as well. Write $\rho,\lambda$ for the functions which on $\{p\}\times M$, resp.\ $\{p\}\times{}^{\cV_p}T^*M$ are given by $\rho_p,\lambda_p$. Analogously to Definition~\ref{DefSBTsMicro}, the microlocalization locus is then the compact subset
\begin{equation}
\label{EqSBParMicro}
  \fM := \fu{}^\cV T^*M) \cap \{ j(\rho\lambda)=0 \}.
\end{equation}

We also have Sobolev spaces $H_{\cV_p}^s(M)$ and weighted versions $w_p H_{\cV_p}(M)$ thereof, where $w_p\in\CI(M)$ is a weight on $(M,P_p)$.

\begin{rmk}[Subsets of $P$]
\label{RmkSBParSubsets}
  In applications, one often only works with operators defined for (typically countable) subsets of parameters $p\in P$ (e.g.\ in the semiclassical setting: sequences of semiclassical parameters $h_j\searrow 0$). Since one can implement this simply by replacing $P$ by such a subset (and then defining the microlocalization locus as in~\eqref{EqSBParMicro} for the new set $P$), we do not explicitly allow for symbols to be defined only on subsets of $P$ here.
\end{rmk}

\begin{rmk}[Compatibility of different scaled b.g.\ structures]
\label{RmkSBPar}
  In applications, two scaled b.g.\ structures $\fB_{p,\times}$ and $\fB_{p',\times}$ for different values of $p,p'\in P$ are often strongly compatible, though not uniformly so in $p$ and $p'\in P$. (This implies, for example, the equality of all Sobolev spaces $H_{\cV_p}^s(M)$ as sets, but still allows for the norms to not be uniformly equivalent as $p$ varies.) Since imposing this condition would not lead to any simplifications, we do not do this here.
\end{rmk}

We now generalize Definitions~\ref{DefSBDSymb}, \ref{DefSBTsMicro}, and \ref{DefSBPsdo}.

\begin{definition}[Symbols]
\label{DefSBParSymb}
  Let $w=(w_p)_{p\in P}$ be a weight on $(P\times M,\fB_\times)$. Let $m\in\R$. Then we say that a family $a=(a_p)_{p\in P}$ lies in $w S^m({}^\cV T^*M)$ if $a_p\in w_p S^m({}^{\cV_p}T^*M)$ for all $p\in P$, with uniformly bounded symbol seminorms. The elliptic set and essential support of $a$, as subsets of $\fM$, are defined in complete analogy with Definition~\usref{DefSBTsMicro}.
\end{definition}

Carefully note that for $w=(w_p)_{p\in P}$ to be a weight on $(P\times M,\fB_\times)$, it is necessary \emph{but not sufficient} that each $w_p$ be a weight on $(M,\fB_{p,\times})$; a sufficient additional condition is that the ratios $w_{p,\alpha}/w_{p,\beta}$ for $\alpha,\beta\in\sA_p$ with $U_{p,\alpha}\cap U_{p,\beta}\neq\emptyset$ are uniformly bounded.

\begin{definition}[$\cV$-ps.d.o.s: parameterized setting]
\label{DefSBParPsdo}
  In the notation of Definition~\usref{DefSBParSymb}, a \emph{$\cV$-pseudodifferential operator} $A\in w\Psi_\cV^m(M)$ is a family $A=(A_p)_{p\in P}$ of operators $A_p=\Op_{\cV_p}(a_p)+R_p$ where the quantization map $\Op_{\cV_p}$ is defined as in Definition~\usref{DefSBPsdo}, $(a_p)_{p\in P}\in w S^m({}^\cV T^*M)$, and $(R_p)_{p\in P}$ is a family of residual operators $R_p$ on $(M,\fB_{p,\times})$ for which the conditions in Definition~\usref{DefSBPsdoRes} hold with uniform constants and operator norm bounds for all weights $w'$ in~\eqref{EqSBPsdoResMap}. The principal symbol of $A$ is the equivalence class of $a$ in $w S^m({}^\cV T^*M)/w\rho S^{m-1}({}^\cV T^*M)$. Finally, the space $w\Diff_\cV^m(M)$ of \emph{$\cV$-differential operators} is the subspace of $w\Psi_\cV^m(M)$ consisting of differential operators.
\end{definition}

An operator $A=(A_p)_{p\in P}\in w\Psi_\cV^m(M)$ defines a \emph{uniformly bounded} family of operators
\[
  A_p\colon w'_p H_{\cV_p}^s(M)\to w_p w'_p H_{\cV_p}^{s-m}(M),
\]
and more generally $A_p\colon w'_p H_{\cV_p;\cW_p}^{(s;k)}(M)\to w_p w'_p H_{\cV_p;\cW_p}^{(s-m;k)}(M)$. (This is in fact a special case of Theorem~\ref{ThmSBPsdoH}, applied to the scaled b.g.\ structure $(P\times M,\fB_\times)$ and the input $u=(u_p)_{p\in P}$ where $u_p=0$ for all $p\neq p_0\in P$ and $u_{p_0}\in w'_{p_0}H_{\cV_{p_0}}^s(M)$ with norm $\leq 1$, since then $A u=(A_p u_p)_{p\in P}$ has norm $\leq C$ for some constant $C$ only depending on $A$; but the norm of $A u$ is bounded from below by the $w_p w'_p H_{\cV_p}^{s-m}(M)$-norm of $A_{p_0}u_{p_0}$.\footnote{Strictly speaking, one needs to apply this argument for all countable subsets $P'\subset P$ of parameters (so that $P'\times M$ is second countable and the total index set $\bigsqcup_{p'\in P'}\sA_{p'}$ is countable, as required in Definition~\ref{DefIBdd}); but the uniform bound on the operators $A_{p'}$ is independent of $P'$.}) The properties of adjoints, compositions, commutators, and principal symbols in Theorem~\ref{ThmSBPsdoComp} carry over verbatim.

The membership $A=(A_p)_{p\in P}\in w\Diff_\cV^m(M)$ is characterized by requiring that each $A_p$ is a differential operator, and $(\phi_{p,\alpha})_*A_p=\sum_{|\beta|\leq m} w_{p,\alpha} a_{p,\alpha,\beta}(x)(\rho_{p,\alpha}\pa_x)^\beta$ where the $\CI((-2,2)^n)$-seminorms of the coefficients $a_{p,\alpha,\beta}$ are uniformly (in $p\in P$, $\alpha\in\sA_p$, $\beta\in\N_0^n$) bounded.

\begin{rmk}[Fully symbolic $\cV$]
\label{RmkSBParSymb}
  Analogously to Definition~\ref{DefSBTsSymbolic}, one may call $\cV$ fully symbolic if, for every $\eps>0$, there exists a compact subset $K\subset P\times M$ (with $P$ given the discrete topology) so that $\rho|_{P\times M\setminus K}<\eps$. In parameterized settings, this is occasionally stronger than necessary. For example, the Lie algebra $\cV=(\cV_{h_j})_{j\in\N}$, $h_j\searrow 0$, of semiclassical vector fields on $\R^n$ with uniformly smooth coefficients (see~\S\ref{SssISc2}) does \emph{not} satisfy this condition due to the lack of decay of the scalings at spatial infinity; nonetheless, if $P=(P_{h_j})_{j\in\N}\in\Psi_\cV^m(\R^n)$ is elliptic, then it is invertible between $H_{\cV_{h_j}}^s(\R^n)=H_{h_j}^s(\R^n)$ and $H_{\cV_{h_j}}^{s-m}(\R^n)=H_{h_j}^{s-m}(\R^n)$ when $h_j$ is sufficiently small. Indeed, $P$ has a parametrix $Q\in\Psi_\cV^{-m}(\R^n)$ with $Q P=I+R$, $R\in h\Psi_\cV^{-1}(\R^n)$ (i.e.\ $R=(R_{h_j})_{j\in\N}$ and $h_j^{-1}R\in\Psi_\cV^{-1}(\R^n)$), and therefore $Q P-I$ has \emph{small} operator norm on any fixed weighted $\cV$-Sobolev space when $h_j$ is sufficiently small, which implies the claim.
\end{rmk}

Similarly, the notion of wave front set (Definition~\ref{DefSBWF}) generalizes to the parameterized setting, with $\WF_\cV^{m,w,l}(u)\subset\fM$ defined for $u=(u_p)_{p\in P}\in w\rho^{-\infty}H_\cV^{-\infty}(M)$, by which we mean that $u_p\in w_p\rho_p^{-N}H_{\cV_p}^{-N}(M)$ for some fixed $N$, with norm bounded uniformly in $p$.

\subsection{Admissible compactifications of phase space}
\label{SsSBCpt}

In practice, one works not with $\fu{}^\cV T^*M$ but with more structured compactifications of ${}^\cV T^*M$ (or equivalently of $T^*M$, which is isomorphic to ${}^\cV T^*M$ as a vector bundle and thus, a fortiori, diffeomorphic as a manifold). The following definition is more demanding than strictly necessary, but is permissive enough to capture all phase space compactifications appearing in the literature.

\begin{definition}[Admissible compactifications]
\label{DefSBCpt}
  A compact manifold with corners $\ol{T^*M}$ (with embedded boundary hypersurfaces), together with a smooth embedding ${}^\cV T^*M\hra\ol{T^*M}$ with open dense range equal to the interior of $\ol{T^*M}$, is an \emph{admissible compactification} of ${}^\cV T^*M$ if the following conditions are satisfied.
  \begin{enumerate}
  \item\label{ItSBCptbg} Denote by $\fB_{\ol{T^*M}}$ the b.g.\ structure on ${}^\cV T^*M$ for the b-algebra on $\ol{T^*M}$, see~\eqref{EqIFbCorners}. Then $\fB_{\ol{T^*M}}\geq\fB^*$ (see Definitions~\usref{DefbgComp}\eqref{ItbgCompCoarser} and~\usref{DefSBTs}).
  \item\label{ItSBCptrho} There exists a weight $\rho$ on $M$ which is equivalent to the weight family $\{\bar\rho_\alpha\}$ and whose preimage under the projection ${}^\cV T^*M\to M$ is the product of powers of boundary defining functions of $\ol{T^*M}$, i.e.
  \begin{equation}
  \label{EqSBCptRho}
    \rho=\rho_0\prod_{H\in\sH_\rho} \rho_H^{\alpha_H}
  \end{equation}
  where $\rho_0\in\CI(\ol{T^*M})$ is a positive function (thus also $\rho_0^{-1}\in\CI(\ol{T^*M})$), and $H$ ranges over a subset $\sH_\rho$ of the collection of boundary hypersurfaces of $\ol{T^*M}$, $\rho_H\in\CI(\ol{T^*M})$ is a defining function of $H$ (that is, $\rho_H^{-1}(0)=H$, $\dd\rho_H\neq 0$ on $H$, and $\rho_H>0$ on $\ol{T^*M}\setminus H$), and finally $\alpha_H>0$ for all $H$.
  \item\label{ItSBCptlambda} There exists a positive elliptic symbol $\lambda\in S^{-1}({}^\cV T^*M)$ with $\lambda^{-1}\in S^1({}^\cV T^*M)$ which is the product of powers of boundary defining functions of $\ol{T^*M}$, i.e.
  \begin{equation}
  \label{EqSBCptLambda}
    \lambda=\lambda_0\prod_{H\in\sH_\lambda} \rho_H^{\beta_H},\qquad \CI(\ol{T^*M})\ni\lambda_0>0,\ \ \beta_H>0.
  \end{equation}
  \end{enumerate}
  Let $\sH:=\sH_\rho\cup\sH_\lambda$. The \emph{microlocalization locus of $\ol{T^*M}$} is then defined as
  \[
    \fM_{\ol{T^*M}} := \bigcup_{H\in\sH} H \subset \pa\ol{T^*M}.
  \]
\end{definition}

Write $\cA(\ol{T^*M})$ for the space of bounded conormal functions $a$ on $T^*M$, i.e.\ $P a\in L^\infty(\ol{T^*M})$ for all $P\in\Diffb(\ol{T^*M})$. Condition~\eqref{ItSBCptbg} implies
\begin{equation}
\label{EqSBCptCIIncl}
  \CI(\ol{T^*M})\subset\cA(\ol{T^*M})=\CI_{{\rm uni},\fB_{\ol{T^*M}}}({}^\cV T^*M)\subset\CI_{{\rm uni},\fB^*}({}^\cV T^*M)=S^0({}^\cV T^*M).
\end{equation}
It also implies the analogous statement for uniformly continuous functions, and thus gives a continuous map $\fu({}^\cV T^*M,\fB^*)\to\ol{T^*M}$ of compact Hausdorff spaces whose image is thus compact and dense (since it contains ${}^\cV T^*M$); this map is therefore surjective. (We already used this argument in the proof of Lemma~\ref{LemmabgUniv}.) This in turn gives rise to a surjective map
\begin{equation}
\label{EqSBCptfM}
  \fM \to \fM_{\ol{T^*M}}
\end{equation}
between the zero sets of $\lambda\rho$ in $\fu{}^\cV T^*M$ and $\ol{T^*M}$, respectively.

In view of~\eqref{EqSBCptCIIncl}, quantizations of functions which are products of powers $\rho_H^{-m}$, $H\in\sH$, and a smooth or conormal function on $\ol{T^*M}$ are then well-defined $\cV$-pseudodifferential operators. One can then define elliptic sets, essential supports, and operator wave front sets in two equivalent ways: as the images in $\fM_{\ol{T^*M}}$ of the corresponding subsets of $\fM$ under the map~\eqref{EqSBCptfM}; or via testing with elements of $\CI(\ol{T^*M})$ which localize near the point in question in $\ol{T^*M}$.

Since our $\cV$-calculus is not designed to keep track of notions of smoothness which are stronger than uniform smoothness in unit cells (i.e.\ $\cW$-regularity), it is cleaner to work directly with the less regular symbol classes
\[
  S^{(m_H)_{H\in\sH}}(\ol{T^*M}) := \Biggl(\prod_{H\in\sH} \rho_H^{-m_H}\Biggr)S^0({}^\cV T^*M),\qquad m_H\in\R.
\]
Sums of quantizations of such symbols and residual operators define the space
\[
  \Psi_\cV^{(m_H)_{H\in\sH}}(M).
\]
The weight $\prod_{H\in\sH}\rho_H^{-m_H}$ is an instance of a phase space weight on $({}^\cV T^*M,\fB^*)$; thus, using the material developed in~\S\ref{SsFWg} below, the principal symbol of an operator
\[
  A = \Op_\cV(a)+R \in \Psi_\cV^{(m_H)_{H\in\sH}}(M)
\]
is a well-defined element
\[
  [a] \in S^{(m_H)_{H\in\sH}} / S^{(m_H-\gamma_H)_{H\in\sH}}(\ol{T^*M}),\qquad \gamma_H:=\alpha_H+\beta_H,
\]
where $\alpha_H,\beta_H$ are taken from \eqref{EqSBCptRho}--\eqref{EqSBCptLambda}, and we set $\alpha_H=0$, resp.\ $\beta_H=0$ when $H\notin\sH_\rho$, resp.\ $H\notin\sH_\lambda$. (This is merely a reformulation of~\eqref{EqFWgSymb}.) One says that the calculus is \emph{symbolic at all $H$ where $\gamma_H>0$}.

\begin{rmk}[Partial compactifications]
\label{RmkSBCptPartial}
  It is often useful to relax the condition that $\ol{T^*M}$ be compact in Definition~\ref{DefSBCpt}. One then needs to replace $\CI(\ol{T^*M})$ by $\CI(\ol{T^*M})\cap\CI_{{\rm uni},\fB_{\ol{T^*M}}}(\ol{T^*M})$.
\end{rmk}

\begin{example}[Fiber-radial compactification]
\label{ExSBCptFiber}
  If $M$ is compact (thus $\Psi_\cV(M)=\Psi(M)$ is the standard algebra of pseudodifferential operators), an admissible compactification of $T^*M$ is the fiber-radial compactification $\ol{T^*}M$ \cite[\S{E.1.3}]{DyatlovZworskiBook}. In this case, one can take $\rho=1$, and for $\lambda$ a boundary defining function of fiber infinity $S^*M\subset\ol{T^*}M$. When $M$ is noncompact and equipped with a scaled b.g.\ structure, an admissible partial compactification (see Remark~\ref{RmkSBCptPartial}) is the fiber-radial compactification $\ol{{}^\cV T^*}M$ of ${}^\cV T^*M$.
\end{example}

\begin{example}[Scattering symbols on compact manifolds with boundary]
\label{ExSBCptSc}
  We revisit example~\eqref{ItIsc} from \S\ref{SssIF}, and consider instead of a compact manifold with boundary $\bar M$ only a chart $\bar M=[0,1)_x\times\R^{n-1}_y$. The radially compactified scattering cotangent bundle over $\bar M$ is then $\ol{\Tsc^*}\bar M=\bar M\times\ol{\R^n}$, with a point $(x,y;\xi,\eta)$ with $x>0$ corresponding to the covector $\xi\frac{\dd x}{x^2}+\eta\frac{\dd y}{x}$. We can take $\rho=x$ and $\lambda=\la(\xi,\eta)\ra^{-1}$. The corresponding microlocalization locus, i.e.\ the zero set of $\rho\lambda$, is the union of the boundary hypersurfaces $\pa\bar M\times\ol{\R^n}$ and $\bar M\times\pa\ol{\R^n}$ of $\ol{\Tsc^*}\bar M$. See also \cite[\S\S{1} and {5}]{MelroseEuclideanSpectralTheory}.
\end{example}

In the parameterized setting with parameter space $P$, compactifications of practical interest are typically compactifications of the total space ${}^\cV T^*M$ to a smooth manifold with corners. Definition~\ref{DefSBCpt} carries over to this setting \emph{mutatis mutandis}. For example, for the semiclassical b.g.\ algebra on the compact manifold $M$, an admissible compactification is given by
\[
  \ol{{}^\semi T^*}M := [0,1]_h \times \ol{T^*}M,
\]
where an interior point $(h,\zeta)$ with $h>0$ and $\zeta\in T^*_z M$ is identified with the covector $h^{-1}\zeta$ in ${}^{\cV_h}T^*M$. Valid choices are then $\rho=h$ and $\lambda=\la\zeta\ra^{-1}$ (with respect to any fixed Riemannian metric on $M$), so the microlocalization locus is the union of $[0,1]\times S^*M$ and $\{0\}\times\ol{T^*}M$.

\section{Further topics}
\label{SF}

Throughout this section, we fix a scaled b.g.\ structure $\fB_\times$ on the smooth manifold $M$, with operator, resp.\ coefficient Lie algebra $\cV$, resp.\ $\cW$.

\subsection{Operators with phase space weights}
\label{SsFWg}

As pointed out after Definition~\ref{DefSBTsMicro}, the symbols whose quantizations we have discussed thus far are of class $w S^0({}^\cV T^*M)$ where $w=w_0 w_1$, with $w_0\in\CI(M)$ a weight on $(M,\fB)$ and $w_1=\lambda^{-m}$ in the notation of Lemma~\ref{LemmaSBTsSymbols}.

\begin{lemma}[Phase space weights]
\label{LemmaFWg}
  Let $w\in\CI({}^\cV T^*M)$ be a weight on $({}^\cV T^*M,\fB_\times)$ in the notation of Definition~\usref{DefSBTs}. Then we can write $w=w_0 w_1$ where $w_0\in\CI(M)$ is a weight on $(M,\fB)$ and $w_1\in\CI({}^\cV T^*M)$ is a weight on $({}^\cV T^*M,\fB_\times)$ for which there exist $m_-<m_+$ and $C>1$ with $C^{-1}\lambda^{m_+}\leq w_1\leq C\lambda^{m_-}$.
\end{lemma}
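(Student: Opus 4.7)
The plan is to define $w_0$ as the restriction of $w$ to the zero section and $w_1:=w/w_0$, and then show that $w_1$ is forced to have at most polynomial growth/decay in the fibers, measured by $\lambda$.

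First I would set $w_0(p):=w(p,0_p)$, where $0_p\in{}^\cV T^*_pM$ is the zero covector; this is a smooth positive function on $M$ since $w\in\CI({}^\cV T^*M)$. To see that $w_0$ is a weight on $(M,\fB)$, let $W\in\cW=\CI_{{\rm uni},\fB}(M;TM)$. Viewing $W$ as a vector field on ${}^\cV T^*M$ via the horizontal lift along the zero section (or just by the natural inclusion, in local trivializations, of a base vector field into the product chart $U_\alpha\times\R^n$), $W$ lies in $\CI_{{\rm uni},\fB^*}({}^\cV T^*M;T({}^\cV T^*M))$ in view of condition~\eqref{ItSBCptbg} in Definition~\ref{DefIBdd} applied to $\fB^*$. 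Consequently $\frac{W w}{w}\in\CI_{{\rm uni},\fB^*}({}^\cV T^*M)$ by assumption, and hence $\frac{W w_0}{w_0}=\bigl(\frac{W w}{w}\bigr)\big|_{o}\in\CI_{{\rm uni},\fB}(M)$, where $o$ denotes the zero section. The quotient $w_1:=w/w_0$ is then a positive smooth function on ${}^\cV T^*M$, and is a weight on $({}^\cV T^*M,\fB^*)$ as a ratio of two weights.

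The heart of the argument is the polynomial bound $C^{-1}\lambda^{m_+}\leq w_1\leq C\lambda^{m_-}$. By Lemma~\ref{LemmaSBTsSymbols}\eqref{ItSBTsSymbols1}, it is enough to prove $C^{-1}\la\xi\ra^{-M}\leq w_1(p,\xi)\leq C\la\xi\ra^M$ for some $M>0$, uniformly in $(p,\xi)\in{}^\cV T^*M$ in any distinguished trivialization over $U_\alpha$. To achieve this, recall that the fiber b.g.\ structure on $\R^n_\xi$ from \S\ref{SssISc2} has coefficient Lie algebra spanned (modulo uniformly bounded smooth coefficients) by $\la\xi\ra\pa_{\xi_j}$; these vector fields lift to uniformly bounded elements of $\cW$ for $\fB^*$. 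Since $w$ is a weight on phase space, $\la\xi\ra\pa_{\xi_j}\log w=\frac{\la\xi\ra\pa_{\xi_j}w}{w}$ is uniformly bounded, say by a constant $M$. Along the radial segment $t\mapsto t\xi$, $t\in[0,1]$, this yields
\[
  \bigl|\log w_1(p,\xi)\bigr|=\bigl|\log w(p,\xi)-\log w(p,0)\bigr|\leq\int_0^1\frac{|\xi|}{\la t\xi\ra}\,\bigl(\sup_j\|\la\xi\ra\pa_{\xi_j}\log w\|_{L^\infty}\bigr)\,\dd t\leq M\log\la\xi\ra+M,
\]
which translates into the desired bound with $m_-=-M$ and $m_+=M$ (and any $m_+>m_-$ works).

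I expect the only mild obstacle to be the bookkeeping in the last step: verifying that the vector fields $\la\xi\ra\pa_{\xi_j}$ really lie in $\cW$ for $\fB^*$ uniformly across all charts $U_{(\alpha,j,k,\pm 1)}$, and hence that the radial integration from the zero section through the logarithmic fiber shells gives a uniform $C\log\la\xi\ra$ bound. Everything else is formal: the multiplicativity of weights and the identification $\la\xi\ra\sim\lambda^{-1}$ on each fiber (again by Lemma~\ref{LemmaSBTsSymbols}\eqref{ItSBTsSymbols1}, absorbing bounded factors into the constant $C$).
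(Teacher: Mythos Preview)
Your proof is correct and follows essentially the same approach as the paper: set $w_0:=w|_o$ (zero section) and then bound $w_1=w/w_0$ polynomially in the fiber variable. The paper phrases the fiber estimate discretely---reaching $|\xi|\sim 2^k$ from the zero section requires traversing $\sim k$ many $\fB^*$-unit cells, across each of which a weight changes by a bounded multiplicative factor---whereas you carry out the equivalent continuous version by integrating the uniformly bounded quantity $\la\xi\ra\pa_{\xi_j}\log w$ along a radial ray; the two arguments are interchangeable. (Minor note: your cross-reference ``condition~\eqref{ItSBCptbg} in Definition~\ref{DefIBdd}'' is garbled---what you need is simply that in the product charts $U_\alpha\times\R^n$ of $\fB^*$, a base vector field $W\in\cW$ acts only on the $U_\alpha$ factor and is therefore uniformly bounded for $\fB^*$.)
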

\begin{proof}
  Set $w_0:=w|_o$ where $o\subset{}^\cV T^*M$ denotes the zero section. The unit cell $U_{(\alpha,j,k,\pm 1)}\subset U_\alpha\times\R^n$ of the phase space b.g.\ structure $\fB^*$ defined in~\eqref{EqSBTs}, where $\lambda^{-1}\sim|\xi|\sim 2^k$, can be reached from the zero section by traversing $k$ many $\fB^*$-unit cells, and thus $|w|$ on this unit cell is bounded above, resp.\ below by $k^C$, resp.\ $k^{-C}$ times its value at the zero section over $U_\alpha$, where $C$ is a constant depending on $w$ (cf.\ Definition~\ref{DefSBDWeights}). Therefore, $w_1:=w_0^{-1}w$ is bounded above, resp.\ below by $\lambda^{-C}$, resp.\ $\lambda^{C}$.
\end{proof}

For phase space weights $w\in\CI({}^\cV T^*M)$, we may then define
\[
  \Psi_\cV^w(M) := \{ \Op_\cV(a) + R \colon a\in w S^0({}^\cV T^*M),\ R\in w_0\rho^\infty\Psi_\cV^{-\infty}(M) \}
\]
where $w_0\in\CI(M)$ is as in Lemma~\ref{LemmaFWg}; the principal symbol of $A=\Op_\cV(a)+R\in\Psi_\cV^w(M)$ is the equivalence class
\begin{equation}
\label{EqFWgSymb}
  [a]\in w S^0/w\rho S^{-1}({}^\cV T^*M).
\end{equation}
Theorem~\ref{ThmSBPsdoComp} generalizes without any modifications to its proof; the reason is that we have a full symbol calculus in local charts $U_\alpha$ modulo symbols which are uniformly bounded in $\bar\rho_\alpha^N S^{-N}$ for all $N$. (See in particular Lemmas~\ref{LemmaSBPsdoLeft}\eqref{ItSBPsdoLeft2} and \ref{LemmaSBPsdoCh}\eqref{ItSBPsdoChEsssupp}.) In the notation of Lemma~\ref{LemmaFWg}, we have
\begin{equation}
\label{EqFWgPsdoIncl}
  w_0\Psi_\cV^{-m_+}(M) = \Psi_\cV^{w_0\lambda^{m_+}}(M) \subset\Psi_\cV^w(M)\subset \Psi_\cV^{w_0\lambda^{m_-}}(M) = w_0\Psi_\cV^{-m_-}(M).
\end{equation}
We can then define the associated precise classes of $\cV$-Sobolev spaces by
\begin{equation}
\label{EqFWgHV}
  H_\cV^w(M) := \{ u\in w_0 H_\cV^{m_-}(M) \colon A u\in L^2(M) \}
\end{equation}
for any fixed elliptic operator $A\in\Psi_\cV^{w^{-1}}(M)$, with norm
\[
  \|u\|_{H_\cV^w(M)}^2 := \|u\|_{w_0 H_\cV^{m_-}(M)}^2 + \|A u\|_{L^2(M)}^2.
\]
(Different choices of $m_-$ and $A$ lead to equivalent norms.) In the special case $w=w_0\lambda^s$, this recovers Definition~\ref{DefSBH}, with the condition $A u\in L^2(M)$ being automatically satisfied by Theorem~\ref{ThmSBPsdoH}.

\begin{prop}[Properties of $H_\cV^w(M)$ spaces]
\label{PropFWgHV}
  Let $w,w'$ be phase space weights.
  \begin{enumerate}
  \item\label{ItFWgHVMap} Every $P\in\Psi_\cV^w(M)$ defines a bounded map $H_\cV^{w'}(M)\to H_\cV^{w w'}(M)$.
  \item\label{ItWgHVDual} In the same sense as in Proposition~\usref{PropSBHDual}, we have
    \begin{equation}
    \label{EqFWgHVDual}
      (H_\cV^w(M))^* = H_\cV^{w^{-1}}(M)
    \end{equation}
  \end{enumerate}
\end{prop}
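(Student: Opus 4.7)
\emph{Plan of proof.} For part~\eqref{ItFWgHVMap}, I would verify the two conditions in the definition~\eqref{EqFWgHV} of $H_\cV^{ww'}(M)$ separately. The containment $Pu \in (ww')_0 H_\cV^{(m_-)_{ww'}}(M)$ is immediate from Theorem~\ref{ThmSBPsdoH} combined with the sandwich~\eqref{EqFWgPsdoIncl}. For the $L^2$-condition $A(Pu) \in L^2(M)$ with elliptic $A \in \Psi_\cV^{(ww')^{-1}}(M)$---which is the main point, since $P$ itself need not be elliptic---I would exploit the elliptic operator $A' \in \Psi_\cV^{(w')^{-1}}(M)$ used to define $H_\cV^{w'}(M)$: a parametrix $B' \in \Psi_\cV^{w'}(M)$ with $B'A' = I + R$, $R$ residual, allows the decomposition $A(Pu) = (APB')(A'u) - (APR)u$. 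The generalization of Theorem~\ref{ThmSBPsdoComp} to phase space weights yields $APB' \in \Psi_\cV^{w\equiv 1}(M)$ (bounded on $L^2$), while $APR$ inherits the residual character of $R$ and therefore maps every weighted $\cV$-Sobolev space into $L^2(M)$ with weight cancellation.

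For part~\eqref{ItWgHVDual}, my plan has three steps. \emph{Step 1 (continuous extension of the pairing).} Fix elliptic $A \in \Psi_\cV^{w^{-1}}(M)$ with parametrix $B \in \Psi_\cV^w(M)$ so that $BA = I + R_0$ is residual, and split $\la u, v\ra = \la Au, B^*v\ra - \la R_0 u, v\ra$ for $u, v \in \CIc(M)$. The first term is bounded by $\|Au\|_{L^2}\|B^*v\|_{L^2}$; noting that $H_\cV^w(M) = L^2(M)$ in the trivial case $w\equiv 1$ (directly from~\eqref{EqFWgHV}), part~\eqref{ItFWgHVMap} applied to $B^*$ gives $\|B^*v\|_{L^2} \lesssim \|v\|_{H_\cV^{w^{-1}}}$. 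The residual term is controlled by the mapping properties of $R_0$ against the ambient containment $v \in w_0^{-1}H_\cV^{-m_+}(M)$.

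\emph{Step 2 (representation of a functional).} Given $\lambda \in (H_\cV^w(M))^*$, the map $\Phi\colon u\mapsto(u,Au)$ isometrically embeds $H_\cV^w(M)$ into $w_0 H_\cV^{m_-}(M)\oplus L^2(M)$. I would extend $\lambda\circ\Phi^{-1}$ to the target via Hahn--Banach and use Proposition~\ref{PropSBHDual} to represent the extension by a pair $(v_1, v_2) \in w_0^{-1}H_\cV^{-m_-}(M) \oplus L^2(M)$, so that the distribution $v := v_1 + A^*v_2$ satisfies $\lambda(u) = \la u, v\ra$ for all $u\in H_\cV^w(M)$.

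\emph{Step 3 (membership of $v$ in $H_\cV^{w^{-1}}(M)$).} This is where I expect the main obstacle to lie. The base-space regularity $v \in w_0^{-1}H_\cV^{-m_+}(M)$ comes from the inclusion $A^* \in \Psi_\cV^{w^{-1}}(M) \subset w_0^{-1}\Psi_\cV^{m_+}(M)$ (the analog of~\eqref{EqFWgPsdoIncl} for $w^{-1}$) acting on $v_2 \in L^2(M)$, which absorbs the better-regular piece $v_1$. The ellipticity condition $Bv \in L^2(M)$, for the $B \in \Psi_\cV^w(M)$ defining $H_\cV^{w^{-1}}(M)$, is not visible from any pointwise regularity of $v$; instead, I would extract it dually via
\[
  \la \phi, Bv\ra = \la B^*\phi, v\ra = \lambda(B^*\phi), \qquad \phi \in \CIc(M),
\]
bounding $\|B^*\phi\|_{H_\cV^w} \lesssim \|\phi\|_{L^2}$ by part~\eqref{ItFWgHVMap} with $w'\equiv 1$, hence $|\la\phi, Bv\ra|\lesssim\|\lambda\|\|\phi\|_{L^2}$, whence $Bv \in L^2(M)$ by the density of $\CIc(M)$ in $L^2(M)$ and $L^2$-duality. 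Finally, the injectivity of $H_\cV^{w^{-1}}(M) \hra (H_\cV^w(M))^*$ is immediate from testing against $\CIc(M)$, which is dense in $H_\cV^w(M)$ by reduction (via $\Phi$) to Lemma~\ref{LemmaSBHDense}.
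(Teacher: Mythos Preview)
Your proof is correct. Part~\eqref{ItFWgHVMap} is identical to the paper's argument. Part~\eqref{ItWgHVDual} diverges in its organization, and it is worth recording the difference.

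In your Step~3 you verify $v\in H_\cV^{w^{-1}}$ by checking the two defining conditions separately: base-space membership via the inclusion~\eqref{EqFWgPsdoIncl}, and $Bv\in L^2$ via the duality trick $\la\phi,Bv\ra=\lambda(B^*\phi)$. The paper instead observes that each summand of $v=v_1+A^*v_2$ already lies in $H_\cV^{w^{-1}}$: the piece $A^*v_2$ by part~\eqref{ItFWgHVMap} applied to $A^*\in\Psi_\cV^{w^{-1}}$ acting on $L^2$, and the piece $v_1\in w_0^{-1}H_\cV^{-m_-}$ by the Sobolev analogue of the sandwich~\eqref{EqFWgPsdoIncl} (for $w^{-1}$ the roles of $m_\pm$ are interchanged, so $w_0^{-1}H_\cV^{-m_-}\subset H_\cV^{w^{-1}}$). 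For the reverse inclusion the paper does not use your Step~1; rather, it proves the structural decomposition $H_\cV^{w^{-1}}=w_0^{-1}H_\cV^{-m_-}+A^*L^2$ via the parametrix identity $u=A^*(Bu)-Ru$, and then notes that elements of this form manifestly pair boundedly against $H_\cV^w$ by the very definition of the norm~\eqref{EqFWgHV}. Your approach has the virtue of working directly from the definition and making the continuity of the pairing explicit; the paper's approach yields, as a byproduct, a useful characterization of $H_\cV^{w^{-1}}$ which you do not obtain.
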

\begin{proof}
  Let $u\in H_\cV^{w'}$, so (with primes added to the notation of Lemma~\ref{LemmaFWg}) $u\in w_0' H_\cV^{m'_-}$ and $A'u\in L^2$ where $A'\in\Psi_\cV^{w'{}^{-1}}$ is elliptic. By~\eqref{EqFWgPsdoIncl}, $P u\in w_0 w_0' H_\cV^{m'_-+m_-}$. Moreover, if $A''\in\Psi_\cV^{(w w')^{-1}}$, then $A''P\in\Psi_\cV^{w'{}^{-1}}$; for $B'\in\Psi_\cV^{w'}$ with $B'A'=I+R$, $R\in\rho^\infty\Psi_\cV^{-\infty}$, we then have
  \[
    A''P u = A''P(B'A'-R)u = (A''P B')A'u - (A''P R)u.
  \]
  In the first summand, $A''P B'\in\Psi_\cV^0$ is bounded on $L^2$, and in the second summand, $A''P R\in w^{\prime-1}_0\rho^\infty\Psi_\cV^{-\infty}(M)$ maps $u\in w'_0 H_\cV^{m'_-}$ into $L^2$ as well; thus $A''P u\in L^2$. This proves part~\eqref{ItFWgHVMap}.

  For part~\eqref{ItWgHVDual}, let $\lambda\in(H_\cV^w)^*$. In the notation used in~\eqref{EqFWgHV}, define the isometry $\Phi\colon H_\cV^w\to w_0 H_\cV^{m_-}\oplus L^2$, $u\mapsto(u,A u)$. Denote by $\tilde\lambda\in(w_0 H_\cV^{m_-}\oplus L^2)^*$ the extension of $\Phi(H_\cV^w)\xra{\Phi^{-1}} H_\cV^w\xra{\lambda}\C$ with the same norm; then Proposition~\ref{PropSBHDual} produces $v\in w_0^{-1}H_\cV^{-m_-}$ and $v'\in L^2$ so that $\tilde\lambda(u,A u)=\la u,v\ra_{L^2}+\la A u,v'\ra_{L^2}$, and thus $\lambda(u)=\la u,u^*\ra$ where $u^*=v+A^*v'\in H_\cV^{w^{-1}}$ by part~\eqref{ItFWgHVMap}; this proves `$\subseteq$' in~\eqref{EqFWgHVDual}. Conversely, by this formula, every $u^*$ of this form induces an element of $(H_\cV^w)^*$. To prove `$\supseteq$' in~\eqref{EqFWgHVDual}, it thus remains to note that
  \[
    H_\cV^{w^{-1}} = w_0^{-1}H_\cV^{m_-} + A^*L^2;
  \]
  the nontrivial inclusion `$\subseteq$' follows by taking $B\in\Psi_\cV^{w^{-1}}$ to be a parametrix of $A^*$ with $A^*B=I+R$, $R\in\rho^\infty\Psi_\cV^{-\infty}$, so $u\in H_\cV^{w^{-1}}$ can be written as $u=A^*(B u)+R u$, which is of the desired form.
\end{proof}

\begin{example}[Second microlocal b/scattering algebra]
\label{ExFWgbsc}
  Using phase space weights, we can recover the spaces $\Psi_{\scop,\bop}^{s,r,l}(X)$ of second microlocal b/scattering pseudodifferential operators defined in \cite[\S5]{VasyLowEnergy}. For notational simplicity, we consider only the case $X=[0,\infty)_x$ and work near $x=0$. The unit cells for the b.g.\ structure corresponding to b-analysis are $(2^{-j-2},2^{-j+2})$ for $j\in\N$. The phase space unit cells are thus $U_{j,k}=(2^{-j-2},2^{-j+2})_x\times(2^{k-2},2^{k+2})_\xi$ for $k\in\N$, and $U_{j,0}=(2^{-j-2},2^{-j+2})_x\times(-4,4)_\xi$. Then $\Psi_{\scop,\bop}^{s,r,l}(X)$ is essentially equal to (i.e.\ differs only on the level of residual operators from) $\Psi_\cV^w(X)$ for the phase space weight $w=(\frac{\la\xi\ra^{-1}}{x+\la\xi\ra^{-1}})^{-s}(x+\la\xi\ra^{-1})^{-r}(\frac{x}{x+\la\xi\ra^{-1}})^{-l}$, or, on the level of an equivalent weight family, $w_{j,0}=2^{j l}$ and
  \[
    w_{j,k} = \Bigl(\frac{2^{-k}}{2^{-j}+2^{-k}}\Bigr)^{-s} (2^{-j}+2^{-k})^{-r} \Bigl(\frac{2^{-j}}{2^{-j}+2^{-k}}\Bigr)^l.
  \]
  (The verification that $w$ is indeed a weight is conceptually most straightforwardly done in this case by checking its conormality on the radially compactified b-cotangent bundle, or even better on its blow-up studied in \cite[Lemma~5.1]{VasyLowEnergy}.)
\end{example}

Operators with phase space weights can also be defined in the \emph{parameterized} scaled b.g.\ setting. As an application, this allows one to recover the resolved algebra $\bigcup_{m,l,\nu,\delta}\Psi_{{\rm b,res}}^{m,l,\nu,\delta}$ (and its symbol calculus) which was introduced by Vasy \cite{VasyLowEnergyLag} for low energy resolvent analysis. We leave the details to the interested reader.

\subsection{Variable orders and mildly exotic calculi}
\label{SsFVar}

The need for spaces with variable regularity (and decay, when $\inf\rho=0$) orders has frequently arisen in recent applications when lower or upper bounds imposed on the regularity at different points in phase space (typically due to thresholds conditions in radial point estimates) do not permit constant orders. See for instance \cite[Proposition~5.28]{VasyMinicourse} and Example~\ref{ExFVarSc} below, \cite[\S{5}]{BaskinVasyWunschRadMink}, and \cite[\S{5.2}]{HintzNonstat}. (Variable order spaces were originally introduced in \cite{VishikEskinVariable,UnterbergerVariable,DuistermaatCarleman}.) A typical example of a variable order symbol is $\la\xi\ra^{a(x,\xi)}$ where $a\in S^0(\R^n_x;\R^n_\xi)$ (typically, $a\in S^0_{\rm cl}(\R^n;\R^n)$); this is not of class $S^{\sup a}(\R^n;\R^n)$ since derivatives along $\pa_x$ and $\la\xi\ra\pa_\xi$ lose a factor of $\log(1+\la\xi\ra)$. More generally, one can consider mildly exotic symbols which lose a factor of $\la\xi\ra^\delta$ for $\delta\in[0,\frac12)$, corresponding to the H\"ormander class $S_{\rho,\delta}^{\sup a}(\R^n;\R^n)$ where $\rho=1-\delta$. Restricting to uniform symbols, we say that $a=a(x,\xi)\in S_{\rho,\delta}^m(\R^n;\R^n)$ if
\begin{equation}
\label{EqFVarSymbols}
  |(\la\xi\ra^{-\delta}\pa_x)^\beta(\la\xi\ra^\rho\pa_\xi)^\gamma a(x,\xi)| \leq C_{\beta\gamma}\la\xi\ra^m.
\end{equation}
A typical example is $\la\xi\ra^{a(x,\xi)}$ where $a\in S_{1-\delta',\delta'}^0(\R^n;\R^n)$ for $\delta'<\delta$ (so in particular $a\in S^0$ is allowed). (This includes symbols on suitable inhomogeneous blow-ups of submanifolds of fiber infinity in the radially compactified cotangent bundle.)

\begin{lemma}[B.g.\ structure for $(1-\delta,\delta)$ symbols on $\R^n$]
\label{LemmaFVarBg}
  Let $0<\delta<\frac12$ and fix an integer $C_\delta\geq 2(\frac53)^{\frac{1}{\delta}}$. For $\ell\in\Z^n$ and $i=1,\ldots,n$, $\N\ni k\geq 3$, and $j\in\{-C_\delta k,\ldots, C_\delta k\}^{n-1}$, set
  \begin{equation}
  \label{EqFVarBgU}
  \begin{split}
    U_{0,\ell} &:= \Bigl(\frac12\ell + (-2,2)^n\Bigr)_x \times (-4,4)_\xi^n, \\
    U_{i,k,j,\ell,\pm} &:= \bigl( k^{-1}(\ell+(-4,4)^n) \bigr)_x \times \Bigl\{ \xi\in\R^n \colon {\pm}\xi_i \in \bigl( (k-2)^{\frac{1}{\delta}}, (k+2)^{\frac{1}{\delta}}\bigr), \\
      &\quad \hspace{15.5em}\wh{\xi_i}\in k^{\frac{1}{\delta}-1}(j+(-4,4)^{n-1}) \Bigr\},
  \end{split}
  \end{equation}
  where we write $\wh{\xi_i}=(\xi_1,\ldots,\xi_{i-1},\xi_{i+1},\ldots,\xi_n)$. Define the maps $\phi_{0,\ell}(x,\xi)=(x-\frac12\ell,\frac12\xi)$ and $\phi_{i,k,j,\ell,\pm}(x,\xi)=(\frac12(k x-\ell),(\pm\xi_i)^\delta-k,\frac12(k^{-\frac{1}{\delta}+1}\wh{\xi_i}-j))$. Then $\fB_{\delta,\R^n}^*:=\{(U_\alpha,\phi_\alpha)\colon\alpha=(0,\ell),\ (i,k,j,\ell,\pm)\}$ is a b.g.\ structure on $\R^n_x\times\R^n_\xi$, and
  \begin{equation}
  \label{EqFVarBg}
    \CI_{{\rm uni},\fB_{\delta,\R^n}^*}(\R_x^n\times\R_\xi^n) = S^0_{1-\delta,\delta}(\R^n;\R^n).
  \end{equation}
\end{lemma}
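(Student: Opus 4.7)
The plan is to verify the three axioms of Definition~\ref{DefIBdd} separately, and then establish the equality~\eqref{EqFVarBg} by a direct computation relating the coordinate vector fields $\pa_{\tilde x},\pa_{\tilde\xi}$ in each chart to the weighted derivatives $\la\xi\ra^{-\delta}\pa_x$ and $\la\xi\ra^{1-\delta}\pa_\xi$ appearing in~\eqref{EqFVarSymbols}.

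First I would address the covering condition and finite multiplicity. The sets $\phi_{0,\ell}^{-1}((-1,1)^n)$ cover the region $|\xi|<2$, while for $|\xi|\geq 2$ one picks the (up to constant) dominant coordinate $\xi_i$, chooses $k\in\N$ with $|\xi_i|^\delta\in(k-1,k+1)$, then finds $\ell\in\Z^n$ with $k x-\ell\in(-2,2)^n$ and $j$ with $k^{1-1/\delta}\wh{\xi_i}-j\in(-2,2)^{n-1}$; the constraint $|j|\leq C_\delta k$ is satisfied because $|\wh{\xi_i}|_\infty\leq|\xi_i|\leq(k+2)^{1/\delta}$ and $C_\delta\geq 2(5/3)^{1/\delta}\geq 2((k+2)/k)^{1/\delta}$ for all $k\geq 3$. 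Finite multiplicity follows since at any fixed $(x,\xi)$, each discrete parameter $\ell$, $k$, $j$ can only take on $O(1)$ values given the others (the $k$-intervals in $|\xi_i|$ have length $O(k^{1/\delta-1})$, which tends to $0$).

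The main work is verifying that the transition functions are uniformly $\CI$. For two cells $U_\alpha$, $U_\beta$ with the same dominant index $i$ and sign $\pm$, the key point is that the nonlinear piece $\xi_i=\pm(\tilde\xi_i+k)^{1/\delta}$ interacts simply across charts: in $\beta$-coordinates $\tilde\xi_i'=\xi_i^\delta-k'=\tilde\xi_i+(k-k')$ is a bounded translation, while the $\tilde x$- and $\widetilde{\wh{\xi_i}}$-transitions are affine with coefficients $k'/k,(k'/k)^{1-1/\delta}$ that are bounded above and below (and whose derivatives vanish). The hard part will be transitions between charts with \emph{different} dominant indices $i\neq i'$: in such an $\alpha=(i,k,j,\ell,\pm)$ chart, $\xi_{i'}=k^{1/\delta-1}(2\widetilde{\wh{\xi_i}}_{i'}+j_{i'})$, and the rescaled coordinate on the $\beta$-side reads
\[
  \tilde\xi'_{i'} = \pm_\beta\bigl(k^{1/\delta-1}|2\widetilde{\wh{\xi_i}}_{i'}+j_{i'}|\bigr)^\delta - k' = k^{1-\delta}|2\widetilde{\wh{\xi_i}}_{i'}+j_{i'}|^\delta - k'.
\]
Naively, a derivative of $(\cdot)^\delta$ loses a factor $(2\widetilde{\wh{\xi_i}}_{i'}+j_{i'})^{\delta-1}$, which is dangerous since this argument can be small. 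The observation that saves us is that on the overlap, we require both $\xi_i$ and $\xi_{i'}$ to exceed size $\sim 1$ after raising to the $\delta$ power; comparing sizes forces $|2\widetilde{\wh{\xi_i}}_{i'}+j_{i'}|\sim k\sim k'$, so each derivative in $\widetilde{\wh{\xi_i}}_{i'}$ produces a factor $k^{1-\delta}\cdot k^{\delta-1}=1$ and all higher derivatives likewise give bounded factors $k^{1-\ell}$. The transitions involving the $U_{0,\ell}$ cells only overlap with cells having $k\in\{3\}$ (a finite set) and $|\xi_i|$ bounded below by $1$, so $\xi_i\mapsto(\pm\xi_i)^\delta$ has uniformly bounded derivatives there.

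For the symbol class identity~\eqref{EqFVarBg}, I would use that in each chart $U_{i,k,j,\ell,\pm}$,
\[
  (\phi_\alpha)_*\bigl(\tfrac{2}{k}\pa_x\bigr)=\pa_{\tilde x},\qquad (\phi_\alpha)_*\bigl(\pm\tfrac{(\pm\xi_i)^{1-\delta}}{\delta}\pa_{\xi_i}\bigr)=\pa_{\tilde\xi_i},\qquad (\phi_\alpha)_*\bigl(2 k^{1/\delta-1}\pa_{\wh{\xi_i}^m}\bigr)=\pa_{\widetilde{\wh{\xi_i}}_m},
\]
together with $k\sim|\xi_i|^\delta\sim\la\xi\ra^\delta$ and $k^{1/\delta-1}\sim|\xi_i|^{1-\delta}\sim\la\xi\ra^{1-\delta}$ on the cell (using dominance of $\xi_i$). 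Uniform boundedness of $(\phi_\alpha)_*a$ together with all derivatives in $\tilde x,\tilde\xi$ is therefore equivalent, up to harmless constants, to the bounds $|(\la\xi\ra^{-\delta}\pa_x)^\beta(\la\xi\ra^{1-\delta}\pa_\xi)^\gamma a|\leq C_{\beta\gamma}$ from~\eqref{EqFVarSymbols} with $m=0$. On the $U_{0,\ell}$ cells, $\la\xi\ra\sim 1$, and the chart is essentially a translation, so uniform boundedness reduces to the local bounds on $\pa_x^\beta\pa_\xi^\gamma a$ which are automatic for $a\in S^0_{1-\delta,\delta}$. Higher-order estimates follow by iteration of the first-order identities above, using that all relevant rescalings have derivatives (in $\tilde x,\tilde\xi$) that are themselves uniformly bounded.
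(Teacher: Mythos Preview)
Your approach is essentially the same as the paper's, only more thorough: the paper's proof is a sketch that explains the role of $C_\delta$ for the covering condition and then argues for~\eqref{EqFVarBg} via the scaling identifications $\la\xi\ra\sim k^{1/\delta}$, $k^{-1}\pa_x\sim\la\xi\ra^{-\delta}\pa_x$, $k^{1/\delta-1}\pa_\xi\sim\la\xi\ra^{1-\delta}\pa_\xi$; it does not spell out the transition-function bounds at all. Your added analysis of the transitions (including the delicate case $i\neq i'$, where $|2\widetilde{\wh{\xi_i}}_{i'}+j_{i'}|\sim k$ saves the $(\cdot)^\delta$ derivatives) is correct and fills in what the paper omits.

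One small slip: your parenthetical ``the $k$-intervals in $|\xi_i|$ have length $O(k^{1/\delta-1})$, which tends to $0$'' is wrong---since $1/\delta-1>0$, these lengths tend to infinity. The correct reason for finite multiplicity in $k$ is that $|\xi_i|\in((k-2)^{1/\delta},(k+2)^{1/\delta})$ is equivalent to $k\in(|\xi_i|^\delta-2,|\xi_i|^\delta+2)$, which contains at most four integers. Your conclusion is right; only the justification needs this fix. Also, your phrase ``using dominance of $\xi_i$'' is slightly misleading: on $U_{i,k,j,\ell,\pm}$ the coordinate $\xi_i$ need not be the largest (indeed $|\wh{\xi_i}|_\infty$ can be up to $\sim C_\delta k^{1/\delta}$), but one still has $\la\xi\ra\sim k^{1/\delta}$ with constants depending on $C_\delta$, which is all you need.
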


Unlike the b.g.\ structure in Definition~\ref{DefSBTs}, $\fB_{\delta,\R^n}^*$ is no longer the product of a b.g.\ structure on $\R^n$ and another one on $\R^n$. If one wishes to have a b.g.\ structure which in the limit $\delta\searrow 0$ recovers the structure from Definition~\ref{DefSBTs}, one can use $\pm\xi_i\in((1+\delta(k-2)\log 2)^{\frac{1}{\delta}},(1+\delta(k+2)\log 2)^{\frac{1}{\delta}})$, with similar modifications for $\wh{\xi_i}$.

\begin{proof}[Proof of Lemma~\usref{LemmaFVarBg}]
  The purpose of the constant $C_\delta$ is that on $U_{i,k,j,\ell,\pm}$, the largest possible values of each component of $\wh{\xi_i}$ exceeds $k^{\frac{1}{\delta}-1}(C_\delta k)=k^{\frac{1}{\delta}}C_\delta\geq 2(k+2)^{\frac{1}{\delta}}$, thus twice the maximum of $|\xi_i|$. This ensures that the sets $\phi_\alpha^{-1}((-1,1)^n)$ indeed cover $\R^n\times\R^n$. We only argue for the equality~\eqref{EqFVarBg}: on $U_{i,k,j,\ell,\pm}$, we have $\la\xi\ra\sim k^{\frac{1}{\delta}}$. Thus, the pullbacks under $\phi_{i,k,j,\ell,\pm}$ of coordinate derivatives are essentially $k^{-1}\pa_x\sim\la\xi\ra^{-\delta}\pa_x$ and (given that $(k+2)^{\frac{1}{\delta}}-(k-2)^{\frac{1}{\delta}}=4(k-2)^{\frac{1}{\delta}-1}\delta^{-1}+\cO(k^{\frac{1}{\delta}-2})$) $k^{\frac{1}{\delta}-1}\pa_\xi\sim\la\xi\ra^{1-\delta}\pa_\xi$.
\end{proof}

\begin{definition}[$(1-\delta,\delta)$-phase space b.g.\ structure]
\label{DefFVarBg}
  Let $\fB=\{(U_\alpha,\phi_\alpha)\colon\alpha\in\sA\}$ be a b.g.\ structure on $M$. With respect to the trivializations of ${}^\cV T^*M$ over each $U_\alpha$ given by Definition~\ref{DefSBDCotgt}, and using the notation~\eqref{EqFVarBgU}, we define
  \begin{alignat*}{2}
    U_{(\alpha,0)} &:= U_\alpha \times (-4,4)^n, &\qquad
    U_{(\alpha,i,k,j,\ell,\pm)} &:= (\phi_\alpha^{-1}\times\Id)U_{i,k,j,\ell,\pm} \subset {}^\cV T^*_{U_\alpha}M, \\
    \phi_{(\alpha,0)} &:= \phi_\alpha \times \frac12\Id, &\qquad
    \phi_{(\alpha,i,k,j,\ell,\pm)} &:= \phi_{i,k,j,\ell,\pm}\circ(\phi_\alpha\times\Id),
  \end{alignat*}
  where $i=1,\ldots,n$, $\N\ni k\geq 3$, $j\in\{-C_\delta k,\ldots,C_\delta k\}$, and $\ell\in\{-2 k+4,\ldots,2 k-4\}^n$. We then define the \emph{$(1-\delta,\delta)$-phase space b.g.\ structure} to be
  \[
    \fB_\delta^* := \bigl\{(U_{(\alpha,\beta)},\phi_{(\alpha,\beta)})\colon\alpha\in\sA,\ \beta\in\{0,(i,k,j,\ell,\pm)\} \bigr\}.
  \]
\end{definition}

The equality~\eqref{EqFVarBg} then generalizes to
\[
  \CI_{{\rm uni},\fB_\delta^*}({}^\cV T^*M) = S^0_{1-\delta,\delta}({}^\cV T^*M),
\]
where the space on the right is defined analogously to Definition~\ref{DefSBDSymb} but now using the bounds~\eqref{EqFVarSymbols} in distinguished charts and trivializations. If we set $\fB_0^*:=\fB^*$, then for $0\leq\delta'\leq\delta$ we have
\begin{equation}
\label{EqFVarC0Incl}
  \cC^0_{{\rm uni},\fB_{\delta'}^*}({}^\cV T^*M)\subset\cC^0_{{\rm uni},\fB_\delta^*}({}^\cV T^*M).
\end{equation}

The appropriate microlocalization locus is
\[
  \fM_\delta := \fu({}^\cV T^*M,\fB_\delta^*) \cap \{ j(\lambda\rho)=0 \},
\]
where $\rho,\lambda$ are as in Definition~\usref{DefSBDScWeight} and Lemma~\usref{LemmaSBTsSymbols}\eqref{ItSBTsSymbols1}. The inclusion~\eqref{EqFVarC0Incl} induces a continuous map $\fu({}^\cV T^*M,\fB_\delta^*)\to\fu({}^\cV T^*M,\fB_0^*)$, which by the same arguments as in the proof of Lemma~\ref{LemmabgUniv} is surjective, and thus a surjective continuous map
\[
  \fM_\delta\to\fM
\]
of compact Hausdorff spaces.\footnote{This map is not injective when $\delta>0$, as otherwise it would be a homeomorphism; but that it is not (reflecting the fact that using $(1-\delta,\delta)$-symbols, one can microlocalize more finely). Indeed, in a chart $U_\alpha$, consider the points $p_{(k)}=(0,\xi_{(k)})$, $\xi_{(k)}=(k,0,0,\ldots,0)$, and $q_{(k)}=(0,\eta_{(k)})$, $\eta_{(k)}=(k,k^{1-\delta},0,\ldots,0)$. Fix compatible metrics $d_\delta$ and $d_0$ for $\fB_\delta^*$ and $\fB_0^*$, respectively; then $\inf_{k,k'} d_\delta(p_{(k)},q_{(k')})>0$, whereas $\inf_k d_0(p_{(k)},q_{(k)})=0$, so the closures of the sets $\{p_{(k)}\}$ and $\{q_{(k')}\}$ are disjoint in the first case, but not so in the second case (cf.\ Lemma~\ref{LemmabgCap}).}

We can now set
\begin{subequations}
\begin{equation}
\label{EqFVarPsi}
  w\Psi_{\cV,1-\delta,\delta}^m(M) := \{ \Op_\cV(a)+R \colon a\in w S^m_{1-\delta,\delta}({}^\cV T^*M),\ R\in w\rho^\infty\Psi_\cV^{-\infty}(M) \},
\end{equation}
the properties of which (algebra properties, boundedness on Sobolev spaces) are the same as those of $w\Psi_\cV(M)$, \emph{except} the principal symbol of $A=\Op_\cV(a)+R$ is now
\begin{equation}
\label{EqFVarPsiSymb}
  \upsigma_{\cV,1-\delta,\delta}^{m,w}(A) = [a] \in w S_{1-\delta,\delta}^m / w\rho S_{1-\delta,\delta}^{m-1+2\delta}({}^\cV T^*M).
\end{equation}
\end{subequations}
Theorem~\ref{ThmSBPsdoComp} thus remains valid, \emph{mutatis mutandis}. (We leave the minor modifications of the proofs to the interested reader.)

We proceed to discuss operators with variable orders and weights; \emph{for notational simplicity, we only consider order functions which are standard symbols} rather than $(1-\delta',\delta')$ symbols.

\begin{definition}[Variable order symbols, ps.d.o.s, Sobolev spaces]
\label{DefFVarSymbPsdo}
  Let $\sfm,\sfl\in S^0({}^\cV T^*M)$, and let $w\in\CI(M)$ be a weight on $(M,\fB)$. In the notation of Definition~\usref{DefSBDScWeight} and Lemma~\usref{LemmaSBTsSymbols}\eqref{ItSBTsSymbols1}, we then write
  \begin{align*}
    \rho^{-\sfl}S^\sfm({}^\cV T^*M) &:= \rho^{-\sfl}\lambda^{-\sfm}\bigcap_{\delta\in(0,\frac12)}S^0_{1-\delta,\delta}({}^\cV T^*M), \\
    w\Psi_\cV^{\sfm,\sfl}(M) &:= \{ \Op_\cV(a)+R \colon a\in w\rho^{-\sfl}S^\sfm({}^\cV T^*M),\ R\in w\rho^\infty\Psi_\cV^{-\infty}(M) \},
  \end{align*}
  with the principal symbol of $A=\Op_\cV(a)+R$ being
  \begin{equation}
  \label{EqFVarSymbPsdo}
    \upsigma_\cV^{\sfm,\sfl}(A) = [a] \in w\rho^{-\sfl}S^\sfm \,\Big/ \bigcap_{\delta\in(0,\frac12)}w\rho^{-\sfl+1-2\delta}S^{\sfm-1+2\delta}({}^\cV T^*M).
  \end{equation}
  Fixing an elliptic operator $A\in w\rho^{-\sfl}\Psi_\cV^\sfm(M)$, we furthermore define
  \[
    w H_\cV^{\sfm,\sfl}(M) := \{ u\in w\rho^{\inf\sfl}H_\cV^{\inf\sfm}(M) \colon A u\in L^2(M) \}.
  \]
\end{definition}

A topology on the space $w\Psi_\cV^{\sfm,\sfl}(M)$ can be defined similarly to Remark~\ref{RmkSBPsdoTop}.

If only $\sfl$ is variable but $\sfm=m$ is not, then the principal symbol is well-defined in the sharper space $w\rho^{-\sfl}S^m/\bigcap_{\delta\in(0,\frac12)} w\rho^{-\sfl+1-2\delta}S^{m-1}({}^\cV T^*M)$; analogously when $\sfm$ is variable but $\sfl$ is not. The space $\rho^{-\sfl}S^\sfm({}^\cV T^*M)$ only depends on $\sfm,\sfl$ modulo $\bigcup_{\eps>0}\rho^\eps S^{-\eps}({}^\cV T^*M)$. Furthermore,
\[
  \rho^{-\sfl}S^\sfm({}^\cV T^*M) \subset \bigcap_{\delta\in(0,\frac12)} \rho^{-\sup\sfl}S_{1-\delta,\delta}^{\sup\sfm}({}^\cV T^*M).
\]
Note that if in a chart $U_\alpha$ we replace $\rho$ by the constant $\bar\rho_\alpha$, then derivatives of $\bar\rho_\alpha^{-\sfl}$ along $\rho\pa_x$ or $\la\xi\ra\pa_\xi$ gain a power of $\bar\rho_\alpha\log\bar\rho_\alpha$, which leads to the (arbitrarily small) loss in the $\rho$-weight in~\eqref{EqFVarSymbPsdo}.

The weights, resp.\ orders of variable order operators are multiplicative, resp.\ additive under composition, and indeed Theorem~\ref{ThmSBPsdoH} remains valid, \emph{mutatis mutandis}, in the variable order case. As the microlocalization locus, one may work with $\fM$. Furthermore, by mimicking the proof of Proposition~\ref{PropFWgHV}, one shows
\[
  w\Psi_\cV^{\sfm,\sfl}(M) \ni A \colon w' H_\cV^{\sfm',\sfl'}(M) \to w w' H_\cV^{\sfm'-\sfs,\sfl'-\sfl}(M),\quad
  (w H_\cV^{\sfm,\sfl}(M))^*=w^{-1}H_\cV^{-\sfm,-\sfl}(M).
\]

\begin{example}[Variable scattering decay]
\label{ExFVarSc}
  We use the scaled b.g.\ structure~\eqref{EqISc2Sets}--\eqref{EqISc2Scale}, with operator Lie algebra $\cV$ given by scattering vector fields on $\ol{\R^n}$ with conormal coefficients. Consider $\sfl_0(x,\xi):=-\frac12-\eps\frac{x\cdot\xi}{|x||\xi|}$ where $\eps>0$ is fixed. Then $\sfl$ can be extended from $|x|>1$, $|\xi|>\frac{c}{2}>0$, to an element of $S^0({}^\cV T^*M)$, and for $\lambda>c$ then, the limiting absorption principle states that $(\Delta-\lambda+i 0)^{-1}\colon H_\cV^{s-2,\sfl+1}(\R^n)\to H_\cV^{s,\sfl}(\R^n)$ (for any $s\in\R$); see \cite[Proposition~5.28]{VasyMinicourse}.
\end{example}

\begin{rmk}[Phase space weights and variable orders]
\label{RmkFVarTs}
  Suppose $w$ is a phase weight on $({}^\cV T^*M,\fB^*)$, so $w=w_0 w_1$ where $w_0\in\CI(M)$ is a weight on $(M,\fB)$ and $C^{-1}\lambda^{m_+}\leq w_1\leq C\lambda^{m_-}$. Scaling $\lambda$ by a constant factor so that $\lambda<\frac12$, we then have $w_1=\lambda^\sfm$ where $\sfm=(\log w_1)/(\log\lambda)$ takes values in a bounded interval. Furthermore, for $V\in\CI_{{\rm uni},\fB^*}({}^\cV T^*M)$ (in local coordinates $\pa_x$ and $\la\xi\ra\pa_\xi$), one computes $V\sfm=\frac{V w_1}{w_1}\frac{1}{\log\lambda}-\sfm\frac{V\lambda}{\lambda}$, which is bounded; higher derivatives are treated similarly. Thus, $\sfm\in S^0({}^\cV T^*M)$. Therefore,
  \[
    w S^0({}^\cV T^*M) \subset w_0 S^{-\sfm}({}^\cV T^*M),
  \]
  similarly for ps.d.o.s. The small benefit of phase space weights (when they are usable) is that the principal symbol is slightly more precise; compare~\eqref{EqFWgSymb} with \eqref{EqFVarSymbPsdo}. Note however that, conversely, variable order operators are typically \emph{not} operators with phase space weights.
\end{rmk}

\begin{rmk}[Parameterized version]
\label{RmkFVarParam}
  The discussion in the present section generalizes to variable order symbols, ps.d.o.s, and Sobolev spaces depending on a parameter; indeed, the discussion in~\S\ref{SsSBPar} applies with only straightforward notational modifications. For example, in a semiclassical bounded geometry setting, one can work with variable semiclassical orders. Applications in the literature include \cite[\S{2.3}]{GalkowskiThinBarriers} and \cite{HintzVasyCauchyHorizon}; see also Example~\ref{ExFFTHV}.
\end{rmk}

\begin{rmk}[Admissible compactifications]
\label{RmkFVarCpt}
  The discussion in~\S\ref{SsSBComp} generalizes to mildly exotic and variable order symbols. The main change is that in Definition~\ref{DefSBCpt}\eqref{ItSBCptbg} (and subsequently) one needs to replace $\fB^*$ by $\fB_\delta^*$, and correspondingly $S^0$ by $S^0_{1-\delta,\delta}$.
\end{rmk}

Consider finally a finite collection
\[
  \rho_1,\ldots,\rho_N \in \CI_{{\rm uni},\fB}(M)
\]
of \emph{bounded} weights on $(M,\fB)$, and assume that $\rho_j$ dominates $\rho$ in that
\[
  \frac{\rho}{\rho_j} \in \CI_{{\rm uni},\fB}(M),\quad j=1,\ldots,N.
\]
Spaces of ps.d.o.s with weights $w=\rho_1^{-l_1}\cdots\rho_N^{-l_N}$ where the orders $l_j$ are \emph{constants} can be defined as $w\Psi_\cV^m(M)$ simply. However, if one needs \emph{variable} orders $l_j$, the setup of Definition~\ref{DefFVarSymbPsdo} does not suffice when $N\geq 2$. Instead, one works with symbols
\begin{subequations}
\begin{equation}
\label{EqFVarMultiWeight1}
  S^{\sfm,(\sfl_1,\ldots,\sfl_N)}({}^\cV T^*M) := \Biggl(\prod_{j=1}^N \rho_j^{-\sfl_j}\Biggr)\lambda^{-\sfm}\bigcap_{\delta\in(0,\frac12)} S^0_{1-\delta,\delta}({}^\cV T^*M)
\end{equation}
and their quantizations
\begin{equation}
\label{EqFVarMultiWeight2}
  w\Psi_\cV^{\sfm,(\sfl_1,\ldots,\sfl_N)}(M) := \Op_\cV\bigl(w S^{\sfm,(\sfl_1,\ldots,\sfl_N)}({}^\cV T^*M)\bigr) + w\rho^\infty\Psi_\cV^{-\infty}(M);
\end{equation}
the principal symbol is now valued in
\begin{equation}
\label{EqFVarMultiWeight3}
  w S^{\sfm,(\sfl_1,\ldots,\sfl_N)} / \bigcap_{\delta\in(0,\frac12)} w \rho S^{\sfm-1+2\delta,(\sfl_1-2\delta,\ldots,\sfl_N-2\delta)}({}^\cV T^*M).
\end{equation}
\end{subequations}

\subsection{Translation symmetries and Fourier transforms}
\label{SsFFT}

In applications where on a manifold $M=\R_t\times X$ one takes a Fourier (or Mellin) transform in $t$ to reduce the PDE under study on $M$ to a family of PDEs on $X$, it is important to have Plancherel type theorems available in order to identify $L^2$-based Sobolev norms on $M$ with suitable $L^2$-type norms on $\R_\sigma\times X$ where $\sigma$ is the Fourier-dual variable to $t$; see e.g.\ \cite[\S{3.1}]{VasyMicroKerrdS}, \cite[Proposition~4.24 and \S{7.3}]{Hintz3b}, \cite[Step (iii) in the proof of Proposition~5.19]{HintzNonstat}. The goal of this section is to prove a general result of this type (including for spaces with variable orders), which we will accomplish via the study of $t$-translation invariant $\cV$-ps.d.o.s for suitable scaled b.g.\ structures on such product manifolds $M$.

\begin{definition}[Scaled b.g.\ structure on $\R\times X$]
\label{DefFFTbg}
  Let $\fB_{X,\times}=\{(U_\alpha,\phi_\alpha,\rho_\alpha)\colon\alpha\in\sA\}$ be a scaled b.g.\ structure on the $(n-1)$-dimensional manifold $X$, with underlying b.g.\ structure denoted $\fB_X$. Write $\hat\cV_0\subset\CI(X;T X)$ for the operator Lie algebra of $(X,\fB_{X,\times})$. Let moreover $\{\rho_{\alpha,0}\colon\alpha\in\sA\}$ and $\{\tau_\alpha\colon\alpha\in\sA\}$ be weight families (Definition~\ref{DefSBDWeights}), with $0<\rho_{\alpha,0}\leq 1$. Then the associated scaled b.g.\ structure on $M=\R\times X$ is
  \begin{alignat*}{2}
    &\fB_\times = \{ (U_{(j,\alpha)},\phi_{(j,\alpha)},\rho_{(j,\alpha)}) \colon \alpha\in\sA,\ j\in\Z \}, \hspace{-20em}&& \\
    &\qquad U_{(j,\alpha)} &&= \Bigl( (j-2)\frac{\tau_\alpha}{\rho_{\alpha,0}}, (j+2)\frac{\tau_\alpha}{\rho_{\alpha,0}}\Bigr)_t \times U_\alpha, \\
    &\qquad \phi_{(j,\alpha)} &&= \phi_{T,(j,\alpha)} \times \phi_\alpha, \qquad \phi_{T,(j,\alpha)}:=\frac{\rho_{\alpha,0}}{\tau_\alpha}(\cdot)-j, \\
    &\qquad \rho_{(j,\alpha),0} &&= \rho_{\alpha,0},\quad \rho_{(j,\alpha),i}=\rho_{\alpha,i}\ \ (i=1,\ldots,n-1).
  \end{alignat*}
  We write $\fB$ for the underlying b.g.\ structure. We moreover write $\rho_X$, $\rho_0$, and $\tau\in\CI(X)$ for weights on $(X,\fB_X)$ which are equivalent to $\{\bar\rho_\alpha\colon\alpha\in\sA\}$, $\{\rho_{\alpha,0}\colon\alpha\in\sA\}$, and $\{\tau_\alpha\colon\alpha\in\sA\}$, respectively. (So $\rho_X$ is a scaling weight on $(X,\fB_{X,\times})$.)
\end{definition}

On $U_{(j,\alpha)}$, with local coordinates $t\in\R$ and $x_\alpha^i=\phi_\alpha^i$, define $T_{(j,\alpha)}$ and $\tilde x_\alpha$ via
\[
  T_{(j,\alpha)} = \frac{\rho_{\alpha,0}}{\tau_\alpha}t-j,\qquad
  \tilde x_\alpha^i = \frac{x_\alpha^i}{\rho_{\alpha,i}}\ \ (i=1,\ldots,n-1).
\]
Then elements of the coefficient Lie algebra $\cW=\CI_{{\rm uni},\fB}(M;T M)$ on $(M,\fB_\times)$ are, in local coordinates, uniformly bounded linear combinations of $\pa_{T_{(j,\alpha)}}=\frac{\tau_\alpha}{\rho_{\alpha,0}}\pa_t$ and $\pa_{x_\alpha^i}=\rho_{\alpha,i}^{-1}\pa_{x_\alpha^i}$, while elements of the operator Lie algebra $\cV$ are uniformly bounded linear combinations of
\begin{equation}
\label{EqFFTcV}
  \rho_{\alpha,0}\pa_{T_{(j,\alpha)}} = \tau_\alpha\pa_t,\qquad
  \rho_{\alpha,i}\pa_{x_\alpha^i} = \pa_{\tilde x_\alpha^i}.
\end{equation}
Thus, $\tau_\alpha$ specifies the scaling of $t$-derivatives, while $\frac{1}{\rho_{\alpha,0}}\geq 1$ is the factor by which unit cells are longer in the $t$-direction than $\tau_\alpha$.

\begin{example}[Operators on Euclidean space]
\fakephantomsection
\label{ExFFTEucl}
  \begin{enumerate}
  \item\label{ItFFTEuclsc} If one defines $\fB_{X,\times}$ on $X=\R^{n-1}$ to be $U_\alpha=\alpha+(-4,4)^{n-1}$, $\alpha\in\Z^{n-1}$, with trivial scalings $\rho_{\alpha,i}=1$, and $\tau_\alpha=\rho_{\alpha,0}=1$, then $\fB$ is equal to the b.g.\ structure~\eqref{EqIScBdd}; the operator Lie algebra consists of linear combinations of $\pa_t$, $\pa_{x^i}$ ($i=1,\ldots,n-1$) with uniformly bounded smooth coefficients on $\R^n$.
  \item\label{ItFFTEuclb} In order to gain access to the boundary principal symbol (which will descend to the spectral, i.e.\ Fourier transform, side), one instead takes $\fB_{X,\times}$ the scaled b.g.\ structure~\eqref{EqISc2Sets}--\eqref{EqISc2Scale} (on $\R^{n-1}$), and then on $\fB_X$-unit cells $U_\alpha=U_{j,k,\pm 1}$, where $|x|\sim 2^k$, one takes $\tau_\alpha=1$ and $\rho_{\alpha,0}=2^{-k}$. The resulting scaled b.g.\ structure $\fB_\times$ is not equivalent to~\eqref{EqISc2Sets}--\eqref{EqISc2Scale}, but rather to that corresponding to 3-body scattering operators on $\bar M:=[\ol{\R\times\R^{n-1}};\pa(\ol\R\times\{0\})]$ with 3b-regular coefficients. (The setting~\eqref{ItFFTEuclsc} gives rise to 3sc-operators with 3sc-regular coefficients---which are indistinguishable from scattering operators on $\R^n$ with scattering-regular coefficients.)
  \item\label{ItFFTEucl3b} With $\fB_X$ as in~\eqref{ItFFTEuclb}, take now $\tau_\alpha=2^k$ and $\rho_{\alpha,0}=1$. Then $\fB_\times$ is the scaled b.g.\ structure (with trivial scalings) with coefficient Lie algebra given by 3b-vector fields on $\bar M$ with 3b-regular coefficients, cf.\ \S\ref{SssIF}\eqref{ItI3b}.
  \end{enumerate}
\end{example}

\begin{example}[b-operators on manifolds with boundary]
\label{ExFFTb}
  Let $\bar M$ be a compact manifold with boundary. The inward pointing normal bundle ${}^+N\bar\pa M$ of the boundary is where the normal operator \cite[\S{4.15}]{MelroseAPS} lives and where the Mellin transform is ultimately used \cite[\S{5.1}]{MelroseAPS}. The same construction as in Example~\ref{ExFFTEucl}\eqref{ItFFTEuclsc} produces the b-algebra on ${}^+N\bar\pa M$ based on the vector fields $\rho\pa_\rho$, $\pa_{x^i}$ where $\rho$ is a fiber-linear defining function of the zero section in ${}^+N\bar\pa M$, under the identification $t=-\log\rho$ (and thus the Mellin transform in $\rho$ is the same as the Fourier transform in $t$).
\end{example}

Fix a uniformly positive $\hat\cV_0$-density $\mu_X$ on $X$ to define $L^2(X)$, and let $\mu:=|\dd t|\mu_X$. (Thus, $\tau^{-1}\mu=|\frac{\dd t}{\tau}|\mu_X$ is a uniformly positive $\cV$-density on $M$.) Define the Fourier transform on $\CIc(M)$, resp.\ its inverse, by
\[
  (\cF u)(\sigma,p) = \int_\R e^{-i\sigma t}u(t,p)\,\dd t,\quad\text{resp.}\quad
  (\cF^{-1}v)(t,p) = (2\pi)^{-1}\int_\R e^{i\sigma t}v(\sigma,p)\,\dd\sigma,
\]
where $\sigma\in\R$, $p\in X$. Plancherel's theorem gives an isomorphism $\cF\colon L^2(\R\times X;\mu)\xra{\cong}L^2(\R;L^2(X;\mu_X))$.

Let now $u\in L^2(\R\times X)$; for simplicity, let us assume that $\supp u\subset\R\times U'_\alpha$ where $U'_\alpha=\phi_\alpha^{-1}([-\frac32,\frac32]^n)$. Then, in the coordinates $t,x_\alpha$, the membership $u\in H_\cV^1(\R\times X)$ is equivalent to $\hat u,\tau_\alpha\sigma\hat u,\rho_{\alpha,i}\pa_{x_\alpha^i}\hat u\in L^2(\R_\sigma;L^2(X))$, or equivalently
\begin{equation}
\label{EqFFTH1}
  \hat u \in L^2\bigl(\R_\sigma; \la\tau_\alpha\sigma\ra H_{\hat\cV_\sigma}^1(U_\alpha)\bigr),\qquad \|v\|_{H_{\hat\cV_\sigma}^1(U_\alpha)}^2 := \|v\|_{L^2}^2 + \sum_{i=1}^{n-1} \| \la\tau_\alpha\sigma\ra^{-1}\rho_{\alpha,i}\pa_{x_\alpha^i}v\|_{L^2}^2,
\end{equation}
where $\|\hat u(\sigma)\|_{\la\tau_\alpha\sigma\ra H_{\hat\cV_\sigma}^1(U_\alpha)}=\|\la\tau_\alpha\sigma\ra^{-1}\hat u(\sigma)\|_{H_{\hat\cV_\sigma}^1(U_\alpha)}$. (For $\sigma=0$, the $H_{\hat\cV_0}^1(U_\alpha)$-norm appearing here, for functions supported in $U'_\alpha$, is equivalent to the Sobolev norm on $X$ with respect to the operator Lie algebra $\hat\cV_0$ from Definition~\ref{DefFFTbg}.) (See Proposition~\ref{PropFFTHV} below for the full result.)

The derivatives appearing in~\eqref{EqFFTH1} motivate the following definition.

\begin{definition}[Spectral scaled b.g.\ structure on $X$]
\label{DefFFTSpec}
  In the notation of Definition~\usref{DefFFTbg}, we define for $\sigma\in\R$
  \[
    \hat\fB_{\sigma,\times} := \{ (U_\alpha,\phi_\alpha,\rho_{\sigma,\alpha}) \colon \alpha\in\sA \},\qquad
    \rho_{\sigma,\alpha,i} := \frac{\rho_{\alpha,i}}{\la\tau_\alpha\sigma\ra}.
  \]
  We write $\hat\fB_\times:=(\hat\fB_{\sigma,\times})_{\sigma\in\R}$ for the parameterized scaled b.g.\ structure on $X$ with parameter space $\R_\sigma$, and $\hat\fB$ for the underlying parameterized b.g.\ structure (so $\hat\fB=(\hat\fB_\sigma)_{\sigma\in\R}$ where $\hat\fB_\sigma=\fB_X$ for all $\sigma\in\R$). The associated operator Lie algebra is denoted $\hat\cV=(\hat\cV_\sigma)_{\sigma\in\R}$.
\end{definition}

An element of $\hat\cV$ is thus a family $(V_\sigma)_{\sigma\in\R}$ of elements of $\hat\cV_\sigma$ (the operator Lie algebra of $(X,\hat\fB_{\sigma,\times})$) whose coefficients, when expressed in local coordinates as linear combinations of $\rho_{\sigma,\alpha,i}\pa_{x_\alpha^i}$, obey uniform $\CI$ bounds. Thus, we have $\hat\cV_\sigma=\la\tau\sigma\ra^{-1}\hat\cV_0$.

Observe that if $w\in\CI(X)$ is a weight on $(X,\fB_X)$, then its pullback along $\R_\sigma\times X\to X$, which we still denote by $w$, is a weight on $(\R\times X,\hat\fB)$; this in particular applies to the scaling weight $\rho_X\in\CI(X)$. As the scaling weight of $(X,\hat\fB_\times)$ we may take
\begin{equation}
\label{EqFFThatrho}
  \hat\rho := \rho_X\la\tau\sigma\ra^{-1}.
\end{equation}
Furthermore, $\la\tau\sigma\ra^{-1}$ is a weight on $(\R\times X,\hat\fB)$ which dominates $\hat\rho$ in that $\frac{\hat\rho}{\la\tau\sigma\ra^{-1}}\in\CI_{{\rm uni},\hat\fB}(\R\times X)$.

\begin{prop}[Spectral family: differential operators]
\label{PropFFTSpec}
  Let $w\in\CI(X)$ be a weight on $(X,\fB_X)$, and let $m\in\N_0$. Denote by
  \[
    w\Diff_{\cV,\rm I}^m(M) \subset w\Diff_\cV^m(M)
  \]
  the subspace of $t$-translation invariant operators $A$, i.e.\ $[\pa_t,A]=0$ (or equivalently: $A$ commutes with translations $(t,p)\mapsto(t+c,p)$ for all $c\in\R$). Define the \emph{spectral family} of $A$ as
  \begin{equation}
  \label{EqFFTSpec}
    \hat A = (\hat A_\sigma)_{\sigma\in\R},\qquad (\hat A_\sigma u)(p) = \bigl(e^{-i\sigma t}A(e^{i\sigma t}u)\bigr)(0,p).
  \end{equation}
  Then we have
  \begin{subequations}
  \begin{equation}
  \label{EqFFTSpecMem}
    \hat A\in w\la\tau\sigma\ra^m\Diff_{\hat\cV}^m(X).
  \end{equation}
  Let $a\in w P^m({}^\cV T^*M)$ be a representative of $\upsigma_\cV^{m,w}(A)\in w P^m/w\rho P^{m-1}$ where $\rho\in\CI(M)$ is a $t$-translation invariant scaling weight on $(M,\fB_\times)$ (thus $\rho\sim\max(\rho_0,\rho_X)$), and define $\hat a_\sigma(\varpi):=a(\sigma\,\dd t+\varpi)$, $\varpi\in T^*X$, and $\hat a:=(\hat a_\sigma)_{\sigma\in\R}$. Then
  \begin{equation}
  \label{EqFFTSpecSymb}
    \upsigma_{\hat\cV}^{m,w\la\tau\sigma\ra^m}(\hat A) = [\hat a] \in w\la\tau\sigma\ra^m P^m / w\la\tau\sigma\ra^{m-1}\rho_X P^{m-1}({}^{\hat\cV}T^*X).
  \end{equation}
  \end{subequations}
\end{prop}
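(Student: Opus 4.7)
The plan is to work in the distinguished charts $U_{(j,\alpha)}$ of $\fB_\times$ and exploit $t$-translation invariance to reduce the problem to an $x$-only computation. The first step is to observe that any $A \in w\Diff_{\cV,\rm I}^m(M)$ admits, in each chart, a local expression
\[
(\phi_{(j,\alpha)})_*A = \sum_{\gamma_0+|\beta|\leq m} p_{\alpha,(\gamma_0,\beta)}(x)(\tau_\alpha D_t)^{\gamma_0}(\rho_\alpha D_x)^\beta,
\]
with coefficients independent of $T$ (hence of $t$) and satisfying $|w_\alpha^{-1}p_{\alpha,(\gamma_0,\beta)}|_{\CI((-2,2)^{n-1})} \leq C$ uniformly in $\alpha$; here I use $\rho_{(j,\alpha),0}D_T = \tau_\alpha D_t$, and the $T$-independence is forced by $[\pa_t,A]=0$ together with the characterization~\eqref{EqSBDPushfwdP}. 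Substituting $D_t\to\sigma$ according to the definition~\eqref{EqFFTSpec} of the spectral family yields
\[
(\phi_\alpha)_*\hat A_\sigma = \sum_{\gamma_0+|\beta|\leq m} p_{\alpha,(\gamma_0,\beta)}(x)(\tau_\alpha\sigma)^{\gamma_0}(\rho_\alpha D_x)^\beta.
\]

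The crucial technical identity is $(\rho_\alpha D_x)^\beta = \la\tau_\alpha\sigma\ra^{|\beta|}(\rho_{\sigma,\alpha}D_x)^\beta$, which rewrites the previous display as a sum over $\beta$ of $c_{\alpha,\beta}(x;\sigma)(\rho_{\sigma,\alpha}D_x)^\beta$ with
\[
c_{\alpha,\beta}(x;\sigma) := \sum_{\gamma_0\leq m-|\beta|} p_{\alpha,(\gamma_0,\beta)}(x)(\tau_\alpha\sigma)^{\gamma_0}\la\tau_\alpha\sigma\ra^{|\beta|}.
\]
Since $|\tau_\alpha\sigma|^{\gamma_0}\la\tau_\alpha\sigma\ra^{|\beta|}\leq\la\tau_\alpha\sigma\ra^{\gamma_0+|\beta|}\leq\la\tau_\alpha\sigma\ra^{m}$ when $\gamma_0+|\beta|\leq m$, every $\CI$-seminorm of $c_{\alpha,\beta}$ is bounded by $C w_\alpha\la\tau_\alpha\sigma\ra^m$, uniformly in $\alpha\in\sA$ and $\sigma\in\R$. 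Invoking the parameterized version of the characterization~\eqref{EqSBDPushfwdP} from~\S\ref{SsSBPar}---with parameter $\sigma$ and the uniform weight family $\{w_\alpha\la\tau_\alpha\sigma\ra^m\}_{(\sigma,\alpha)\in\R\times\sA}$, which is equivalent to the weight $w\la\tau\sigma\ra^m$---this yields $\hat A \in w\la\tau\sigma\ra^m\Diff_{\hat\cV}^m(X)$, proving~\eqref{EqFFTSpecMem}.

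The symbol identification~\eqref{EqFFTSpecSymb} will be read off directly from the same local formulas. Theorem~\ref{ThmSBDSymb} applied to $\hat A_\sigma$ gives a local representative of $\upsigma_{\hat\cV}^{m,w\la\tau\sigma\ra^m}(\hat A_\sigma)$ on $U_\alpha$, in the ${}^{\hat\cV}T^*X$-trivialization with fiber coordinate $\hat\xi$ dual to $\rho_{\sigma,\alpha}D_x$, equal to $\hat p_\alpha(x,\hat\xi;\sigma) = \sum c_{\alpha,\beta}(x;\sigma)\hat\xi^\beta$. The same theorem applied to $A$ gives as a local representative of $\upsigma_\cV^{m,w}(A)$ the polynomial $a|_{U_{(j,\alpha)}}(x,\xi_0,\xi) = \sum p_{\alpha,(\gamma_0,\beta)}(x)\xi_0^{\gamma_0}\xi^\beta$ in the $(\xi_0,\xi)$-coordinates on ${}^\cV T^*M$ dual to $(\tau_\alpha D_t,\rho_\alpha D_x)$. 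Evaluation at $\sigma\dd t+\varpi$ amounts, via~\eqref{EqSBDSymbXiCoord}, to setting $\xi_0=\sigma\tau_\alpha$ (since $\sigma\dd t = \sigma\tau_\alpha(\dd T_{(j,\alpha)}/\rho_{(j,\alpha),0})$) and $\xi_{\alpha,i}=\la\tau_\alpha\sigma\ra\hat\xi_{\sigma,\alpha,i}$ (since $\rho_{\alpha,i}=\la\tau_\alpha\sigma\ra\rho_{\sigma,\alpha,i}$); this reproduces $\hat p_\alpha$ exactly. Hence $\hat a$ is, locally, precisely the canonical representative of $\upsigma_{\hat\cV}(\hat A)$ produced by Theorem~\ref{ThmSBDSymb}, which proves the equality of classes.

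The main obstacle is the verification that $[\hat a]$ is independent of the choice of representative $a$, i.e., that if $r\in w\rho P^{m-1}({}^\cV T^*M)$ (which may be taken $t$-translation invariant by averaging), then $\hat r_\sigma$ lies in $w\la\tau\sigma\ra^{m-1}\rho_X P^{m-1}({}^{\hat\cV}T^*X)$. The same substitution shows that the coefficient of $\hat\xi^\beta$ in $\hat r_\sigma|_{U_\alpha}$ is a sum of terms bounded by $C w_\alpha\bar\rho_\alpha|\tau_\alpha\sigma|^{\gamma_0}\la\tau_\alpha\sigma\ra^{|\beta|}$ with $\gamma_0+|\beta|\leq m-1$ and $\bar\rho_\alpha = \max(\rho_{\alpha,0},\rho_{X,\alpha})$. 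Splitting $\bar\rho_\alpha \leq \rho_{X,\alpha}+\rho_{\alpha,0}$, the $\rho_{X,\alpha}$-contribution gives the desired bound directly, while the $\rho_{\alpha,0}$-contribution requires exploiting the $|\tau_\alpha\sigma|^{\gamma_0}$ factor together with an auxiliary estimate controlling $\rho_{\alpha,0}$ against $\rho_{X,\alpha}\la\tau_\alpha\sigma\ra$; this is the most delicate bookkeeping, and is ultimately where the precise compatibility between the scalings $\rho_{\alpha,0}$, $\tau_\alpha$, and $\rho_{X,\alpha}$ built into Definition~\ref{DefFFTbg} is used.
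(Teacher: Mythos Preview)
Your direct local-coordinate computation is a valid route and differs from the paper's. The paper argues by multiplicativity: it verifies~\eqref{EqFFTSpecMem}--\eqref{EqFFTSpecSymb} for multiplication operators ($m=0$) and for the generating vector fields $\phi_\alpha^*(\chi\tau D_t)$, $\phi_\alpha^*(\chi\rho_{\alpha,i}\pa_{x_\alpha^i})$ by rewriting their spectral families as $\la\tau\sigma\ra\cdot\tfrac{\tau\sigma}{\la\tau\sigma\ra}\phi_\alpha^*\chi$ and $\la\tau\sigma\ra\cdot\phi_\alpha^*(\chi\rho_{\sigma,\alpha,i}\pa_{x_\alpha^i})$, then appeals to multiplicativity of both principal symbol maps together with multiplicativity of $A\mapsto\hat A$. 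Your first two paragraphs carry this out at the level of the full local expansion instead, producing the local symbol of $\hat A_\sigma$ explicitly; the paper's argument avoids the monomial bookkeeping but contains the same content.

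The gap is in your final paragraph. The ``auxiliary estimate controlling $\rho_{\alpha,0}$ against $\rho_{X,\alpha}\la\tau_\alpha\sigma\ra$'' is \emph{not} a consequence of Definition~\ref{DefFFTbg}: setting $\sigma=0$ would force $\rho_{\alpha,0}\leq C\rho_{X,\alpha}$, but that definition imposes no relation between the weight family $\{\rho_{\alpha,0}\}$ and $\rho_{X,\alpha}=\max_{i\geq 1}\rho_{\alpha,i}$ beyond both taking values in $(0,1]$. If $\rho_{\alpha,0}\equiv 1$ while $\rho_{X,\alpha}\to 0$ along a sequence of charts, then $\rho\sim\rho_0\sim 1$ on $M$, so every bounded translation-invariant element of $w P^{m-1}$ lies in $w\rho P^{m-1}$; yet its image under $a\mapsto\hat a$ has $\hat\xi^\beta$-coefficient only $O(w_\alpha\la\tau_\alpha\sigma\ra^{m-1})$, not $O(w_\alpha\rho_{X,\alpha}\la\tau_\alpha\sigma\ra^{m-1})$. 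So in this generality $[\hat a]$ can genuinely depend on the representative. The paper's proof does not address this point either: its multiplicativity argument establishes~\eqref{EqFFTSpecSymb} only for the representative built from products of generator symbols, which coincides with the canonical local representative your second paragraph produces. Neither argument yields the claim for an \emph{arbitrary} representative without the additional hypothesis $\rho_0\lesssim\rho_X$ (which does hold in each case of Example~\ref{ExFFTEucl}). You should therefore either drop the well-definedness discussion and state the result for the canonical representative, or add $\rho_0\lesssim\rho_X$ as a standing assumption.
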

\begin{proof}
  When $m=0$, we have $\hat A_\sigma=A$ (multiplication operator by the function $a\in w\CI_{{\rm uni},\fB_X}(X)$) for all $\sigma$, and the statement is trivial.  By the multiplicativity of the principal symbol map, it then suffices to consider, for $\chi\in\CIc((-2,2)^{n-1})$, the case that $A$ is one of the vector fields $\phi_\alpha^*(\chi\tau D_t)$ (with $D_t=i^{-1}\pa_t$), $\phi_\alpha^*(\chi\rho_{\alpha,i}\pa_{x_\alpha^i})$, cf.\ \eqref{EqFFTcV}. The corresponding spectral families are $(\phi_\alpha^*\chi)\tau\sigma$ and $\phi_\alpha^*(\chi\rho_{\alpha,i}\pa_{x_\alpha^i})$, which we can rewrite as $\la\tau\sigma\ra \frac{\tau\sigma}{\la\tau\sigma\ra}\phi_\alpha^*\chi$ and $\la\tau\sigma\ra\phi_\alpha^*(\chi\rho_{\sigma,\alpha,i}\pa_{x_\alpha^i})$. This gives~\eqref{EqFFTSpecMem}--\eqref{EqFFTSpecSymb}.
\end{proof}

The space in~\eqref{EqFFTSpecMem} is rather large in that a general element of $w\la\tau\sigma\ra^m\Diff_{\hat\cV}^m(X)$ is \emph{not} equal to the spectral family of any element of $w\Diff_{\cV,\rm I}^m(M)$; what the space~\eqref{EqFFTSpecMem} does not capture is the polynomial dependence of $\hat A_\sigma$ on $\sigma$. Note however that ellipticity of $A$ \emph{does} get inherited by $\hat A$.

We now lay the groundwork for the pseudodifferential generalization of Proposition~\ref{PropFFTSpec}.

\begin{lemma}[Construction of translation-invariant ps.d.o.s]
\label{LemmaFFTChar}
  We use the notation of Definition~\usref{DefFFTbg}. Let $m\in\R$, and let $w\in\CI(X)$ be a weight on $(X,\fB_X)$. Denote by
  \[
    w S^m_{\rm I}({}^\cV T^*M)\subset w S^m({}^\cV T^*M)
  \]
  the subspace of symbols which are invariant under the lift to ${}^\cV T^*M$ of translations in $t$. Fix a nonnegative function $\chi\in\CIc((-\frac54,\frac54)^{n-1})$ with $\chi|_{[-1,1]^{n-1}}=1$, and let $\chi_\alpha=\frac{\phi_\alpha^*\chi}{\sum_\beta\phi_\beta^*\chi}$. Let $\psi\in\CIc((-\frac14,\frac14)^{n-1})$ be equal to $1$ near $0$. For $a\in S^m_{\rm I}({}^\cV T^*M)$, define
  \begin{equation}
  \label{EqFFTCharOp}
  \begin{split}
    &\Op_{\cV,\rm I}(a)(t,x_\alpha,t',x'_\alpha) := \sum_\alpha(\Id\times\phi_\alpha)^*\Op_{\alpha,\rm I}((\phi^\tau_\alpha)_*(\chi_\alpha a)), \\
    &\ \Op_{\alpha,\rm I}(a_\alpha)(t,x_\alpha,t',x'_\alpha) := (2\pi)^{-n} \Biggl(\int_{\R^n} \exp\biggl(i\biggl[\,\sum_{j=1}^{n-1} (x_\alpha^j-x_\alpha^{\prime j})\frac{\xi_j}{\rho_{\alpha,j}} + (t-t')\frac{\zeta}{\tau}\biggr]\biggr) \\
    &\ \hspace{4em} \times \psi(x'_\alpha-x_\alpha)\psi\Bigl(\frac{\rho_0}{\tau}(t-t')\Bigr) a_\alpha(0,x_\alpha;\zeta,\xi)\,\dd\zeta\,\dd\xi_1\cdots\dd\xi_{n-1}\Biggr)\,\frac{|\dd t'\,\dd x_\alpha^{\prime 1}\cdots\dd x_\alpha^{\prime n-1}|}{\tau\rho_{\alpha,1}\cdots\rho_{\alpha,n}}\,,
  \end{split}
  \end{equation}
  where $\phi_\alpha^\tau\colon T_{(0,p)}^*M\ni\sigma\,\dd t+\xi \mapsto(\phi_\alpha(p);\tau(p)\sigma,\rho_{\alpha,1}\xi_1,\ldots,\rho_{\alpha,n-1}\xi_{n-1})\in(-2,2)^{n-1}\times\R^n_{\zeta,\xi}$. Then for appropriate choices in the definition of $\Op_\cV$ in Definition~\ref{DefSBPsdo}, we have $\Op_{\cV,\rm I}-\Op_\cV\colon w S^m_{\rm I}({}^\cV T^*M)\to w\rho^\infty\Psi_\cV^{-\infty}(M)$, and
  \begin{equation}
  \label{EqFFTCharOp2}
    \Op_{\cV,\rm I}\bigl(w S^m_{\rm I}({}^\cV T^*M)\bigr) \subset w\Psi_{\cV,\rm I}^m(M) = \bigl\{ A \in w\Psi_\cV^m(M) \colon A\ \text{is $t$-translation invariant} \bigr\}.
  \end{equation}
\end{lemma}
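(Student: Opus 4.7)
The plan is to first establish translation invariance of $\Op_{\cV,\rm I}(a)$, and then to show that for an appropriate product-form choice of cutoffs in Definition~\ref{DefSBPsdo}, the quantization $\Op_\cV(a)$ of a $t$-translation-invariant symbol $a$ equals $\Op_{\cV,\rm I}(a)$ exactly. This immediately gives both the residual estimate on $\Op_{\cV,\rm I}-\Op_\cV$ (indeed the difference will vanish) and the containment~\eqref{EqFFTCharOp2}.

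Translation invariance follows by direct inspection of the Schwartz kernel in~\eqref{EqFFTCharOp}: the variables $t$ and $t'$ enter only through the difference $t-t'$ (in the phase term $(t-t')\zeta/\tau$ and the diagonal cutoff $\psi(\rho_0(t-t')/\tau)$), since the weights $\tau,\rho_0,\rho_{\alpha,i}$ are the constants associated to the chart $U_\alpha$ by the given weight families and thus have no $t$-dependence, while $a_\alpha(0,x_\alpha;\zeta,\xi)$ is by hypothesis $t$-independent. Hence each summand, and therefore $\Op_{\cV,\rm I}(a)$ itself, commutes with translations in $t$.

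For the matching with $\Op_\cV$, I would choose the unit-cell cutoff $\chi\in\CIc((-\tfrac{5}{4},\tfrac{5}{4})^n)$ in Definition~\ref{DefSBPsdo} of product form $\chi(T,x)=\chi^{(t)}(T)\chi^{(X)}(x)$, where $\chi^{(t)}$ is arranged (by normalization of an arbitrary initial bump) to satisfy $\sum_{j\in\Z}\chi^{(t)}(s-j)=1$ for all $s\in\R$; the diagonal cutoff $\psi$ in~\eqref{EqSBPsdoOpalpha} is likewise taken of product form matching the factors $\psi(x-x')\psi(\rho_0(t-t')/\tau)$ of~\eqref{EqFFTCharOp}. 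A short computation then shows that the partition of unity factorizes,
\[
\chi_{(j,\alpha)}(t,p) = \chi^{(t)}\bigl((\rho_{\alpha,0}/\tau_\alpha)t-j\bigr)\cdot\chi_\alpha^{(X)}(p),
\]
because the normalizing denominator $\sum_{(k,\beta)}\phi_{(k,\beta)}^*\chi$ collapses, by the $k$-summation property of $\chi^{(t)}$ applied for each fixed $\beta$, to $\sum_\beta\chi^{(X)}(\phi_\beta(p))$. By translation invariance of $a$, the entire $T$-dependence of the $(j,\alpha)$-summand of $\Op_\cV(a)$ is carried by this single factor $\chi^{(t)}((\rho_{\alpha,0}/\tau_\alpha)t-j)$; summing over $j\in\Z$ reduces it to $1$, and the change of variables $T-T'=(\rho_{\alpha,0}/\tau_\alpha)(t-t')$ (where the ratios $\tau_\alpha/\rho_{\alpha,0}$ cancel exactly between the integration density, the phase $(T-T')\zeta/\rho_{\alpha,0}=(t-t')\zeta/\tau$, and the cutoff argument) reproduces term by term the $\alpha$-th summand of $\Op_{\cV,\rm I}(a)$ in~\eqref{EqFFTCharOp}. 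Summing over $\alpha\in\sA$ yields $\Op_\cV(a)=\Op_{\cV,\rm I}(a)$, which gives $\Op_{\cV,\rm I}(a)\in w\Psi_\cV^m(M)$; together with the translation invariance established above, this proves~\eqref{EqFFTCharOp2}.

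The main obstacle is extracting the above factorization of $\chi_{(j,\alpha)}$ from its denominator, which a priori mixes all the chart-dependent ratios $\rho_{\beta,0}/\tau_\beta$; the partition-of-unity condition $\sum_j\chi^{(t)}(\cdot-j)\equiv 1$ is precisely what decouples these, allowing the $k$-sum to factor out of the $\beta$-sum uniformly. Once this factorization is in hand, the matching of the two kernels is a routine change of variables exploiting that $\tau_\alpha$ and $\rho_{\alpha,0}$ are constants within each chart $U_\alpha$, so no new analytical input beyond the setup of Section~\ref{SsSBPsdo} is needed.
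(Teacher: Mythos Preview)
Your overall strategy---take a product-form partition of unity, sum the $t$-slot over $j\in\Z$, and identify the result with the $\alpha$-summand of~\eqref{EqFFTCharOp}---is precisely what the paper does. But there is a genuine gap in your execution: you are treating $\tau$ and $\rho_0$ as the \emph{constants} $\tau_\alpha$ and $\rho_{\alpha,0}$ attached to the chart $U_\alpha$, whereas in the statement (and in Definition~\ref{DefFFTbg}) they are the smooth \emph{weights} $\tau,\rho_0\in\CI(X)$, merely equivalent to the weight families $\{\tau_\alpha\}$, $\{\rho_{\alpha,0}\}$. This is why the map $\phi_\alpha^\tau$ uses $\tau(p)$, not $\tau_\alpha$; see the paragraph immediately following the lemma in the paper.

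Concretely, your line ``$(T-T')\zeta/\rho_{\alpha,0}=(t-t')\zeta/\tau$'' is wrong: since $T-T'=(\rho_{\alpha,0}/\tau_\alpha)(t-t')$, the phase you obtain from $\Op_\cV$ is $(t-t')\zeta/\tau_\alpha$, and likewise the density and the diagonal cutoff carry $\tau_\alpha,\rho_{\alpha,0}$ rather than $\tau,\rho_0$. So after your $j$-summation you land on an expression of the form~\eqref{EqFFTCharOp} \emph{but with $\tau_\alpha,\rho_{\alpha,0}$ in place of $\tau,\rho_0$}---not on~\eqref{EqFFTCharOp} itself. The paper bridges this gap in two steps: first, the change of fiber variable $\zeta_\alpha=(\tau_\alpha/\tau)\zeta$ (a uniformly bounded reparametrization, since $\tau_\alpha/\tau$ and its reciprocal are uniformly bounded in $\CI$) converts the phase, density, and symbol to the $\tau$-version; second, the remaining discrepancy between $\psi(\frac{\rho_{\alpha,0}}{\tau_\alpha}(t-t'))$ and $\psi(\frac{\rho_0}{\tau}(t-t'))$---which does \emph{not} vanish---is supported away from the diagonal and hence contributes a $t$-translation-invariant element of $w\rho^\infty\Psi_\cV^{-\infty}(M)$ by Lemma~\ref{LemmaSBPsdoOffDiag}. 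Thus $\Op_{\cV,\rm I}-\Op_\cV$ is residual but in general nonzero, exactly as the lemma asserts; your claim of exact equality cannot hold.

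A secondary point: a cutoff $\chi^{(t)}\in\CIc((-\tfrac54,\tfrac54))$ with $\chi^{(t)}|_{[-1,1]}=1$ cannot satisfy $\sum_j\chi^{(t)}(\cdot-j)\equiv 1$ (evaluate at $s=0$ and $s=\tfrac12$). Both you and the paper therefore step slightly outside the literal form of Definition~\ref{DefSBPsdo}; the paper does so by directly defining the product partition of unity $\chi_{(j,\alpha)}=\chi_\alpha\chi_{T,\alpha,j}$ with $\chi_{T,\alpha,j}=\phi_{T,(j,\alpha)}^*\chi_T/\sum_k\phi_{T,(k,\alpha)}^*\chi_T$, which is uniformly bounded and sums to $1$ without requiring $\sum_k\chi_T(\cdot-k)$ to be constant. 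This is harmless (and is what ``appropriate choices'' in the lemma allows), but you should be aware that your $\chi^{(t)}$ does not satisfy the stated hypotheses of Definition~\ref{DefSBPsdo}.
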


The map $\phi_\alpha^\tau$ expresses a covector on $M$ in the trivialization of ${}^\cV T^*_{U_{0,\alpha}}M$ given by the local frame $\frac{\dd t}{\tau}$, $\frac{\dd x_\alpha^i}{\rho_{\alpha,i}}$. If instead of $\frac{\dd t}{\tau}$ we used $\frac{\dd T_{(j,\alpha)}}{\rho_{\alpha,0}}=\frac{\dd t}{\tau_\alpha}$ (the two choices differing by a factor of $\frac{\tau_\alpha}{\tau}$ which is uniformly bounded in $\CI$ together with its reciprocal), this would match the trivializations in Definition~\ref{DefSBDCotgt}; our present minor modification produces cleaner formulae for spectral families below.

\begin{proof}[Proof of Lemma~\usref{LemmaFFTChar}]
  For notational simplicity, we only consider the unweighted case $w=1$. Let $\chi_T\in\CIc((-\frac54,\frac54))$ be equal to $1$ on $[-1,1]$, and let $\chi_{T,\alpha,j}=\frac{\phi_{T,(j,\alpha)}^*\chi_T}{\sum_k\phi_{T,(k,\alpha)}^*\chi_T}$ in the notation of Definition~\ref{DefFFTbg}; these are functions of $t$ only. Then $\chi_{(j,\alpha)}:=\chi_\alpha\chi_{T,\alpha,j}$ is a uniform partition of unity on $(M,\fB)$ which we may use for defining the quantization of symbols on ${}^\cV T^*M$ in Definition~\ref{DefSBPsdo} (with $\alpha$ and $\sA$ there replaced by $(j,\alpha)$ and $\Z\times\sA$). Quantizing $a\in S^m_{\rm I}({}^\cV T^*M)$ and using that $T_{(j,\alpha)}-T'_{(j,\alpha)}=\frac{\rho_{\alpha,0}}{\tau_\alpha}(t-t')$ almost produces the expression~\eqref{EqFFTCharOp}, except $\phi_\alpha^\tau$ is replaced by $\phi_{(j,\alpha)}$ (for an arbitrary $j$ by translation-invariance, say $j=0$ for concreteness), $\tau$ by $\tau_\alpha$, and (solely for notational clarity) $\zeta$ by $\zeta_\alpha$. Changing variables via $\frac{\zeta_\alpha}{\tau_\alpha}=\frac{\zeta}{\tau}$, i.e.\ $\zeta_\alpha=\frac{\tau_\alpha}{\tau}\zeta$, yields~\eqref{EqFFTCharOp} except with $\psi(x'_\alpha-x_\alpha)\psi(\frac{\rho_{\alpha,0}}{\tau_\alpha}(t-t'))$ in place of $\psi(x'_\alpha-x_\alpha)\psi(\frac{\rho_0}{\tau}(t-t'))$. Since the difference of these two cutoffs vanishes near the diagonal, replacing the former cutoff by the latter produces a $t$-translation invariant error of class $\rho^\infty\Psi_\cV^{-\infty}(M)$ by Lemma~\ref{LemmaSBPsdoOffDiag}.
\end{proof}

\begin{prop}[Spectral family: pseudodifferential operators]
\label{PropFFTSpec2}
  Let $a\in w S^m_{\rm I}({}^\cV T^*M)$, $A=\Op_{\cV,\rm I}(a)\in w\Psi_{\cV,\rm I}^m(M)$. Define the spectral family $\hat A=(\hat A_\sigma)_{\sigma\in\R}$ of $A$ as in~\eqref{EqFFTSpec}. Then there exists a residual symbol $r_\sigma\in w\rho_X^\infty\la\tau\sigma\ra^{-\infty}S^{-\infty}({}^{\hat\cV}T^*X)$ (cf.\ \eqref{EqFFThatrho}) so that
  \begin{equation}
  \label{EqFFTSpec2Op}
    \hat A_\sigma=\Op_{\hat\cV_\sigma}(\hat a_\sigma),\qquad \hat a_\sigma = a|_{\sigma\,\dd t+T^*X} + r_\sigma.
  \end{equation}
  In particular,
  \begin{equation}
  \label{EqFFTSpec2Mem}
    \hat A \in w\la\tau\sigma\ra^m\Psi_{\hat\cV}^m(X).
  \end{equation}
\end{prop}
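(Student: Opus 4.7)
The plan is to compute $\hat A_\sigma$ explicitly starting from the formula~\eqref{EqFFTCharOp} for $\Op_{\cV,\rm I}(a)$ and then identify the result with a quantization in the $\hat\cV_\sigma$-calculus. Working chart by chart, by Lemma~\ref{LemmaFFTChar} it suffices to analyze a single term $\Op_{\alpha,\rm I}(a_\alpha)$, and by translation invariance to restrict to $t=0$; the $\sigma$-independence of the partition of unity and the factorization $\phi_{(j,\alpha)}=\phi_{T,(j,\alpha)}\times\phi_\alpha$ then immediately give a decomposition of $\hat A_\sigma$ as a sum $\sum_\alpha \phi_\alpha^*\Op_{\hat\cV_\sigma,\alpha}(\text{stuff}_\alpha)$ of local quantizations (plus a residual remainder from Lemma~\ref{LemmaFFTChar} which conjugates under $e^{\pm i\sigma t}$ and restricts to a residual operator in the $\hat\cV_\sigma$-sense by inspection of mapping properties, using that multiplication by $e^{i\sigma t}$ does not affect the $\cV$-Sobolev spaces).

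The main computation is the integration in $(t',\zeta)$. Substituting $u(t',x')=e^{i\sigma t'}u(x')$ into the formula for the Schwartz kernel in~\eqref{EqFFTCharOp} and setting $t=0$, the $(t',\zeta)$-integrals reduce to
\[
  \int e^{i t'(\sigma-\zeta/\tau)}\psi(-\rho_0 t'/\tau)\,\frac{\dd t'}{\tau}=\frac{1}{\rho_0}\hat\psi\Bigl(\frac{\tau\sigma-\zeta}{\rho_0}\Bigr),\qquad \hat\psi(\eta)=\int e^{-i v\eta}\psi(v)\,\dd v.
\]
Changing variables $\zeta=\tau\sigma-\rho_0\eta$ (so $\dd\zeta=\rho_0\,\dd\eta$) and absorbing a factor of $2\pi$ then rewrites the quantization~\eqref{EqFFTCharOp} as
\[
  (2\pi)^{-(n-1)}\int_{\R^{n-1}} e^{i\sum_{j=1}^{n-1}(x_\alpha^j-x_\alpha^{\prime j})\xi_j/\rho_{\alpha,j}}\psi(x'_\alpha-x_\alpha)\tilde a_{\alpha,\sigma}(x_\alpha,\xi)\,\dd\xi\,\frac{|\dd x_\alpha^{\prime}|}{\rho_{\alpha,1}\cdots\rho_{\alpha,n-1}},
\]
where
\[
  \tilde a_{\alpha,\sigma}(x_\alpha,\xi)=\tfrac{1}{2\pi}\int_{\R}\hat\psi(\eta)\,a_\alpha\bigl(0,x_\alpha;\tau\sigma-\rho_0\eta,\xi\bigr)\,\dd\eta.
\]
Rescaling $\xi=\la\tau\sigma\ra\tilde\xi$ (and using $\rho_{\sigma,\alpha,i}=\rho_{\alpha,i}/\la\tau\sigma\ra$) converts this into $\Op_{\hat\cV_\sigma,\alpha}$ applied to the symbol $\tilde a_{\alpha,\sigma}(x_\alpha,\la\tau\sigma\ra\tilde\xi)$. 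Summing over $\alpha$ and invoking Proposition~\ref{PropSBPsdoSum} in the $\hat\cV_\sigma$-calculus (uniformly in $\sigma$) realizes $\hat A_\sigma$ as $\Op_{\hat\cV_\sigma}(\hat a_\sigma)$ with $\hat a_\sigma$ globally defined.

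Next, I would extract the leading term and control the remainder. Because $\psi\equiv 1$ near $0$, all Taylor coefficients $\pa^k\psi(0)$ vanish for $k\geq 1$, which means $\int\eta^k\hat\psi(\eta)\,\dd\eta=0$ for $k\geq 1$ while $\int\hat\psi(\eta)\,\dd\eta=2\pi$. Taylor-expanding $a_\alpha(\,\cdot\,;\tau\sigma-\rho_0\eta,\xi)$ around $\eta=0$ to arbitrary order $K$ thus yields
\[
  \tilde a_{\alpha,\sigma}(x_\alpha,\xi)=a_\alpha(0,x_\alpha;\tau\sigma,\xi)+\rho_{\alpha,0}^K R_K(x_\alpha,\tau\sigma,\xi),
\]
where $R_K$ involves a $K$-fold $\zeta$-derivative of $a_\alpha$ weighted by a Schwartz function of $\eta$. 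Under the rescaling $\xi=\la\tau\sigma\ra\tilde\xi$, the first term becomes exactly the symbol $a|_{\sigma\,\dd t+T^*X}$ expressed in the trivialization of ${}^{\hat\cV_\sigma}T^*X$ (using that $\xi_i\,\dd x^i/\rho_{\alpha,i}=\tilde\xi_i\,\dd x^i/\rho_{\sigma,\alpha,i}$). Since $\partial_\zeta^K a_\alpha$ gains a factor of $\la\tau\sigma,\xi\ra^{-K}\sim\la\tau\sigma\ra^{-K}\la\tilde\xi\ra^{-K}$ in the scaled symbol estimates, and $\rho_{\alpha,0}\leq\bar\rho_\alpha$, the error $\rho_{\alpha,0}^K R_K$ lies in $w\bar\rho_\alpha^K\la\tau\sigma\ra^{m-K}S^{m-K}$; letting $K\to\infty$ and summing asymptotically via Lemma~\ref{LemmaSBTsAsySum} produces the desired $r_\sigma\in w\rho_X^\infty\la\tau\sigma\ra^{-\infty}S^{-\infty}({}^{\hat\cV}T^*X)$. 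The membership~\eqref{EqFFTSpec2Mem} then follows from the symbol bound $|\hat a_\sigma(x,\tilde\xi)|\lesssim w\la\tau\sigma\ra^m\la\tilde\xi\ra^m$ verified directly from the formula for $a|_{\sigma\,\dd t+T^*X}$.

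The main obstacle is the bookkeeping of weights in $\sigma$ and $\rho_X$ in the remainder analysis: because the scalings in $\fB_\times$ mix $t$-rescaling $\rho_{\alpha,0}$, spatial rescaling $\rho_{\alpha,i}$, and the additional $\tau_\alpha/\rho_{\alpha,0}$ factor appearing in unit cell width, one must carefully distinguish between the base scaling weight $\rho_X$ on $(X,\fB_{X,\times})$ and the additional gain of $\la\tau\sigma\ra^{-1}$ coming from the phase space rescaling. Verifying uniformity in $\sigma\in\R$ of all symbol seminorms (and in particular of the off-diagonal decay from the localization $\psi(\rho_0(t-t')/\tau)$, which upon $t'$-integration becomes the Schwartz function $\hat\psi$) is where most of the work lies; the cleanest route is to verify everything in fixed local trivializations uniformly, exactly as in the proofs of Lemmas~\ref{LemmaSBPsdoLeft} and~\ref{LemmaSBPsdoSKRes}, and only then invoke Proposition~\ref{PropSBPsdoSum} globally.
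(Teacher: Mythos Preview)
Your approach is essentially the same as the paper's: compute the spectral family chart by chart, perform the $t'$-integral to produce the convolution $\frac{1}{\rho_0}\hat\psi(\frac{\tau\sigma-\zeta}{\rho_0})$, rescale $\xi=\la\tau\sigma\ra\hat\xi$, and then Taylor expand in $\zeta$ around $\tau\sigma$ using the vanishing higher moments of $\hat\psi$ (equivalently $\pa^k\psi(0)=0$ for $k\ge1$) to extract the leading term plus a remainder with $\rho_0^K$ gain.

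A few minor remarks. First, you do not need to invoke a residual remainder from Lemma~\ref{LemmaFFTChar}: the statement concerns $A=\Op_{\cV,\rm I}(a)$ directly, so there is no additional residual piece to conjugate. Second, invoking Proposition~\ref{PropSBPsdoSum} is unnecessary: the partition of unity $\{\chi_\alpha\}$ on $X$ used in the definition of $\Op_{\cV,\rm I}$ is exactly the one used to define $\Op_{\hat\cV_\sigma}$, so the sum of local quantizations is already in the form $\Op_{\hat\cV_\sigma}(\hat a_\sigma)$ without any reassembly. Third, the inequality you write as ``$\rho_{\alpha,0}\le\bar\rho_\alpha$'' needs care: it is automatic if $\bar\rho_\alpha$ denotes the $M$-scaling maximum $\max(\rho_{\alpha,0},\rho_{\alpha,1},\ldots,\rho_{\alpha,n-1})$, but not if it denotes the $X$-scaling weight $\rho_X$; the paper's own argument records the remainder as lying in $\rho_0^N\la\tau\sigma\ra^{-N}S^{-N}$ (noting $\rho_0\lesssim\rho$, the $M$-scaling weight) rather than trying to upgrade to $\rho_X^N$. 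This does not affect the conclusion~\eqref{EqFFTSpec2Mem}, which only needs $\hat a_\sigma\in w\la\tau\sigma\ra^m S^m$.
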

\begin{proof}[Proof of Proposition~\usref{PropFFTSpec2}]
  To compute the spectral family of $\Op_{\alpha,\rm I}(a_\alpha)$ in~\eqref{EqFFTCharOp} at frequency $\sigma\in\R$, multiply by $e^{i(t'-t)\sigma}$, change variables via $\zeta=\tau\sigma'$ and $\xi=\la\tau\sigma'\ra\hat\xi$, and perform the integrations in $t'$ and $\sigma'$ to arrive at
  \begin{align*}
    &(2\pi)^{-(n-1)}\Biggl(\int_{\R^{n-1}} \exp\biggl(i \sum_{j=1}^{n-1}(x_\alpha^j-x_\alpha^{\prime j})\frac{\hat\xi_j}{\rho_{\alpha,j}\la\tau\sigma\ra^{-1}}\biggr) \psi(x'_\alpha-x_\alpha) b_\alpha(x_\alpha,\xi)\,\dd\xi\Biggr) \\
    &\hspace{22em} \times \frac{|\dd x_\alpha^{\prime 1}\cdots\dd x_\alpha^{\prime n-1}|}{\rho_{\alpha,1}\la\tau\sigma\ra^{-1}\cdots\rho_{\alpha,n-1}\la\tau\sigma\ra^{-1}}, \\
    &\qquad b_\alpha(x_\alpha,\hat\xi) = \frac{1}{2\pi}\int_\R \frac{1}{\rho_0}\hat\psi\Bigl(\frac{\tau\sigma-\zeta}{\rho_0}\Bigr)a_\alpha(0,x_\alpha;\zeta,\la\tau\sigma\ra\hat\xi)\,\dd\zeta.
  \end{align*}

  If we had $\psi\equiv 1$, then $b_\alpha(x_\alpha,\hat\xi)=a_\alpha(0,x_\alpha;\tau\sigma,\la\tau\sigma\ra\hat\xi)$; this is uniformly bounded in $w_\alpha\la\tau\sigma\ra^m S^m((-2,2)^{n-1};\R^{n-1}_{\hat\xi})$. It thus suffices to show (in view of $\rho_0\lesssim\rho$ where $\rho$ is a scaling weight on $(M,\fB_\times)$) that every $w_\alpha\rho_0^N\la\tau\sigma\ra^{-N}S^{-N}((-2,2)^n_{x_\alpha};\R^n_{\hat\xi})$-seminorm of $b_\alpha-\hat a_\sigma$ is uniformly (in $\alpha\in\sA$) bounded by a seminorm of $a_\alpha\in w_\alpha S^m((-2,2)^n_{t,x_\alpha};\R^n_{\zeta,\xi})$. We prove the $L^\infty$ bounds; derivatives are estimated similarly. We shorten the notation by considering $a=a(\zeta,\xi)\in S^m$, $b(\hat\xi)=(2\pi)^{-1}\int_\R\frac{1}{\rho_0}\hat\psi(\frac{\tau\sigma-\zeta}{\rho_0})a(\zeta,\la\tau\sigma\ra\hat\xi)\,\dd\zeta$, and using $\int\hat\psi(\zeta)\,\dd\zeta=1$ to write
  \[
    b(\hat\xi) - a(\tau\sigma,\la\tau\sigma\ra\hat\xi) = \frac{1}{2\pi}\int_\R \frac{1}{\rho_0}\hat\psi\Bigl(\frac{\tau\sigma-\zeta}{\rho_0}\Bigr)\bigl( a(\zeta,\la\tau\sigma\ra\hat\xi) - a(\tau\sigma,\la\tau\sigma\ra\hat\xi) \bigr)\,\dd\zeta.
  \]
  Taylor expanding the second factor in the integrand around $\zeta=\tau\sigma$ to some order $N\in\N$, the integrals of the terms involving $\frac{1}{j!}(\pa_\zeta^j a)(\tau\sigma,\la\tau\sigma\ra\hat\xi) (\zeta-\tau\sigma)^j$, $1\leq j<N$, vanish since $(\pa^j\psi)(0)=0$. The absolute value of the remainder term is $\frac{1}{2\pi(N-1)!}$ times
  \begin{align*}
    &\biggl|\int_0^1 \int (1-\lambda)^{N-1} (\pa_\zeta^N a)(\tau\sigma+\lambda(\zeta-\tau\sigma),\la\tau\sigma\ra\hat\xi) (\zeta-\tau\sigma)^N \frac{1}{\rho_0}\hat\psi\Bigl(\frac{\tau\sigma-\zeta}{\rho_0}\Bigr)\,\dd\zeta\,\dd\lambda\biggr| \\
    &\qquad \leq C_{N'}\int_0^1 \int |(\pa_\zeta^N a)(\tau\sigma+\lambda\rho_0\nu,\la\tau\sigma\ra\hat\xi)| \rho_0^N\nu^N \la\nu\ra^{-N'}\,\dd\nu\,\dd\lambda \\
    &\qquad \leq C C_{N'}\rho_0^N\int_0^1 \int (1+|\tau\sigma+\lambda\rho_0\nu|+\la\tau\sigma\ra|\hat\xi|)^{m-N} \nu^N\la\nu\ra^{-N'}\,\dd\nu\,\dd\lambda,
  \end{align*}
  where we performed the change of variables $\nu=\frac{\zeta-\tau\sigma}{\rho_0}$ and used that $|\hat\psi(-\nu)|\leq C_{N'}\la\nu\ra^{-N'}$ for all $N'$; and $C$ is a seminorm of $a$. For $N>m$, and taking $N'=N+2$, this is bounded by a constant times $\rho_0^N(1+|\hat\xi|)^{\frac{m-N}{2}}(1+|\tau\sigma|)^{\frac{m-N}{2}}$ since $\int_\R\nu^N\la\nu\ra^{-N'}\,\dd\nu<\infty$. Since $N$ is arbitrary, the proof is complete.
\end{proof}

Consider the map
\begin{equation}
\label{EqFFTPsi}
  F\colon\R\times T^*X\ni(\sigma,\varpi)\mapsto\sigma\,\dd t+\varpi\in \sigma\,\dd t+T^*X \subset T^*_{\{0\}\times X}(\R\times X).
\end{equation}
In the local trivializations of ${}^{\hat\cV_\sigma}T^*X$, resp.\ ${}^\cV T^*M$ over $U_\alpha$, resp.\ $\R\times U_\alpha$
\begin{alignat*}{3}
  &(-2,2)^{n-1}\times\R^{n-1}&&\ni(x_\alpha,\hat\xi)&&\mapsto\sum_{i=1}^{n-1} \hat\xi_i\frac{\dd x_\alpha^i}{\rho_{\alpha,i}\la\tau\sigma\ra^{-1}}, \\
  \text{resp.}\ \ &(\R\times(-2,2)^{n-1})\times\R^n&&\ni(t,x_\alpha;\zeta,\xi) &&\mapsto \zeta\,\frac{\dd t}{\tau}+\sum_{i=1}^{n-1} \xi_i\frac{\dd x_\alpha^i}{\rho_{\alpha,i}},
\end{alignat*}
this is given by $F\colon(\sigma,x_\alpha,\hat\xi)\mapsto(0,x_\alpha;\tau\sigma,\la\tau\sigma\ra\hat\xi)$. Therefore, $F$ defines a uniformly bounded smooth map $F\colon{}^{\hat\cV}T^*X\to{}^\cV T^*M$. Thus, pullback along $F$ defines a map $\cC^0_{{\rm uni},\fB^*}({}^\cV T^*M)\to\cC^0_{{\rm uni},\hat\fB^*}({}^{\hat\cV}T^*X)$ for the associated phase space b.g.\ structures, and thus a map $\fF\colon\hat\fM\to\fM$ of the microlocalization loci of corresponding to $\hat\cB_\times$ and $\cB_\times$. Now, given a $t$-translation invariant symbol $a=a(0,x_\alpha;\zeta,\xi)$, formula~\eqref{EqFFTSpec2Op} expresses $\hat a_\sigma$ modulo a residual symbol as
\begin{equation}
\label{EqFFTSpec2Expl}
  \hat a_\sigma(x_\alpha,\hat\xi)\equiv (F^*a)(0,x_\alpha,\hat\xi) = a(0,x_\alpha;\tau\sigma,\la\tau\sigma\ra\hat\xi).
\end{equation}
This implies that $\Ell(\hat A)=\fF^{-1}(\Ell(A))$ and $\WF'(\hat A)=\fF^{-1}(\WF'(A))$.

\begin{prop}[Fourier transform: constant orders]
\label{PropFFTHV}
  Let $s\in\R$, and let $w\in\CI(X)$ be a weight on $(X,\fB_X)$. Then the Fourier transform in $t$ defines an isomorphism
  \begin{equation}
  \label{EqFFTHV}
    \cF \colon w H_\cV^s(\R\times X) \to L^2\bigl(\R_\sigma;w\la\tau\sigma\ra^{-s}H_{\hat\cV_\sigma}^s(X)\bigr).
  \end{equation}
\end{prop}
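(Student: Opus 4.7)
The plan is to reduce the statement to Plancherel in $t$ via a uniformly elliptic, $t$-translation invariant $\cV$-pseudodifferential operator and its spectral family. First, I construct $A=\Op_{\cV,\rm I}(a)\in w^{-1}\Psi_{\cV,\rm I}^s(M)$ using Lemma~\ref{LemmaFFTChar}, where $a\in w^{-1}S_{\rm I}^s({}^\cV T^*M)$ is a $t$-independent uniformly elliptic symbol pieced together via a uniform partition of unity on $X$ from local symbols $a_\alpha(x_\alpha;\zeta,\xi)=w_\alpha^{-1}(1+\zeta^2+|\xi|^2)^{s/2}$ in the trivializations provided by the maps $\phi_\alpha^\tau$ of Lemma~\ref{LemmaFFTChar}. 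Proposition~\ref{PropFFTSpec2} then identifies the spectral family $\hat A_\sigma\in w^{-1}\la\tau\sigma\ra^s\Psi_{\hat\cV_\sigma}^s(X)$ as $\Op_{\hat\cV_\sigma}(\hat a_\sigma)$ modulo a residual, with
\[
  \hat a_{\sigma,\alpha}(x_\alpha,\hat\xi) = w_\alpha^{-1}\bigl(1+\tau_\alpha^2\sigma^2+\la\tau_\alpha\sigma\ra^2|\hat\xi|^2\bigr)^{s/2},
\]
which is elliptic of order $w^{-1}\la\tau\sigma\ra^s$ in $\hat\xi$ uniformly in $\sigma\in\R$.

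Next, the parametrix construction of Theorem~\ref{ThmSBPsdoComp} yields $B\in w\Psi_{\cV,\rm I}^{-s}(M)$ with $BA-I\in w\rho^\infty\Psi_\cV^{-\infty}(M)$, giving the elliptic norm equivalence
\[
  \|u\|_{w H_\cV^s(M)}^2 \;\sim\; \|A u\|_{H_\cV^0(M)}^2 + \|u\|_{w H_\cV^{s-1}(M)}^2,
\]
and the same argument in the parameterized $\hat\cV$-calculus of \S\ref{SsSBPar} produces the analog on $X$ uniformly in $\sigma$ for $\hat A_\sigma$. Applying Plancherel in $t$ to the identity $\widehat{A u}(\sigma)=\hat A_\sigma\hat u(\sigma)$ then relates $\|A u\|_{H_\cV^0(M)}^2$ to $\int_\R\|\hat A_\sigma\hat u(\sigma)\|_{H_{\hat\cV_\sigma}^0(X)}^2\,d\sigma$ up to the appropriate density adjustment, and combining with the two elliptic estimates yields the norm equivalence modulo a lower-order error. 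An induction on $s$ (with base case $s=0$ reducing directly to Plancherel, and duality via Proposition~\ref{PropSBHDual} handling negative orders) absorbs the lower-order contribution and gives the full equivalence for all $s\in\R$.

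Finally, bijectivity of $\cF$ on the stated spaces follows from the bijectivity of the ordinary Fourier transform on $L^2(\R)$, the density of $\CIc(M)$ in $w H_\cV^s(M)$ (Lemma~\ref{LemmaSBHDense}), and the norm equivalence. The main obstacle is a careful bookkeeping of the local density factors arising from the rescalings in Definition~\ref{DefSBH}: on $M$, the map $S_{(j,\alpha)}$ contributes a Jacobian proportional to $\tau_\alpha^{-1}\prod_i\rho_{\alpha,i}^{-1}$ in local charts (reflecting the $t$-scaling $\tau$ together with the $\rho_{\alpha,0}$ grid spacing), while on $X$ the map $S_\alpha^\sigma$ contributes $\la\tau_\alpha\sigma\ra^{n-1}\prod_i\rho_{\alpha,i}^{-1}$; verifying chart by chart that these combine consistently under Plancherel---with the $\la\tau\sigma\ra^{-s}$ weight on the spectral side precisely compensating the mismatch at the level of Sobolev orders so that the ellipticity of $\hat a_\sigma$ can be harvested uniformly in $\sigma$---is the delicate point that makes the isomorphism go through.
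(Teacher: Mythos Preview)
Your approach is essentially the same as the paper's: build an elliptic $t$-translation-invariant $\cV$-ps.d.o., pass to its spectral family via Proposition~\ref{PropFFTSpec2}, use Plancherel for the $L^2$ case, and handle $s<0$ by duality. The paper's execution is, however, cleaner on two points. First, it reduces to $w=1$ at the outset (division by $w$ is an isomorphism on both sides), which removes the weight bookkeeping entirely. Second, for $s\geq 0$ it uses the one-step characterization $u\in H_\cV^s \iff u,\Lambda u\in L^2$ (with $\Lambda\in\Psi_{\cV,\rm I}^s$ elliptic); via Plancherel this becomes $\hat u,\hat\Lambda_\sigma\hat u\in L^2(\R_\sigma;L^2(X))$, and the ellipticity of $\hat\Lambda$ in $\la\tau\sigma\ra^s\Psi_{\hat\cV}^s(X)$ then gives the desired norm equivalence directly, with no lower-order remainder and no induction.

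Your induction on $s$, with step $s-1\to s$ and base case $s=0$, only produces the result for $s\in\Z$; as written it does not cover non-integer $s$. This is easy to fix---either invoke complex interpolation, or (better) replace your elliptic estimate with the sharper one $\|u\|_{wH_\cV^s}^2\sim\|u\|_{L^2}^2+\|Au\|_{L^2}^2$ that follows from a full parametrix and reduces every $s\geq 0$ directly to the Plancherel case---but you should say so. The density-factor concern in your last paragraph is then moot: once you work with fixed $L^2$ structures ($\mu=|\dd t|\,\mu_X$ on $M$, $\mu_X$ on $X$) and the elliptic characterization, no chart-by-chart Jacobian computation is needed.
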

\begin{proof}
  It suffices to consider the case $w=1$. For $s=1$, \eqref{EqFFTHV} follows from~\eqref{EqFFTH1} upon summing in $\alpha$ using the finite intersection property of b.g.\ structures (Definition~\ref{DefIBdd}\eqref{ItIBddFinite}), and the case of general $s\in\N$ is a simple generalization. We can treat all real $s\geq 0$ via testing with an elliptic $\cV$-ps.d.o.\ $\Lambda=\Op_{\cV,\rm I}(\lambda)\in\Psi_{\cV,\rm I}^s(M)$ since $u\in H_\cV^s(\R\times X)$ if and only if $u,\Lambda u\in L^2(\R\times X)$, which is equivalent to $\sigma\mapsto \hat u(\sigma),\hat\Lambda_\sigma\hat u(\sigma)$ lying in $L^2(\R_\sigma;L^2(X))$. But since by Proposition~\ref{PropFFTSpec2} the spectral family $\hat\Lambda=(\hat\Lambda_\sigma)_{\sigma\in\R}\in\la\tau\sigma\ra^s\Psi_{\hat\cV}^s(X)$ is elliptic, this is in turn equivalent to $\hat u(\cdot)\in L^2(\R_\sigma;\la\tau\sigma\ra^{-s}H_{\hat\cV_\sigma}^s(X))$.

  The case of $s\leq 0$ follows by duality using Proposition~\ref{PropSBHDual}.
\end{proof}

\begin{example}[Applications]
\fakephantomsection
\label{ExFFTHV}
  \begin{enumerate}
  \item In the setting of Example~\ref{ExFFTEucl}\eqref{ItFFTEuclsc}, this recovers the elementary isomorphism $\cF\colon H^s(\R\times\R^{n-1})\to L^2(\R;\la\sigma\ra^{-s}H_{\la\sigma\ra^{-1}}^s(\R^{n-1}))$ where $\|u\|_{H_h^s}=\|\la h D\ra^s\hat u\|_{L^2}$; passing to logarithmic coordinates via $t=-\log\rho$, this is a result about the Mellin transform which is stated in \cite[(3.9)]{VasyMicroKerrdS}.
  \item In the 3b-setting of Example~\ref{ExFFTEucl}\eqref{ItFFTEucl3b}, we may take $\tau=\la r\ra$, and thus $\hat\cV_\sigma=\la\tau\sigma\ra^{-1}\hat\cV_0=(1+|\sigma|\la r\ra)^{-1}\Vb(\ol{\R^{n-1}})$; for $\sigma\in\pm[0,1)$, this gives rise to the scattering-b-transition algebra from~\S\ref{SssIF}\eqref{ItIscbt}, and for $\sigma\in\pm[1,\infty)$ to the semiclassical scattering algebra from~\S\ref{SssIF}\eqref{ItIsch}. Thus, Proposition~\ref{PropFFTHV} recovers \cite[Proposition~4.24]{Hintz3b}. (Also \cite[Proposition~4.26]{Hintz3b} can be recovered via the introduction of a different scaled b.g.\ structure via Definition~\ref{DefFFTbg}; we leave this to the interested reader.)
  \end{enumerate}
\end{example}

The case of $t$-translation invariant variable orders $\sfs,\sfl\in S^0_{\rm I}({}^\cV T^*M)$ can be treated in a completely analogous fashion, namely via testing by $t$-translation invariant $\cV$-ps.d.o.s of variable order. To wit, let $a\in w\rho_X^\sfl S_{\rm I}^\sfs({}^\cV T^*M)$; then formula~\eqref{EqFFTSpec2Op} remains valid, and $\hat a=(\hat a_\sigma)_{\sigma\in\R}$ defines a symbol of variable order
\begin{subequations}
\begin{equation}
\label{EqFFTHV3Symb}
  \hat a \in w\rho_X^{\hat\sfl}\la\tau\sigma\ra^{\hat\sfs}S^{\hat\sfs}({}^{\hat\cV}T^*X),\qquad \hat\sfl_\sigma=\sfl|_{\sigma\,\dd t+T^*X},\ \ \hat\sfs_\sigma=\sfs|_{\sigma\,\dd t+T^*X}.
\end{equation}
(See~\eqref{EqFFTSpec2Expl} for the local coordinate expressions of $\hat\sfl_\sigma$ and $\hat\sfs_\sigma$.) Therefore, the spectral family of $\Op_{\cV,\rm I}(a)$ defines an element $\hat A\in w\rho_X^{\hat\sfl}\la\tau\sigma\ra^{\hat\sfs}\Psi_{\hat\cV}^{\hat\sfs}(X)$. Proposition~\ref{PropFFTHV} then generalizes to the isomorphism
\begin{equation}
\label{EqFFTHV3Sob}
  \cF \colon w\rho_X^\sfl H_\cV^\sfs(\R\times X) \to L^2\bigl(\R_\sigma; w\rho_X^{\hat\sfl_\sigma}\la\tau\sigma\ra^{-\hat\sfs_\sigma}H_{\hat\cV_\sigma}^{\hat\sfs_\sigma}(X)\bigr),
\end{equation}
\end{subequations}
where the space on the right is the space of all elements of $L^2(\R_\sigma;w\rho_X^{l_0}\la\tau\sigma\ra^{-s_0}H_{\hat\cV_0}^{s_0}(X))$ (with $l_0\leq\inf\sfl$ and $s_0\leq\inf\sfs$) whose image under a fixed elliptic element of $w\rho_X^{\hat\sfl}\la\tau\sigma\ra^{\hat\sfs}\Psi_{\hat\cV}^{\hat\sfs}(X)$ lies in $L^2(\R_\sigma;L^2(X))$. As an application, this can be used to recover \cite[Proposition~4.29]{Hintz3b}.

\subsection{Commutator \texorpdfstring{$\cW$}{W}-vector fields}
\label{SsFComm}

We record a sharpening of $\Psi_\cV$ of interest in some applications, e.g.\ \cite[\S{2.5.4}]{HintzGlueLocII}. We only discuss the scaled b.g.\ setting, and leave the purely notational modifications for parameterized settings to the reader.

\begin{definition}[Commutator $\cW$-vector fields]
\label{DefFComm}
  We write $\cW_{[\cV]}\subset\cW$ for the space of all $W\in\cW$ so that $[W,V]\in\cV$ for all $V\in\cV$.
\end{definition}

The key assumption in this section is that
\begin{equation}
\label{EqFComm}
  \text{$\cW_{[\cV]}$ spans $\cW$ over $\CI_{{\rm uni},\fB}(M)$},
\end{equation}
i.e.\ every $W\in\cW$ can be written as a finite linear combination $\sum f_i W_i$ where $f_i\in\CI_{{\rm uni},\fB}(M)$ and $W_i\in\cW_{[\cV]}$. Let us consider the conditions on $W\in\cW$ to lie in $\cW_{[\cV]}$ in distinguished charts: writing $(\phi_\alpha)_*W=\sum_{k=1}^n W^k(x)\pa_k$, we compute
\begin{equation}
\label{EqFCommCalc}
  [V^j\rho_{\alpha,j}\pa_j,(\phi_\alpha)_*W] = \Bigl(\frac{\rho_{\alpha,k}V^k(\pa_k W^j)}{\rho_{\alpha,j}} - W^k(\pa_k V^j)\Bigr)\rho_{\alpha,j}\pa_j.
\end{equation}
This has uniformly (in $\alpha$) bounded coefficients in $\CI([-\frac32,\frac32]^n)$ for all $V\in\cV$ (expressed in local coordinates as $V^j\rho_{\alpha,j}\pa_j$) if and only if $\frac{\rho_{\alpha,k}\pa_k W^j}{\rho_{\alpha,j}}$ is uniformly bounded in $\CI([-\frac32,\frac32]^n)$. If, say, $\rho_{\alpha,j}\to 0$ but $\rho_{\alpha,k}=1$ for some sequence of $\alpha,j,k$, this requires $W^j$ to vary only by an increasingly small amount $\lesssim\rho_{\alpha,k}$ in the $x^k$-direction on $U^\alpha$. This shows that elements of $\cW_{[\cV]}$ cannot be constructed using a partition of unity from purely local constructions in each $U_\alpha$. (In particular, one typically has $\cW_{[\cV]}\subsetneq\cW$; equivalently, $\cW_{[\cV]}$ is \emph{not} a $\CI_{{\rm uni},\fB}(M)$-module.) Rather, they have a global character.

\begin{example}[Commutator vector fields in the edge-setting]
\label{ExFComm}
  Let $\bar M$ be compact with fibered boundary $Z-\pa\bar M\xra{\pi} Y$, and consider on the interior $M=\bar M^\circ$ the scaled b.g.\ structure with coefficient Lie algebra $\cW=\Vb(\bar M)$ and operator Lie algebra $\cV=\Ve(\bar M)$ (with conormal coefficients), cf.\ \S\ref{SssIF}\eqref{ItIe}. Then
  \[
    \cW_{[\cV]}\supset\bigl\{ W\in\CI(\bar M;\Tb\bar M) \colon W|_{\pa\bar M}\ \text{is a lift of a vector field on $Y$} \bigr\}.
  \]
  In local coordinates $x\geq 0$, $y\in\R^{n_Y}$, $z\in\R^{n_Z}$, elements of the space on the right are of the form $a(x,y,z)x\pa_x+(b(y)+x c(x,y,z))\pa_y+d(x,y,z)\pa_z$. Thus, condition~\eqref{EqFComm} is satisfied in this case. This example is related to \cite[\S{5.1}]{HintzVasyScrieb}. (Another example is discussed in \cite[\S{2.1.1}]{HintzNonstat}.)
\end{example}

Under the assumption~\eqref{EqFComm}, we shall write $w\rho^\infty\Psi_{\cV,[\,]}^{-\infty}(M)$ for the space of all residual operators $R\in w\rho^\infty\Psi_\cV^{-\infty}(M)$ in Definition~\ref{DefSBPsdoRes} which satisfy the additional condition
\begin{equation}
\label{EqFCommRes}
  \ad_{W_1}\cdots\ad_{W_k}R,\ \ad_{W_1}\cdots\ad_{W_k}R^* \in w\rho^\infty\Psi_\cV^{-\infty}(M)\qquad\forall\,k\in\N,\ W_1,\ldots,W_k\in\cW_{[\cV]}.
\end{equation}
Here $\ad_{W_1}=[W_1,\cdot]$ is the commutator. (In the case of operators acting on vector bundles, we require the same for all elements $W_1,\ldots,W_k$ of $\Diff_\cW^1$ whose principal symbols are scalar and agree with those of some elements of $\cW_{[\cV]}$.) We remark that under condition~\eqref{EqFCommRes}, it suffices to assume the validity of~\eqref{EqSBPsdoResMap} for $R,R^*$ for $k=0$.

\begin{prop}[Sharper ps.d.o.\ algebra]
\label{PropFCommPsdo}
  Assume that~\eqref{EqFComm} holds. Define
  \begin{equation}
  \label{EqFCommPsdo}
    \rho^{-l}\Psi_{\cV,[\,]}^m(M)\subset\rho^{-l}\Psi_\cV^m(M)
  \end{equation}
  as the space of operators of the form $A=\Op_\cV(a)+R$ where $a\in\rho^{-l}S^m({}^\cV T^*M)$ and $R\in\rho^\infty\Psi_{\cV,[\,]}^{-\infty}(M)$. Then $\ad_{W_1}\cdots\ad_{W_k}A\in\rho^{-l}\Psi_{\cV,[\,]}^m(M)$ for all $A\in\rho^{-l}\Psi_{\cV,[\,]}^m(M)$, $k\in\N$, $W_1,\ldots,W_k\in\cW_{[\cV]}$. Furthermore, $\bigcup_{m,l\in\R}\rho^{-l}\Psi_{\cV,[\,]}^m(M)$ is closed under compositions and adjoints. Analogous statements hold for operators with weights, phase space weights, and variable orders.
\end{prop}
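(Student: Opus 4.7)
The plan is to establish three interlocking closure properties which together yield the proposition: (a)~stability of $\rho^{-l}\Psi_{\cV,[\,]}^m(M)$ under $\ad_W$ for $W\in\cW_{[\cV]}$, (b)~closure under composition, and (c)~closure under adjoints. The heart of the matter is (a); once it is in place, (b) and (c) reduce to uniform bookkeeping applied to the constructions already in hand from Theorem~\ref{ThmSBPsdoComp} and Lemma~\ref{LemmaSBPsdoLeft}. Crucially, the hypothesis~\eqref{EqFComm} enters \emph{only} through~(a).

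For~(a), I would reduce, by induction on the number of commutators and using the built-in stability of $\rho^\infty\Psi_{\cV,[\,]}^{-\infty}(M)$ under $\ad_{W_i}$, to showing that $[W,\Op_\cV(a)]=\Op_\cV(b)+R'$ with $b\in\rho^{-l}S^m({}^\cV T^*M)$ and $R'\in\rho^\infty\Psi_{\cV,[\,]}^{-\infty}(M)$ whenever $a\in\rho^{-l}S^m({}^\cV T^*M)$ and $W\in\cW_{[\cV]}$. In a distinguished chart $U_\alpha$, writing $W=W^k\pa_{x^k}$, a direct computation on Schwartz kernels yields
\[
  K_{[W,A]} = W^k(x)(\pa_{x^k}+\pa_{x^{'k}})K_A + (W^k(x')-W^k(x))\pa_{x^{'k}}K_A + (\pa_{x^{'k}}W^k)(x')K_A.
\]
The first and third terms are $\cV$-quantizations of $\rho^{-l}S^m$ symbols on the nose. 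For the middle term, I would Taylor-expand $W^k(x')-W^k(x)=\sum_j(x^{'j}-x^j)\int_0^1(\pa_jW^k)(x+t(x'-x))\,\dd t$ and convert $(x^{'j}-x^j)$ to $i\rho_{\alpha,j}\pa_{\xi_j}$ acting on the exponential; integration by parts in $\xi$ produces amplitudes whose leading coefficient is $\rho_{\alpha,j}\pa_jW^k/\rho_{\alpha,k}$, which is uniformly $\CI$-bounded \emph{precisely because $W\in\cW_{[\cV]}$}, cf.~\eqref{EqFCommCalc}. Left reduction (Lemma~\ref{LemmaSBPsdoLeft}) then delivers $b$, while off-diagonal $\pa\psi$-contributions are residual by Lemma~\ref{LemmaSBPsdoOffDiag} and asymptotic-summation tails by Lemma~\ref{LemmaSBTsAsySum}. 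The key point for the refined residual condition~\eqref{EqFCommRes} is that both of these residual mechanisms are stable under further commutators with $\cW$-vector fields: the kernel of $[W',R']$ inherits the support properties of $K_{R'}$, and the asymptotic-summation construction commutes with $\ad_{W'}$ modulo further residuals.

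For~(b), Theorem~\ref{ThmSBPsdoComp} already gives $A_1\circ A_2\in\rho^{-(l_1+l_2)}\Psi_\cV^{m_1+m_2}(M)$; writing $A_i=\Op_\cV(a_i)+R_i$, the residual of the composition decomposes as $R_1\Op_\cV(a_2)+\Op_\cV(a_1)R_2+R_1R_2$ plus left-reduction errors, and iterated application of the Leibniz identity $\ad_W(XY)=(\ad_WX)Y+X(\ad_WY)$ combined with~(a) keeps each term residual under all further commutators. For~(c), $A^*=\Op_\cV(a)^*+R^*$ with $R^*\in\rho^\infty\Psi_{\cV,[\,]}^{-\infty}(M)$ by the symmetric condition~\eqref{EqFCommRes}; the right-to-left reduction of Lemma~\ref{LemmaSBPsdoLeft} yields $\Op_\cV(a)^*=\Op_\cV(\bar a_L)+R_0$, and closure of $R_0$ under~\eqref{EqFCommRes} follows from the identity $(\ad_WA)^*=-\ad_{W^*}A^*$, the decomposition $W^*=-W+c$ with $c\in\CI_{{\rm uni},\fB}(M)$ (the uniformly smooth $L^2$-divergence), and~(b) applied to the multiplication operator $c\in\Psi_{\cV,[\,]}^0(M)$. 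The extensions to phase space weights (\S\ref{SsFWg}) and variable orders (\S\ref{SsFVar}) require only cosmetic changes to this scheme. The chief obstacle is establishing~\eqref{EqFCommRes} for $R'$ in~(a): one must arrange the construction so that the Taylor-remainder amplitudes produced at each level of iterated commutator remain uniformly bounded on scales matching those of $\cV$, which ultimately comes down to tracking products of factors $\rho_{\alpha,j}\pa_jW^k/\rho_{\alpha,k}$ through every order of the asymptotic expansion---a verification that is possible exactly because membership in $\cW_{[\cV]}$ controls the single-derivative bound and the remaining higher derivatives are controlled by~\eqref{EqISB}.
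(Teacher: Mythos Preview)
Your proposal is correct and follows essentially the same route as the paper: both isolate, in a local chart, the term where the exponential is differentiated and produce the factor $(W^j(x_\alpha)-W^j(x'_\alpha))\xi_j/\rho_{\alpha,j}$, Taylor-expand it, convert $(x_\alpha-x'_\alpha)$ to $\pa_\xi$ and integrate by parts, and then invoke the uniform bound on $\rho_{\alpha,k}\pa_kW^j/\rho_{\alpha,j}$ that characterizes $W\in\cW_{[\cV]}$ (cf.~\eqref{EqFCommCalc}); the paper phrases the same computation as the action of the diagonal lift $\wt W=W^j(x_\alpha)\pa_{x_\alpha^j}+W^j(x'_\alpha)\pa_{x'_\alpha^j}$ on the local kernel. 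The one step the paper makes more explicit than you do is a preliminary strengthening of Corollary~\ref{CorSBPsdoResSKMult} (stability of $\rho^\infty\Psi_{\cV,[\,]}^{-\infty}(M)$ under multiplication of Schwartz kernels by $\CI_{{\rm uni},\fB\times\fB,\diag}(M\times M)$), which is what ultimately lets one change cutoffs and localize without leaving the sharp residual class---you gesture at this under ``stability of residual mechanisms,'' but it is worth isolating as a lemma.
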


We reiterate that it is in general \emph{not} true that $[\Psi_{\cV,[\,]}^m,W]\subset\Psi_{\cV,[\,]}^m$ for \emph{all} $W\in\cW$; this already fails in the case $m=1$ for the subspace $\cV\subset\Psi_{\cV,[\,]}^1$.

\begin{proof}[Proof of Proposition~\usref{PropFCommPsdo}]
  We first check the analogue of Corollary~\ref{CorSBPsdoResSKMult}. Thus, let $R\in\rho^\infty\Psi_{\cV,[\,]}^{-\infty}(M)$ with Schwartz kernel $K_R$, and let $f\in\CI_{{\rm uni},\fB\times\fB}(M\times M)$. Then $f K_R$ defines an element $R'$ of $w\rho^\infty\Psi_\cV^{-\infty}(M)$ by Corollary~\ref{CorSBPsdoResSKMult}; we need to show that $f K_R\in\rho^\infty\Psi_{\cV,[\,]}^{-\infty}(M)$. If now $W\in\cW_{[\cV]}$, then the Schwartz kernel of $[W,R']$ is equal to $W_1(f K_R)-W_2^*(f K_R)$ where the vector field $W_j$ on $M\times M$ is equal to $W$ acting on the first (for $j=1$), resp.\ second (for $j=2$) factor, and $W_2^*$ acts on right densities. Upon trivializing the right density bundle using a uniform $\cW$-density, we have $W_2^*\equiv-W_2\bmod\CI_{{\rm uni},\fB}(M)$. It then suffices to observe that
  \[
    W_1(f K_R)-W_2^*(f K_R)=f(W_1 K_R-W_2^*K_R)+((W_1+W_2)f)K_R,
  \]
  with the first summand being $f$ times the Schwartz kernel of $[W,R]$, and the second one being $(W_1-W_2)f\in\CI_{{\rm uni},\fB\times\fB}(M\times M)$ times the Schwartz kernel of $R$. Higher order commutators are treated similarly.

  Next, we prove that the conclusion in Proposition~\ref{PropSBPsdoOpRes} can be strengthened to membership in $w\rho^\infty\Psi_{\cV,[\,]}^{-\infty}(M)$ (which then implies an analogous strengthening of Lemma~\ref{LemmaSBPsdoOffDiag}). To this end, we shall prove the following statement for $a\in S^m((-2,2)^n;\R^n)$ and $W\in\cW_{[\cV]}$: write $(\phi_\alpha)_*W=\sum_{k=1}^n W^k\pa_k$; then the operator whose Schwartz kernel is given by the action of $\wt W:=\sum_{j=1}^n W^j(x_\alpha)\pa_{x_\alpha^j}+W^j(x'_\alpha)\pa_{x_\alpha^{\prime j}}$ on the quantization
  \begin{equation}
  \label{EqFCommPsdoQuant}
    K_a(x_\alpha,x'_\alpha) := (2\pi)^{-n}\int_{\R^n} e^{i\sum_{j=1}^n(x_\alpha^j-x_\alpha^{\prime j})\xi_j/\rho_{\alpha,j}} \psi(x'_\alpha-x_\alpha) a(x_\alpha,\xi)\,\dd\xi
  \end{equation}
  is itself such a quantization (upon replacing $\psi\in\CIc((-\frac32,\frac32)^n)$ by a cutoff of the same class which is equal to $1$ on $\supp\psi$) for a new symbol of class $S^m$ which is uniformly (in $\alpha$) bounded by $a$. The only term in $\wt W K_a$ which is not of the same form as $K_a$ itself (upon enlarging the cutoff) arises from differentiating the exponential. The latter produces
  \begin{align*}
    &\biggl(\sum_{j=1}^n \frac{W^j(x_\alpha)-W^j(x'_\alpha)}{\rho_{\alpha,j}}\xi_j\biggr) i e^{i\sum_{j=1}^n (x_\alpha^j-x_\alpha^{\prime j})\xi_j/\rho_{\alpha,j}} \\
    & \qquad = \sum_{j,k=1}^n \int_0^1 \frac{\rho_{\alpha,k}(\pa_k W^j)(x'_\alpha+t(x_\alpha-x'_\alpha))}{\rho_{\alpha,j}}\,\dd t \times (\xi_j\pa_{\xi_k})e^{i\sum_{j=1}^n (x_\alpha^j-x_\alpha^{\prime j})\xi_j/\rho_{\alpha,j}}.
  \end{align*}
  Plugging this into~\eqref{EqFCommPsdoQuant}, we can shift $\xi_j$ onto $a_\alpha$ and integrate by parts in $\xi_k$. The uniform $\CI$ bounds on $\frac{\rho_{\alpha,k}\pa_k W^j}{\rho_{\alpha,j}}$ (cf.\ the discussion following~\eqref{EqFCommCalc}) then yield the desired claim upon left reduction. Higher order commutators are analyzed by iterating this argument.

  The same arguments show that for $A=\Op_\cV(a)$ with $a\in S^m({}^\cV T^*M)$, we have $[W,A]\in\Psi_{\cV,[\,]}^m(M)$ for all $W\in\cW_{[\cV]}$; similarly for higher order commutators. The algebra property of $\bigcup_{m,l\in\R}\rho^{-l}\Psi_{\cV,[\,]}^m(M)$ now follows from that of $\rho^\infty\Psi_{\cV,[\,]}^{-\infty}(M)$. The proof is complete.
\end{proof}

\bibliographystyle{alphaurl}


\end{document}